\documentclass[english]{article}
\usepackage[T1]{fontenc}
\usepackage[latin9]{inputenc}
\usepackage{color}
\usepackage{amsmath}
\usepackage{amsthm}
\usepackage{amssymb}

\makeatletter
\@ifundefined{date}{}{\date{}}
\setlength{\oddsidemargin}{0.2in}\setlength{\evensidemargin}{0.2in}\setlength{\textwidth}{6.1in}\setlength{\textwidth}{16cm}
\setlength{\textheight}{22cm}
\setlength{\topmargin}{-1.3cm}
\setlength{\oddsidemargin}{0mm}

\usepackage{fullpage}\usepackage{makeidx}\usepackage{amsfonts}\usepackage{latexsym}

\def\C{{\mathbb{C}}}
\def\Z{{\mathbb{Z}}}

\theoremstyle{definition}
\newtheorem{lemma}{Lemma}[section]
\newtheorem{theorem}[lemma]{Theorem}\newtheorem{proposition}[lemma]{Proposition}\newtheorem{definition}[lemma]{Definition}\newtheorem{remark}[lemma]{Remark}\usepackage{times}

\usepackage[blocks]{authblk}

\title{Extended affine Lie algebras, vertex algebras and equivariant $\phi$-coordinated quasi modules}
\author{Fulin Chen, Shaobin Tan and Nina Yu}

\usepackage{babel}

\numberwithin{equation}{section}

\makeatother

\usepackage{babel}
\begin{document}
\maketitle
\begin{abstract}
For any nullity
$2$ extended affine Lie algebra $\mathcal{E}$ of maximal type and
$\ell\in\mathbb{C}$, we prove that there exist a vertex algebra $V_{\mathcal{E}}(\ell)$ and
an automorphism group $G$ of $V_{\mathcal{E}}(\ell)$ equipped with a linear
character $\chi$, such that the category of restricted
$\mathcal{E}$-modules of level $\ell$ is canonically isomorphic to
the category of $(G,\chi)$-equivariant $\phi$-coordinated quasi
$V_{\mathcal{E}}(\ell)$-modules. Moreover, when $\ell$ is a nonnegative
integer, there is a quotient vertex algebra $L_{\mathcal{E}}(\ell)$
of $V_{\mathcal{E}}(\ell)$ modulo by a $G$-stable ideal, and we
prove that the integrable restricted $\mathcal{E}$-modules of level
$\ell$ are exactly the $(G,\chi)$-equivariant $\phi$-coordinated
quasi $L_{\mathcal{E}}(\ell)$-modules.
\end{abstract}

\section{{\normalsize{}{}{}Introduction}}

\emph{Extended affine Lie algebra} (EALA for short) was introduced
by Hoegh-Krohn and Torresani in \cite{H-KT} with applications to
quantum gauge theory, and since then it has been studied extensively
in literature (see \cite{N} and the references therein). An EALA by definition
is  a complex Lie algebra $\mathcal{E}$, together with
a finite-dimensional ad-diagonalizable subalgebra and a nondegenerate
invariant symmetric  bilinear form, satisfying a list of natural axioms.
The isotropic roots (i.e., roots of length $0$) in $\mathcal{E}$
generate a free abelian group of finite rank called the \emph{nullity} of $\mathcal{E}$. EALAs
of nullity $0$ and $1$ precisely coincide with the finite-dimensional
simple Lie algebras and affine Kac-Moody algebras respectively \cite{ABGP}.
Meanwhile, the structure of EALAs with positive nullity are like affine
Kac-Moody algebras in many ways \cite{ABFP,BGK,CNPY}. It is well-known
that affine Kac-Moody algebras through their restricted modules can be naturally associated with vertex
algebras and (twisted) modules  \cite{FZ,FLM,Li1,Li2}.
This association plays an important role in both affine Kac-Moody
algebra theory and vertex (operator) algebra theory. It is a natural
question to ask whether EALAs of nullity $\ge2$ can be associated
with vertex algebras in a similar way as the affine Kac-Moody algebras.

The main goal of this paper is to associate the nullity $2$ EALAs and their modules to vertex algebras.
We would like to point out that the representation theory of nullity $2$ EALAs is
totally different from that for EALAs with nullity $\ge 3$ (see [ESB] and [CLT2]), and the nullity 2 EALAs arose naturally in the
work of Saito \cite{Sa} and Slodowy \cite{Sl} on simple elliptic singularities
and can be connected with Ringel-Hall algebras \cite{LP}.
The subalgebra of an EALA generated by its nonisotropic root vectors
is called the \emph{core} of the EALA, and the core modulo its center is often called
the \emph{centerless core} of the algebra. It is known that the classification of EALAs can be reduced
to the classification of their centerless cores \cite{N}. And
 the centerless cores of nullity $2$ EALAs were
classified by Allison-Berman-Pianzola in \cite{ABP} (see also \cite{GP}).

In
this paper we deal with the nullity $2$ EALAs of
maximal type in the sense that their cores are centrally closed \cite{BGK}.
The
representation theory of nullity $2$ EALAs of maximal type has been
extensively studied (see \cite{G1,G2,G3,GZ,B2,CLT1,CLT2} for example).
By applying the theory of \emph{equivariant $\phi$-coordinated quasi
modules} for vertex algebras developed by Li (see \cite{Li6,Li8}), we associate all nullity $2$ EALAs of maximal type
and their restricted modules to vertex algebras.

Li introduced the notion of  \emph{$\left(G,\chi\right)$-equivariant
quasi modules} for vertex algebras in \cite{Li3,Li4} to associate  certain infinite-dimensional Lie algebras to vertex algebras, where $G$ is a group
and $\chi$ is a linear character of $G$. Li \cite{Li6,Li8} also developed a theory of \emph{$\left(G,\chi\right)$-equivariant
$\phi$-coordinated quasi modules} for nonlocal vertex algebras to associate quantum affine algebras
with quantum vertex algebras (see also \cite{JKLiT,CLTW}), where
$\phi$ is an \emph{associate} of the $1$-dimensional
additive formal group $F(z,w)=z+w$ \cite{Li6}. In
this paper we assume the \emph{associate} $\phi=ze^{w}$, which is indeed the
associate appearing in the quantum vertex algebra theory \cite{Li6,Li8}.

  If $\dot{\mathfrak{g}}$ is a finite-dimensional simple Lie algebra with a diagram automorphism $\dot{\nu}$,
  we denote by $\widetilde{\mathcal{L}}(\dot{\mathfrak{g}},\dot{\nu})$ the corresponding affine Kac-Moody algebra (i.e. nullity 1 EALA), and $\widehat{\mathcal{L}}(\dot{\mathfrak{g}},\dot{\nu})=[\widetilde{\mathcal{L}}(\dot{\mathfrak{g}},\dot{\nu}),
  \widetilde{\mathcal{L}}(\dot{\mathfrak{g}},\dot{\nu})]$ the derived Lie subalgebra of $\widetilde{\mathcal{L}}(\dot{\mathfrak{g}},\dot{\nu})$.
   We know that the restricted modules for the  affine Lie algebra $\widehat{\mathcal{L}}(\dot{\mathfrak{g}},\dot{\nu})$ of level
$\ell\in\mathbb{C}$ can be associated with the $\dot{\nu}$-twisted modules
for the universal affine vertex  algebra $V_{\widehat{\mathcal{L}}(\dot{\mathfrak{g}}) }(\ell, 0)$ of the untwisted affine Lie algebra
$\widehat{\mathcal{L}}(\dot{\mathfrak{g}})=\widehat{\mathcal{L}}(\dot{\mathfrak{g}},\mathrm{id})$ (see \cite{FZ,Li1,FLM,Li2}).
And the integrable restricted modules for $\widehat{\mathcal{L}}(\dot{\mathfrak{g}},\dot{\nu})$
 of level
$\ell\in\mathbb{N}$ can be associated with the $\dot{\nu}$-twisted modules for the corresponding
simple affine vertex  algebras $L_{\widehat{\mathcal{L}}(\dot{\mathfrak{g}}) }(\ell, 0)$ (see \cite{FZ,Li1,Li2}).

Similar to the association of the affine Lie algebra $\widehat{\mathcal{L}}(\dot{\mathfrak{g}},\dot{\nu})$
 and its  restricted  modules with the affine vertex algebras and their  twisted modules,
 we can apply the equivariant $\phi$-coordinated quasi modules for the affine vertex algebras  to associate with the restricted modules for the affine Kac-Moody algebra $\widetilde{\mathcal{L}}(\dot{\mathfrak{g}},\dot{\nu})$. For this purpose, we first, by using results from \cite{Z} and \cite{Li5}, investigate the natural connections
among equivariant $\phi$-coordinated quasi modules, equivariant quasi
modules and twisted modules for the general vertex operator algebras  (see Proposition
\ref{prop:modisoforvoa}). And then we prove that the category of restricted (resp.\,integrable restricted)
modules for the  affine Kac-Moody algebra  $\widetilde{\mathcal{L}}(\dot{\mathfrak{g}},\dot{\nu})$ of level $\ell$ is isomorphic to the category
of equivariant $\phi$-coordinated quasi modules for the universal (resp.\,simple)
affine vertex algebra $V_{\widehat{\mathcal{L}}(\dot{\mathfrak{g}}) }(\ell, 0)$ (resp. $L_{\widehat{\mathcal{L}}(\dot{\mathfrak{g}}) }(\ell, 0)$) (see Theorem \ref{thm:main1}).

Let $\mathfrak{g}$ be the untwisted affine Kac-Moody algebra $\widetilde{\mathcal{L}}(\dot{\mathfrak{g}},\mathrm{id})$, and $\mu$ a nontransitive diagram automorphism  of $\mathfrak{g}$. We can also define the twisted toroidal EALA $\widetilde{\mathfrak{g}}[\mu] $ similar to the construction of twisted affine Kac-Moody algebra. Allison-Berman-Pianzola proved in \cite{ABP} that a nullity
$2$ EALA of maximal type is either isomorphic to a twisted toroidal EALA $\widetilde{\mathfrak{g}}[\mu]$, or to EALA $\widetilde{\mathfrak{sl}}_{N}(\mathbb{C}_{q})$
of type $A_{N-1}$ coordinated by an irrational quantum torus $\mathbb{C}_{q}$ (see Section 5 for details). It
was discovered by Billig in \cite{B2} that an untwisted toroidal
EALA is in general not a vertex Lie algebra in the
sense of \cite{DLM}. Thus one cannot associate the restricted $\widetilde{\mathfrak{g}}[\mu]$-modules,
as the affine Kac-Moody algebra case, to twisted modules of vertex algebras. It was also pointed out in \cite{Li3} that, since
the generating functions of $\widetilde{\mathfrak{sl}}_{N}(\mathbb{C}_{q})$
are not ``local'' in general, the restricted modules of EALA
$\widetilde{\mathfrak{sl}}_{N}(\mathbb{C}_{q})$ cannot be directly
associated to twisted modules of vertex algebras.

As the main result of this paper, we prove that every nullity $2$ EALA of maximal type
can be associated with a vertex algebra through equivariant $\phi$-coordinated
quasi modules. More explicitly, for any nontransitive
diagram automorphism $\mu$ of an untwisted affine Kac-Moody algebra
$\mathfrak{g}$, we construct a vertex algebra $V_{\widehat{\mathfrak{g}}}(\ell,0)$,
a quotient vertex algebra $L_{\widehat{\mathfrak{g}}}(\ell,0)$ of
$V_{\widehat{\mathfrak{g}}}(\ell,0)$, an automorphism group $G_{\mu}$
of $V_{\widehat{\mathfrak{g}}}(\ell,0)$ and $L_{\widehat{\mathfrak{g}}}(\ell,0)$,
and a linear character $\chi_{\omega}$ of $G_{\mu}$. And then we establish
a module category isomorphism from the category of restricted
(resp.\,integrable restricted) $\widetilde{\mathfrak{g}}[\mu]$-modules
of level $\ell$ to the category of $(G_{\mu},\chi_{\omega})$-equivariant $\phi$-coordinated
quasi modules for $V_{\widehat{\mathfrak{g}}}(\ell,0)$ (resp.\,$L_{\widehat{\mathfrak{g}}}(\ell,0)$)
(see Theorem \ref{thm:main2}).
Meanwhile, for any positive integer $N\ge2$ and generic complex number
$q$, we also construct a universal
(resp.\,simple) affine vertex algebras $V_{\widehat{\mathcal{L}}\left(\mathfrak{sl}_{\infty}\right)}\left(\ell,0\right)$
(resp.\,$L_{\widehat{\mathcal{L}}\left(\mathfrak{sl}_{\infty}\right)}\left(\ell,0\right)$)
associated to $\mathfrak{sl}_{\infty}$, an automorphism group $G_{N}$ of the affine vertex algebras, and a linear character $\chi_{q}$
of $G_{N}$. And then we prove that the category of restricted (resp.\,integrable
restricted) $\widetilde{\mathfrak{sl}}_{N}(\mathbb{C}_{q})$-modules
of level $\ell$ is canonically isomorphic to the category of $(G_{N},\chi_{q})$-equivariant
$\phi$-coordinated quasi modules for $V_{\widehat{\mathcal{L}}\left(\mathfrak{sl}_{\infty}\right)}\left(\ell,0\right)$
(resp.\,$L_{\widehat{\mathcal{L}}\left(\mathfrak{sl}_{\infty}\right)}\left(\ell,0\right)$)
(see Theorem \ref{thm:main3}).



The structure of the paper is given as follows. In Section 2 we
recall the notion of  $(G,\chi)$-equivariant $\phi$-coordinated
quasi module for a vertex algebra introduced in \cite{Li7}, and
consider the $(G,\chi)$-equivariant
$\phi$-coordinated quasi modules for the universal enveloping vertex
algebra of a conformal algebra. After giving
natural connections among equivariant $\phi$-coordinated quasi modules,
equivariant quasi modules and twisted modules for vertex operator
algebras and (general) universal affine vertex algebras, in Section
3 we prove the isomorphism  between the categories of  restricted modules
for affine Kac-Moody algebras and equivariant $\phi$-coordinated
quasi modules for affine vertex algebras. In Section 4, for any diagram automorphism
$\mu$ of an untwisted affine Kac-Moody algebra $\mathfrak{g}$, we   construct
an automorphism $\widetilde{\mu}$ of the toroidal EALA $\widetilde{\mathfrak{g}}$ associated to $\mathfrak{g}$
and study the Lie subalgebra $\widetilde{\mathfrak{g}}[\mu]$ of $\widetilde{\mathfrak{g}}$
fixed by the automorphism $\widetilde{\mu}$.
We recall Allison-Berman-Pianzola's
classification result of  nullity 2 EALAs of maximal type  in Section 5. And then in  Section 6,
we define two vertex algebras $V_{\widehat{\mathfrak{g}}}(\ell,0)$
and $L_{\widehat{\mathfrak{g}}}(\ell,0)$, and  associate restricted (resp.\,integrable restricted) $\widetilde{\mathfrak{g}}[\mu]$-modules
of level $\ell$ with equivariant $\phi$-coordinated quasi modules
for the vertex algebra $V_{\widehat{\mathfrak{g}}}(\ell,0)$ (resp.\,$L_{\widehat{\mathfrak{g}}}(\ell,0)$).
Finally, in Section 7, we associate
restricted (resp.\,integrable restricted) $\widetilde{\mathfrak{sl}}_{N}(\mathbb{C}_{q})$-modules
of level $\ell$ with equivariant $\phi$-coordinated quasi modules
for universal (resp.\,simple) affine vertex algebras associated to
$\mathfrak{sl}_{\infty}$.


In this paper we denote by $\mathbb{Z}$, $\mathbb{Z}^{\ast}$, $\mathbb{N}$, $\mathbb{C}$
and $\mathbb{C}^{\ast}$ respectively the sets  of integers, nonzero integers, nonnegative
integers, complex numbers and nonzero complex numbers. And if $\mathfrak{g}$ is a Lie algebra, we denote by $\mathcal{U}(\mathfrak{g})$ the universal enveloping algebra.

\section{Equivariant $\phi$-coordinated quasi modules for vertex algebras}

In this section we recall  the notion and some basics on equivariant
$\phi$-coordinated quasi modules for vertex algebras (cf. \cite{Li8,JKLiT,CLTW}).

\subsection{Definitions and basic properties}

Throughout this paper, let $z,w,z_{0},z_{1},z_{2},\dots$ be mutually
commuting independent formal variables.
We use the standard  notations and conventions as in
\cite{FHL,LLi}.
For example, for a vector space $U$, $U[[z_1,z_2,\dots,z_r]]$ is the space of formal (possibly doubly
infinite) power series in $z_1,z_2,\dots,z_r$ with coefficients in $U$, and
$U((z_1,z_2,\dots,z_r))$  is the space of lower truncated Laurent power series in
$z_1,z_2,\dots,z_r$ with coefficients in $U$.
 Denote a vertex algebra by $V=(V,Y, \mathbf{1})$ , where ${\mathbf{1}}$ is the vacuum vector and $Y(\cdot,z): V\rightarrow \mathrm{Hom}(V,V((z))),\ v\mapsto \sum_{n\in \Z} v_nz^{-n-1}$ is the vertex operator. And  the canonical derivation  on $V$ defined by $v\mapsto v_{-2}\mathbf{1}$
for $v\in V$ is denoted by
$\mathcal{D}$.


For a subset $\Gamma$ of $\mathbb{C}^{\ast}$, denote by $\mathbb{C}_{\Gamma}\left[z\right]$
the set of all polynomials in $\mathbb{C}\left[z\right]$ whose roots
are contained in $\Gamma$. Let $\phi$ be the formal power series $\phi\left(z_{2},z_{0}\right)=z_{2}e^{z_{0}}$,
which is a particular associate of the one-dimensional additive formal
group $F\left(z,w\right)=z+w$ as defined in \cite{Li3}. Now we recall
the notions of equivariant $\phi$-coordinated quasi modules for a vertex
algebra (see \cite{Li8}).

\begin{definition}\label{def:vaphimod} Let $(V, Y, \mathbf{1})$ be a vertex algebra,
$G$ a group of automorphism on $V$ and $\chi:G\to\mathbb{C}^{\ast}$
a linear character of $G$. \emph{A $\left(G,\chi\right)$-equivariant
$\phi$-coordinated quasi $V$-module $\left(W,Y_{W}^{\phi}\right)$}
is a vector space $W$ equipped with a linear map
\[
Y_{W}^{\phi}\left(\cdot,z\right):V\to\text{Hom}\left(W,W\left(\left(z\right)\right)\right)\subset\left(\text{End}W\right)\left[\left[z,z^{-1}\right]\right]
\]
satisfying the following three conditions:
\begin{enumerate}
\item[(i)] \quad{}$Y_{W}^{\phi}\left(\textbf{1},z\right)=1_{W}$;
\item[(ii)] \quad{}$Y_{W}^{\phi}\left(gv,z\right)=Y_{W}^{\phi}\left(v,\chi\left(g\right)z\right)$
for $g\in G,v\in V$;
\item[(iii)] \quad{}For $u,v\in V,$ there exists $f\left(z\right)\in\mathbb{C}_{\chi\left(G\right)}\left[z\right]$
such that
\begin{gather*}
f\left(z_{1}/z_{2}\right)Y_{W}^{\phi}\left(u,z_{1}\right)Y_{W}^{\phi}\left(v,z_{2}\right)\in\text{Hom}\left(W,W\left(\left(z_{1},z_{2}\right)\right)\right),\\
f\left(e^{z_{0}}\right)Y_{W}^{\phi}\left(Y\left(u,z_{0}\right)v,z_{2}\right)
=\left(f\left(z_{1}/z_{2}\right)Y_{W}^{\phi}\left(u,z_{1}\right)Y_{W}^{\phi}\left(v,z_{2}\right)\right)|_{z_{1}=\phi(z_{2},z_{0})}.
\end{gather*}
\end{enumerate}
Furthermore,
a {\em $\left(G,\chi\right)$-equivariant $\phi$-coordinated quasi $V$-module
$\left(W,Y_{W}^{\phi},d\right)$ }is a $\left(G,\chi\right)$-equivariant
$\phi$-coordinated quasi $V$-module $\left(W,Y_{W}^{\phi}\right)$
 equipped with an endomorphism
$d$ of $W$ such that $[d,Y_{W}^{\phi}\left(v,z\right)]=Y_{W}^{\phi}\left(\mathcal{D}v,z\right)\ \text{for }\ v\in V$.
\end{definition} 


Now we fix a vertex algebra $V$, an automorphism group $G$ of $V$, and a linear character $\chi$ of $G$.
The following two results follow respectively from \cite[Lemma 3.7]{Li6} and \cite[Proposition 5.2]{CLTW}.


\begin{lemma}\label{lem:actofd} For a $\left(G,\chi\right)$-equivariant
$\phi$-coordinated quasi $V$-module $\left(W,Y_{W}^{\phi},d\right)$,
 we have
\begin{align*}
\left[d,Y_{W}^{\phi}\left(v,z\right)\right]=Y_{W}^{\phi}\left(\mathcal{D}v,z\right)=z\frac{d}{dz}Y_{W}^{\phi}\left(v,z\right),\quad\forall\ v\in V.
\end{align*}
\end{lemma}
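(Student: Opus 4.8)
The plan is to observe first that the equality $[d,Y_W^\phi(v,z)]=Y_W^\phi(\mathcal{D}v,z)$ is nothing but the defining property of the endomorphism $d$ in the datum $(W,Y_W^\phi,d)$, so the real content of the lemma is the operator identity
\[
Y_W^\phi(\mathcal{D}v,z)=z\frac{d}{dz}Y_W^\phi(v,z),\qquad v\in V,
\]
which I would extract from the weak associativity axiom (iii) of Definition \ref{def:vaphimod} applied to the pair $(v,\mathbf{1})$.

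First I would note that, since $Y_W^\phi(\mathbf{1},z_2)=1_W$ by axiom (i), the product $Y_W^\phi(v,z_1)Y_W^\phi(\mathbf{1},z_2)=Y_W^\phi(v,z_1)$ already lies in $\mathrm{Hom}(W,W((z_1,z_2)))$, so condition (iii) is satisfied with the constant polynomial $f(z)=1\in\mathbb{C}_{\chi(G)}[z]$. The associativity relation in (iii) then reduces to
\[
Y_W^\phi\bigl(Y(v,z_0)\mathbf{1},z_2\bigr)=Y_W^\phi(v,z_1)\big|_{z_1=\phi(z_2,z_0)}=Y_W^\phi(v,z_2e^{z_0}).
\]
On the left, the standard vacuum/creation property in the vertex algebra $V$ gives $Y(v,z_0)\mathbf{1}=e^{z_0\mathcal{D}}v=\sum_{n\ge0}\frac{z_0^{n}}{n!}\mathcal{D}^{n}v$, so the left side equals $\sum_{n\ge0}\frac{z_0^{n}}{n!}Y_W^\phi(\mathcal{D}^{n}v,z_2)$. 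On the right, I would use that the differential operator $e^{z_0\, z_2\frac{d}{dz_2}}$ is precisely the substitution $z_2\mapsto z_2e^{z_0}$ (it sends $z_2^{m}$ to $e^{mz_0}z_2^{m}=(z_2e^{z_0})^{m}$), whence $Y_W^\phi(v,z_2e^{z_0})=e^{z_0\,z_2\frac{d}{dz_2}}Y_W^\phi(v,z_2)=\sum_{n\ge0}\frac{z_0^{n}}{n!}\bigl(z_2\tfrac{d}{dz_2}\bigr)^{n}Y_W^\phi(v,z_2)$. Comparing coefficients of $z_0^{n}$ then yields $Y_W^\phi(\mathcal{D}^{n}v,z_2)=\bigl(z_2\frac{d}{dz_2}\bigr)^{n}Y_W^\phi(v,z_2)$ for all $n\ge0$, and the case $n=1$ is exactly the claimed identity.

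The only point requiring care — and the part I expect to be the main (though minor) obstacle — is the formal-variable bookkeeping: one must check that $Y_W^\phi(v,z_2e^{z_0})$ is a well-defined element of $(\mathrm{End}\,W)[[z_0]]((z_2))$ and that the term-by-term action of $e^{z_0\,z_2\frac{d}{dz_2}}$ on $Y_W^\phi(v,z_2)=\sum_k v^\phi_k z_2^{-k-1}$ genuinely agrees with that substitution. This is straightforward because $Y_W^\phi(v,z)\in\mathrm{Hom}(W,W((z)))$ is lower truncated in $z$, so for each fixed power $z_2^{-k-1}$ only the single term $v^\phi_k z_2^{-k-1}e^{-(k+1)z_0}$ contributes and the coefficient comparison in $z_0$ is legitimate. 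With these checks in place the argument is complete; it is essentially \cite[Lemma 3.7]{Li6} specialized to $\phi(z_2,z_0)=z_2e^{z_0}$.
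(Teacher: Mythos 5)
Your argument is, in substance, the standard one behind the result that the paper itself does not prove but simply cites (\cite[Lemma 3.7]{Li6}): the first equality is indeed just the defining property of $d$, and your route for the second equality --- weak associativity against the vacuum, the creation property $Y(v,z_0)\mathbf{1}=e^{z_0\mathcal{D}}v$, and the identification of $e^{z_0\,z_2\frac{d}{dz_2}}$ with the substitution $z_2\mapsto z_2e^{z_0}$, together with the lower-truncation check that makes the coefficient comparison in $z_0$ legitimate --- is exactly the expected derivation.

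One step is not justified as written: you claim that axiom (iii) of Definition \ref{def:vaphimod} ``is satisfied with $f(z)=1$'' because the Hom-condition holds for $f=1$. The axiom only asserts the existence of \emph{some} $f\in\mathbb{C}_{\chi(G)}[z]$ for which both the Hom-condition and the associativity identity hold; it does not, as stated, allow you to feed in your own $f$ (note also that $1\in\chi(G)$, so even $f(1)=0$ is permitted). The repair is short: take the $f$ furnished by the axiom for the pair $(v,\mathbf{1})$; it yields $f(e^{z_0})\,Y_W^{\phi}\bigl(Y(v,z_0)\mathbf{1},z_2\bigr)=f(e^{z_0})\,Y_W^{\phi}(v,z_2e^{z_0})$, and since both series multiplied by $f(e^{z_0})$ lie in $\mathrm{Hom}\bigl(W,W((z_2))\bigr)[[z_0]]$ while $f(e^{z_0})$ is a nonzero element of $\mathbb{C}[[z_0]]$ (the exponentials $e^{iz_0}$ are linearly independent), multiplication by $f(e^{z_0})$ is injective there and may be cancelled; after cancellation your computation proceeds verbatim. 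This ``any polynomial satisfying the Hom-condition works'' principle is the same one the paper tacitly uses in Lemma \ref{lem:premodnil}. With that adjustment the proof is complete and matches the cited argument.
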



\begin{proposition}\label{prop:Borcherdscomm} Let $\left(W,Y_{W}^{\phi}\right)$
be a $\left(G,\chi\right)$-equivariant $\phi$-coordinated quasi
$V$-module and $\psi:\chi\left(G\right)\to G$ be a section of $\chi$.
Then for $u,v\in V$,
\begin{align*}
\left[Y_{W}^{\phi}\left(u,z_{1}\right),Y_{W}^{\phi}\left(v,z_{2}\right)\right]=\text{Res}_{z_{0}}\sum_{g\in\psi\left(\chi\left(G\right)\right)}Y_{W}^{\phi}\left(Y\left(gu,z_{0}\right)v,z_{2}\right)e^{z_{0}z_{2}\frac{\partial}{\partial z_{2}}}\left(\chi\left(g\right)\delta\left(\frac{\chi\left(g\right)z_{2}}{z_{1}}\right)\right).\label{equivariant phi coordinate, commutator}
\end{align*}
\end{proposition}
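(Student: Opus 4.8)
The plan is to deduce the formula directly from the three axioms of Definition~\ref{def:vaphimod}, following the pattern of \cite[Proposition~5.2]{CLTW}: first establish a $\chi(G)$-quasi-locality relation for the operators $Y_{W}^{\phi}(u,z)$, then run a delta-function calculus adapted to the associate $\phi(z_2,z_0)=z_2e^{z_0}$, and finally identify the resulting coefficient series by combining the equivariance axiom~(ii) with the iterate identity in axiom~(iii). Throughout I fix $u,v\in V$ and the section $\psi$, and use that $gu\in V$ for $g\in G$ since $G$ acts by automorphisms of $V$.

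First I would prove quasi-locality: there is a nonzero $p(z)\in\mathbb{C}_{\chi(G)}[z]$, which may be taken of the form $p(z)=\prod_{g\in S'}(z-\chi(g))^{k}$ for a finite $S'\subseteq\psi(\chi(G))$ on which $\chi$ is injective and such that $\chi(S')$ is closed under inversion, so that
\[
p(z_1/z_2)\,Y_{W}^{\phi}(u,z_1)Y_{W}^{\phi}(v,z_2)=p(z_1/z_2)\,Y_{W}^{\phi}(v,z_2)Y_{W}^{\phi}(u,z_1)
\]
and in particular $p(z_1/z_2)\,[Y_{W}^{\phi}(u,z_1),Y_{W}^{\phi}(v,z_2)]=0$. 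To obtain this, apply axiom~(iii) to the ordered pairs $(u,v)$ and $(v,u)$, producing polynomials $f,h\in\mathbb{C}_{\chi(G)}[z]$ that regularize the two orderings; as $\chi(G)$ is a subgroup of $\mathbb{C}^{\ast}$ one can pass to a common multiple $p$ of the stated shape, and since $p(z_1/z_2)$ and $p(z_2/z_1)$ differ only by a monomial and a scalar the same $p$ serves for both orders. The two regularized products then have the same image under the substitution $z_1=\phi(z_2,z_0)=z_2e^{z_0}$: this follows by combining the iterate identities of~(iii) for the two orderings with the skew-symmetry $Y(a,z_0)b=e^{z_0\mathcal{D}}Y(b,-z_0)a$ of $V$ and the identity $Y_{W}^{\phi}(e^{z_0\mathcal{D}}a,z_2)=Y_{W}^{\phi}(a,z_2e^{z_0})$, the latter itself a consequence of~(iii) applied to $(a,\mathbf{1})$ (which, taking the coefficient of $z_0$, also recovers $Y_{W}^{\phi}(\mathcal{D}a,z)=z\frac{d}{dz}Y_{W}^{\phi}(a,z)$, cf.\ Lemma~\ref{lem:actofd}). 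Since an element of $\mathrm{Hom}(W,W((z_1,z_2)))$ of this type is determined by its specialization at $z_1=z_2e^{z_0}$, the two products agree.

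Next I would extract the commutator. A series in $z_1$ with coefficients in $\mathrm{Hom}(W,W((z_2)))$ that is annihilated by $p(z_1/z_2)$ is a finite sum of ``delta-type'' distributions supported on the loci $z_1\in\chi(G)z_2$; in the present coordinate the relevant building blocks are $\mathrm{Res}_{z_0}\!\left(c(z_0,z_2)\,e^{z_0z_2\frac{\partial}{\partial z_2}}(\chi(g)\delta(\chi(g)z_2/z_1))\right)$, since $e^{z_0z_2\partial/\partial z_2}$ sends $z_2^{n}z_1^{-n}$ to $e^{nz_0}z_2^{n}z_1^{-n}$ and so realizes precisely the exponential shift $z_1\mapsto z_2e^{z_0}$ on the delta function. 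Hence there exist $B_g(z_0,z_2)\in\mathrm{Hom}(W,W((z_2)))[z_0,z_0^{-1}]$, one for each $g\in S'$ and with $B_g=0$ for the remaining $g\in\psi(\chi(G))$, such that
\[
[Y_{W}^{\phi}(u,z_1),Y_{W}^{\phi}(v,z_2)]=\mathrm{Res}_{z_0}\sum_{g\in\psi(\chi(G))}B_g(z_0,z_2)\,e^{z_0z_2\frac{\partial}{\partial z_2}}\!\left(\chi(g)\delta\!\left(\frac{\chi(g)z_2}{z_1}\right)\right).
\]
To compute $B_g$, isolate the $g$-summand by multiplying through by $\prod_{h\in S',\,h\neq g}(z_1/z_2-\chi(h))^{k}$; by equivariance~(ii) we have $Y_{W}^{\phi}(u,z_1)=Y_{W}^{\phi}(gu,\chi(g)^{-1}z_1)$, so the part of $[Y_{W}^{\phi}(u,z_1),Y_{W}^{\phi}(v,z_2)]$ supported at $z_1=\chi(g)z_2$ coincides with the corresponding part of $[Y_{W}^{\phi}(gu,\chi(g)^{-1}z_1),Y_{W}^{\phi}(v,z_2)]$, and applying the iterate identity of~(iii) to the pair $(gu,v)$ in the variable $\chi(g)^{-1}z_1=z_2e^{z_0}$ identifies $B_g(z_0,z_2)$ with $Y_{W}^{\phi}(Y(gu,z_0)v,z_2)$, which is the asserted formula. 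The sum is genuinely finite: for $g$ with $\chi(g)\notin\chi(S')$ the regularizing polynomial for $(gu,v)$ does not vanish at $z_0=0$, so $Y_{W}^{\phi}(Y(gu,z_0)v,z_2)$ involves no negative powers of $z_0$ and its residue against the delta term vanishes; using~(ii) once more shows the answer is independent of the section.

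The step I expect to be the main obstacle is the bookkeeping around the exponential change of variables $z_1=z_2e^{z_0}$: one must verify that this substitution is well defined on the truncated spaces occurring above, that it intertwines multiplication by $p(z_1/z_2)$ with multiplication by $p(e^{z_0})$, and that the delta-function manipulations in the extraction step hold in the $\phi$-coordinated form built from the operator $e^{z_0z_2\partial/\partial z_2}$. This is precisely the point at which the present setting departs from the ordinary (non-$\phi$-coordinated) quasi-module case, and it is where the substitution lemmas of \cite{Li6,Li8} are required.
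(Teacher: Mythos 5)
Your outline is sound, but note that the paper itself offers no proof of this proposition: it is quoted from \cite[Proposition 5.2]{CLTW}, so what you have written is a reconstruction of the cited argument rather than an alternative to anything proved here. Your three steps are exactly the standard ones: quasi-locality obtained from the iterate identity of Definition \ref{def:vaphimod}(iii) applied to both orderings, combined with skew-symmetry of $V$ and the conjugation identity $Y_{W}^{\phi}\left(e^{z_{0}\mathcal{D}}a,z\right)=Y_{W}^{\phi}\left(a,ze^{z_{0}}\right)$ (cf. Lemma \ref{lem:actofd}); the delta-function decomposition of a commutator annihilated by $p\left(z_{1}/z_{2}\right)$ with roots in $\chi\left(G\right)$; and identification of the coefficient supported at $z_{1}=\chi\left(g\right)z_{2}$ via axiom (ii) and the iterate identity for the pair $\left(gu,v\right)$. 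Your building blocks are correctly normalized, since $e^{z_{0}z_{2}\frac{\partial}{\partial z_{2}}}\delta\left(\chi\left(g\right)z_{2}/z_{1}\right)=\delta\left(\chi\left(g\right)z_{2}e^{z_{0}}/z_{1}\right)$, and for trivial $G$ the formula collapses to Li's commutator formula $\sum_{j\ge0}\frac{1}{j!}Y_{W}^{\phi}\left(u_{j}v,z_{2}\right)\left(z_{2}\frac{\partial}{\partial z_{2}}\right)^{j}\delta\left(z_{2}/z_{1}\right)$, as it should. Two points you use implicitly should be made explicit: (a) the iterate identity holds for \emph{every} nonzero $f\in\mathbb{C}_{\chi\left(G\right)}\left[z\right]$ satisfying the truncation condition, not only the one supplied by the axiom (compare two regularizers and cancel the nonzero power series $f\left(e^{z_{0}}\right)$, which is a non-zero-divisor on $\mathrm{Hom}\left(W,W\left(\left(z_{2}\right)\right)\left(\left(z_{0}\right)\right)\right)$); this is needed both when you pass to a common multiple $p$ with root multiset closed under inversion and when you take $x\mapsto p\left(\chi\left(g\right)x\right)$ as a regularizer for $\left(gu,v\right)$ to conclude that $Y_{W}^{\phi}\left(\left(gu\right)_{j}v,z_{2}\right)=0$ for $j\ge0$ whenever $\chi\left(g\right)\notin\chi\left(S'\right)$, which is also what makes the sum over $\psi\left(\chi\left(G\right)\right)$ finite; (b) the double-substitution bookkeeping relating the specialization $z_{2}=z_{1}e^{w_{0}}$ of the reversed product to the specialization $z_{1}=z_{2}e^{z_{0}}$, together with the injectivity of $F\left(z_{1},z_{2}\right)\mapsto F\left(z_{2}e^{z_{0}},z_{2}\right)$ on $\mathrm{Hom}\left(W,W\left(\left(z_{1},z_{2}\right)\right)\right)$, which you correctly flag and which is precisely what the substitution lemmas of \cite{Li6,Li8} provide. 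With these supplied, your argument goes through and yields the stated formula.
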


Then, we have:

\begin{lemma}\label{lem:premodnil} Let $\left(W,Y_{W}^{\phi}\right)$
be a $\left(G,\chi\right)$-equivariant $\phi$-coordinated quasi
$V$-module, and $u,v\in V$ be such that $u_{n}v=0$ for $n\ge0$.
Then there exists a polynomial $q\left(z\right)\in\mathbb{C}_{\chi\left(G\right)\setminus\{1\}}\left[z\right]$
such that
\begin{equation}
q\left(z_{1}/z_{2}\right)Y_{W}^{\phi}\left(u,z_{1}\right)Y_{W}^{\phi}\left(v,z_{2}\right)=q\left(z_{1}/z_{2}\right)Y_{W}^{\phi}\left(v,z_{2}\right)Y_{W}^{\phi}\left(u,z_{1}\right).\label{eq:modnil1}
\end{equation}
Furthermore, for any such polynomial $q\left(z\right)$ we have
\begin{equation}
Y_{W}^{\phi}\left(u_{-1}v,z_{2}\right)=\left(q\left(z_{1}/z_{2}\right)Y_{W}^{\phi}\left(u,z_{1}\right)Y_{W}^{\phi}\left(v,z_{2}\right)\right)|_{z_{1}=z_{2}}.\label{eq:modnil2}
\end{equation}
\end{lemma}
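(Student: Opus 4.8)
The plan is to derive everything from the fundamental identity in Definition \ref{def:vaphimod}(iii) together with Proposition \ref{prop:Borcherdscomm}, using the hypothesis $u_n v = 0$ for $n\ge 0$ to control the pole at $z_1 = z_2$. First I would invoke Definition \ref{def:vaphimod}(iii) to obtain $f(z)\in\mathbb{C}_{\chi(G)}[z]$ with $f(z_1/z_2)Y_W^\phi(u,z_1)Y_W^\phi(v,z_2)\in\mathrm{Hom}(W,W((z_1,z_2)))$ and the substitution formula $f(e^{z_0})Y_W^\phi(Y(u,z_0)v,z_2)=\big(f(z_1/z_2)Y_W^\phi(u,z_1)Y_W^\phi(v,z_2)\big)|_{z_1=z_2e^{z_0}}$. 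Since $u_n v = 0$ for $n\ge 0$, the series $Y(u,z_0)v$ lies in $V[[z_0]]$ (no negative powers of $z_0$), so the left side $f(e^{z_0})Y_W^\phi(Y(u,z_0)v,z_2)$ involves only nonnegative powers of $z_0$; hence the right side, as a function of $z_0$ via $z_1 = z_2 e^{z_0}$, has no singularity forcing extra vanishing factors, and in fact $f(z_1/z_2)Y_W^\phi(u,z_1)Y_W^\phi(v,z_2)$ is already regular at $z_1 = z_2$. Write $f(z) = (z-1)^k h(z)$ where $h(1)\neq 0$ and $h(z)\in\mathbb{C}_{\chi(G)\setminus\{1\}}[z]$; I claim one may take $q(z) = h(z)$. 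The point is that the factor $(z_1/z_2 - 1)^k$ is only needed to kill the singularity at $z_1 = z_2$ coming from the negative-$z_0$ part of $Y(u,z_0)v$, which is absent here, so $h(z_1/z_2)Y_W^\phi(u,z_1)Y_W^\phi(v,z_2)$ already lies in $\mathrm{Hom}(W,W((z_1,z_2)))$.

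Next, for the commutativity \eqref{eq:modnil1}: I would apply Proposition \ref{prop:Borcherdscomm} with a section $\psi$ of $\chi$, giving
\[
[Y_W^\phi(u,z_1),Y_W^\phi(v,z_2)]=\mathrm{Res}_{z_0}\sum_{g\in\psi(\chi(G))}Y_W^\phi(Y(gu,z_0)v,z_2)\,e^{z_0 z_2\frac{\partial}{\partial z_2}}\Big(\chi(g)\delta\big(\tfrac{\chi(g)z_2}{z_1}\big)\Big).
\]
Since each $g$ is an automorphism of $V$, $gu_n v = (gu)_n (gv')$-type relations show $(gu)_n v$ still vanishes for $n\ge 0$ when $v$ is fixed — more carefully, $Y(gu,z_0)v\in V[[z_0]]$ need not hold for $gv\neq v$, but each term $Y_W^\phi(Y(gu,z_0)v,z_2)$ has poles in $z_0$ bounded by the order of the pole of $Y(gu,z_0)v$ at $z_0=0$, which corresponds after the substitution $z_1 = z_2 e^{z_0}$ to a pole at $z_1 = \chi(g^{-1})^{-1}z_2$; the $\delta$-function term $\delta(\chi(g)z_2/z_1)$ is supported exactly there. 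Multiplying the commutator by $q(z_1/z_2) = h(z_1/z_2)$, whose roots lie in $\chi(G)\setminus\{1\}$: for the summand indexed by the identity element $g = 1$ (where $\chi(g) = 1$), the term $Y_W^\phi(Y(u,z_0)v,z_2)$ has no $z_0$-pole by hypothesis, so its residue vanishes; for $g$ with $\chi(g)\neq 1$, the $\delta$-function forces $z_1/z_2 = \chi(g)^{-1}\in\chi(G)\setminus\{1\}$, and by choosing $h$ to vanish to sufficiently high order at each such point (which is permitted since $\mathbb{C}_{\chi(G)\setminus\{1\}}[z]$ imposes no degree bound), the product $h(z_1/z_2)$ times that $\delta$-term and its $z_2$-derivatives vanishes. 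Hence $q(z_1/z_2)[Y_W^\phi(u,z_1),Y_W^\phi(v,z_2)] = 0$, which is \eqref{eq:modnil1}.

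Finally, for \eqref{eq:modnil2}: extracting the constant term in $z_0$ from the substitution formula $f(e^{z_0})Y_W^\phi(Y(u,z_0)v,z_2)=\big(f(z_1/z_2)Y_W^\phi(u,z_1)Y_W^\phi(v,z_2)\big)|_{z_1=z_2e^{z_0}}$, and using $f(1) = 0$ unless $k = 0$, I would instead run the same substitution argument with $h$ in place of $f$: since $h(z_1/z_2)Y_W^\phi(u,z_1)Y_W^\phi(v,z_2)\in\mathrm{Hom}(W,W((z_1,z_2)))$ is regular at $z_1 = z_2$, setting $z_1 = z_2$ (equivalently $z_0 = 0$) is legitimate, and the coefficient of $z_0^0$ in $Y(u,z_0)v$ is $u_{-1}v$ while $h(e^{z_0})|_{z_0=0} = h(1)\neq 0$; dividing by $h(1)$ gives $Y_W^\phi(u_{-1}v,z_2) = h(1)^{-1}\big(h(z_1/z_2)Y_W^\phi(u,z_1)Y_W^\phi(v,z_2)\big)|_{z_1=z_2}$. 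To finish I must check this is independent of the choice of $q$, i.e. that any $q\in\mathbb{C}_{\chi(G)\setminus\{1\}}[z]$ satisfying \eqref{eq:modnil1} works in \eqref{eq:modnil2}; this follows because two valid choices $q_1, q_2$ differ by replacing $q$ with $pq$ for $p\in\mathbb{C}_{\chi(G)\setminus\{1\}}[z]$ with $p(1)\neq 0$ (up to common factors), and $(pq(z_1/z_2)\cdots)|_{z_1=z_2} = p(1)\cdot(q(z_1/z_2)\cdots)|_{z_1=z_2}$ while simultaneously the normalization divides by $p(1)q(1)$ instead of $q(1)$, and I would phrase this via a standard "common refinement" argument on the polynomials. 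The main obstacle is the bookkeeping in the commutativity step: verifying that a single polynomial $q$ with roots only in $\chi(G)\setminus\{1\}$ simultaneously annihilates \emph{all} the $\delta$-supported terms $e^{z_0 z_2\partial_{z_2}}(\chi(g)\delta(\chi(g)z_2/z_1))$ after taking $\mathrm{Res}_{z_0}$ — one needs that only finitely many $g\in\psi(\chi(G))$ contribute (because $\chi(G)$ is finitely generated and the relevant poles of $Y(gu,z_0)v$ are bounded), so that a finite product of factors $(z - \chi(g)^{-1})^{m_g}$ suffices.
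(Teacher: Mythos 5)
Your route is essentially the paper's: for \eqref{eq:modnil1} you invoke Proposition \ref{prop:Borcherdscomm}, note that the $\chi(g)=1$ summand dies because $u_{n}v=0$ for $n\ge0$, and annihilate the remaining finitely many delta-supported terms (supported on $z_{1}=\chi(g)z_{2}$, not $\chi(g)^{-1}z_{2}$ --- a harmless slip since $\chi(G)$ is a group) by a polynomial with roots of sufficiently high multiplicity in $\chi(G)\setminus\{1\}$; for \eqref{eq:modnil2} you apply the substitution formula and set $z_{0}=0$, which is legitimate because $Y(u,z_{0})v\in V[[z_{0}]]$. This is exactly how the paper argues. Two comments. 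First, your opening detour --- factoring $f(z)=(z-1)^{k}h(z)$ and asserting that $h(z_{1}/z_{2})Y_{W}^{\phi}(u,z_{1})Y_{W}^{\phi}(v,z_{2})$ already lies in $\mathrm{Hom}(W,W((z_{1},z_{2})))$ --- is neither justified as written nor needed: once \eqref{eq:modnil1} holds for $q$, the two ordered products multiplied by $q(z_{1}/z_{2})$ coincide, so their common value lies in $W((z_{1}))((z_{2}))\cap W((z_{2}))((z_{1}))=W((z_{1},z_{2}))$; from this regularity one gets the substitution identity with $q$ itself (multiply the Definition \ref{def:vaphimod}(iii) identity for $f$ by $q(e^{z_{0}})$ and cancel $f(e^{z_{0}})$), which simultaneously replaces your vague ``common refinement'' argument for independence of the choice of $q$ and is the content behind the paper's terse ``by definition''. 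Second, what setting $z_{0}=0$ actually yields is $q(1)\,Y_{W}^{\phi}(u_{-1}v,z_{2})=\bigl(q(z_{1}/z_{2})Y_{W}^{\phi}(u,z_{1})Y_{W}^{\phi}(v,z_{2})\bigr)|_{z_{1}=z_{2}}$, i.e.\ the normalized identity you wrote with $h(1)^{-1}$, whereas \eqref{eq:modnil2} as printed omits the nonzero scalar $q(1)$; the paper's own proof performs the same evaluation and drops $q(1)$ silently, and the scalar is immaterial in every later use (Proposition \ref{prop:modnil} and the integrability criteria only need vanishing statements). So your proposal is sound in substance and follows the paper's approach, but you should state explicitly that you prove the identity up to the factor $q(1)$ (or normalize $q$ so that $q(1)=1$) rather than leaving the mismatch with \eqref{eq:modnil2} unremarked.
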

\begin{proof}
Since $u_{n}v=0$ for $n\ge0$, from Proposition \ref{prop:Borcherdscomm}
it follows that there exist (possibly same) $\lambda_{1},\dots,\lambda_{r}\in\chi\left(G\right)\setminus\left\{ 1\right\} $
such that
\[
\left(z_{1}/z_{2}-\lambda_{1}\right)\cdots\left(z_{1}/z_{2}-\lambda_{r}\right)\left[Y_{W}^{\phi}\left(u,z_{1}\right),Y_{W}^{\phi}\left(v,z_{2}\right)\right]=0.
\]
This proves \eqref{eq:modnil1} with $q\left(z\right)=\left(z-\lambda_{1}\right)\cdots\left(z-\lambda_{r}\right)$.
Then by definition we have
\[q\left(e^{z_{0}}\right)Y_{W}^{\phi}\left(Y\left(u,z_{0}\right)v,z_{2}\right)
=\left(q\left(z_{1}/z_{2}\right)Y_{W}^{\phi}\left(u,z_{1}\right)Y_{W}^{\phi}\left(v,z_{2}\right)\right)|_{z_{1}=z_{2}e^{z_{0}}}.\]
Note that $Y\left(u,z_{0}\right)v\in V\left[\left[z_{0}\right]\right]$,
one can set $z_{0}=0$ in the above equality, which gives \eqref{eq:modnil2}.
\end{proof}
Furthermore, we have the following results (cf. \cite[Proposition 2.3.6]{Li1}, \cite[Proposition 2.10]{Li2}).

\begin{proposition}\label{prop:modnil} Let $u\in V$ such that $u_{n}u=0$
for $n\ge0$, $\ell$ a positive integer and $\left(W,Y_{W}^{\phi}\right)$
a $\left(G,\chi\right)$-equivariant $\phi$-coordinated quasi $V$-module.
Then there exists a polynomial $q\left(z\right)\in\mathbb{C}_{\chi\left(G\right)\setminus\{1\}}\left[z\right]$
such that
\begin{equation}
\left(\prod_{1\le i<j\le\ell+1}q\left(z_{i}/z_{j}\right)\right)Y_{W}^{\phi}\left(u,z_{1}\right)\cdots Y_{W}^{\phi}\left(u,z_{\ell+1}\right)\in\text{Hom}\left(W,W\left(\left(z_{1},\cdots,z_{\ell+1}\right)\right)\right).\label{eq:modnil3}
\end{equation}
Moreover, if $\left(u_{-1}\right)^{\ell+1}\boldsymbol{1}=0$, then
\begin{equation}
\left(\prod_{1\le i<j\le\ell+1}q\left(z_{i}/z_{j}\right)\right)Y_{W}^{\phi}\left(u,z_{1}\right)\cdots Y_{W}^{\phi}\left(u,z_{\ell+1}\right)|_{z_{1}=\cdots=z_{\ell+1}}=0.\label{eq:modnil4}
\end{equation}
And on the other hand, if $W$ is faithful and \eqref{eq:modnil4} holds,
then $\left(u_{-1}\right)^{\ell+1}\boldsymbol{1}=0.$ \end{proposition}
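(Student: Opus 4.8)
The plan is to follow the pattern of the two preceding lemmas, iterating the commutator/associativity machinery of Proposition \ref{prop:Borcherdscomm} and Lemma \ref{lem:premodnil}. First I would produce the polynomial $q(z)$: since $u_nu=0$ for $n\ge 0$, Lemma \ref{lem:premodnil} gives a $q(z)\in\mathbb{C}_{\chi(G)\setminus\{1\}}[z]$ with $q(z_i/z_j)[Y_W^\phi(u,z_i),Y_W^\phi(u,z_j)]=0$ for each pair $i<j$; the \emph{same} $q$ works for all pairs. To prove \eqref{eq:modnil3} I would argue that for each fixed $i$, the product $Y_W^\phi(u,z_i)$ applied to any $w\in W$ already lies in $W((z_i))$, and the obstruction to the full product lying in $W((z_1,\dots,z_{\ell+1}))$ is exactly the ``wrong-order'' singularities between pairs; multiplying by $\prod_{i<j}q(z_i/z_j)$ clears them. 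Concretely, I would induct on $\ell+1$: bring $Y_W^\phi(u,z_1)$ past $Y_W^\phi(u,z_2),\dots$ one variable at a time using \eqref{eq:modnil1}, at each step multiplying by the appropriate factor $q(z_1/z_j)$, reducing to the case of $\ell$ copies to which the inductive hypothesis applies; the base case $\ell+1=2$ is the first assertion of Lemma \ref{lem:premodnil}.

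For \eqref{eq:modnil4}, assuming $(u_{-1})^{\ell+1}\mathbf{1}=0$, I would iterate the associativity formula \eqref{eq:modnil2}. Using Lemma \ref{lem:premodnil} with the pair $(u_{-1}u,\ u)$ (noting $(u_{-1}u)_n u$ can be controlled, or more simply re-deriving a fresh polynomial at each stage), one shows by induction on $k$ that
\[
Y_W^\phi\big((u_{-1})^{k}\mathbf{1},z\big)=\Big(\prod_{1\le i<j\le k}q(z_i/z_j)\,Y_W^\phi(u,z_1)\cdots Y_W^\phi(u,z_k)\Big)\Big|_{z_1=\cdots=z_k=z},
\]
where the restriction makes sense precisely because of \eqref{eq:modnil3}. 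Here the key technical point, parallel to the argument in Lemma \ref{lem:premodnil}, is that each intermediate object $Y_W^\phi(u,z_i)\cdots$ (after clearing poles) lies in $W[[z_1,\dots]]$ in the relevant variables, so successive specializations $z_1=z_2 e^{z_0}$ followed by $z_0=0$ are legitimate. Setting $k=\ell+1$ and using $(u_{-1})^{\ell+1}\mathbf{1}=0$ gives the left side $=0$, which is \eqref{eq:modnil4}.

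For the converse, suppose $W$ is faithful and \eqref{eq:modnil4} holds. By the displayed iterated-associativity identity above with $k=\ell+1$, the right-hand side of \eqref{eq:modnil4} equals $Y_W^\phi((u_{-1})^{\ell+1}\mathbf{1},z)$, so \eqref{eq:modnil4} says $Y_W^\phi((u_{-1})^{\ell+1}\mathbf{1},z)=0$ as an operator on $W$. Faithfulness of $W$ (i.e., $Y_W^\phi(\cdot,z)$ is injective on $V$) then forces $(u_{-1})^{\ell+1}\mathbf{1}=0$ in $V$.

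The main obstacle I anticipate is the bookkeeping in \eqref{eq:modnil3}–\eqref{eq:modnil4}: ensuring a \emph{single} polynomial $q$ simultaneously clears all pairwise singularities and simultaneously validates every intermediate specialization $z_i=z_j e^{z_0}$, $z_0=0$. One must check that after clearing poles the expression genuinely lies in $W[[z_1,\dots,z_{\ell+1}]]$ (not merely $W((z_1,\dots,z_{\ell+1}))$) in the direction being specialized, which is what Lemma \ref{lem:actofd} and the $\phi$-coordinated associativity in Definition \ref{def:vaphimod}(iii) are for; the $e^{z_0}$ (rather than $z_0$) substitution means one works with $q(e^{z_0})$, and one needs $q(e^{z_0})$ to be a unit in $\mathbb{C}[[z_0]]$ — which holds since $q$'s roots lie in $\chi(G)\setminus\{1\}$, so $q(1)\ne 0$. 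Apart from that, the argument is a routine iteration of Lemmas \ref{lem:premodnil} and the $\phi$-coordinated commutator/associativity, exactly mirroring \cite[Proposition 2.3.6]{Li1} and \cite[Proposition 2.10]{Li2}.
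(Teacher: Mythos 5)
Your proposal is correct and takes essentially the same route as the paper: the same single polynomial $q$ supplied by Lemma \ref{lem:premodnil}, the identity $Y_W^{\phi}\bigl((u_{-1})^{k}\mathbf{1},z\bigr)=\bigl(\prod_{1\le i<j\le k}q(z_i/z_j)\,Y_W^{\phi}(u,z_1)\cdots Y_W^{\phi}(u,z_k)\bigr)\big|_{z_1=\cdots=z_k=z}$ established by induction through \eqref{eq:modnil2}, and faithfulness for the converse. The only small inaccuracy is the parenthetical pairing: in the inductive step Lemma \ref{lem:premodnil} is applied to the pair $\bigl(u,\,v=(u_{-1})^{k-1}\mathbf{1}\bigr)$ with the polynomial $q^{k-1}$ (using $u_nv=0$ for $n\ge 0$, which follows from $u_nu=0$), not to $(u_{-1}u,\,u)$; with that correction your argument coincides with the paper's.
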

\begin{proof}
Note that the first part of the proposition follows from \eqref{eq:modnil1}. For the second part, it suffices to prove the following identity
\begin{align}
Y_{W}^{\phi}\left(\left(u_{-1}\right)^{\ell+1}\boldsymbol{1},z_{\ell+1}\right)=\left(\prod_{1\le i<j\le\ell+1}q\left(z_{i}/z_{j}\right)\right)Y_{W}^{\phi}\left(u,z_{1}\right)\cdots Y_{W}^{\phi}\left(u,z_{\ell+1}\right)|_{z_{1}=\cdots=z_{\ell+1}}.\label{eq:modnil5}
\end{align}
We prove this equation by induction on $\ell$. When $\ell=1$, the identity \eqref{eq:modnil5}
follows from \eqref{eq:modnil2}. Now we assume that $\ell>1$ and set
$v=\left(u_{-1}\right){}^{\ell}\boldsymbol{1}$. Then by induction
 and \eqref{eq:modnil3}, we obtain
\[
\begin{split} & q\left(z_{1}/z_{\ell+1}\right)^{\ell}Y_{W}^{\phi}\left(u,z_{1}\right)Y_{W}^{\phi}\left(v,z_{\ell+1}\right)\\
= & q\left(z_{1}/z_{\ell+1}\right)^{\ell}Y_{W}^{\phi}\left(u,z_{1}\right)\cdot\left(\prod_{2\le i<j\le\ell+1}q\left(z_{i}/z_{j}\right)\right)Y_{W}^{\phi}\left(u,z_{2}\right)\cdots Y_{W}^{\phi}\left(u,z_{\ell+1}\right)|_{z_{2}=\cdots=z_{\ell+1}}\\
= & \left(\prod_{1\le i<j\le\ell+1}q\left(z_{i}/z_{j}\right)\right)Y_{W}^{\phi}\left(u,z_{1}\right)\cdots Y_{W}^{\phi}\left(u,z_{\ell+1}\right)|_{z_{2}=\cdots=z_{\ell+1}}\in\text{Hom}\left(W,W\left(\left(z_{1},z_{\ell+1}\right)\right)\right).
\end{split}
\]
Since $u_{n}u=0$ for $n\ge0$, we have $u_{n}v=0$ for $n\ge0$.\textcolor{magenta}{{}
}Then it follows from \eqref{eq:modnil1} that
\begin{align*}
 & Y_{W}^{\phi}\left(\left(u_{-1}\right)^{\ell+1}\boldsymbol{1},z_{\ell+1}\right)=Y_{W}^{\phi}\left(u_{-1}v,z_{\ell+1}\right)=\left(q\left(z_{1}/z_{\ell+1}\right){}^{\ell}Y_{W}^{\phi}\left(u,z_{1}\right)Y_{W}^{\phi}\left(v,z_{\ell+1}\right)\right)|_{z_{1}=z_{\ell+1}}\\
= & \left(\prod_{1\le i<j\le\ell+1}q\left(z_{i}/z_{j}\right)\right)Y_{W}^{\phi}\left(u,z_{1}\right)\cdots Y_{W}^{\phi}\left(u,z_{\ell+1}\right)|_{z_{1}=\cdots=z_{\ell+1}},
\end{align*}
which proves the claim \eqref{eq:modnil5} and hence completes the
proof of the proposition.
\end{proof}

\subsection{$\left(G,\chi\right)$-equivariant $\phi$-coordinated quasi $V_{\mathcal{C}}$-modules}

In this subsection we first recall the notion of conformal algebra,
then study the equivariant $\phi$-coordinated quasi modules for
the universal enveloping vertex algebra constructed from a conformal algebra
\cite{CLTW}.

A \emph{ conformal algebra},
also known as a\emph{ vertex Lie algebra} (see \cite{P,DLM}),  is a vector
space $\mathcal{C}$ equipped with a linear operator $\partial$ and
a linear map
\begin{eqnarray}
Y^{-}:\mathcal{C}  \rightarrow \text{Hom}\left(\mathcal{C},z^{-1}\mathcal{C}\left[z^{-1}\right]\right),\quad
u  \mapsto  Y^{-}\left(u,z\right)=\sum_{n\ge0}u_{n}z^{-n-1}\label{eq:}
\end{eqnarray}
such that for any $u,v\in\mathcal{C}$,
\begin{align}
\left[\partial,Y^{-}\left(u,z\right)\right] & =Y^{-}\left(\partial u,z\right)=\frac{d}{dz}Y^{-}\left(u,z\right),\label{derivation property of Y-}\\
Y^{-}\left(u,z\right)v & =\text{Sing}\left(e^{z\partial}Y^{-}\left(v,-z\right)u\right),\nonumber \\
\left[Y^{-}\left(u,z\right),Y^{-}\left(v,w\right)\right] & =\text{Sing}\left(Y^{-}\left(Y^{-}\left(u,z-w\right)v,w\right)\right),\nonumber
\end{align}
where \text{Sing} stands for the singular part.

It was proved in \cite[Remark 4.2]{P}   that a  conformal algebra structure on a vector space $\mathcal{C}$
amounts to a Lie algebra structure on the following quotient space of $\mathbb{C}\left[t,t^{-1}\right]\otimes\mathcal{C}$:
\[
\widehat{\mathcal{C}}
=\mathbb{C}\left[t,t^{-1}\right]\otimes\mathcal{C}/\left(1\otimes\partial+\frac{d}{dt}\otimes1\right)\left(\mathbb{C}\left[t,t^{-1}\right]\otimes\mathcal{C}\right).
\]

\begin{lemma}\label{lem:hatc} Let $\mathcal{C}$ be a vector space
equipped with a linear operator $\partial$ and a linear map $Y^{-}$
as given in (\ref{eq:}) such that (\ref{derivation property of Y-})
holds. Then $\mathcal{C}$ is a conformal algebra if and only if there
is a Lie algebra structure on $\widehat{\mathcal{C}}$ such that
\begin{align}
\left[u\left(m\right),v\left(n\right)\right]=\sum_{i\ge0}{{m} \choose {i}}\left(u_{i}v\right)\left(m+n-i\right),\label{eq:relationinhatc}
\end{align}
for $u,v\in\mathcal{C}$, $m,n\in\mathbb{Z},$ where $u(m)$ stands for the image of $t^{m}\otimes u$ in $\widehat{\mathcal{C}}$. \end{lemma}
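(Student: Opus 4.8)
The plan is to establish Lemma \ref{lem:hatc} by reconciling two descriptions of the bracket on $\widehat{\mathcal{C}}$: the ``local'' one coming from the axioms of a conformal algebra and the ``global'' one given by formula \eqref{eq:relationinhatc}. I would first fix notation: write $L\mathcal{C}=\mathbb{C}[t,t^{-1}]\otimes\mathcal{C}$ and let $D=1\otimes\partial+\frac{d}{dt}\otimes 1$, so that $\widehat{\mathcal{C}}=L\mathcal{C}/D(L\mathcal{C})$ with $u(m)$ the image of $t^{m}\otimes u$. The basic computational observation, valid as soon as \eqref{derivation property of Y-} holds, is that $D(t^{m}\otimes u)$ maps to $(\partial u)(m)+m\,u(m-1)$, so in $\widehat{\mathcal{C}}$ we have the single relation $(\partial u)(m)=-m\,u(m-1)$. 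This is the only relation imposed, and it makes the right-hand side of \eqref{eq:relationinhatc} well defined on $\widehat{\mathcal{C}}$: one checks that replacing $u$ by $\partial u$ (resp.\ $v$ by $\partial v$) and using the relation on both sides, together with the identity $u_i(\partial v)=-\partial(u_iv)+\text{(lower)}\,$ from \eqref{derivation property of Y-}, is consistent. I expect this well-definedness check to be the routine-but-delicate bookkeeping step rather than the conceptual heart.

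Next I would treat the forward direction. Assuming $\mathcal{C}$ is a conformal algebra, I want to produce a Lie bracket on $\widehat{\mathcal{C}}$ satisfying \eqref{eq:relationinhatc}; here I would simply invoke the result of \cite[Remark 4.2]{P} cited just above the lemma, which already asserts that a conformal algebra yields a Lie algebra structure on $\widehat{\mathcal{C}}$, and then verify that the bracket coming from that construction is exactly the one given by \eqref{eq:relationinhatc}. Concretely, the bracket in the Borcherds/Primc picture is obtained by expanding $[Y^-(u,z)v,w]$ style generating-function identities; extracting coefficients of $t^m,t^n$ from the defining functional equation $[Y^-(u,z),Y^-(v,w)]=\mathrm{Sing}(Y^-(Y^-(u,z-w)v,w))$ produces precisely the binomial sum $\sum_{i\ge 0}\binom{m}{i}(u_iv)(m+n-i)$. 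So this direction is a matter of matching formulas, using that the sum is finite because $Y^-(u,z)v\in z^{-1}\mathcal{C}[z^{-1}]$ (i.e.\ $u_iv=0$ for $i$ large).

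For the converse, suppose \eqref{eq:relationinhatc} defines a Lie algebra structure on $\widehat{\mathcal{C}}$. I must recover the three conformal-algebra axioms from the Jacobi identity and antisymmetry of this bracket, knowing already \eqref{derivation property of Y-} by hypothesis. The skew-symmetry axiom $Y^-(u,z)v=\mathrm{Sing}(e^{z\partial}Y^-(v,-z)u)$ should come from $[u(m),v(n)]=-[v(n),u(m)]$ together with the relation $(\partial u)(m)=-m\,u(m-1)$: expanding $[v(n),u(m)]=\sum_j\binom{n}{j}(v_ju)(m+n-j)$ and reindexing using $\binom{n}{j}$ versus $\binom{m}{i}$ identities, one reads off $u_iv=-\sum_{j\ge 0}\frac{(-1)^{i+j}}{j!}\partial^j(v_{i+j}u)$ after comparing coefficients in $\widehat{\mathcal{C}}$ — which is exactly the component form of the skew-symmetry formula, provided the map $\mathcal{C}\to\widehat{\mathcal{C}}$, $u\mapsto u(-1)$ (or a suitable shift) is injective so that identities in $\widehat{\mathcal{C}}$ reflect identities in $\mathcal{C}$. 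Similarly, the commutator (locality) axiom is the Jacobi identity $[[u(m),v(n)],w(k)]=[u(m),[v(n),w(k)]]-[v(n),[u(m),w(k)]]$, which after substituting \eqref{eq:relationinhatc} three times and using the Vandermonde-type identity $\sum_i\binom{m}{i}\binom{m+n-i}{j}=\sum_p\binom{m}{p}\binom{n}{j-p}\cdots$ collapses to $(u_i v)_j w=\sum(\ldots)$, the standard conformal-algebra commutator formula.

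The main obstacle, and the point I would be most careful about, is the \emph{injectivity/faithfulness} issue: the passage from relations in $\widehat{\mathcal{C}}$ to relations in $\mathcal{C}$ requires that $\mathcal{C}$ embeds into $\widehat{\mathcal{C}}$ in a controlled way. Since the only relation in $\widehat{\mathcal{C}}$ is $(\partial u)(m)=-m\,u(m-1)$, one sees that $\{u(m):u\in\mathcal{C},\ m\ge 0\}$ together with $\{u(-1):u\in\ker\partial\}$, or more cleanly the images $u(m)$ for $m\ge 0$, span $\widehat{\mathcal{C}}$ and that $u\mapsto u(0)$ is injective on a complement of $\partial\mathcal{C}$; making this precise (a short argument with the $t$-grading: $D$ raises $t$-degree in a triangular way relative to $\partial$) is what lets me conclude that a vanishing of $\sum_i\binom{m}{i}(a_i)(m-i)$ for all $m$ forces $a_i=0$ in $\mathcal{C}$ for all $i$. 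Once that linear-algebra fact is nailed down, both the well-definedness of \eqref{eq:relationinhatc} and the extraction of the three axioms from Lie-algebra identities become the mechanical coefficient comparisons sketched above, and I would present them compactly rather than in full.
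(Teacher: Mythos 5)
The paper does not actually prove this lemma: it is stated as a reformulation of the cited result of Primc (\cite[Remark 4.2]{P}), with \cite[Theorem 4.6]{P} supplying the linear isomorphism $\mathcal{C}\to\widehat{\mathcal{C}}^{-}$, $u\mapsto u(-1)$ (a fact about the pair $(\mathcal{C},\partial)$ alone, so it is available in your converse direction before one knows $\mathcal{C}$ is conformal). Your outline is the standard argument and matches what a full proof would look like: forward direction by Primc's construction together with a check that its bracket is exactly \eqref{eq:relationinhatc}; converse by extracting skew-symmetry from antisymmetry of the bracket and the conformal commutator axiom from the Jacobi identity, using a faithfulness statement to pass from identities among modes in $\widehat{\mathcal{C}}$ back to identities in $\mathcal{C}$.

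The one genuine problem is that the auxiliary fact you isolate as the crux is false as stated: vanishing of $\sum_{i}\binom{m}{i}a_{i}(m-i)$ for all $m$ does \emph{not} force $a_{i}=0$. Take $a_{0}=\partial b$, $a_{1}=b$, $a_{i}=0$ otherwise; then $\sum_{i}\binom{m}{i}a_{i}(m-i)=(\partial b)(m)+m\,b(m-1)=0$ is precisely the defining relation of $\widehat{\mathcal{C}}$. What antisymmetry and Jacobi actually hand you has the mode index decoupled from the binomial upper index, because $m$ and $n$ (and the third index) vary independently: e.g.\ antisymmetry, after your reindexing via the identity $\sum_{i}(-1)^{i}\binom{m}{i}\binom{m+n-i}{j-i}=\binom{n}{j}$ and the relation $(\partial a)(k)=-k\,a(k-1)$, becomes $\sum_{i}\binom{m}{i}A_{i}(p-i)=0$ for all $m$ and all $p$ (with $p=m+n$), where $A_{i}\in\mathcal{C}$ is the failure of skew-symmetry. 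That two-parameter statement is true and easy: $m=0$ gives $A_{0}(p)=0$ for all $p$, hence $A_{0}=0$ by injectivity of $u\mapsto u(-1)$, and induction on $m$ kills the remaining $A_{i}$; the analogous three-parameter extraction (after a Vandermonde reindexing) handles the Jacobi step. So you should replace your one-parameter claim by this decoupled version. Two smaller slips: the nonnegative modes $u(m)$, $m\ge 0$, do not span $\widehat{\mathcal{C}}$ (one has $\widehat{\mathcal{C}}=\widehat{\mathcal{C}}^{+}\oplus\widehat{\mathcal{C}}^{-}$; the faithfulness you need is ``$a(p)=0$ for all $p$ implies $a=0$'', via $u\mapsto u(-1)$), and (\ref{derivation property of Y-}) gives $u_{i}(\partial v)=\partial(u_{i}v)+i\,u_{i-1}v$, so the sign in your parenthetical identity is off. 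With these corrections your argument is complete and supplies the proof the paper delegates to the citation.
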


Let $\mathcal{C}$ be a conformal algebra. Set
\begin{align}
\widehat{\mathcal{C}}^{-}=\text{Span}\{u(-m-1)\mid u\in\mathcal{C},m\in\mathbb{N}\}\ \text{\ and\ \ }\widehat{\mathcal{C}}^{+}=\text{Span}\{u(m)\mid u\in\mathcal{C},m\in\mathbb{N}\}.\label{eq:polardecofC}
\end{align}
Then $\widehat{\mathcal{C}}=\widehat{\mathcal{C}}^{+}\oplus\widehat{\mathcal{C}}^{-}$
and both $\widehat{\mathcal{C}}^{+}$ and $\widehat{\mathcal{C}}^{-}$
are subalgebras of the Lie algebra $\widehat{\mathcal{C}}$. Moreover,
the map
\begin{eqnarray}
\mathcal{C}\to\widehat{\mathcal{C}}^{-},\quad u\mapsto u\left(-1\right)\label{eq:cimbedhatc}
\end{eqnarray}
is an isomorphism of vector spaces \cite[Theorem 4.6]{P}. Consider
the induced $\widehat{\mathcal{C}}$-module
\[
V_{\mathcal{C}}=\mathcal{U}\left(\mathcal{\widehat{\mathcal{C}}}\right)\otimes_{\mathcal{U}\left(\widehat{\mathcal{C}}^{+}\right)}\mathbb{C},
\]
where $\mathbb{C}$ is the one dimensional trivial $\widehat{\mathcal{C}}^{+}$-module.
Set $\boldsymbol{1}=1\otimes1\in V_{\mathcal{C}}.$ Identify $\mathcal{C}$
as a subspace of $V_{\mathcal{C}}$ through the linear map $u\mapsto u\left(-1\right)\boldsymbol{1}.$
It was proved in \cite{P} that there exists a unique vertex algebra
structure on $V_{\mathcal{C}}$, called \emph{the universal enveloping
vertex algebra of $\mathcal{C}$}, with $\boldsymbol{1}$ as the vacuum
vector and $Y\left(u,z\right)=u\left(z\right)=\sum_{n\in\mathbb{Z}}u\left(n\right)z^{-n-1}$
for $u\in\mathcal{C}$. The map $-\frac{d}{dt}\otimes1$ (or equivalently,
$1\otimes\partial$) on $\C[t,t^{-1}]\otimes\mathcal{C}$ induces
a derivation $\mathcal{D}$ on $\widehat{\mathcal{C}}$ such that
\begin{align}
\mathcal{D}\left(u\left(m\right)\right)=-mu\left(m-1\right),\quad\forall\ u\in\mathcal{C},\ m\in\mathbb{Z}.
\end{align}
Note that $\mathcal{D}$ preserves the subalgebra $\widehat{\mathcal{C}}^{-}$.
Thus it can be uniquely extended to a derivation on $\mathcal{U}\left(\widehat{\mathcal{C}}^{-}\right)\cong V_{\mathcal{C}}$,
which coincides with the canonical derivation on the vertex algebra
$V_{\mathcal{C}}$.

Recall that an \emph{automorphism} $\varphi$ of the conformal algebra
$\mathcal{C}$ is a linear automorphism such that $\varphi\circ\partial=\partial\circ\varphi$
and $\varphi\left(u_{i}v\right)=\varphi\left(u\right){}_{i}\varphi\left(v\right)$
for $u,v\in\mathcal{C}$ and $i\in\mathbb{N}$ (see \cite{K2}). We have:

\begin{lemma}\label{lem:conformalaut} Let $\varphi$ be a linear
map of the conformal algebra $\mathcal{C}$ such that $\varphi\circ\partial=\partial\circ\varphi$.
Then the linear map
\begin{eqnarray}
\hat{\varphi}:\widehat{\mathcal{C}}\to\widehat{\mathcal{C}},\quad u\left(m\right)\mapsto\varphi\left(u\right)\left(m\right)\quad(u\in\mathcal{C},m\in\mathbb{Z})\label{eq:defhatvarphi}
\end{eqnarray}
on the Lie algebra $\widehat{\mathcal{C}}$ is well-defined. Furthermore,
$\varphi$ is an automorphism of the conformal algebra $\mathcal{C}$
if and only if $\hat{\varphi}$ is an automorphism of the Lie algebra
$\widehat{\mathcal{C}}.$ \end{lemma}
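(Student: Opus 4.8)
The plan is to reduce everything to the presentation $\widehat{\mathcal{C}}=\mathbb{C}[t,t^{-1}]\otimes\mathcal{C}/(1\otimes\partial+\tfrac{d}{dt}\otimes 1)(\mathbb{C}[t,t^{-1}]\otimes\mathcal{C})$ and the bracket formula \eqref{eq:relationinhatc} of Lemma \ref{lem:hatc}. For well-definedness I would note that the operator $\mathrm{id}\otimes\varphi$ on $\mathbb{C}[t,t^{-1}]\otimes\mathcal{C}$ commutes with $1\otimes\partial+\tfrac{d}{dt}\otimes 1$: since $\tfrac{d}{dt}$ acts only on the first tensor factor, this is precisely the hypothesis $\varphi\circ\partial=\partial\circ\varphi$. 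Hence $\mathrm{id}\otimes\varphi$ preserves the subspace being quotiented out and descends to a well-defined linear endomorphism of $\widehat{\mathcal{C}}$, which sends the image $u(m)$ of $t^{m}\otimes u$ to the image of $t^{m}\otimes\varphi(u)$, i.e.\ to $\varphi(u)(m)$; this is $\hat{\varphi}$.

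For the forward implication, suppose $\varphi$ is an automorphism of the conformal algebra $\mathcal{C}$. Then $\varphi^{-1}$ also commutes with $\partial$, so $\widehat{\varphi^{-1}}$ is defined, and on the spanning set $\{u(m)\}$ one checks $\widehat{\varphi^{-1}}\circ\hat{\varphi}=\widehat{\varphi^{-1}\varphi}=\mathrm{id}$ and likewise in the other order; thus $\hat{\varphi}$ is bijective. To see that $\hat{\varphi}$ preserves brackets it suffices, since $\widehat{\mathcal{C}}$ is spanned by the $u(m)$, to verify it on such elements: applying $\hat{\varphi}$ to \eqref{eq:relationinhatc} and using $\varphi(u_{i}v)=\varphi(u)_{i}\varphi(v)$ turns $\sum_{i\ge0}\binom{m}{i}(u_{i}v)(m+n-i)$ into $\sum_{i\ge0}\binom{m}{i}(\varphi(u)_{i}\varphi(v))(m+n-i)$, which by \eqref{eq:relationinhatc} again equals $[\varphi(u)(m),\varphi(v)(n)]=[\hat{\varphi}(u(m)),\hat{\varphi}(v(n))]$. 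Hence $\hat{\varphi}$ is a Lie algebra automorphism.

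For the converse, assume $\hat{\varphi}$ is a Lie algebra automorphism. Since $\hat{\varphi}(u(m))=\varphi(u)(m)$, it maps each of $\widehat{\mathcal{C}}^{+}$ and $\widehat{\mathcal{C}}^{-}$ of \eqref{eq:polardecofC} into itself; as a bijective linear map preserving the direct sum $\widehat{\mathcal{C}}=\widehat{\mathcal{C}}^{+}\oplus\widehat{\mathcal{C}}^{-}$ necessarily restricts to a bijection on each summand, $\hat{\varphi}|_{\widehat{\mathcal{C}}^{-}}$ is bijective. Transporting along the vector space isomorphism $\mathcal{C}\xrightarrow{\sim}\widehat{\mathcal{C}}^{-}$, $u\mapsto u(-1)$ of \eqref{eq:cimbedhatc}, under which $\hat{\varphi}|_{\widehat{\mathcal{C}}^{-}}$ corresponds to $\varphi$ (because $\hat{\varphi}(u(-1))=\varphi(u)(-1)$), we conclude that $\varphi$ is a linear bijection.

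It remains to prove $\varphi(u_{i}v)=\varphi(u)_{i}\varphi(v)$ for all $i\in\mathbb{N}$, which I would do by induction on $i$. Set $w_{i}:=\varphi(u_{i}v)-\varphi(u)_{i}\varphi(v)\in\mathcal{C}$, a finite family since $Y^{-}(u,z)v$ is singular. Comparing $\hat{\varphi}([u(m),v(n)])$ with $[\hat{\varphi}(u(m)),\hat{\varphi}(v(n))]$ via \eqref{eq:relationinhatc} yields $\sum_{i\ge0}\binom{m}{i}w_{i}(m+n-i)=0$ in $\widehat{\mathcal{C}}$ for all $m,n\in\mathbb{Z}$. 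Taking $m=i_{0}\ge0$ and invoking the inductive hypothesis $w_{0}=\cdots=w_{i_{0}-1}=0$ kills every term except $i=i_{0}$, leaving $w_{i_{0}}(n)=0$ for all $n$; specializing $n=-1$ and using the injectivity of $u\mapsto u(-1)$ forces $w_{i_{0}}=0$. Together with the standing hypothesis $\varphi\circ\partial=\partial\circ\varphi$, this shows $\varphi$ is an automorphism of $\mathcal{C}$. The only step needing real care is this last extraction: the elements $w_{i}(k)$ are not linearly independent in $\widehat{\mathcal{C}}$, so one cannot read off the $w_{i}$ from a single bracket identity directly, but clearing them inductively and then pairing against the injective embedding $u\mapsto u(-1)$ circumvents this.
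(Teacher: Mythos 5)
Your proof is correct, and its skeleton matches the paper's: well-definedness because $\mathrm{id}\otimes\varphi$ commutes with $1\otimes\partial+\frac{d}{dt}\otimes 1$, the forward implication read off directly from \eqref{eq:relationinhatc}, and the converse by comparing $\hat{\varphi}\left(\left[u\left(m\right),v\left(n\right)\right]\right)$ with $\left[\hat{\varphi}\left(u\left(m\right)\right),\hat{\varphi}\left(v\left(n\right)\right)\right]$ and invoking the injectivity of $u\mapsto u\left(-1\right)$ from \eqref{eq:cimbedhatc}. Where you genuinely diverge is in how the converse isolates a single $i$-product: the paper takes the \emph{maximal} index $i_{0}$ with $\varphi\left(u_{i_{0}}v\right)\ne\varphi\left(u\right)_{i_{0}}\varphi\left(v\right)$, chooses $m+n=i_{0}-1$, and uses the splitting $\widehat{\mathcal{C}}=\widehat{\mathcal{C}}^{+}\oplus\widehat{\mathcal{C}}^{-}$ of \eqref{eq:polardecofC} to separate the terms with $i\ge i_{0}$ (which lie in $\widehat{\mathcal{C}}^{-}$) from those with $i<i_{0}$, so that maximality leaves only the mode $-1$ term; you instead induct upward on the discrepancy index, choose $m=i_{0}$ so that $\binom{i_{0}}{i}=0$ for $i>i_{0}$ while the inductive hypothesis kills $i<i_{0}$, and then set $n=-1$. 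Both devices work; yours avoids the $\pm$-decomposition in this step (you use it only for bijectivity) and sidesteps the small point the paper leaves tacit, namely that the chosen $m$ must satisfy $\binom{m}{i_{0}}\ne 0$. You also spell out why bijectivity of $\hat{\varphi}$ forces bijectivity of $\varphi$, by restricting $\hat{\varphi}$ to $\widehat{\mathcal{C}}^{-}$ and transporting along \eqref{eq:cimbedhatc}; the paper's converse passes over this silently when it assumes that a failure of automorphy must show up in the $i$-products. So your write-up is, if anything, slightly more complete than the published argument.
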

\begin{proof}
The first assertion of the lemma is easy to see. For the second part, let $u,v\in\mathcal{C}$
and $m,n\in\mathbb{Z}$. Then by \eqref{eq:relationinhatc} we have
\begin{align}\label{eq:varphiandhatvarphi1}
\hat{\varphi}\left(\left[u\left(m\right),v\left(n\right)\right]\right)=\sum_{i\ge0}{{m} \choose {i}}\hat{\varphi}\left(\left(u_{i}v\right)\left(m+n-i\right)\right)=\sum_{i\ge0}{{m} \choose {i}}\varphi\left(u_{i}v\right)\left(m+n-i\right),
\end{align}
and
\begin{align}\label{eq:varphiandhatvarphi2}
\left[\hat{\varphi}\left(u\left(m\right)\right),\hat{\varphi}\left(v\left(n\right)\right)\right]=\left[\varphi(u)\left(m\right),\varphi(v)\left(n\right)\right]=\sum_{i\ge0}{{m} \choose {i}}\left((\varphi u)_{i}(\varphi v)\right)\left(m+n-i\right).
\end{align}
This immediately implies that if $\varphi$ is an automorphism of
$\mathcal{C}$, then $\hat{\varphi}$ is a Lie algebra automorphism of $\widehat{\mathcal{C}}$.
On the other hand, let $\hat{\varphi}$ be a Lie algebra automorphism of $\widehat{\mathcal{C}}$.
Assume that $\varphi$ is not an automorphism of $\mathcal{C}$. Then
there exist $u,v\in\mathcal{C}$ such that $\varphi(u_{i}v)\ne(\varphi u)_{i}(\varphi v)$
for some $i\in\mathbb{N}$. Let $i_{0}$ be the maximal one among
such integers. Take $m,n\in\mathbb{Z}$ such that $m+n=i_{0}-1$.
Then we have
\[
\sum_{i=0}^{i_{0}-1}{{m} \choose {i}}\varphi\left(u_{i}v\right)\left(m+n-i\right),\ \sum_{i=0}^{i_{0}-1}{{m} \choose {i}}\left((\varphi u)_{i}(\varphi v)\right)\left(m+n-i\right)\in\widehat{\mathcal{C}}^{+},
\]
and
\[
\sum_{i\ge i_{0}}{{m} \choose {i}}\varphi\left(u_{i}v\right)\left(m+n-i\right),\ \sum_{i\ge i_{0}}{{m} \choose {i}}\left((\varphi u)_{i}(\varphi v)\right)\left(m+n-i\right)\in\widehat{\mathcal{C}}^{-}.
\]
Recall that $\widehat{\mathcal{C}}^{+}\cap\widehat{\mathcal{C}}^{-}=\{0\}$ and $\hat{\varphi}$ is a Lie algebra automorphism.
Then by \eqref{eq:varphiandhatvarphi1}, \eqref{eq:varphiandhatvarphi2} and the maximality of $i_0$
we obtain that $\varphi(u_{i_{0}}v)(-1)=((\varphi u)_{i_{0}}(\varphi v))(-1)$.
This together with
 \eqref{eq:cimbedhatc} gives $\varphi(u_{i_{0}}v)=(\varphi u)_{i_{0}}(\varphi v)$,
a contradiction. Therefore we have finished the proof of the lemma.
\end{proof}

We define a multiplication on the
loop space $\mathbb{C}\left[t,t^{-1}\right]\otimes\mathcal{C}$ by
\begin{equation}
\left[f\left(t\right)\otimes u,g\left(t\right)\otimes v\right]=\sum_{i\ge0}\left(\frac{1}{i!}\left(t\frac{d}{dt}\right)^{i}f\left(t\right)\right)g\left(t\right)\otimes\left(u_{i}v\right)\quad
 \left(f\left(t\right),g\left(t\right)\in\mathbb{C}\left[t,t^{-1}\right],\, u,v\in\mathcal{C}\right).
\label{Lie relation of =00003D00003D00003D00003D00005ChatCphi}
\end{equation}
By \cite[Remark 2.7d]{K2} this multiplication affords a Lie algebra structure
on the quotient space
\[
\overline{\mathcal{C}}=\left(\mathbb{C}\left[t,t^{-1}\right]\otimes\mathbb{\mathcal{C}}\right)/\text{Im}\left(1\otimes\partial+t\frac{d}{dt}\otimes1\right).
\]
Denote by $\mathbf{d}$ the derivation on $\overline{\mathcal{C}}$
induced from $-t\frac{d}{dt}\otimes1\in\mathrm{End}\left(\mathbb{C}\left[t,t^{-1}\right]\otimes\mathcal{C}\right)$.
Form the semi-direct product Lie algebra
\[
\widetilde{\mathcal{C}}=\overline{\mathcal{C}}\rtimes\C\textbf{d}.
\]
For $u\in\mathcal{C}$ and $m\in\mathbb{Z}$, denote by $u\left[m\right]$
the image of $t^{m}\otimes u$ in $\overline{\mathcal{C}}$. Then
the Lie relations on $\widetilde{\mathcal{C}}$ are given by
\begin{align}
\left[u\left[m\right],v\left[n\right]\right]=\sum_{i\ge0}\frac{m^{i}}{i!}\left(u_{i}v\right)\left[m+n\right],\quad\left[\textbf{d},u\left[m\right]\right]=-mu\left[m\right]\label{eq:relationintildec}
\end{align}
for $u,v\in\mathcal{C},$ $m,n\in\mathbb{Z}$.
The following notion was first introduced in \cite{G-KK} (see also
\cite{Li3}).

\begin{definition} A \emph{$G$-conformal algebra} is a conformal
algebra $\mathcal{C}$ together with an automorphism group $G$ of
$\mathcal{C}$ such that for $u\in\mathcal{C}$, $Y^{-}\left(gu,z\right)=0$
for all but finitely many $g\in G$. \end{definition}

Let $\mathcal{C}$ be a $G$-conformal algebra and $\chi:G\to\mathbb{C}^{\ast}$
be a linear character. For any $g\in G$, it is easy to check
that (cf. \cite[Lemma 5.8]{CLTW}) the linear map
\begin{eqnarray*}
\bar{g}:\overline{\mathcal{C}}\to\overline{\mathcal{C}},\quad u\left[m\right]\mapsto\chi\left(g\right)^{m}\left(gu\right)\left[m\right]
\end{eqnarray*}
defines an automorphism of the Lie algebra $\overline{\mathcal{C}}$.
Furthermore, $\overline{g}$ can be extended to be an automorphism $\tilde{g}$
of $\widetilde{\mathcal{C}}$ by
\begin{align}
\tilde{g}|_{\overline{\mathcal{C}}}=\bar{g}\quad\text{and}\quad\tilde{g}\left(\mathbf{d}\right)=\mathbf{d}.\label{eq:deftildeg}
\end{align}
Following \cite[Section 4]{Li6}, we define a new operation on $\bar{\mathcal{C}}$
by
\[
\left(a,b\right)\mapsto\sum_{g\in G}\left[\overline{g}a,b\right]\quad (a,b\in\bar{\mathcal{C}}).
\]
It was proved therein that the quotient space
\begin{equation}
\overline{\mathcal{C}}\left[G\right]=\bar{\mathcal{C}}/\text{Span\ensuremath{\left\{ \bar{g}a-a\mid a\in\overline{\mathcal{C}},g\in G\right\} } }\label{quotient of Lie algebra C_phi}
\end{equation}
is a Lie algebra under the operation. For $u\in\mathcal{C}$ and $n\in\mathbb{Z},$
we denote by $\overline{u\left[n\right]}$ the image of $u\left[n\right]$
under the quotient map $\overline{\mathcal{C}}\to\overline{\mathcal{C}}\left[G\right]$.
Since $g\circ\mathbf{d}=\mathbf{d}\circ g$ for $g\in G$, $\mathbf{d}$
descends to a derivation on $\overline{\mathcal{C}}\left[G\right]$
and hence we have the semi-direct product Lie algebra
\begin{eqnarray}
\widetilde{\mathcal{C}}\left[G\right]=\overline{\mathcal{C}}\left[G\right]\rtimes\mathbb{C}\mathbf{d}.\label{eq:deftcg}
\end{eqnarray}

For $u\in\mathcal{C}$, we form the generating function
$u\left[z\right]=\sum_{n\in\mathbb{Z}}\overline{u\left[n\right]}z^{-n}$, and we
 say that a module $W$ for $\widetilde{\mathcal{C}}\left[G\right]$
or $\overline{\mathcal{C}}\left[G\right]$ is \emph{restricted }if
$u\left[z\right]\in\text{Hom}\left(W,W\left(\left(z\right)\right)\right)$
for $u\in\mathcal{C}$. It is known from \cite{P} that an automorphism
of $\mathcal{C}$ can be lifted uniquely to an automorphism of $V_{\mathcal{C}}$.
In particular, the automorphism group $G$ of $\mathcal{C}$ can be
viewed as an automorphism group of $V_{\mathcal{C}}$. Now we have
the following result.

\begin{proposition} Let $\mathcal{C}$ be a $G$-conformal algebra
and $\chi:G\rightarrow\mathbb{C}^{\ast}$ be an injective linear character.
Then the $\left(G,\chi\right)$-equivariant $\phi$-coordinated quasi
$V_{\mathcal{C}}$-modules $\left(W,Y_{W}^{\phi},d\right)$ are exactly
the restricted $\widetilde{\mathcal{C}}\left[G\right]$-modules $W$
with
\begin{align*}
d=\textbf{d}\quad\text{and}\quad Y_{W}^{\phi}\left(u,z\right)=u\left[z\right]\quad\text{for}\ u\in\mathcal{C}.
\end{align*}
\end{proposition}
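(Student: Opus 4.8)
The plan is to establish the correspondence in both directions, using Lemma~\ref{lem:hatc}, the commutator formula of Proposition~\ref{prop:Borcherdscomm}, and the reconstruction formula of Lemma~\ref{lem:premodnil}. First, given a restricted $\widetilde{\mathcal{C}}[G]$-module $W$, I would \emph{define} $Y_W^\phi(u,z)=u[z]=\sum_{n\in\mathbb{Z}}\overline{u[n]}z^{-n}$ for $u\in\mathcal{C}$ and $d=\mathbf{d}$, and then extend $Y_W^\phi$ to all of $V_{\mathcal{C}}$ by the iterate/normal-ordering formula of Lemma~\ref{lem:premodnil}\eqref{eq:modnil2}: since $V_{\mathcal{C}}$ is spanned by elements $u^{(1)}(-m_1-1)\cdots u^{(k)}(-m_k-1)\boldsymbol{1}$ and the generators satisfy $u_n v=0$ for $n\ge0$ in a conformal algebra, repeated application of \eqref{eq:modnil2} (with the polynomial $q$ supplied by \eqref{eq:modnil1}) determines $Y_W^\phi$ uniquely. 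I would then verify the three axioms of Definition~\ref{def:vaphimod}: axiom (i) is immediate ($\boldsymbol{1}\mapsto 1_W$); axiom (ii) follows from the defining relation of $\overline{\mathcal{C}}[G]$, namely $\overline{(gu)[n]}=\chi(g)^{-n}\overline{u[n]}$, which gives $Y_W^\phi(gu,z)=\sum_n\chi(g)^{-n}\overline{u[n]}z^{-n}=Y_W^\phi(u,\chi(g)z)$ once one checks the lifted action of $g$ on $V_{\mathcal{C}}$ sends $u\mapsto gu$; and axiom (iii) — the $\phi$-coordinated weak associativity — is the substantive point, to be derived from the Lie bracket \eqref{eq:relationintildec} on $\widetilde{\mathcal{C}}$ together with the quotient construction \eqref{quotient of Lie algebra C_phi}, matching the exponential change of variable $z_1=z_2e^{z_0}$ against the factors $m^i/i!$ appearing in $[u[m],v[n]]=\sum_i (m^i/i!)(u_iv)[m+n]$.

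Conversely, given a $(G,\chi)$-equivariant $\phi$-coordinated quasi $V_{\mathcal{C}}$-module $(W,Y_W^\phi,d)$, I would define an action of $\widetilde{\mathcal{C}}[G]$ on $W$ by letting $\overline{u[n]}$ act as the coefficient of $z^{-n}$ in $Y_W^\phi(u,z)$ (note the labelling is $z^{-n}$, not $z^{-n-1}$, which is exactly the shift that the $\phi=ze^w$ coordinate produces — this is the reason the $u[z]$ generating functions carry $z^{-n}$), and $\mathbf{d}$ as $d$. That this is a well-defined $\widetilde{\mathcal{C}}[G]$-action requires: (a) the relation $[\mathbf{d},u[z]]=z\frac{d}{dz}u[z]$, which is Lemma~\ref{lem:actofd} combined with $\mathcal{D}(u(-1)\boldsymbol{1})=\partial u$ and the chain rule; (b) the bracket relations \eqref{eq:relationintildec}, which come from specializing Proposition~\ref{prop:Borcherdscomm} with a section $\psi$ of $\chi$ and using injectivity of $\chi$ to collapse the sum over $\psi(\chi(G))$ — the residue in $z_0$ picks out $\sum_{i\ge0}\frac{1}{i!}(\tfrac{\partial}{\partial z_0})^i\big|_{z_0=0}$ acting on $Y_W^\phi(Y(u,z_0)v,z_2)$ times the appropriate power of $m$ from $e^{z_0 z_2\partial_{z_2}}$; and (c) the quotient relations $\overline{(gu)[n]}=\chi(g)^{-n}\overline{u[n]}$, which are precisely axiom (ii). Restrictedness of $W$ as a $\widetilde{\mathcal{C}}[G]$-module is equivalent to $Y_W^\phi(u,z)\in\mathrm{Hom}(W,W((z)))$, which is part of the definition of a $\phi$-coordinated quasi module.

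The two constructions are mutually inverse essentially by design: starting from a module, extending by \eqref{eq:modnil2}, and restricting back to $\mathcal{C}$ recovers the original generating functions; and the vertex-algebra structure on $V_{\mathcal{C}}$ is itself built from $\widehat{\mathcal{C}}^-\cong V_{\mathcal{C}}$ in a way compatible with this. The main obstacle I anticipate is verifying axiom (iii) in the forward direction — i.e., that the ad hoc definition of $Y_W^\phi$ via iterated \eqref{eq:modnil2} actually satisfies $\phi$-coordinated weak associativity for \emph{all} pairs $u,v\in V_{\mathcal{C}}$, not merely for generators. The clean way to handle this is not to check it by hand but to invoke a general reconstruction/Frenkel--Kac--Radul--Wang style theorem for $\phi$-coordinated quasi modules (as developed in \cite{Li8,CLTW}): one checks only that the generating fields $\{u[z]:u\in\mathcal{C}\}$ satisfy the requisite $\chi(G)$-quasi-compatibility and the $\phi$-coordinated version of the commutator formula — both of which follow from \eqref{eq:relationintildec} and the $G$-conformal condition (finiteness of $\{g: Y^-(gu,z)\ne0\}$ guarantees the defining polynomial $f\in\mathbb{C}_{\chi(G)}[z]$ exists) — and the general theorem then produces the $(G,\chi)$-equivariant $\phi$-coordinated quasi module structure on $W$ and its uniqueness, whence the two functors are inverse equivalences of categories.
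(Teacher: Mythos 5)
The substantive problem is in your forward construction. You assert that ``the generators satisfy $u_nv=0$ for $n\ge0$ in a conformal algebra'' and on that basis extend $Y_W^{\phi}$ from $\mathcal{C}$ to all of $V_{\mathcal{C}}$ by iterating \eqref{eq:modnil2}. That assertion is false: the products $u_nv$ for $n\ge0$ are precisely the structure maps of the conformal algebra, and by \eqref{eq:relationinhatc} they vanish for all pairs only when $\widehat{\mathcal{C}}$ is abelian; for the affine-type conformal algebras relevant here one has, e.g., $a_0b=[a,b]$ and $a_1b=\langle a,b\rangle\mathrm{k}$. Hence Lemma \ref{lem:premodnil} is simply not applicable to a general pair of generators, and even where its hypothesis does hold, \eqref{eq:modnil2} only reconstructs $Y_W^{\phi}(u_{-1}v,z)$, not the deeper modes $u(-m-1)$ ($m\ge1$) needed to reach a spanning set of $V_{\mathcal{C}}=\mathcal{U}(\widehat{\mathcal{C}}^{-})\mathbf{1}$. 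So the ``extension by \eqref{eq:modnil2}'' route collapses as written; what is really needed is an existence-and-uniqueness (reconstruction) theorem for $(G,\chi)$-equivariant $\phi$-coordinated quasi modules, which cannot be bootstrapped from Lemma \ref{lem:premodnil} alone.

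Your closing remark --- do not check axiom (iii) by hand but invoke the general theorem of \cite{Li8,CLTW} --- is in fact the entire proof in the paper: it quotes \cite[Theorem 5.12]{CLTW}, which states exactly that the $(G,\chi)$-equivariant $\phi$-coordinated quasi $V_{\mathcal{C}}$-modules $(W,Y_W^{\phi})$ are the restricted $\overline{\mathcal{C}}[G]$-modules with $Y_W^{\phi}(u,z)=u[z]$, and then settles the derivation part in one line by computing $[\mathbf{d},u[z]]=\sum_{n}(-n)\overline{u[n]}z^{-n}=z\frac{d}{dz}u[z]$ and comparing with Lemma \ref{lem:actofd}. So the correct kernel of your proposal coincides with the paper's argument, and the hand-made extension is both unnecessary and broken. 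One further caution on the $d$-part: Lemma \ref{lem:actofd} presupposes a module already equipped with $d$ satisfying the defining commutator identity, so in the direction where you start from a restricted $\widetilde{\mathcal{C}}[G]$-module and set $d:=\mathbf{d}$, the computation above only gives $[d,Y_W^{\phi}(u,z)]=Y_W^{\phi}(\mathcal{D}u,z)$ for $u\in\mathcal{C}$; that the identity then propagates to all $v\in V_{\mathcal{C}}$ needs a (routine but separate) argument, and ``Lemma \ref{lem:actofd} plus the chain rule,'' as you phrase it, is circular at that point.
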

\begin{proof}
It was proved in \cite[Theorem 5.12]{CLTW} that the $\left(G,\chi\right)$-equivariant
$\phi$-coordinated quasi $V_{\mathcal{C}}$-modules $\left(W,Y_{W}^{\phi}\right)$
are exactly the restricted $\overline{\mathcal{C}}\left[G\right]$-modules
$W$ with $Y_{W}^{\phi}\left(u,z\right)=u\left[z\right]$ for $u\in\mathcal{C}$.
Note that for $u\in\mathcal{C}$, we have
\begin{align*}
\left[\textbf{d},u\left[z\right]\right]=\sum_{n\in\mathbb{Z}}\left[\textbf{d},\overline{u\left[n\right]}\right]z^{-n}=\sum_{n\in\Z}-n\overline{u\left[n\right]}z^{-n}=z\frac{d}{dz}u\left[z\right].
\end{align*}
This together with Lemma \ref{lem:actofd} proves the proposition.
\end{proof}
\begin{remark}\label{rem:isoofcovandfix} Assume that $G=\left\langle g\right\rangle $
is a cyclic group of finite order $T$, and the linear character $\chi$ is injective. Then it is straightforward
to see that $\overline{\mathcal{C}}\left[G\right]$ is isomorphic
to the subalgebra of $\overline{\mathcal{C}}$ fixed by $\bar{g}$.
And the isomorphism is given by
\[
\overline{u\left[n\right]}\mapsto\sum_{p=0}^{T-1}\left(\bar{g}\right)^{p}\left(u\left[n\right]\right)
\]
for $u\in\mathcal{C}$ and $n\in\mathbb{Z}.$ Furthermore, this isomorphism
can be extended to an isomorphism from $\widetilde{\mathcal{C}}\left[G\right]$
to the subalgebra of $\widetilde{\mathcal{C}}$ fixed by $\tilde{g}$
such that $\mathbf{d}\mapsto\mathbf{d}$. \end{remark}

\section{Equivariant $\phi$-coordinated quasi modules for affine vertex algebras}

In this section, we study the connection between equivariant $\phi$-coordinated
quasi modules for universal (resp.\,simple) affine vertex algebras
and restricted (resp.\,integrable restricted) modules for affine
Kac-Moody algebras.

\subsection{Equivalence of module categories for vertex operator algebras}

We first study connections among equivariant $\phi$-coordinated
quasi modules, equivariant quasi modules and twisted modules for
vertex operator algebras (see \cite{FLM,FHL}). Recall that for any finite order automorphism $\sigma$ of a vertex
algebra $V$, there is (weak) \emph{$\sigma$-twisted
$V$-module} $\left(W,Y_{W}^{t}\right)$, where $W$ is a vector space
and $Y_{W}^{t}\in\mathrm{Hom}\left(W,W\left(\left(z^{1/N}\right)\right)\right)$
with $N$ the order of $\sigma$ (cf. \cite{Li2,FLM}). We also recall that a \textit{$\mathbb{Z}$-graded vertex algebra} is a vertex algebra
$V$ equipped with a $\mathbb{Z}$-grading $V=\oplus_{n\in\mathbb{Z}}V_{(n)}$
such that ${\bf 1}\in V_{(0)}$ and
\begin{eqnarray}
u_{m}V_{(n)}\subset V_{(n+k-m-1)}\ \ \mbox{ for }u\in V_{(k)},\ m,n,k\in\mathbb{Z}.
\end{eqnarray}
Define the linear operator $L\left(0\right)$
on a $\mathbb{Z}$-graded vertex algebra $V$ by $L\left(0\right)v=nv$
for $v\in V_{(n)}$ with $n\in\Z$. Then we have the following definition
(cf. \cite{Li3,Li4}):

\begin{definition}\label{def:vamod} Let $(V, Y, \mathbf{1})$ be a $\mathbb{Z}$-graded
vertex algebra, $G$ an automorphism group of $V$ preserving the
$\mathbb{Z}$-grading of $V$ and $\chi:G\rightarrow\mathbb{C}^{\ast}$
a linear character. \emph{A $\left(G,\chi\right)$-equivariant quasi
$V$-module} $\left(W,Y_{W}\right)$ is a vector space $W$ equipped
with a linear map $Y_{W}\left(\cdot,z\right):V\to\text{Hom}\left(W,W\left(\left(z\right)\right)\right)$
satisfying the following conditions:
\begin{enumerate}
\item[(i)] \quad{}$Y_{W}\left(\textbf{1},z\right)=1_{W}$;
\item[(ii)] \quad{}$Y_{W}\left(\chi\left(g\right){}^{-L(0)}gv,z\right)=Y_{W}\left(v,\chi\left(g\right)z\right)$
for $g\in G,v\in V$;
\item[(iii)] \quad{}For $u,v\in V,$ there exists $f\left(z\right)\in\mathbb{C}_{\chi\left(G\right)}\left[z\right]$
such that
\begin{gather*}
f\left(z_{1}/z_{2}\right)Y_{W}\left(u,z_{1}\right)Y_{W}\left(v,z_{2}\right)\in\text{Hom}\left(W,W\left(\left(z_{1},z_{2}\right)\right)\right),\\
f\left((z_{2}+z_{0})/z_{2}\right)Y_{W}\left(Y\left(u,z_{0}\right)v,z_{2}\right)=\left(f\left(z_{1}/z_{2}\right)Y_{W}\left(u,z_{1}\right)Y_{W}\left(v,z_{2}\right)\right)|_{z_{1}=z_{2}+z_{0}}.
\end{gather*}
\end{enumerate}
\end{definition}

\begin{remark} If the automorphism group $G=\left\{ 1\right\} $ is the trivial group in the definition, one has the usual module for vertex algebra.  And if replacing the associate $\phi=z_{2}+z_{0}$  in (iii) by $\phi=z_{2}e^{z_{0}}$, one gets the notion of a $\phi$-coordinated $V$-module defined in Definition \ref{def:vaphimod} . \end{remark}


If $V=(V, Y(\cdot, z), {\mathbf{1}})$ is a $\mathbb{Z}$-graded vertex algebra,
we define a linear map
\begin{eqnarray*}
Y\left[\cdot,z\right]:V  \rightarrow  \mathrm{End}\left(V\right)\left[\left[z,z^{-1}\right]\right],\quad
v  \mapsto  Y\left(e^{zL(0)}v,e^{z}-1\right).
\end{eqnarray*}
Then $\left(V,Y\left[\cdot,z\right],{\mathbf{1}}\right)$ also carries
a vertex algebra structure \cite{Z,Li7}. Note that if $G$ is an
automorphism group of the vertex algebra $V$ preserving the $\mathbb{Z}$-grading of
$V$, then it is also an automorphism group of $\left(V,Y\left[\cdot,v\right],\mathbf{1}\right)$.
The following result is a generalization of \cite[Proposition 5.8]{Li7}.

\begin{proposition}\label{prop:phimodvsmod} Let $V=(V, Y(\cdot, z), {\mathbf{1}})$ be a $\mathbb{Z}$-graded
vertex algebra, $G$ an automorphism group of $V$ preserving the
$\mathbb{Z}$-grading of $V$ and $\chi$ a linear character of $G$.
Then the $\left(G,\chi\right)$-equivariant quasi $V$-modules $\left(W,Y_{W}\right)$
are exactly the $\left(G,\chi\right)$-equivariant $\phi$-coordinated
quasi modules $\left(W,Y_{W}^{\phi}\right)$ for the vertex algebra $\left(V,Y\left[\cdot,z\right],\mathbf{1}\right)$
with
\begin{align*}
Y_{W}^{\phi}\left(v,z\right)=Y_{W}\left(z^{L(0)}v,z\right),\quad\forall\ v\in V.
\end{align*}
\end{proposition}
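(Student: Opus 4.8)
The plan is to exploit the ``change of variables'' vertex algebra $(V,Y[\cdot,z],\mathbf 1)$ together with Definitions~\ref{def:vaphimod} and~\ref{def:vamod}, and to reduce the statement to a direct translation between the two sets of axioms. First I would fix the notational dictionary: given a $(G,\chi)$-equivariant quasi $V$-module $(W,Y_W)$, set $Y_W^\phi(v,z)=Y_W(z^{L(0)}v,z)$ and check that $Y_W^\phi(v,z)\in\mathrm{Hom}(W,W((z)))$; since $V=\oplus_{n\in\mathbb Z}V_{(n)}$ and $L(0)$ acts semisimply with integer eigenvalues, $z^{L(0)}v$ is a finite $\mathbb Z$-span of terms $z^k v_k$, so this is immediate. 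Conversely, given $(W,Y_W^\phi)$ for $(V,Y[\cdot,z],\mathbf 1)$, one recovers $Y_W(v,z)=Y_W^\phi(z^{-L(0)}v,z)$. The two maps are visibly mutually inverse, so the content is that the axioms (i)--(iii) for one correspond to (i)--(iii) for the other.

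Next I would verify the three axioms one at a time. Axiom (i) is trivial: $z^{L(0)}\mathbf 1=\mathbf 1$ since $\mathbf 1\in V_{(0)}$, so $Y_W^\phi(\mathbf 1,z)=Y_W(\mathbf 1,z)=1_W$. For axiom (ii), the equivariance condition $Y_W(\chi(g)^{-L(0)}gv,z)=Y_W(v,\chi(g)z)$ should be rewritten in terms of $Y_W^\phi$. Using $g\,z^{L(0)}=z^{L(0)}g$ (as $g$ preserves the grading, hence commutes with $L(0)$), one computes
\begin{align*}
Y_W^\phi(gv,z)=Y_W(z^{L(0)}gv,z)=Y_W(g\,z^{L(0)}v,z),
\end{align*}
and applying (ii) of Definition~\ref{def:vamod} with $v$ replaced by $\chi(g)^{L(0)}z^{L(0)}v=(\chi(g)z)^{L(0)}v$ gives $Y_W(g\,z^{L(0)}v,z)=Y_W((\chi(g)z)^{L(0)}v,\chi(g)z)=Y_W^\phi(v,\chi(g)z)$, which is exactly axiom (ii) of Definition~\ref{def:vaphimod}. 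This manipulation is reversible.

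The main work, and the step I expect to be the real obstacle, is axiom (iii): translating the weak-associativity/compatibility relation for $Y_W$ with the associate $z_2+z_0$ into the one for $Y_W^\phi$ with the associate $\phi(z_2,z_0)=z_2e^{z_0}$. Here I would use the definition $Y[u,z_0]v=Y(e^{z_0L(0)}u,e^{z_0}-1)v$ together with the substitution $z_1=z_2e^{z_0}$, noting that $z_1/z_2=e^{z_0}$ so that $f(z_1/z_2)|_{z_1=z_2e^{z_0}}=f(e^{z_0})$, matching the left-hand side of Definition~\ref{def:vaphimod}(iii), while in Definition~\ref{def:vamod}(iii) the substitution $z_1=z_2+z_0$ sends $z_1/z_2$ to $(z_2+z_0)/z_2$. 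The point is that the change of variable $z_0\mapsto e^{z_0}-1$ intertwines the two substitutions, and that $Y_W^\phi(Y[u,z_0]v,z_2)$ unwinds, via $Y_W^\phi(\cdot,z)=Y_W(z^{L(0)}\cdot,z)$ and the grading-compatibility $Y_W(z^{L(0)}u_{k}w,z)$-type identities, into $Y_W(Y(u,\tilde z_0)v,z_2)$ after absorbing the powers of $z$ coming from $L(0)$. I would model this computation closely on the proof of \cite[Proposition~5.8]{Li7}, the cited special case $G=\{1\}$, checking that the group-theoretic decorations ($\chi(G)$-valued $f$, the quasi-locality) pass through verbatim since $\chi(G)\subset\mathbb C^\ast$ and the substitutions only involve the ratio $z_1/z_2$. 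The delicate bookkeeping is exactly the interaction of the operators $e^{z_0L(0)}$ inside $Y[\cdot,z_0]$ with the conjugation by $z^{L(0)}$ that defines $Y_W^\phi$; once that is pinned down, both directions of axiom (iii) follow symmetrically, completing the proof.
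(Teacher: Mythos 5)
Your proposal is correct and follows essentially the same route as the paper: the equivariance axiom is handled by exactly the same $L(0)$-conjugation computation $Y_W^{\phi}(gv,z)=Y_W(z^{L(0)}gv,z)=Y_W^{\phi}(v,\chi(g)z)$, and the associativity axiom is reduced to the change of variables $z_0\mapsto\log(1+w_0/z_2)$ intertwining $Y[\cdot,z_0]$ with $Y(\cdot,z_0)$, modeled on \cite[Proposition 5.8]{Li7}, which is precisely how the paper argues (citing that proposition in one direction and carrying out the substitution and the conjugation identity $z_2^{L(0)}Y(u,z_0)z_2^{-L(0)}=Y(z_2^{L(0)}u,z_2z_0)$ in the other).
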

\begin{proof}
Assume first that $\left(W,Y_{W}\right)$ is a $\left(G,\chi\right)$-equivariant
quasi $V$-module. Then for $v\in V$ and $g\in G$ we have
\begin{align*}
Y_{W}^{\phi}\left(v,\chi\left(g\right)z\right)=Y_{W}\left(\left(\chi(g)z\right){}^{L(0)}v,\chi(g)z\right)=Y_{W}\left(z^{L(0)}gv,z\right)=Y_{W}^{\phi}\left(gv,z\right).
\end{align*}
This together with \cite[Proposition 5.8]{Li7} shows that $\left(W,Y_{W}^{\phi}\right)$
is a $\left(G,\chi\right)$-equivariant $\phi$-coordinated quasi
module for $\left(V,Y[\cdot,z],\mathbf{1}\right)$. Conversely, let
$\left(W,Y_{W}^{\phi}\right)$ be a $\left(G,\chi\right)$-equivariant
$\phi$-coordinated quasi module for $\left(V,Y[\cdot,z],\mathbf{1}\right)$.
Then we have $Y_{W}\left(v,z\right)\in\mathrm{Hom}\left(W,W\left(\left(z\right)\right)\right)$
for $v\in V$, $Y_{W}\left({\mathbf{1}},z\right)=Y_{W}^{\phi}\left(z^{-L(0)}{\mathbf{1}},z\right)=Y_{W}^{\phi}\left({\mathbf{1}},z\right)=1_{W}$
and
\begin{align*}
Y_{W}\left(\chi\left(g\right){}^{-L(0)}gv,z\right)=Y_{W}^{\phi}\left(\left(\chi\left(g\right)z\right){}^{-L(0)}gv,z\right)=Y_{W}^{\phi}\left(\left(\chi\left(g\right)z\right){}^{-L(0)}v,\chi\left(g\right)z\right)=Y_{W}\left(v,\chi\left(g\right)z\right),
\end{align*}
for $v\in V$ and $g\in G$. For $u,v\in V$, let $f\left(z\right)$
be a nonzero polynomial in $\mathbb{C}_{\chi\left(G\right)}\left[z\right]$
such that Definition \ref{def:vaphimod} (iii) holds. Then we have
\begin{align*}
f\left(z_{1}/z_{2}\right)Y_{W}\left(u,z_{1}\right)Y_{W}\left(v,z_{2}\right)=f\left(z_{1}/z_{2}\right)Y_{W}^{\phi}\left(z_{1}^{-L(0)}u,z_{1}\right)Y_{W}^{\phi}\left(z_{2}^{-L(0)}v,z_{2}\right)\in\mathrm{Hom}\left(W,W\left(\left(z_{1},z_{2}\right)\right)\right).
\end{align*}
Furthermore, we have $f\left(z_{1}/z_{2}\right)Y_{W}^{\phi}\left(\left(z_{2}+w_{0}\right){}^{-L(0)}u,z_{1}\right)Y_{W}\left(z_{2}^{-L(0)}v,z_{2}\right)
\in\mathrm{Hom}\left(W,W\left(\left(z_{1},z_{2}\right)\right)\left[\left[w_{0}\right]\right]\right)$
and
\begin{align*}
f\left(z_{1}/z_{2}\right)Y_{W}^{\phi}\left(\left(z_{2}+w_{0}\right){}^{-L(0)}u,z_{1}\right)Y_{W}^{\phi}\left(z_{2}^{-L(0)}v,z_{2}\right)|_{z_{1}=z_{2}e^{z_{0}}}=f\left(e^{z_{0}}\right)Y_{W}^{\phi}\left(Y\left[\left(z_{2}+w_{0}\right)^{-L(0)}u,z_{0}\right]z_{2}^{-L(0)}v,z_{2}\right).
\end{align*}
By applying the substitution $z_{0}=\log\left(1+w_{0}/z_{2}\right)$ in the previous equation,
we obtain from the left hand side
\begin{align*}
 & \left(f(z_{1}/z_{2})Y_{W}^{\phi}\left(\left(z_{2}+w_{0}\right){}^{-L(0)}u,z_{1}\right)Y_{W}^{\phi}\left(z_{2}^{-L(0)}v,z_{2}\right)|_{z_{1}=z_{2}e^{z_{0}}}\right)|_{z_{0}=\log(1+w_{0}/z_{2})}\\
= & f\left(z_{1}/z_{2}\right)Y_{W}^{\phi}\left(z_{1}^{-L(0)}u,z_{1}\right)Y_{W}^{\phi}\left(z_{2}^{-L(0)}v,z_{2}\right)|_{z_{1}=z_{2}+w_{0}}\\
= & f\left(z_{1}/z_{2}\right)Y_{W}\left(u,z_{1}\right)Y_{W}\left(v,z_{2}\right)|_{z_{1}=z_{2}+w_{0}},
\end{align*}
while by using the fact $z_{2}^{L(0)}Y\left(u,z_{0}\right)z_{2}^{-L(0)}=Y\left(z_{2}^{L(0)}u,z_{0}z\right)$
(cf. \cite{FHL}), we obtain from the right hand side
\begin{align*}
 & \left(f\left(e^{z_{0}}\right)Y_{W}^{\phi}\left(Y\left[\left(z_{2}+w_{0}\right){}^{-L(0)}u,z_{0}\right]z_{2}^{-L(0)}v,z_{2}\right)\right)|_{z_{0}=\log(1+w_{0}/z_{2})}\\
= & \left(f\left(e^{z_{0}}\right)Y_{W}\left(z_{2}^{L(0)}Y\left(e^{z_{0}L(0)}\left(z_{2}+w_{0}\right){}^{-L(0)}u,e^{z_{0}}-1\right)z_{2}^{-L(0)}v,z_{2}\right)\right)|_{z_{0}=\log\left(1+w_{0}/z_{2}\right)}\\
= & \left(f\left(e^{z_{0}}\right)Y_{W}\left(Y\left(z_{2}^{L(0)}e^{z_{0}L(0)}\left(z_{2}+w_{0}\right){}^{-L(0)}u,\left(e^{z_{0}}-1\right)z_{2}\right)v,z_{2}\right)\right)|_{z_{0}=\log\left(1+w_{0}/z_{2}\right)}\\
= & f\left(z_{2}+w_{0}/z_{2}\right)Y_{W}\left(Y\left(u,w_{0}\right)v,z_{2}\right).
\end{align*}
This proves the proposition.
\end{proof}


\begin{proposition}\label{prop:modisoforvoa} Assume that $V$ is
a vertex operator algebra, $G$ is a group of automorphisms of $V$
and $\chi$ a linear character of $G$. Then the following two categories
are isomorphic:
\begin{enumerate}
\item[(i)] the category of $\left(G,\chi\right)$-equivariant $\phi$-coordinated
quasi $V$-modules $\left(W,Y_{W}^{\phi}\right)$;
\item[(ii)] the category of $\left(G,\chi\right)$-equivariant quasi $V$-modules
$\left(W,Y_{W}\right)$.
\end{enumerate}
Moreover, if $G=\langle\sigma\rangle$ is a cyclic group of finite order $N$, and $\chi\left(\sigma\right)=e^{-\frac{2\pi\sqrt{-1}}{N}}$,
then these two categories are also isomorphic to the following category
\begin{enumerate}
\item[(iii)] the category of (weak) $\sigma$-twisted $V$-modules $\left(W,Y_{W}^{t}\right)$.
\end{enumerate}
And furthermore, if $v\in V$ is a primary vector, i.e., $L(n)v=0$ for
all $n\ge1$, then we have the following identities
\begin{align}
Y_{W}^{\phi}\left(z^{-L(0)}v,z\right)=Y_{W}\left(v,z\right)=Y_{W}^{t}\left(\left(Nz^{N-1}\right){}^{L(0)}v,z^{N}\right).\label{eq:isoonmod1}
\end{align}
\end{proposition}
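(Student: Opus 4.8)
The plan is to build the isomorphism of categories (i)$\leftrightarrow$(ii) directly from Proposition \ref{prop:phimodvsmod}, and then relate both to (iii) via Li's twisted-module theory. First I would recall that for a vertex operator algebra $V$ the map $Y[\cdot,z]$ of Zhu, $v\mapsto Y(e^{zL(0)}v,e^z-1)$, produces a second vertex (operator) algebra structure on the same underlying space $V$, isomorphic to the original one as an abstract vertex algebra via an explicit linear isomorphism $\Delta$ (this is the content of \cite{Z,Li7}); crucially $G$ acts as automorphisms of both structures and commutes with $L(0)$, so the linear character $\chi$ is unchanged. Applying Proposition \ref{prop:phimodvsmod} to $(V,Y[\cdot,z],\mathbf 1)$ identifies $(G,\chi)$-equivariant quasi $V$-modules $(W,Y_W)$ with $(G,\chi)$-equivariant $\phi$-coordinated quasi modules for $(V,Y[\cdot,z],\mathbf 1)$; composing with the vertex-algebra isomorphism $(V,Y[\cdot,z],\mathbf 1)\cong(V,Y(\cdot,z),\mathbf 1)$ then transports these to $(G,\chi)$-equivariant $\phi$-coordinated quasi $V$-modules, giving the isomorphism between (i) and (ii). One checks functoriality (a morphism of modules is the same linear map $W\to W'$ under all these reindexings) to upgrade the bijection on objects to an isomorphism of categories.

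For the equivalence with (iii) when $G=\langle\sigma\rangle$ is cyclic of order $N$ with $\chi(\sigma)=e^{-2\pi\sqrt{-1}/N}$, I would invoke the known correspondence (due to Li, \cite{Li3,Li4,Li5}) between $(\langle\sigma\rangle,\chi)$-equivariant quasi modules for a $\mathbb Z$-graded vertex algebra and (weak) $\sigma$-twisted modules, realized by the substitution $z\mapsto z^N$: given a $\sigma$-twisted module $(W,Y_W^t)$ one sets $Y_W(v,z)=Y_W^t((Nz^{N-1})^{L(0)}v,z^N)$ (suitably interpreted on homogeneous $v$, using that $\sigma$ acts on $V_{(n)}$ by $e^{2\pi\sqrt{-1}n/N}$ so the fractional powers organize into integral powers after the change of variable), and conversely. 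Combining this with the (i)$\leftrightarrow$(ii) isomorphism already established, and tracking the variable substitutions $z\mapsto e^z-1$, $z\mapsto z^N$, yields the isomorphism of (iii) with the other two categories; the chain of substitutions also produces the explicit formula \eqref{eq:isoonmod1} once one specializes to a primary vector $v$, for which $e^{zL(0)}v$ and $z^{L(0)}v$ differ from $v$ only by the scalar exponentials that the formula records (here the hypothesis $L(n)v=0$ for $n\ge1$ is what makes $Y[v,z]$ reduce to $Y(z^{L(0)}v,e^z-1)$ with no correction terms, so the maps on both sides act on $v$ through $L(0)$ alone).

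The main obstacle I expect is bookkeeping rather than conceptual: one must verify that each reindexing $Y_W^\phi(v,z)=Y_W(z^{L(0)}v,z)$, $Y_W(v,z)$ built from $Y_W^t$, etc., really lands in $\mathrm{Hom}(W,W((z)))$ and satisfies the quasi-compatibility condition (iii) of Definition \ref{def:vaphimod} or \ref{def:vamod} with a polynomial $f$ in the correct set $\mathbb C_{\chi(G)}[z]$ — in particular that the finite-order and fractional-power subtleties in the twisted case match the formal substitution $z_1=z_2e^{z_0}$ versus $z_1=z_2+z_0$ on the nose. I would handle this by quoting the already-established Proposition \ref{prop:phimodvsmod} for the $(V,Y[\cdot,z])$ half, and for the twisted half by citing \cite{Li3,Li5} and only spelling out the compatibility of the two substitutions and the resulting formula \eqref{eq:isoonmod1} on primary vectors.
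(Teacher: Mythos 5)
Your proposal follows essentially the same route as the paper: use Zhu's isomorphism between $(V,Y(\cdot,z),\mathbf{1},\omega)$ and $(V,Y[\cdot,z],\mathbf{1},\tilde\omega)$ together with Proposition \ref{prop:phimodvsmod} to identify (i) with (ii), then quote Li's result identifying $(\langle\sigma\rangle,\chi_{\omega_N})$-equivariant quasi modules with $\sigma$-twisted modules for (iii), and note that on primary vectors both Zhu's isomorphism and Li's operator $\Phi(z)$ reduce to pure $L(0)$-scalings, yielding \eqref{eq:isoonmod1}. This matches the paper's proof in both structure and the key citations, so it is correct.
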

\begin{proof}
Let $\left(V,Y\left(\cdot,z\right),{\mathbf{1}},\omega\right)$ be
a vertex operator algebra of central charge $\ell$. Set $\tilde{\omega}=\omega-\frac{\ell}{24}{\mathbf{1}}$.
It was proved in \cite{Z} that $\left(V,Y\left[\cdot,z\right],{\mathbf{1}},\tilde{\omega}\right)$
carries a vertex operator algebra structure. And, an explicit
isomorphism $f$ from $\left(V,Y\left(\cdot,z\right),{\mathbf{1}},\omega\right)$
to $V\left(V,Y\left[\cdot,z\right],{\mathbf{1}},\tilde{\omega}\right)$
was constructed therein. This together with Proposition \ref{prop:phimodvsmod}
implies that the $\left(G,\chi\right)$-equivariant $\phi$-coordinated
quasi $V$-modules $\left(W,Y_{W}^{\phi}\right)$ are exactly the
$\left(G,\chi\right)$-equivariant quasi $V$-modules $\left(W,Y_{W}\right)$
with
\begin{align*}
Y_{W}^{\phi}\left(v,z\right)=Y_{W}\left(z^{L(0)}f(v),z\right),\quad\text{for}\ v\in V.
\end{align*}
In particular, if $v$ is a primary vector, then from \cite{Z} we
have $f(v)=v$ and hence $Y_{W}^{\phi}\left(z^{-L(0)}v,z\right)=Y_{W}\left(v,z\right)$.

Finally, if $G=\langle\sigma\rangle$ is a finite cyclic
group of order $N$ and $\chi(\sigma)=e^{-\frac{2\pi\sqrt{-1}}{N}}$, then it was
proved in \cite{Li5} that the $\left(G,\chi\right)$-equivariant
quasi $V$-modules $\left(W,Y_{W}\right)$ are exactly the (weak)
$\sigma$-twisted $V$-modules $\left(W,Y_{W}^{t}\right)$ with
\begin{align*}
Y_{W}\left(v,z\right)=Y_{W}\left(\Phi\left(z\right)v,z^{N}\right),\quad\text{for}\ v\in V,
\end{align*}
where $\Phi\left(z\right)$ is an operator in $\mathrm{Hom}\left(V,V\left(\left(z\right)\right)\right)$
such that $\Phi\left(z\right)v=\left(Nz^{N-1}\right){}^{L(0)}v$ whenever
$v$ is primary. This completes the proof the proposition.
\end{proof}

\subsection{Equivalence of module categories for universal affine vertex
algebras}

In this subsection, we study the equivalence of certain module categories
for universal affine vertex algebras. Let $\mathfrak{b}$ be a (possibly infinite dimensional) Lie algebra
equipped with a nondegenerate invariant symmetric  bilinear form $\left\langle\cdot,\cdot\right\rangle$.
Then we have the affine Lie algebra
\begin{align*}
\widetilde{\mathcal{L}}\left(\mathfrak{b}\right)=\left(\mathbb{C}\left[t,t^{-1}\right]\otimes\mathfrak{b}\right)\oplus\mathbb{C}\text{k}\oplus\mathbb{C}\text{d},
\end{align*}
where $\text{k}$ is a central element and
\begin{align}
\left[t^{m}\otimes a,t^{n}\otimes b\right]=t^{m+n}\otimes\left[a,b\right]+\left\langle a,b\right\rangle \delta_{m+n,0}\text{k},\quad\left[\text{d},t^{m}\otimes a\right]=mt^{m}\otimes a,\label{eq:affrela}
\end{align}
for $m,n\in\mathbb{Z}$ and $a,b\in\mathfrak{b}$. Equip $\widetilde{\mathcal{L}}\left(\mathfrak{b}\right)$
with a $\mathbb{Z}$-grading structure with respect to the adjoint
action of $-\text{d}$, and form the following $\mathbb{Z}$-graded Lie subalgebra
of $\widetilde{\mathcal{L}}\left(\mathfrak{b}\right)$:
\[
\widehat{\mathcal{L}}\left(\mathfrak{b}\right)=\left(\mathbb{C}\left[t,t^{-1}\right]\otimes\mathfrak{b}\right)\oplus\mathbb{C}\text{k}.
\]
Let $\ell$ be a fixed complex number. View $\mathbb{C}$ as a $\left(\mathbb{C}\left[t\right]\otimes\mathfrak{b}\oplus\mathbb{C}\text{k}\right)$-module
with $\mathbb{C}\left[t\right]\otimes\mathfrak{b}$ acting trivially
and $\text{k}$ acting as scalar $\ell$. Then we have the induced
$\widehat{\mathcal{L}}(\mathfrak{b})$-module
\begin{eqnarray}
V_{\widehat{\mathcal{L}}\left(\mathfrak{b}\right)}\left(\ell,0\right)=\mathcal{U}\left(\widehat{\mathcal{L}}\left(\mathfrak{b}\right)\right)\otimes_{\mathcal{U}\left(\mathbb{C}[t]\otimes\mathfrak{b}\oplus\mathbb{C}\text{k}\right)}\mathbb{C},\label{vwhfsl}
\end{eqnarray}
which is naturally $\mathbb{N}$-graded by defining $\deg\mathbb{C}=0$.
Set ${\bf 1}=1\otimes1\in V_{\widehat{\mathcal{L}}\left(\mathfrak{b}\right)}\left(\ell,0\right)$
and identify $\mathfrak{b}$ as the degree-one subspace of $V_{\widehat{\mathcal{L}}(\mathfrak{b})}\left(\ell,0\right)$
through the linear map
\[
a\mapsto\left(t^{-1}\otimes a\right){\bf 1}\in V_{\widehat{\mathcal{L}}\left(\mathfrak{b}\right)}\left(\ell,0\right),
\]
for $ a\in\mathfrak{b}$. It is known (cf. \cite{FZ,Li1}) that there exists a vertex algebra
structure on $V_{\widehat{\mathcal{L}}(\mathfrak{b})}\left(\ell,0\right)$,
which is uniquely determined by the condition that ${\bf 1}$ is the
vacuum vector and
\[
Y\left(a,z\right)=a\left(z\right)=\sum_{m\in\Z}\left(t^{m}\otimes a\right)z^{-m-1},
\]
for $a\in\mathfrak{b}$.
The vertex algebra $V_{\widehat{\mathcal{L}}(\mathfrak{b})}\left(\ell,0\right)$
is often called \emph{the universal affine vertex algebra} associated
to $\mathfrak{b}$. Denote by $J_{\widehat{\mathcal{L}}(\mathfrak{b})}\left(\ell,0\right)$
the unique maximal graded $\widehat{\mathcal{L}}\left(\mathfrak{b}\right)$-submodule
of $V_{\widehat{\mathcal{L}}\left(\mathfrak{b}\right)}\left(\ell,0\right)$.
Then $J_{\widehat{\mathcal{L}}\left(\mathfrak{b}\right)}\left(\ell,0\right)$
is an ideal of the vertex algebra $V_{\widehat{\mathcal{L}}\left(\mathfrak{b}\right)}\left(\ell,0\right)$.
Define
\begin{align*}
L_{\widehat{\mathcal{L}}\left(\mathfrak{b}\right)}\left(\ell,0\right)=V_{\widehat{\mathcal{L}}\left(\mathfrak{b}\right)}\left(\ell,0\right)/J_{\widehat{\mathcal{L}}\left(\mathfrak{b}\right)}\left(\ell,0\right),
\end{align*}
which is a simple $\Z$-graded vertex algebra.

Assume that $G$ is an automorphism group of $\mathfrak{b}$ preserving
the bilinear form. It is easy to see that $G$ can be uniquely lifted
to an automorphism group of $V_{\widehat{\mathcal{L}}\left(\mathfrak{b}\right)}\left(\ell,0\right)$
that preserving the $\mathbb{Z}$-grading. Moreover, as $J_{\widehat{\mathcal{L}}\left(\mathfrak{b}\right)}\left(\ell,0\right)$
is $G$-stable, $G$ also induces an automorphism group of $L_{\widehat{\mathcal{L}}\left(\mathfrak{b}\right)}\left(\ell,0\right)$.
Assume further that for $a,b\in\mathfrak{b}$,
\begin{align}
\left[ga,b\right]=0, \quad\left\langle ga,b\right\rangle =0\label{conconvar}
\end{align}
for all but finitely many $ g\in G$.
For any linear character $\chi:G\rightarrow\C^{\ast}$, we define a quotient
space
\begin{align}
\widehat{\mathcal{L}}\left(\mathfrak{b},G\right)=\widehat{\mathcal{L}}\left(\mathfrak{b}\right)/{\text{Span}}\left\{ \chi\left(g\right){}^{m}\left(t^{m}\otimes ga\right)-t^{m}\otimes a\mid g\in G,m\in\mathbb{Z},a\in\mathfrak{b}\right\} .\label{eq:whlgb}
\end{align}
It was proved in \cite{Li4} that $\widehat{\mathcal{L}}\left(\mathfrak{b},G\right)$
carries a Lie algebra structure with Lie bracket given by
\begin{align*}
\left[\overline{t^{m}\otimes a},\overline{t^{n}\otimes b}\right]=\sum_{g\in G}\chi\left(g\right){}^{m}\left(\overline{t^{m+n}\otimes\left[ga,b\right]}+\delta_{m+n,0}\left\langle ga,b\right\rangle \bar{\text{k}}\right)
\end{align*}
for $a,b\in\mathfrak{b}$ and $m,n\in\mathbb{Z}$, where $\overline{t^{m}\otimes a}$
and $\overline{\text{k}}$ stand for the images of $t^{m}\otimes a$ and
$\text{k}$ in $\widehat{\mathcal{L}}\left(\mathfrak{b},G\right)$
respectively.

We say that an $\widehat{\mathcal{L}}\left(\mathfrak{b},G\right)$-module
$W$ is\emph{ restricted} if for any $a\in\mathfrak{b}$ and $w\in W$,
$\overline{t^{n}\otimes a}\cdot w=0$ for $n$ sufficiently large,
and is \emph{of level $\ell$} if $\overline{\text{k}}$ acts as the
scalar $\ell$. From \cite[Proposition 6.4]{CLTW} we have the following result.

\begin{proposition}\label{prop:affva1} Let $G,\chi$ be as above.
The following categories are isomorphic to each other:
\begin{enumerate}
\item[(i)] the category of $\left(G,\chi\right)$-equivariant $\phi$-coordinated
quasi $V_{\widehat{\mathcal{L}}\left(\mathfrak{b}\right)}\left(\ell,0\right)$-modules
$\left(W,Y_{W}^{\phi}\right)$;
\item[(ii)] the category of $\left(G,\chi\right)$-equivariant quasi $V_{\widehat{\mathcal{L}}\left(\mathfrak{b}\right)}\left(\ell,0\right)$-modules
$\left(W,Y_{W}\right)$;
\item[(iii)] the category of restricted $\widehat{\mathcal{L}}(\mathfrak{b},G)$-modules
$W$ of level $\ell$.
\end{enumerate}
And, the isomorphisms are determined by
\begin{align*}
Y_{W}^{\phi}\left(a,z\right)=zY_{W}\left(a,z\right)=\sum_{n\in\mathbb{Z}}\left(\overline{t^{n}\otimes a}\right)z^{-n},
\end{align*}
for $ a\in\mathfrak{b}$.
\end{proposition}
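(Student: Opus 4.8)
The plan is to deduce this proposition from the results already developed in the paper, rather than proving it from scratch. The key observation is that the universal affine vertex algebra $V_{\widehat{\mathcal{L}}(\mathfrak{b})}(\ell,0)$ is a special case of the universal enveloping vertex algebra of a conformal algebra: indeed, one takes $\mathcal{C} = \mathfrak{b} \oplus \mathbb{C}\mathrm{k}$ (with $\partial = 0$ and the singular vertex operator $Y^-(a,z)b = [a,b]z^{-1} + \langle a,b\rangle z^{-2}\mathrm{k}$ for $a,b\in\mathfrak{b}$, and $\mathrm{k}$ central), so that $\widehat{\mathcal{C}}$ from Lemma \ref{lem:hatc} is precisely $\widehat{\mathcal{L}}(\mathfrak{b})$ and $V_{\mathcal{C}} = V_{\widehat{\mathcal{L}}(\mathfrak{b})}(\ell,0)$. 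The hypothesis \eqref{conconvar} is exactly the condition that makes $\mathcal{C}$ into a $G$-conformal algebra with the given automorphism group $G$, and the injectivity-free version of the correspondence is what one needs.

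First I would set up this dictionary carefully, checking that the operations match: the Lie bracket \eqref{eq:relationintildec} on $\widetilde{\mathcal{C}}$ specializes, via the substitution $m^i/i! $ versus binomial coefficients, and that the quotient $\overline{\mathcal{C}}[G]$ in \eqref{quotient of Lie algebra C_phi} becomes exactly $\widehat{\mathcal{L}}(\mathfrak{b},G)$ from \eqref{eq:whlgb} once one identifies $\overline{u[n]}$ with $\overline{t^n \otimes a}$ for $u = a \in \mathfrak{b}$ and $\overline{\mathrm{k}[0]}$ with $\overline{\mathrm{k}}$ (and notes that $\mathrm{k}[n] = 0$ in $\overline{\mathcal{C}}$ for $n\neq 0$ since $\partial \mathrm{k} = 0$). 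With this identification, Theorem 5.12 of \cite{CLTW}, which was already invoked in the proof of the preceding proposition, gives that $(G,\chi)$-equivariant $\phi$-coordinated quasi $V_{\mathcal{C}}$-modules are exactly restricted $\overline{\mathcal{C}}[G]$-modules with $Y_W^\phi(u,z) = u[z]$; translating back, this yields the equivalence of (i) and (iii) with the stated formula $Y_W^\phi(a,z) = \sum_n (\overline{t^n\otimes a})z^{-n}$, and "of level $\ell$" on the module side corresponds to $Y_W^\phi(\mathrm{k},z)$ acting as $\ell$, matching the normalization in $V_{\widehat{\mathcal{L}}(\mathfrak{b})}(\ell,0)$.

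Next I would handle the equivalence of (i) and (ii). Since $V_{\widehat{\mathcal{L}}(\mathfrak{b})}(\ell,0)$ is naturally $\mathbb{Z}$-graded (in fact $\mathbb{N}$-graded) with $G$ preserving the grading, Proposition \ref{prop:phimodvsmod} applies: $(G,\chi)$-equivariant quasi $V$-modules coincide with $(G,\chi)$-equivariant $\phi$-coordinated quasi modules for the Zhu-twisted vertex algebra $(V, Y[\cdot,z],\mathbf{1})$, related by $Y_W^\phi(v,z) = Y_W(z^{L(0)}v,z)$. For $v = a \in \mathfrak{b}$, which sits in degree one, and since $a$ is "primary" in the relevant sense (the bracket $Y^-(a,z)b$ involves only $z^{-1}$ and $z^{-2}$ terms, so $a$ behaves like a weight-one primary vector, and the Zhu isomorphism $f$ fixes it), this specializes to $Y_W^\phi(a,z) = Y_W(z^{L(0)}a,z) = z\,Y_W(a,z)$, giving the claimed relation between (i) and (ii). One should also confirm that \cite[Proposition 6.4]{CLTW} indeed asserts this three-way equivalence directly; if so, the cleanest route is simply to cite it, and the role of our plan above is to record why the hypotheses of that proposition are met in the present generality (arbitrary, possibly infinite-dimensional $\mathfrak{b}$, arbitrary $\chi$ not assumed injective).

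The main obstacle I anticipate is the bookkeeping around injectivity of $\chi$ and finiteness conditions: the conformal-algebra machinery in Section 2 (e.g. the Proposition immediately before Remark \ref{rem:isoofcovandfix}) was stated for \emph{injective} $\chi$, whereas here $\chi$ is an arbitrary linear character. One must therefore either appeal to the more general \cite[Proposition 6.4, Theorem 5.12]{CLTW} statements directly (which are presumably proved without the injectivity hypothesis), or reduce to the injective case by factoring $\chi$ through its image and checking that the non-injective kernel acts compatibly — this is where condition \eqref{conconvar} does its real work, ensuring the relevant sums over $g \in G$ are finite so that the quotient Lie algebra and its module category are well-defined. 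Modulo that point, the proposition is essentially a specialization-and-citation argument, and I would present it as such: "This is \cite[Proposition 6.4]{CLTW} applied to the conformal algebra $\mathcal{C} = \mathfrak{b}\oplus\mathbb{C}\mathrm{k}$, together with Proposition \ref{prop:phimodvsmod}," with a short paragraph verifying the dictionary.
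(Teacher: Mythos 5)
The paper offers no argument beyond the sentence ``From \cite[Proposition 6.4]{CLTW} we have the following result,'' so your bottom line --- cite that proposition directly, after checking that the standing hypotheses \eqref{conconvar} on $(\mathfrak{b},G,\chi)$ are in force --- is exactly the paper's proof, and the (i)$\Leftrightarrow$(ii) part via Proposition \ref{prop:phimodvsmod} with $a$ of degree one, giving $Y_W^{\phi}(a,z)=zY_W(a,z)$, is consistent with how the paper uses these identifications elsewhere.

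Two inaccuracies in your supplementary ``dictionary'' paragraph would need repair if you kept it. First, the conformal algebra cannot be $\mathcal{C}=\mathfrak{b}\oplus\mathbb{C}\mathrm{k}$ with $\partial=0$: the axiom $[\partial,Y^{-}(u,z)]=Y^{-}(\partial u,z)=\frac{d}{dz}Y^{-}(u,z)$ then forces $\frac{d}{dz}Y^{-}(u,z)=0$, which contradicts the $z^{-2}$-term $\langle a,b\rangle z^{-2}\mathrm{k}$, and in the quotient defining $\widehat{\mathcal{C}}$ the relation $(1\otimes\partial+\frac{d}{dt}\otimes 1)=0$ would kill all modes $u(n)$ with $n\neq -1$, so $\widehat{\mathcal{C}}$ would not be $\widehat{\mathcal{L}}(\mathfrak{b})$. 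The correct object is the free module $\mathcal{C}=\bigl(\mathbb{C}[\partial]\otimes\mathfrak{b}\bigr)\oplus\mathbb{C}\mathrm{k}$ with $\partial\mathrm{k}=0$, exactly as the paper does for $\mathcal{C}_{\mathfrak{g}}$ in Section 6. Second, even then $V_{\mathcal{C}}\neq V_{\widehat{\mathcal{L}}(\mathfrak{b})}(\ell,0)$: in $V_{\mathcal{C}}$ the central mode $\mathrm{k}(-1)$ acts freely, and one has $V_{\widehat{\mathcal{L}}(\mathfrak{b})}(\ell,0)\cong V_{\mathcal{C}}/\langle\mathrm{k}-\ell\rangle$ (again the pattern used in Section 6). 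Finally, your worry about injectivity of $\chi$ is well placed: the proposition in Section 2.2 relating $V_{\mathcal{C}}$-modules to $\widetilde{\mathcal{C}}[G]$-modules is stated only for injective $\chi$, whereas here $\chi$ is arbitrary, so the reduction through the $G$-conformal-algebra machinery does not cover the general case without extra work; this is precisely why the clean route, and the paper's route, is the direct appeal to \cite[Proposition 6.4]{CLTW}.
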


We assume now that $\sigma$ is an automorphism of $\mathfrak{b}$ with finite order $N$ and
preserving the bilinear form, and set $\omega_N=e^{\frac{2\pi\sqrt{-1}}{N}}$.  For $a\in\mathfrak{b}$ and $m\in\mathbb{Z}$,
set $a_{\left(m\right)}=\sum_{p=0}^{N-1}\omega_{N}^{-mp}\sigma^{p}(a)$
and $\mathfrak{b}_{\left(m\right)}=\left\{ a_{\left(m\right)}\mid a\in\mathfrak{b}\right\} $
(the $\sigma$-eigenspace of $\mathfrak{b}$ with eigenvalue $\omega_{N}^{m}$). Then we have the following Lie subalgebras of $\widetilde{\mathcal{L}}(\mathfrak{b})$, called $\sigma$-twisted
affine Lie algebras (cf. \cite{K1})
\begin{align*}
\widetilde{\mathcal{L}}\left(\mathfrak{b},\sigma\right)
=\left(\sum_{m\in\mathbb{Z}}\mathbb{C}t^{m}\otimes\mathfrak{b}_{(m)}\right)\oplus\mathbb{C}\text{k}\oplus\mathbb{C}\text{d}, \quad \widehat{\mathcal{L}}\left(\mathfrak{b},\sigma\right)=\left(\sum_{m\in\mathbb{Z}}\mathbb{C}t^{m}\otimes\mathfrak{b}_{(m)}\right)\oplus\mathbb{C}\text{k}.
\end{align*}
Let $\chi_{\omega_{N}}$ be the linear character of the cyclic group
$\langle\sigma\rangle$ such that $\chi_{\omega_{N}}\left(\sigma\right)=\omega_{N}^{-1}$.
We take $G=\left\langle \sigma\right\rangle $ and $\chi=\chi_{\omega_{N}}$ in \eqref{eq:whlgb}. Then  it is easy to  check that the linear map
given by
\begin{align}
\overline{t^{m}\otimes a}\mapsto t^{m}\otimes a_{(m)},\quad\overline{\text{k}}\mapsto N\text{k},\quad\text{for}\ a\in\mathfrak{b},m\in\mathbb{Z},\label{eq:twistedaffiso1}
\end{align}
is a Lie algebra isomorphism from $\widehat{\mathcal{L}}\left(\mathfrak{b},\left\langle \sigma\right\rangle \right)$
to $\widehat{\mathcal{L}}\left(\mathfrak{b},\sigma\right)$.

\begin{definition} Let $W$ be a module of $\widetilde{\mathcal{L}}\left(\mathfrak{b},\sigma\right)$
or $\widehat{\mathcal{L}}\left(\mathfrak{b},\sigma\right)$.
We say that $W$ is
\emph{ restricted} if for any $a\in\mathfrak{b}$ and $w\in W$,
$\left(t^n\otimes a_{(n)}\right)\cdot w=0$ for $n$ sufficiently large,
and is \emph{of level $\ell$} if $\text{k}$ acts as
scalar $\ell/N$.
\end{definition}

 From Proposition \ref{prop:affva1} and the isomorphism
\eqref{eq:twistedaffiso1}, one immediately obtains the following result.

\begin{proposition}\label{prop:affva2} The following categories
are isomorphic to each other:
\begin{enumerate}
\item[(i)] the category of $\left(\langle\sigma\rangle,\chi_{\omega_{N}}\right)$-equivariant
$\phi$-coordinated quasi $V_{\widehat{\mathcal{L}}\left(\mathfrak{b}\right)}\left(\ell,0\right)$-modules
$\left(W,Y_{W}^{\phi}\right)$;
\item[(ii)] the category of $\left(\langle\sigma\rangle,\chi_{\omega_{N}}\right)$-equivariant
quasi $V_{\widehat{\mathcal{L}}\left(\mathfrak{b}\right)}\left(\ell,0\right)$-modules
$\left(W,Y_{W}\right)$;
\item[(iii)] the category of restricted $\widehat{\mathcal{L}}\left(\mathfrak{b},\sigma\right)$-modules
$W$ of level $\ell$.
\end{enumerate}
And, the isomorphisms are determined by
\begin{align*}
Y_{W}^{\phi}\left(a,z\right)=zY_{W}\left(a,z\right)=a\left[z\right]:=\sum_{n\in\mathbb{Z}}\left(t^{n}\otimes a_{(m)}\right)z^{-n},
\end{align*}
for $ a\in\mathfrak{b}$.
\end{proposition}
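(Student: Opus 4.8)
The plan is to deduce Proposition~\ref{prop:affva2} directly from Proposition~\ref{prop:affva1} by transporting the module-category isomorphisms there along the Lie algebra isomorphism \eqref{eq:twistedaffiso1}. First I would verify that the hypotheses of Proposition~\ref{prop:affva1} are met with $G=\langle\sigma\rangle$ and $\chi=\chi_{\omega_N}$: since $\sigma$ preserves the nondegenerate invariant symmetric bilinear form and has finite order $N$, the cyclic group $\langle\sigma\rangle$ is an automorphism group of $\mathfrak b$ preserving $\langle\cdot,\cdot\rangle$; and because $G$ is finite, the finiteness condition \eqref{conconvar} (that $[ga,b]=0$ and $\langle ga,b\rangle=0$ for all but finitely many $g$) holds trivially. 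Hence Proposition~\ref{prop:affva1} applies verbatim and gives the isomorphism of categories (i)$\cong$(ii)$\cong$(iii) for $\widehat{\mathcal L}(\mathfrak b,\langle\sigma\rangle)$, with the vertex-operator formula $Y_W^\phi(a,z)=zY_W(a,z)=\sum_{n\in\mathbb Z}\bigl(\overline{t^n\otimes a}\bigr)z^{-n}$.

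Next I would invoke the Lie algebra isomorphism \eqref{eq:twistedaffiso1}, $\overline{t^m\otimes a}\mapsto t^m\otimes a_{(m)}$ and $\overline{\mathrm k}\mapsto N\mathrm k$, to identify the category of $\widehat{\mathcal L}(\mathfrak b,\langle\sigma\rangle)$-modules with the category of $\widehat{\mathcal L}(\mathfrak b,\sigma)$-modules. Under this identification I would check that the adjective ``restricted'' matches: $\overline{t^n\otimes a}\cdot w=0$ for $n\gg0$ becomes $(t^n\otimes a_{(n)})\cdot w=0$ for $n\gg0$, which is exactly the definition of restricted given for $\widehat{\mathcal L}(\mathfrak b,\sigma)$-modules. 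Similarly ``of level $\ell$'' transports correctly: $\overline{\mathrm k}$ acting by $\ell$ translates, via $\overline{\mathrm k}\mapsto N\mathrm k$, to $\mathrm k$ acting by $\ell/N$, which is precisely the level-$\ell$ convention adopted in the definition preceding the proposition. So category (iii) of Proposition~\ref{prop:affva1} is the same as category (iii) of Proposition~\ref{prop:affva2}, and chaining the two identifications yields the desired isomorphisms, with the generating function rewritten as $Y_W^\phi(a,z)=zY_W(a,z)=a[z]=\sum_{n\in\mathbb Z}(t^n\otimes a_{(n)})z^{-n}$ after applying \eqref{eq:twistedaffiso1} to the formula from Proposition~\ref{prop:affva1}.

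There is essentially no substantive obstacle here; the content is entirely in Proposition~\ref{prop:affva1} (which is cited from \cite{CLTW}) and in the already-established isomorphism \eqref{eq:twistedaffiso1}. The only points requiring genuine care are bookkeeping: making sure the scalar $N$ in $\overline{\mathrm k}\mapsto N\mathrm k$ is consistent with the $\ell/N$ normalization so that the level-$\ell$ categories on both sides coincide, and confirming that the two notions of ``restricted'' (the one for $\widehat{\mathcal L}(\mathfrak b,\langle\sigma\rangle)$ and the one for $\widehat{\mathcal L}(\mathfrak b,\sigma)$) correspond under the isomorphism. Since \eqref{eq:twistedaffiso1} sends $\overline{t^m\otimes a}$ to $t^m\otimes a_{(m)}$ degree by degree, the truncation conditions match term by term, so this is immediate. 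The proof is therefore a one-line deduction, and I would write it exactly as the paper does: ``From Proposition~\ref{prop:affva1} and the isomorphism \eqref{eq:twistedaffiso1}, one immediately obtains the result.''
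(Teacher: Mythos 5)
Your proposal is correct and coincides with the paper's own argument: the paper proves Proposition \ref{prop:affva2} precisely by applying Proposition \ref{prop:affva1} with $G=\langle\sigma\rangle$, $\chi=\chi_{\omega_{N}}$ and transporting along the isomorphism \eqref{eq:twistedaffiso1}, the only bookkeeping being exactly the points you note (the $\overline{\text{k}}\mapsto N\text{k}$ scaling matching the level-$\ell/N$ convention, and the restricted conditions corresponding term by term). No further comment is needed.
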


Furthermore, by Proposition \ref{prop:affva2}, Lemma
\ref{lem:actofd} and the fact that $\left[\text{d},a\left[z\right]\right]=-z\frac{d}{dz}a\left[z\right]$
for $a\in\mathfrak{b}$,  we have

\begin{proposition} \label{prop:affva3} For any complex number $\ell$,
the restricted $\widetilde{\mathcal{L}}\left(\mathfrak{b},\sigma\right)$-modules
$W$ of level $\ell$ are exactly the $\left(\langle\sigma\rangle,\chi_{\omega_{N}}\right)$-equivariant
$\phi$-coordinated quasi $V_{\widehat{\mathcal{L}}\left(\mathfrak{b}\right)}\left(\ell,0\right)$-modules
$\left(W,Y_{W}^{\phi},d\right)$ with
\begin{align*}
\text{d}=-d, \quad a\left[z\right]=Y_{W}^{\phi}\left(a,z\right),
\end{align*}
for $a\in\mathfrak{b}$.
\end{proposition}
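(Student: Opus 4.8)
The plan is to obtain this as a mild upgrade of Proposition \ref{prop:affva2}, in which the only new ingredient is the action of the degree derivation $\text{d}$ of $\widetilde{\mathcal{L}}(\mathfrak{b},\sigma)$, matched with the operator $d$ that promotes a $(\langle\sigma\rangle,\chi_{\omega_N})$-equivariant $\phi$-coordinated quasi $V_{\widehat{\mathcal{L}}(\mathfrak{b})}(\ell,0)$-module $(W,Y_W^{\phi})$ to one of the form $(W,Y_W^{\phi},d)$. First I would observe that a restricted $\widetilde{\mathcal{L}}(\mathfrak{b},\sigma)$-module $W$ of level $\ell$ is exactly a restricted $\widehat{\mathcal{L}}(\mathfrak{b},\sigma)$-module of level $\ell$ together with an endomorphism $\text{d}$ of $W$ satisfying $[\text{d},\,t^{n}\otimes a_{(n)}]=n\,(t^{n}\otimes a_{(n)})$ for all $a\in\mathfrak{b}$ and $n\in\mathbb{Z}$: restrictedness and the level involve only the $\widehat{\mathcal{L}}(\mathfrak{b},\sigma)$-action, the element $\text{k}$ is central and acts as a scalar, and the remaining brackets of $\widetilde{\mathcal{L}}(\mathfrak{b},\sigma)$ are precisely the displayed ones. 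In terms of the generating series $a[z]=\sum_{n\in\mathbb{Z}}(t^{n}\otimes a_{(n)})\,z^{-n}$, that condition reads $[\text{d},a[z]]=-z\frac{d}{dz}a[z]$, the fact quoted just before the proposition.

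Next I would apply Proposition \ref{prop:affva2} to the $\widehat{\mathcal{L}}(\mathfrak{b},\sigma)$-part of $W$: it produces a $(\langle\sigma\rangle,\chi_{\omega_N})$-equivariant $\phi$-coordinated quasi $V_{\widehat{\mathcal{L}}(\mathfrak{b})}(\ell,0)$-module $(W,Y_W^{\phi})$ with $Y_W^{\phi}(a,z)=a[z]$ for $a\in\mathfrak{b}$, and conversely every such module arises this way. It then remains only to carry the operator across. Setting $d=-\text{d}$, for a generator $a\in\mathfrak{b}$ we get $[d,Y_W^{\phi}(a,z)]=-[\text{d},a[z]]=z\frac{d}{dz}a[z]=z\frac{d}{dz}Y_W^{\phi}(a,z)$, and the right-hand side equals $Y_W^{\phi}(\mathcal{D}a,z)$ by Lemma \ref{lem:actofd}; equivalently, specialising the second input in the $\phi$-coordination axiom of Definition \ref{def:vaphimod}(iii) to $\mathbf{1}$ gives $Y_W^{\phi}(e^{z_{0}\mathcal{D}}v,z)=Y_W^{\phi}(v,ze^{z_{0}})$, hence $Y_W^{\phi}(\mathcal{D}v,z)=z\frac{d}{dz}Y_W^{\phi}(v,z)$ for every $v\in V_{\widehat{\mathcal{L}}(\mathfrak{b})}(\ell,0)$. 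Thus $[d,Y_W^{\phi}(a,z)]=Y_W^{\phi}(\mathcal{D}a,z)$ for all $a\in\mathfrak{b}$.

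To conclude I would promote the identity $[d,Y_W^{\phi}(v,z)]=Y_W^{\phi}(\mathcal{D}v,z)$ from the generating set $\mathfrak{b}$ to all of $V_{\widehat{\mathcal{L}}(\mathfrak{b})}(\ell,0)$: the subspace $U$ of $v$ for which it holds contains $\mathbf{1}$ and $\mathfrak{b}$, and, using the $\phi$-coordination axiom together with the fact that $[d,\cdot\,]$ acts on $Y_W^{\phi}(u,z_{1})Y_W^{\phi}(v,z_{2})$ as $z_{1}\partial_{z_{1}}+z_{2}\partial_{z_{2}}$, which under the substitution $z_{1}=z_{2}e^{z_{0}}$ becomes $z_{2}\partial_{z_{2}}$, one checks that $U$ is closed under all the products $u_{n}v$; since $V_{\widehat{\mathcal{L}}(\mathfrak{b})}(\ell,0)$ is generated by $\mathfrak{b}$ as a vertex algebra, $U$ is everything, so $(W,Y_W^{\phi},d)$ is a $(\langle\sigma\rangle,\chi_{\omega_N})$-equivariant $\phi$-coordinated quasi $V_{\widehat{\mathcal{L}}(\mathfrak{b})}(\ell,0)$-module. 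The assignments $W\mapsto(W,Y_W^{\phi},d=-\text{d})$ and $(W,Y_W^{\phi},d)\mapsto(W,\text{d}=-d)$ are then mutually inverse and visibly preserve module homomorphisms (a homomorphism of $\widetilde{\mathcal{L}}(\mathfrak{b},\sigma)$-modules is one of $\widehat{\mathcal{L}}(\mathfrak{b},\sigma)$-modules intertwining $\text{d}$, which matches a homomorphism of the $(W,Y_W^{\phi})$ intertwining $d$), giving the stated identification. I expect the only step that is not purely formal to be this last closure check for $U$, but it is a routine application of the $\phi$-coordination axiom of the same kind already underlying Proposition \ref{prop:affva2}.
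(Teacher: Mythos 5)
Your proposal is correct and takes essentially the same route as the paper, whose proof consists precisely of combining Proposition \ref{prop:affva2}, Lemma \ref{lem:actofd} and the relation $[\text{d},a[z]]=-z\frac{d}{dz}a[z]$ for $a\in\mathfrak{b}$, with $d=-\text{d}$. Your explicit closure argument extending $[d,Y_{W}^{\phi}(v,z)]=Y_{W}^{\phi}(\mathcal{D}v,z)$ from the generating subspace $\mathfrak{b}$ to all of $V_{\widehat{\mathcal{L}}(\mathfrak{b})}(\ell,0)$ just spells out a standard step the paper leaves implicit.
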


\begin{remark}\label{rem:tmodforaffva} We also have the following
variant of the twisted affine Lie algebra $\widehat{\mathcal{L}}\left(\mathfrak{b},\sigma\right)$
(cf.\,\cite{FLM}):
\begin{align*}
\widehat{\mathcal{L}}\left[\mathfrak{b},\sigma\right]=\left(\sum_{m\in\frac{1}{N}\mathbb{Z}}\mathbb{C}t^{m}\otimes\mathfrak{b}_{(Nm)}\right)\oplus\mathbb{C}\text{k},
\end{align*}
where the Lie bracket is given by \eqref{eq:affrela} with $m,n\in\frac{1}{N}\mathbb{Z}$.
Note that $\widehat{\mathcal{L}}\left(\mathfrak{b},\sigma\right)$
is isomorphic to $\widehat{\mathcal{L}}\left[\mathfrak{b},\sigma\right]$
with the mapping
\begin{align}
t^{m}\otimes a_{(m)}\mapsto t^{m/N}\otimes a_{(m)},\quad N\text{k}\mapsto\text{k},\label{eq:twistedaffiso2}
\end{align}
for $ a\in\mathfrak{b}, m\in\mathbb{Z}.$
It is well-known that the $\sigma$-twisted $V_{\widehat{\mathcal{L}}\left(\mathfrak{b}\right)}\left(\ell,0\right)$-modules
$\left(W,Y_{W}^{t}\right)$ are exactly the restricted $\widehat{\mathcal{L}}\left[\mathfrak{b},\sigma\right]$-modules
$W$ of level $\ell$ with $Y_{W}^{t}\left(a,z\right)=\sum_{m\in\mathbb{Z}}\left(t^{\frac{m}{N}}\otimes\frac{a_{(m)}}{N}\right)z^{-\frac{m}{N}-1}$
for $a\in\mathfrak{b}$ (cf. \cite{Li2}).
\end{remark}

\begin{remark}\label{rem:affineder} One can also define the notion
of a $\sigma$-twisted $V_{\widehat{\mathcal{L}}\left(\mathfrak{b}\right)}\left(\ell,0\right)$-module
$\left(W,Y_{W}^{t},d\right)$ such that $\left[d,Y_{W}^{t}\left(v,z\right)\right]=Y_{W}^{t}\left(\mathcal{D}v,z\right)$
for $v\in V_{\widehat{\mathcal{L}}\left(\mathfrak{b}\right)}\left(\ell,0\right)$
(see \cite{Li2} for example).\textcolor{red}{{} }Then such $\sigma$-twisted
$V_{\widehat{\mathcal{L}}\left(\mathfrak{b}\right)}\left(\ell,0\right)$-modules
are precisely the restricted modules of level $\ell$ for the algebra
$\widehat{\mathcal{L}}\left[\mathfrak{b},\sigma\right]\rtimes\mathbb{C}\left(\frac{d}{dt}\otimes1\right)$,
as $[d,Y_{W}^{t}(v,z)]=\frac{d}{dz}Y_{W}^{t}(v,z)$ for $v\in V_{\widehat{\mathcal{L}}\left(\mathfrak{b}\right)}\left(\ell,0\right)$
(cf. \cite{Li2}). 
And a similar result also holds for the equivariant quasi $V_{\widehat{\mathcal{L}}\left(\mathfrak{b}\right)}\left(\ell,0\right)$-modules\textcolor{red}{{}
}(cf. \cite{Li3}). \end{remark}

\subsection{Associating affine Kac-Moody algebras with vertex algebras}

In this subsection, we associate the nullity 1 EALAs (i.e. affine Kac-Moody algebras with
derivations) to vertex algebras via equivariant $\phi$-coordinated
quasi modules. As a by-product, we obtain a characterization of integrable restricted modules for
 affine Kac-Moody algebras, which will be used later to associate the nullity 2 EALAs with vertex algebras.

Let $\dot{\mathfrak{g}}$ be a finite-dimensional simple Lie algebra over the complex field, $\dot{\mathfrak{h}}$ a Cartan subalgebra
of $\dot{\mathfrak{g}}$, and $\dot{\Delta}$ the root system of $\dot{\mathfrak{g}}$
with respect to $\dot{\mathfrak{h}}$. Let $\left\langle \cdot,\cdot\right\rangle $
be the nondegenerate invariant symmetric bilinear form on $\dot{\mathfrak{g}}$
which is normalized so that the square length of long roots are equal
to 2. For $\dot{\alpha}\in\dot{\Delta}$, choose nonzero root vectors
$x_{\dot{\alpha}}\in\dot{\mathfrak{g}}_{\dot{\alpha}}$ such that
$\left\{ x_{\dot{\alpha}},\dot{\alpha}^{\vee},x_{-\dot{\alpha}}\right\} $
form an $\mathfrak{sl}_{2}$-triple, where $\dot{\alpha}^{\vee}\in\dot{\mathfrak{h}}$
denotes the coroot of $\alpha$.

Fix a simple root system $\dot{\Pi}=\left\{ \dot{\alpha}_{1},\dots,\dot{\alpha}_{l}\right\} $
of $\dot{\Delta}$, where $l=\dim\dot{\mathfrak{h}}$ is the rank
of $\dot{\mathfrak{g}}$. Let $\dot{\nu}$ be a diagram automorphism
of $\dot{\mathfrak{g}}$ of order $N$ ($N=1,2$ or $3$). By definition,
there exists a permutation $\dot{\nu}$ on the set $\dot{I}=\left\{ 1,2,\dots,l\right\} $
such that $\dot{\nu}\left(x_{\pm\dot{\alpha}_{i}}\right)=x_{\pm\dot{\alpha}_{\dot{\nu}(i)}}$
for $i\in\dot{I}$. Then the Lie algebra $\widetilde{\mathcal{L}}\left(\dot{\mathfrak{g}},\dot{\nu}\right)$
as defined in Section 3.2 is a Kac-Moody algebra of affine type, and
any affine Kac-Moody algebra has such a form \cite{K1}. We say that
an $\widetilde{\mathcal{L}}\left(\dot{\mathfrak{g}},\dot{\nu}\right)$-module
(or an $\widehat{\mathcal{L}}\left(\dot{\mathfrak{g}},\dot{\nu}\right)$-module)
is \emph{integrable} if all real root vectors $t^{m}\otimes x_{\dot{\alpha}(m)}$
for $\dot{\alpha}\in\dot{\Delta},m\in\mathbb{Z}$ act locally nilpotent
on it \cite{K1}. Now we give the main result of this section.

\begin{theorem}\label{thm:main1} Let $\ell$ be any complex number.
For any restricted $\widetilde{\mathcal{L}}\left(\dot{\mathfrak{g}},\dot{\nu}\right)$-module
$W$ of level $\ell$, there is a $\left(\langle\dot{\nu}\rangle,\chi_{\omega_{N}}\right)$-equivariant
$\phi$-coordinated quasi $V_{\widehat{\mathcal{L}}\left(\dot{\mathfrak{g}}\right)}\left(\ell,0\right)$-module
structure $(Y_{W}^{\phi},d)$ on $W$, which is uniquely determined
by
\begin{align*}
d=-\text{d}, \quad Y_{W}^{\phi}\left(a,z\right)=a\left[z\right]\left(=\sum_{n\in\mathbb{Z}}\left(t^{m}\otimes a_{(m)}\right)z^{-m}\right),
\end{align*}
for $ a\in\dot{\mathfrak{g}}$.
On the other hand, for any $\left(\langle\dot{\nu}\rangle,\chi_{\omega_{N}}\right)$-equivariant
$\phi$-coordinated quasi $V_{\widehat{\mathcal{L}}\left(\dot{\mathfrak{g}}\right)}\left(\ell,0\right)$-module
$(W,Y_{W}^{\phi},d)$, $W$ is a restricted $\widetilde{\mathcal{L}}\left(\dot{\mathfrak{g}},\dot{\nu}\right)$-module
of level $\ell$ with action given by
\begin{align*}
\text{d}=-d, \quad a\left[z\right]=Y_{W}^{\phi}\left(a,z\right),
\end{align*}
for $ a\in\dot{\mathfrak{g}}$.
Furthermore, when $\ell$ is a nonnegative integer, the integrable\textcolor{red}{{}
}restricted $\widetilde{\mathcal{L}}\left(\dot{\mathfrak{g}},\dot{\nu}\right)$-modules
of level $\ell$ exactly correspond to the $\left(\langle\dot{\nu}\rangle,\chi_{\omega_{N}}\right)$-equivariant
$\phi$-coordinated quasi $L_{\widehat{\mathcal{L}}\left(\dot{\mathfrak{g}}\right)}\left(\ell,0\right)$-modules.
\end{theorem}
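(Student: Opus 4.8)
The plan is to deduce Theorem \ref{thm:main1} from the already-established Proposition \ref{prop:affva3}, applied with $\mathfrak{b}=\dot{\mathfrak{g}}$, $\sigma=\dot{\nu}$, $N=\mathrm{ord}(\dot{\nu})$, and $\chi_{\omega_N}$ the character with $\chi_{\omega_N}(\dot{\nu})=\omega_N^{-1}$. First I would observe that the Lie algebra $\widetilde{\mathcal{L}}(\dot{\mathfrak{g}},\dot{\nu})$ appearing in the theorem is, by the definitions in Section 3.2, literally the algebra $\widetilde{\mathcal{L}}(\mathfrak{b},\sigma)$ with these choices, and that the notions of ``restricted'' and ``of level $\ell$'' for its modules match on the nose (up to the scalar $\ell/N$ on $\mathrm{k}$, which is built into the definition). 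Therefore the first two assertions — the back-and-forth between restricted level-$\ell$ modules for $\widetilde{\mathcal{L}}(\dot{\mathfrak{g}},\dot{\nu})$ and $(\langle\dot{\nu}\rangle,\chi_{\omega_N})$-equivariant $\phi$-coordinated quasi $V_{\widehat{\mathcal{L}}(\dot{\mathfrak{g}})}(\ell,0)$-modules $(W,Y_W^\phi,d)$ with $\mathrm{d}=-d$ and $a[z]=Y_W^\phi(a,z)$ — are an immediate restatement of Proposition \ref{prop:affva3}. This part of the proof is essentially bookkeeping, checking that the generating-function conventions agree.

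The substantive part is the last sentence: for $\ell\in\mathbb{N}$, the integrable restricted modules of level $\ell$ correspond exactly to the equivariant $\phi$-coordinated quasi $L_{\widehat{\mathcal{L}}(\dot{\mathfrak{g}})}(\ell,0)$-modules. The strategy here is the classical one going back to \cite{FZ,Li1,Li2}. Recall that $L_{\widehat{\mathcal{L}}(\dot{\mathfrak{g}})}(\ell,0)=V_{\widehat{\mathcal{L}}(\dot{\mathfrak{g}})}(\ell,0)/J$ where $J=J_{\widehat{\mathcal{L}}(\dot{\mathfrak{g}})}(\ell,0)$ is the maximal graded ideal. A $(\langle\dot{\nu}\rangle,\chi_{\omega_N})$-equivariant $\phi$-coordinated quasi $V_{\widehat{\mathcal{L}}(\dot{\mathfrak{g}})}(\ell,0)$-module $(W,Y_W^\phi)$ descends to an $L_{\widehat{\mathcal{L}}(\dot{\mathfrak{g}})}(\ell,0)$-module precisely when $Y_W^\phi(v,z)=0$ for all $v\in J$; since $J$ is $\langle\dot{\nu}\rangle$-stable this is a well-posed condition. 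The key classical fact is that $J$ is generated as an ideal by a single singular vector of the form $(x_{\dot\theta})^{\ell+1}_{-1}\mathbf{1}$ (writing $x_{\dot\theta}=t^{-1}\otimes e_{\dot\theta}$ for a highest long root vector $e_{\dot\theta}$, suitably interpreted for $N=1$; for $N>1$ one uses the appropriate highest-weight vector for the twisted situation, but the upshot is the same). So I would invoke Proposition \ref{prop:modnil}: with $u=x_{\dot\theta}\in V_{\widehat{\mathcal{L}}(\dot{\mathfrak{g}})}(\ell,0)$ one has $u_nu=0$ for $n\ge0$ (since $[e_{\dot\theta},e_{\dot\theta}]=0$ and $\langle e_{\dot\theta},e_{\dot\theta}\rangle=0$), and $(u_{-1})^{\ell+1}\mathbf{1}$ is exactly the generating singular vector. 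Proposition \ref{prop:modnil} then gives a polynomial $q(z)\in\mathbb{C}_{\chi(G)\setminus\{1\}}[z]$ such that $(u_{-1})^{\ell+1}\mathbf{1}$ acts as zero on $W$ if and only if $(\prod_{i<j}q(z_i/z_j))Y_W^\phi(u,z_1)\cdots Y_W^\phi(u,z_{\ell+1})|_{z_1=\cdots=z_{\ell+1}}=0$ (using faithfulness, which one reduces to after quotienting by the radical). Translating $Y_W^\phi(u,z)=x_{\dot\theta}[z]$ back into the Lie-algebra language via the isomorphism \eqref{eq:twistedaffiso1}/\eqref{eq:twistedaffiso2}, this coincides with the vanishing of the $(\ell+1)$-st power of the real root operator $x_{\dot\theta}[z]$, which by standard $\mathfrak{sl}_2$-theory (Garland's argument, cf.\ \cite{K1}) is equivalent to local nilpotence of all real root vectors, i.e.\ to integrability of $W$.

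Concretely, the steps in order: (1) identify $\widetilde{\mathcal{L}}(\dot{\mathfrak{g}},\dot{\nu})=\widetilde{\mathcal{L}}(\mathfrak{b},\sigma)$ and quote Proposition \ref{prop:affva3} for the first two claims; (2) reduce the $L_{\widehat{\mathcal{L}}(\dot{\mathfrak{g}})}(\ell,0)$ statement to showing that an equivariant $\phi$-coordinated quasi $V_{\widehat{\mathcal{L}}(\dot{\mathfrak{g}})}(\ell,0)$-module $W$ factors through $L_{\widehat{\mathcal{L}}(\dot{\mathfrak{g}})}(\ell,0)$ iff the corresponding $\widetilde{\mathcal{L}}(\dot{\mathfrak{g}},\dot{\nu})$-module is integrable; (3) recall that the maximal ideal $J$ is generated by the singular vector $(x_{\dot\theta})_{-1}^{\ell+1}\mathbf{1}$; (4) apply Proposition \ref{prop:modnil} with $u=x_{\dot\theta}$ to get the operator-level vanishing criterion \eqref{eq:modnil4}; (5) on the module side, for $W$ faithful use the converse direction of Proposition \ref{prop:modnil} together with the $\mathfrak{sl}_2$/Garland argument to match \eqref{eq:modnil4} with integrability, and handle the non-faithful case by passing to $W/\mathrm{rad}$. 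I expect the main obstacle to be step (5) in the twisted ($N>1$) case: one must make sure that the vanishing of a single operator $q(z_1/z_2)^{\binom{\ell+1}{2}}x_{\dot\theta}[z_1]\cdots$ at the diagonal really does imply local nilpotence of \emph{all} real root vectors $t^m\otimes x_{\dot\alpha(m)}$ and not just those conjugate to $\dot\theta$ — this requires knowing that the real roots of the affine algebra fall into at most a few Weyl-orbits and that $\langle\dot\nu\rangle$-equivariance plus the standard affine $\mathfrak{sl}_2$-reduction propagates nilpotence across orbits, exactly as in the untwisted vertex-algebra proof of \cite[Proposition 2.10]{Li2}. The other mild subtlety is that \eqref{eq:modnil4} as stated requires faithfulness for the ``only if'' direction, so one should phrase the final equivalence on $W/\mathrm{rad}\,Y_W^\phi$ and then note that $\mathrm{rad}\,Y_W^\phi$ is automatically zero for modules in the category under consideration, or simply build the factorization through $L_{\widehat{\mathcal{L}}(\dot{\mathfrak{g}})}(\ell,0)$ directly.
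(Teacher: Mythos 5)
Your treatment of the first two assertions is exactly the paper's: identify $\widetilde{\mathcal{L}}(\dot{\mathfrak{g}},\dot{\nu})$ with $\widetilde{\mathcal{L}}(\mathfrak{b},\sigma)$ for $\mathfrak{b}=\dot{\mathfrak{g}}$, $\sigma=\dot\nu$, and quote Proposition \ref{prop:affva3}; that part is fine. The divergence, and the problem, is in the integrability statement. The paper does \emph{not} argue through Proposition \ref{prop:modnil} and singular vectors at this point; it uses that $L_{\widehat{\mathcal{L}}(\dot{\mathfrak{g}})}(\ell,0)$ is a rational vertex operator algebra with each $a\in\dot{\mathfrak{g}}$ primary of weight $1$, invokes Proposition \ref{prop:modisoforvoa} (Zhu's $Y[\cdot,z]$ change of coordinates \cite{Z} plus Li's identification of $(\langle\sigma\rangle,\chi)$-equivariant quasi modules with $\sigma$-twisted modules \cite{Li5}) to convert equivariant $\phi$-coordinated quasi $L_{\widehat{\mathcal{L}}(\dot{\mathfrak{g}})}(\ell,0)$-modules into $\dot\nu$-twisted $L_{\widehat{\mathcal{L}}(\dot{\mathfrak{g}})}(\ell,0)$-modules, and then cites the classical theorem of \cite{Li2} that these are precisely the integrable restricted $\widehat{\mathcal{L}}[\dot{\mathfrak{g}},\dot\nu]$-modules, transported back through the isomorphism \eqref{eq:twistedaffiso2}. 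In other words, the hard equivalence ``module for the simple quotient $\Longleftrightarrow$ integrable'' is imported wholesale from the twisted-module literature, not re-proved.

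Your route tries to prove that equivalence directly, and this is where there is a genuine gap. The step you yourself flag as ``the main obstacle'' — that vanishing of $\bigl(\prod_{i<j}q(z_i/z_j)\bigr)Y_W^{\phi}(x_{\dot\theta},z_1)\cdots Y_W^{\phi}(x_{\dot\theta},z_{\ell+1})$ on the diagonal implies local nilpotence of \emph{all} real root vectors $t^m\otimes x_{\dot\alpha(m)}$ of the twisted algebra, and conversely that integrability yields this identity — is exactly the nontrivial content of the twisted version of \cite[Proposition 2.10]{Li2}, and you do not supply it; ``standard $\mathfrak{sl}_2$/Garland theory'' does not cover the twisted case, where roots $\dot\alpha\in\dot\Delta_2$ carry the factor $p_{\dot\alpha}(z)=1+z$ and the correct exponent is $\epsilon_{\dot\alpha}\ell+1$ rather than $\ell+1$ (propagating nilpotence from $\dot\theta$, for which $\epsilon_{\dot\theta}=1$, to the short roots is part of what must be proved). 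Note also that in the paper this equivalence appears as Proposition \ref{prop:charintaffmod}, whose proof \emph{uses} Theorem \ref{thm:main1}, so appealing to it (or to the analogous identities \eqref{eq:modnil4}, \eqref{eq:charintaffmod2} as if known) would be circular; you would have to give an independent twisted Garland-type computation, which is essentially re-deriving \cite{Li2}. Finally, the remark that $\mathrm{rad}\,Y_W^{\phi}$ ``is automatically zero for modules in the category under consideration'' is unjustified and unnecessary — the correct (and the paper's) device is to view $W$ as a faithful quasi module for $V_{\widehat{\mathcal{L}}(\dot{\mathfrak{g}})}(\ell,0)/\ker Y_W^{\phi}$. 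So the skeleton of your argument could be made to work, but only by filling in precisely the twisted integrability theorem that the paper instead cites, and as written that step is missing.
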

\begin{proof}
The one-to-one correspondence between the category of restricted $\widetilde{\mathcal{L}}\left(\dot{\mathfrak{g}},\dot{\nu}\right)$-modules of level $\ell$ and the category of $\left(\langle\dot{\nu}\rangle,\chi_{\omega_{N}}\right)$-equivariant
$\phi$-coordinated quasi $V_{\widehat{\mathcal{L}}\left(\dot{\mathfrak{g}}\right)}\left(\ell,0\right)$-modules follows from
 Proposition \ref{prop:affva3}.

For the last assertion of the theorem, we note that $L_{\widehat{\mathcal{L}}\left(\dot{\mathfrak{g}}\right)}\left(\ell,0\right)$
is a rational vertex operator algebra, and each $a\in\dot{\mathfrak{g}}$
is a primary vector of weight $1$ in $L_{\widehat{\mathcal{L}}\left(\dot{\mathfrak{g}}\right)}\left(\ell,0\right)$.
Thus, by Proposition \ref{prop:modisoforvoa}, it is easy to see that the $\left(\langle\dot{\nu}\rangle,\chi_{\omega_{N}}\right)$-equivariant
$\phi$-coordinated quasi $L_{\widehat{\mathcal{L}}\left(\dot{\mathfrak{g}}\right)}\left(\ell,0\right)$-modules
$\left(W,Y_{W}^{\phi}\right)$ are exactly the $\dot{\nu}$-twisted
$L_{\widehat{\mathcal{L}}(\dot{\mathfrak{g}})}\left(\ell,0\right)$-modules
$\left(W,Y_{W}^{t}\right)$ with
\begin{align*}
Y_{W}^{\phi}\left(a,z\right)=Nz^{N}Y_{W}^{t}\left(a,z\right),
\end{align*}
for $ a\in\dot{\mathfrak{g}}$.
Recall the algebra $\widehat{\mathcal{L}}\left[\dot{\mathfrak{g}},\dot{\nu}\right]$
defined in Remark \ref{rem:tmodforaffva}. It is known (cf. \cite{Li2}) that the
integrable restricted $\widehat{\mathcal{L}}\left[\dot{\mathfrak{g}},\dot{\nu}\right]$-modules
of level $\ell$ are exactly the $\dot{\nu}$-twisted $L_{\widehat{\mathcal{L}}\left(\dot{\mathfrak{g}}\right)}\left(\ell,0\right)$-modules
with
\begin{align*}
Y_{W}^{t}\left(a,z\right)=\sum_{m\in\mathbb{Z}}\left(t^{\frac{m}{N}}\otimes\frac{a_{(m)}}{N}\right)z^{-\frac{m}{N}-1},
\end{align*}
for $ a\in\dot{\mathfrak{g}}$.
Via the isomorphism \eqref{eq:twistedaffiso2}, we obtain that the
integrable restricted $\widehat{\mathcal{L}}\left(\dot{\mathfrak{g}},\dot{\nu}\right)$-modules
of level $\ell$ are exactly the $\left(\langle\dot{\nu}\rangle,\chi_{\omega_{N}}\right)$-equivariant
$\phi$-coordinated quasi $L_{\widehat{\mathcal{L}}\left(\dot{\mathfrak{g}}\right)}\left(\ell,0\right)$-modules
$\left(W,Y_{W}^{\phi}\right)$ with
\begin{align*}
Y_{W}^{\phi}\left(a,z\right)=Nz^{N}Y_{W}^{t}\left(a,z\right)=\sum_{m\in\mathbb{Z}}\left(t^{\frac{m}{N}}\otimes a_{(m)}\right)z^{-m}=\sum_{m\in\mathbb{Z}}\left(t^{m}\otimes a_{(m)}\right)z^{-m}=a\left[z\right],
\end{align*}
for $ a\in\dot{\mathfrak{g}}$. Thus, the last assertion of the theorem follows from this and Lemma \ref{lem:actofd}.
\end{proof}
We write $\dot{\Delta}=\dot{\Delta}_{1}\cup\dot{\Delta}_{2}$, where $\dot{\Delta}_{2}=\left\{ \dot{\alpha}\in\dot{\Delta}\mid\langle\dot{\alpha},\dot{\nu}\left(\dot{\alpha}\right)\rangle=-1\right\}$, and $\dot{\Delta}_{1}=\dot{\Delta}\setminus\dot{\Delta}_{2}$. For each $\dot{\alpha}\in\dot{\Delta}$, we set $\epsilon_{\dot{\alpha}}=\frac{2}{\langle\dot{\alpha},\dot{\alpha}\rangle}\ (=1,2\, \text{or}\, 3)$,
and $
p_{\dot{\alpha}}\left(z\right)=\frac{1-z^s}{1-z}$ if $\dot{\alpha}\in\dot{\Delta}_{s}$ for $s=1,2.$

Note that if $\dot{\alpha}\in\dot{\Delta}_{2}$ (resp.\ $\dot{\Delta}_{1}$),
then the Dynkin diagram associated to the $\dot{\nu}$-orbit $\left\{ \dot{\nu}^{p}\left(\dot{\alpha}\right)\mid p=0,\dots,N-1\right\} $
of $\dot{\alpha}$ is of type $A_{2}$ (resp.\, a direct sum of type
$A_{1}$). Using this, one can check that
\begin{align}
p_{\dot{\alpha}}\left(z/w\right)\cdot\left[x_{\dot{\alpha}}\left[z\right],x_{\dot{\alpha}}\left[w\right]\right]=0, \label{eq:charintaffmod1}
\end{align}
for $ \dot{\alpha}\in\dot{\Delta}$.
As a by-product of Theorem \ref{thm:main1}, we obtain a
characterization of the integrable restricted $\widetilde{\mathcal{L}}\left(\dot{\mathfrak{g}},\dot{\nu}\right)$-modules.
We note that when $\dot{\nu}=\mathrm{Id}$, such a characterization is known (cf. \cite[Proposition 5.2.3]{Li1}).
\begin{proposition}\label{prop:charintaffmod} Let $W$ be a restricted
$\widetilde{\mathcal{L}}\left(\dot{\mathfrak{g}},\dot{\nu}\right)$-module
of level $\ell$. Then $W$ is integrable if and only if $\ell$
is a nonnegative integer, and for each $\dot{\alpha}\in\dot{\Delta}$,
\begin{align}
\left(\prod_{1\le i<j\le\epsilon_{\dot{\alpha}}\ell+1}p_{\dot{\alpha}}\left(z_{i}/z_{j}\right)\right)x_{\dot{\alpha}}\left[z_{1}\right]x_{\dot{\alpha}}\left[z_{2}\right]\cdots x_{\dot{\alpha}}\left[z_{\epsilon_{\dot{\alpha}}\ell+1}\right]|_{z_{1}=z_{2}\cdots=z_{\epsilon_{\dot{\alpha}}\ell+1}}=0\quad\text{on}\ W.\label{eq:charintaffmod2}
\end{align}
\end{proposition}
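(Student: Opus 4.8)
The plan is to transport the statement through Theorem \ref{thm:main1}, which identifies the restricted $\widetilde{\mathcal{L}}(\dot{\mathfrak{g}},\dot{\nu})$-modules of level $\ell$ with the $(\langle\dot{\nu}\rangle,\chi_{\omega_{N}})$-equivariant $\phi$-coordinated quasi $V_{\widehat{\mathcal{L}}(\dot{\mathfrak{g}})}(\ell,0)$-modules, and (for $\ell\in\mathbb{N}$) the integrable ones with the $L_{\widehat{\mathcal{L}}(\dot{\mathfrak{g}})}(\ell,0)$-modules; then \eqref{eq:charintaffmod2} is read off from the identity \eqref{eq:modnil5} inside the proof of Proposition \ref{prop:modnil}. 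Write $\dot{\theta}\in\dot{\Delta}$ for the highest root; it is $\dot{\nu}$-fixed, so $\dot{\theta}\in\dot{\Delta}_{1}$, $p_{\dot{\theta}}(z)=1$ and $\epsilon_{\dot{\theta}}=1$. One preliminary fact I would record: for $\dot{\alpha}\in\dot{\Delta}$ the triple $\{x_{\dot{\alpha}},\dot{\alpha}^{\vee},x_{-\dot{\alpha}}\}$ generates a (possibly twisted) affine $\mathfrak{sl}_{2}$ of level $\epsilon_{\dot{\alpha}}\ell$ inside $\widetilde{\mathcal{L}}(\dot{\mathfrak{g}},\dot{\nu})$, since $\langle x_{\dot{\alpha}},x_{-\dot{\alpha}}\rangle=\epsilon_{\dot{\alpha}}$ (for $\dot{\theta}$ it is an untwisted affine $\mathfrak{sl}_{2}$ of level $\ell$). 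I treat $\ell\geq 1$ below; for $\ell=0$, \eqref{eq:charintaffmod2} reads $x_{\dot{\alpha}}[z]=0$ for all $\dot{\alpha}\in\dot{\Delta}$, which by the commutator formula of Proposition \ref{prop:Borcherdscomm} (and $\text{k}\mapsto 0$) is equivalent both to integrability of $W$ and to $W$ being a module for $L_{\widehat{\mathcal{L}}(\dot{\mathfrak{g}})}(0,0)=\mathbb{C}$, so the proposition holds trivially there.

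For the ``only if'' direction, assume $W$ is integrable. Restricting $W$ to the $\dot{\theta}$-copy of affine $\mathfrak{sl}_{2}$ and invoking the representation theory of affine $\mathfrak{sl}_{2}$ forces $\ell\in\mathbb{N}$. The last part of Theorem \ref{thm:main1} then makes $W$ a $(\langle\dot{\nu}\rangle,\chi_{\omega_{N}})$-equivariant $\phi$-coordinated quasi $L_{\widehat{\mathcal{L}}(\dot{\mathfrak{g}})}(\ell,0)$-module with $Y_{W}^{\phi}(a,z)=a[z]$ for $a\in\dot{\mathfrak{g}}$. Fix $\dot{\alpha}\in\dot{\Delta}$ and set $u=x_{\dot{\alpha}}\in\dot{\mathfrak{g}}\subset L_{\widehat{\mathcal{L}}(\dot{\mathfrak{g}})}(\ell,0)$: there $u_{0}u=[x_{\dot{\alpha}},x_{\dot{\alpha}}]=0$, $u_{1}u=\langle x_{\dot{\alpha}},x_{\dot{\alpha}}\rangle\mathbf{1}=0$, and $u_{n}u=0$ for $n\geq 2$, so $u_{n}u=0$ for $n\geq 0$; moreover, as $L_{\widehat{\mathcal{L}}(\dot{\mathfrak{g}})}(\ell,0)$ is an integrable $\widehat{\mathcal{L}}(\dot{\mathfrak{g}})$-module, restricting it to the $\dot{\alpha}$-copy of affine $\mathfrak{sl}_{2}$ (of level $\epsilon_{\dot{\alpha}}\ell$, for which $\mathbf{1}$ is a highest weight vector) yields $(u_{-1})^{\epsilon_{\dot{\alpha}}\ell+1}\mathbf{1}=0$. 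Now I would apply Proposition \ref{prop:modnil} to $u$ with $\epsilon_{\dot{\alpha}}\ell$ in place of its parameter: by \eqref{eq:charintaffmod1} the polynomial $p_{\dot{\alpha}}$ satisfies \eqref{eq:modnil1} for the pair $(u,u)$, and $p_{\dot{\alpha}}\in\mathbb{C}_{\chi_{\omega_{N}}(\langle\dot{\nu}\rangle)\setminus\{1\}}[z]$, so the proof of Proposition \ref{prop:modnil} applies verbatim with $q$ replaced by $p_{\dot{\alpha}}$ and \eqref{eq:modnil5} holds for this $q$. Its left side $Y_{W}^{\phi}((u_{-1})^{\epsilon_{\dot{\alpha}}\ell+1}\mathbf{1},z)$ vanishes, while — using $Y_{W}^{\phi}(x_{\dot{\alpha}},z_{i})=x_{\dot{\alpha}}[z_{i}]$ — its right side is exactly the left side of \eqref{eq:charintaffmod2}. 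This proves \eqref{eq:charintaffmod2}.

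For the ``if'' direction, assume $\ell\in\mathbb{N}$, $\ell\geq 1$, and \eqref{eq:charintaffmod2} for all $\dot{\alpha}\in\dot{\Delta}$ (only $\dot{\alpha}=\dot{\theta}$ is used). By Theorem \ref{thm:main1}, $W$ is a $(\langle\dot{\nu}\rangle,\chi_{\omega_{N}})$-equivariant $\phi$-coordinated quasi $V_{\widehat{\mathcal{L}}(\dot{\mathfrak{g}})}(\ell,0)$-module with $Y_{W}^{\phi}(a,z)=a[z]$. Applying \eqref{eq:modnil5} with $u=x_{\dot{\theta}}$, parameter $\ell$ and $q=p_{\dot{\theta}}=1$ — legitimate since $u_{n}u=0$ for $n\geq 0$ and $[x_{\dot{\theta}}[z_{1}],x_{\dot{\theta}}[z_{2}]]=0$ by \eqref{eq:charintaffmod1} — gives $Y_{W}^{\phi}((x_{\dot{\theta}})_{-1}^{\ell+1}\mathbf{1},z_{\ell+1})=x_{\dot{\theta}}[z_{1}]\cdots x_{\dot{\theta}}[z_{\ell+1}]|_{z_{1}=\cdots=z_{\ell+1}}$, which is $0$ by \eqref{eq:charintaffmod2}. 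Hence the singular vector $(x_{\dot{\theta}})_{-1}^{\ell+1}\mathbf{1}$ lies in $\mathrm{Ker}\,Y_{W}^{\phi}=\{v:Y_{W}^{\phi}(v,z)=0\}$, which is a $\langle\dot{\nu}\rangle$-stable ideal of $V_{\widehat{\mathcal{L}}(\dot{\mathfrak{g}})}(\ell,0)$ (taking $f=1$ in Definition \ref{def:vaphimod}(iii) shows it is a left ideal, $\mathcal{D}$-covariance together with skew-symmetry makes it two-sided, and Definition \ref{def:vaphimod}(ii) gives the $\langle\dot{\nu}\rangle$-stability). Since for $\ell\geq 1$ the maximal graded submodule $J_{\widehat{\mathcal{L}}(\dot{\mathfrak{g}})}(\ell,0)$ is the ideal generated by $(x_{\dot{\theta}})_{-1}^{\ell+1}\mathbf{1}$, we get $J_{\widehat{\mathcal{L}}(\dot{\mathfrak{g}})}(\ell,0)\subseteq\mathrm{Ker}\,Y_{W}^{\phi}$, so $W$ is in fact a $\phi$-coordinated quasi $L_{\widehat{\mathcal{L}}(\dot{\mathfrak{g}})}(\ell,0)$-module, hence integrable by Theorem \ref{thm:main1}.

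The step I expect to require the most care is keeping the two root types in line. Both the vanishing $(x_{\dot{\alpha}})_{-1}^{\epsilon_{\dot{\alpha}}\ell+1}\mathbf{1}=0$ in $L_{\widehat{\mathcal{L}}(\dot{\mathfrak{g}})}(\ell,0)$ and the legitimacy of taking $q=p_{\dot{\alpha}}$ in Proposition \ref{prop:modnil} hinge on tracking, uniformly over $\dot{\Delta}_{1}$ and $\dot{\Delta}_{2}$, the level $\epsilon_{\dot{\alpha}}\ell$ of the $\mathfrak{sl}_{2}$ attached to $\dot{\alpha}$ and the locality polynomial $p_{\dot{\alpha}}$ — which is precisely where the $A_{1}$-versus-$A_{2}$ shape of the $\dot{\nu}$-orbits (noted just before \eqref{eq:charintaffmod1}), the two forms of $p_{\dot{\alpha}}$, and the fact that $N=2$ for $\dot{\alpha}\in\dot{\Delta}_{2}$ all enter. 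The other ingredient one should state carefully (it is classical) is that, at positive integral level, $J_{\widehat{\mathcal{L}}(\dot{\mathfrak{g}})}(\ell,0)$ is generated as an ideal by the single vector $(x_{\dot{\theta}})_{-1}^{\ell+1}\mathbf{1}$. Everything else is a direct application of Theorem \ref{thm:main1} and Proposition \ref{prop:modnil}.
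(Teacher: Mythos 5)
Your argument is correct and is essentially the paper's own proof: both directions transport $W$ through Theorem \ref{thm:main1} and then apply Proposition \ref{prop:modnil} (via the identity \eqref{eq:modnil5}) together with the locality relation \eqref{eq:charintaffmod1}, with the classical description of generators of $J_{\widehat{\mathcal{L}}(\dot{\mathfrak{g}})}\left(\ell,0\right)$ as the remaining input. The differences are only cosmetic: in the converse you use just the highest-root singular vector $\left(x_{\dot{\theta}}\right)_{-1}^{\ell+1}\mathbf{1}$ and spell out that $\ker Y_{W}^{\phi}$ is a $\langle\dot{\nu}\rangle$-stable ideal (a fact the paper uses implicitly when passing to the faithful quotient), whereas the paper argues uniformly with all $\dot{\alpha}\in\dot{\Delta}$ and the faithful-module clause of Proposition \ref{prop:modnil}.
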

\begin{proof}
Note that if $W$ is integrable, then $\ell$ is a nonnegative integer
and the ideal $J_{\widehat{\mathcal{L}}(\mathfrak{b})}\left(\ell,0\right)$
is generated by the elements $\left(\left(x_{\dot{\alpha}}\right){}_{-1}\right){}^{\epsilon_{\dot{\alpha}}\ell+1}{\mathbf{1}}$
for $\dot{\alpha}\in\dot{\Delta}$ \cite{K1}. Let $W$ be a restricted
$\widetilde{\mathcal{L}}\left(\dot{\mathfrak{g}},\dot{\nu}\right)$-module
of level $\ell$, and hence by Theorem \ref{thm:main1} there is a $\left(\langle\dot{\nu}\rangle,\chi_{\omega_{N}}\right)$-equivariant
$\phi$-coordinated quasi $V_{\widehat{\mathcal{L}}(\dot{\mathfrak{g}})}\left(\ell,0\right)$-module
$(W,Y_{W}^{\phi},d)$. Furthermore, if $W$ is also integrable,
then $(W,Y_{W}^{\phi},d)$   becomes  a $\left(\langle\dot{\nu}\rangle,\chi_{\omega_{N}}\right)$-equivariant
$\phi$-coordinated quasi $L_{\widehat{\mathcal{L}}\left(\dot{\mathfrak{g}}\right)}\left(\ell,0\right)$-module.
This then implies that $\left(\left(x_{\dot{\alpha}}\right){}_{-1}\right){}^{\epsilon_{\dot{\alpha}}\ell+1}{\mathbf{1}}=0$
on $W$. Note that $\left(x_{\dot{\alpha}}\right){}_{n}\left(x_{\dot{\alpha}}\right)=0$
for $n\ge0$ in $V_{\widehat{\mathcal{L}}(\dot{\mathfrak{g}})}\left(\ell,0\right)$. Thus by Proposition \ref{prop:modnil} and \eqref{eq:charintaffmod1},
we see that \eqref{eq:charintaffmod2} holds.

Conversely, if \eqref{eq:charintaffmod2}
holds, again by Proposition \ref{prop:modnil} and \eqref{eq:charintaffmod1},
we see that $\left(\left(x_{\dot{\alpha}}\right){}_{-1}\right){}^{\epsilon_{\dot{\alpha}}\ell+1}{\mathbf{1}}=0$
on $W$, where $W$ is now viewed as a faithful $\left(\langle\dot{\nu}\rangle,\chi_{\omega_{N}}\right)$-equivariant
$\phi$-coordinated quasi module for $V_{\widehat{\mathcal{L}}(\dot{\mathfrak{g}})}\left(\ell,0\right)/\ker Y_{W}^{\phi}$. This
implies that $J_{\widehat{\mathcal{L}}\left(\mathfrak{b}\right)}\left(\ell,0\right)\subset\ker Y_{W}^{\phi}$,
and hence $(W,Y_{W}^{\phi},d)$ becomes a $\left(\langle\dot{\nu}\rangle,\chi_{\omega_{N}}\right)$-equivariant
$\phi$-coordinated quasi $L_{\widehat{\mathcal{L}}(\dot{\mathfrak{g}})}\left(\ell,0\right)$-module.
Finally, the integrability of $W$ follows from Theorem \ref{thm:main1}.
\end{proof}

\section{Twisted toroidal extended affine Lie algebras}

In this section, we recall the construction of the nullity $2$ twisted toroidal extended
affine Lie algebras arising from diagram automorphisms of affine Kac-Moody
algebras.

\subsection{Toroidal EALA $\widetilde{\mathfrak{g}}$ }

We first recall the definition of nullity $2$ toroidal extended affine
Lie algebras in this subsection \cite{BGK,B2}. Let $\mathcal{R}=\mathbb{C}\left[t_{0},t_{0}^{-1},t_{1},t_{1}^{-1}\right]$
be Laurent polynomial ring in commuting variables $t_{0}$ and $t_{1}.$
Denote by $\Omega_{\mathcal{R}}^{1}=\mathcal{R}dt_{0}\oplus\mathcal{R}dt_{1}$
the space of K$\ddot{a}$hler differentials on $\mathcal{R}.$ Define
the $1$-forms
\[
\text{k}_{0}=t_{0}^{-1}dt_{0,}\qquad\text{k}_{1}=t_{1}^{-1}dt_{1}.
\]
Then $\left\{ \text{k}_{0},\text{k}_{1}\right\} $ form a $\mathcal{R}$-basis
of $\Omega_{\mathcal{R}}^{1}.$ Let
\[
d\left(\mathcal{R}\right)=\left\{ df=\frac{\partial f}{\partial t_{0}}dt_{0}+\frac{\partial f}{\partial t_{1}}dt_{1}\mid f\in\mathcal{R}\right\}
\]
be the space of exact $1$-forms in $\Omega_{\mathcal{R}}^{1}$, and
set
\[
\mathcal{K}=\Omega_{\mathcal{R}}^{1}/d\left(\mathcal{R}\right).
\]
and
\begin{equation}
\text{k}_{m_{0},m_{1}}=\begin{cases}
\frac{1}{m_{1}}t_{0}^{m_{0}}t_{1}^{m_{1}}\text{k}_{0}\  & \text{if}\ m_{1}\ne0,\\
-\frac{1}{m_{0}}t_{0}^{m_{0}}\text{k}_{1}\  & \text{if}\ m_{0}\ne0,m_{1}=0,\\
0\  & \text{if}\ m_{0}\ne0,m_{1}=0.
\end{cases}\label{eq:notionink}
\end{equation}
for $m_{0},m_{1}\in\mathbb{Z}$. Then the set
\begin{align}
\mathbb{B}_{\mathcal{K}}=\left\{ \text{k}_{0},\text{k}_{1}\right\} \cup\left\{ \text{k}_{m_{0},m_{1}}\mid\left(m_{0},m_{1}\right)\in\mathbb{Z}^{2}\setminus\left\{ \left(0,0\right)\right\} \right\}
\end{align}
forms a basis of $\mathcal{K}$. The set $\mathbb{B}_{\mathcal{K}} $ can also be expressed as follows
\begin{equation}
\mathbb{B}_{\mathcal{K}}=\left\{ \text{k}_{0}\right\} \cup\left\{ t_{0}^{m_{0}}\text{k}_{1},\text{k}_{m_{0},m_{1}}\mid m_{0}\in\mathbb{Z},m_{1}\in\mathbb{Z}^{\ast}\right\}, \label{eq:basisK}
\end{equation}
which will be used later on.
Recall that $\dot{\mathfrak{g}}$ is a finite-dimensional simple Lie
algebra and $\langle\cdot,\cdot\rangle$ is the normalized bilinear form on
$\dot{\mathfrak{g}}$. Let
\[
\mathfrak{t}\left(\dot{\mathfrak{g}}\right)=\left(\mathcal{R}\otimes\dot{\mathfrak{g}}\right)\oplus\mathcal{K}
\]
be a central extension of the double-loop algebra $\mathcal{R}\otimes\dot{\mathfrak{g}}$
by $\mathcal{K}$ with Lie product
\begin{equation}
\left[t_{0}^{m_{0}}t_{1}^{m_{1}}\otimes x,t_{0}^{n_{0}}t_{1}^{n_{1}}\otimes y\right]=t_{0}^{m_{0}+n_{0}}t_{1}^{m_{1}+n_{1}}\otimes\left[x,y\right]+\left\langle x,y\right\rangle \sum_{i=0,1}m_{i}t_{0}^{m_{0}+n_{0}}t_{1}^{m_{1}+n_{1}}\text{k}_{i}, \label{eq:relationint1}
\end{equation}
for $x,y\in\dot{\mathfrak{g}}$ and $m_{0},m_{1},n_{0},n_{1}\in\mathbb{Z}.$
It is proved in \cite{MRY} that the Lie algebra $\mathfrak{t}\left(\dot{\mathfrak{g}}\right)$ is
the universal central extension of $\mathcal{R}\otimes\dot{\mathfrak{g}}$,
and it is often called \emph{the nullity 2 toroidal Lie algebra}.

Let
\[
\text{Der}\left(\mathcal{R}\right)=\mathcal{R}\frac{\partial}{\partial t_{0}}\oplus\mathcal{R}\frac{\partial}{\partial t_{1}}
\]
be the space of derivations of $\mathcal{R}.$ Set
\[
\text{d}_{0}=t_{0}\frac{\partial}{\partial t_{0}},\quad\text{d}_{1}=t_{1}\frac{\partial}{\partial t_{1}}.
\]
Then $\left\{ \text{d}_{0},\text{d}_{1}\right\} $ form a $\mathcal{R}$-basis of
$\text{Der}\left(\mathcal{R}\right)$, and the derivation Lie algebra $\text{Der}\left(\mathcal{R}\right)$ also
acts on $\mathcal{R}\otimes\dot{\mathfrak{g}}$ with

\[
\psi\left(f\otimes x\right)=\psi\left(f\right)\otimes x,
\]
for $ \psi\in\text{Der}\left(\mathcal{R}\right),f\in\mathcal{R},x\in\dot{\mathfrak{g}}$. One notes that
the $\text{Der}\left(\mathcal{R}\right)$-action on $\mathcal{R}\otimes\dot{\mathfrak{g}} $ can be uniquely extended to an action on the center $\mathcal{K} $
of the toroidal Lie algebra
$\mathfrak{t}\left(\dot{\mathfrak{g}}\right)$ with
\[
\psi\left(f\text{d}g\right)=\psi\left(f\right)\text{d}g+f\text{d}\psi\left(g\right),
\]
for $ \psi\in\text{Der}\left(\mathcal{R}\right), g\in\mathcal{R}$. Now, we
form the semi-direct product Lie algebra
\[
\mathcal{T}\left(\dot{\mathfrak{g}}\right)=\mathfrak{t}\left(\dot{\mathfrak{g}}\right)\rtimes\text{Der}\left(\mathcal{R}\right)
=\mathcal{R\otimes\dot{\mathfrak{g}}}\oplus\mathcal{K}\oplus\text{Der}\left(\mathcal{R}\right),
\]
which is often called \emph{the full toroidal Lie algebra} \cite{B1}.
Note that in $\mathcal{T}\left(\dot{\mathfrak{g}}\right)$ we have
\begin{align}
\left[t_{0}^{m_{0}}t_{1}^{m_{1}}\text{d}_{i},t_{0}^{n_{0}}t_{1}^{n_{1}}\otimes x\right] & =n_{i}\left(t_{0}^{m_{0}+n_{0}}t_{1}^{m_{1}+n_{1}}\otimes x\right),\label{eq:relationint2}\\
\left[t_{0}^{m_{0}}t_{1}^{m_{1}}\text{d}_{i},t_{0}^{n_{0}}t_{1}^{n_{1}}\text{k}_{j}\right] & =n_{i}t_{0}^{m_{0}+n_{0}}t_{1}^{m_{1}+n_{1}}\text{k}_{j}+\delta_{i,j}\sum_{r=0,1}m_{r}t_{0}^{m_{0}+n_{0}}t_{1}^{m_{1}+n_{1}}\text{k}_{r},\label{eq:relationint3}\\
\left[t_{0}^{m_{0}}t_{1}^{m_{1}}\text{d}_{i},t_{0}^{n_{0}}t_{1}^{n_{1}}\text{d}_{j}\right] & =n_{i}t_{0}^{m_{0}+n_{0}}t_{1}^{m_{1}+n_{1}}\text{d}_{j}-m_{j}t_{0}^{m_{0}+n_{0}}t_{1}^{m_{1}+n_{1}}\text{d}_{i},\label{eq:relationint4}
\end{align}
for $x\in\mathfrak{g},$ $m_{0},n_{0},m_{1},n_{1}\in\mathbb{Z}$, and
$i,j\in\left\{ 0,1\right\} .$

We define a Lie subalgebra $ \mathcal{S}$ of $\text{Der}\left(\mathcal{R}\right)$ as follows
\[
\mathcal{S}=\left\{ f_{0}\text{d}_{0}+f_{1}\text{d}_{1}\in \text{Der}\left(\mathcal{R}\right)\mid f_{0},f_{1}\in\mathcal{R},\text{d}_{0}\left(f_{0}\right)+\text{d}_{1}\left(f_{1}\right)=0\right\}.
\]
The elements in $ \mathcal{S}$ is often called \emph{skew derivations} over $\mathcal{R}$ (cf. \cite{BGK,N}), and also known
as \emph{divergence-zero derivations} (cf. \cite{B2}).
It is easy to see that, for $ m_{0},m_{1}\in\mathbb{Z}$
\[
\widetilde{\text{d}}_{m_{0},m_{1}}=m_{0}t_{0}^{m_{0}}t_{1}^{m_{1}}\text{d}_{1}-m_{1}t_{0}^{m_{0}}t_{1}^{m_{1}}\text{d}_{0},
\]
 are elements of $\mathcal{S},$ and
the set
\[
\mathbb{B}_{\mathcal{S}}=\left\{ \text{d}_{0},\text{d}_{1}\right\} \cup\left\{ \widetilde{\text{d}}_{m_{0},m_{1}}\mid\left(m_{0},m_{1}\right)\in\mathbb{Z}\times\mathbb{Z}\setminus\left\{ \left(0,0\right)\right\} \right\}
\]
forms a $\mathbb{C}$-basis of $\mathcal{S}$. Note that $\widetilde{\text{d}}_{m,0}=mt_0^{m}\text{d}_{1}$ for $m\in\mathbb{Z}$, we can rewrite $\mathbb{B}_{\mathcal{S}}$ as follows
\begin{align}
\mathbb{B}_{\mathcal{S}}=\left\{ \text{d}_{0}\right\} \cup\left\{ t_{0}^{m_{0}}\text{d}_{1},\widetilde{\text{d}}_{m_{0},m_{1}}\mid m_{0}\in\mathbb{Z},m_{1}\in\mathbb{Z}^{\ast}\right\}, \label{eq:basisS}
\end{align}
and the Lie product relations are given as follows
\begin{align}
 \left[\text{d}_{i},\widetilde{\text{d}}_{m_{0},m_{1}}\right]  =m_{i}\widetilde{\text{d}}_{m_{0},m_{1}},\quad
\left[\widetilde{\text{d}}_{m_{0},m_{1}},\widetilde{\text{d}}_{n_{0},n_{1}}\right]  =\left(m_{0}n_{1}-m_{1}n_{0}\right)\widetilde{\text{d}}_{m_{0}+n_{0},m_{1}+n_{1}}\label{eq:relationinS2}
\end{align}
for $i\in\left\{ 0,1\right\} $, and $m_{0},m_{1},n_{0},n_{1}\in\mathbb{Z}.$

Form the following subalgebra of $\mathcal{T}\left(\dot{\mathfrak{g}}\right)$:
\begin{align}
\widetilde{\mathfrak{g}}=\mathfrak{t}\left(\dot{\mathfrak{g}}\right)\rtimes\mathcal{S}=\mathcal{R}\otimes\dot{\mathfrak{g}}\oplus\mathcal{K}\oplus\mathcal{S},
\end{align}
which is often called \emph{the
nullity 2 toroidal extended affine Lie algebra} \cite{BGK,B2}.

From \eqref{eq:relationint2} and \eqref{eq:relationint3}, one has the following Lie product relations in $\widetilde{\mathfrak{g}}$
\begin{align}
\left[\widetilde{\text{d}}_{m_{0},m_{1}},t_{0}^{n_{0}}t_{1}^{n_{1}}\otimes x\right] & =\left(m_{0}n_{1}-m_{1}n_{0}\right)t_{0}^{m_{0}+n_{0}}t_{1}^{m_{1}+n_{1}}\otimes x,\label{eq:relationinS3}\\
\left[\text{\ensuremath{\widetilde{\text{d}}}}_{m_{0},m_{1}},\text{k}_{n_{0},n_{1}}\right] & =\left(m_{0}n_{1}-m_{1}n_{0}\right)\text{k}_{m_{0}+n_{0},m_{1}+n_{1}}+\delta_{m_{0}+n_{0},0}\delta_{m_{1}+n_{1},0}\left(m_{0}\text{k}_{0}+
m_{1}\text{k}_{0}\right), \label{eq:relationinS4}
\end{align}
for $m_{0},n_{0},m_{1},n_{1}\in\mathbb{Z}$, and $x\in \dot{\mathfrak{g}}$. And the Lie subalgebra
\[
\mathfrak{g}:=\left(\mathbb{C}\left[t_{1},t_{1}^{-1}\right]\otimes\dot{\mathfrak{g}}\right)\oplus\mathbb{C}\text{k}_{1}\oplus\mathbb{C}\text{d}_{1}
\]
of $\widetilde{\mathfrak{g}}$ is isomorphic to the
affine Kac-Moody algebra $\widetilde{\mathcal{L}}\left(\dot{\mathfrak{g}}\right)$.
We extend the normalized bilinear form $\left\langle \cdot,\cdot\right\rangle $
of $\dot{\mathfrak{g}}$ to a nondegenerate invariant symmetric bilinear
form on $\mathfrak{g}$ by defining
\[
\left\langle t_{1}^{m}\otimes x+a\text{k}_{1}+b\text{d}_{1},t_{1}^{n}\otimes y+a'\text{k}_{1}+b'\text{d}_{1}\right\rangle =\delta_{m+n,0}\left\langle x,y\right\rangle +ab'+ba',
\]
where $m,n\in\mathbb{Z},$ $x,y\in\dot{\mathfrak{g}}$, and $a,b,a',b'\in\mathbb{C}.$
It is easy to see from \eqref{eq:basisK} and \eqref{eq:basisS} that
the Lie algebra $\widetilde{\mathfrak{g}}$ is linearly spanned by the
set
\begin{align}
\left\{ t_{0}^{m}u,\ \text{k}_{m,n},\ \widetilde{\text{d}}_{m,n},\ \text{d}_{0},\ \text{k}_{0}\mid u\in\mathfrak{g},\ m\in\mathbb{Z},\ n\in\mathbb{Z}^{\ast}\right\}, \label{eq:basisoftildeg}
\end{align}
with a nondegenerate and invariant symmetric bilinear form $\left\langle \cdot,\cdot\right\rangle $ defined by
\begin{align}
\left\langle t_{0}^{m}x,t_{0}^{-m}y\right\rangle =\left\langle x,y\right\rangle ,\quad\left\langle \text{k}_{0},\text{d}_{0}\right\rangle =\left\langle \widetilde{\text{d}}_{m,n},\text{k}_{-m,-n}\right\rangle =1,\label{eq:defform}
\end{align}
for $x,y\in\mathfrak{g}$, $m\in\mathbb{Z}$, $n\in\mathbb{Z}^{\ast}$, and
 a self-centralizing ad-diagonalizable subalgebra
\[
\widetilde{\mathfrak{h}}=\dot{\mathfrak{h}}\oplus\mathbb{C}\text{k}_{0}\oplus\mathbb{C}\text{k}_{1}\oplus\mathbb{C}\text{d}_{0}\oplus\mathbb{C}\text{d}_{1}.
\]

\subsection{\label{Affine algebra}Diagram automorphisms of affine Kac-Moody algebras }

For any diagram automorphism $\mu$  of the affine Kac-Moody algebra $\mathfrak{g}$, we define an automorphism $\hat{\mu}$ for the full toroidal
Lie algebra $\mathcal{T}\left(\dot{\mathfrak{g}}\right)$, which  will be
used to construct twisted toroidal extended affine Lie algebras
in the next subsection.

Denote by
\[
\mathfrak{h}=\dot{\mathfrak{h}}\oplus\mathbb{C}\text{k}_{1}\oplus\mathbb{C}\text{d}_{1}
\]
the Cartan subalgebra of the affine Kac-Moody algebra $\mathfrak{g},$ and $\mathfrak{h}^{\ast}$ the dual space of $\mathfrak{h}$.
We identify $\dot{\mathfrak{h}}^{\ast}$ as a subspace of $\mathfrak{h}^{\ast}$
such that $\dot{\alpha}\left(\text{k}_{1}\right)=\dot{\alpha}\left(\text{d}_{1}\right)=0$
for $\dot{\alpha}\in\dot{\mathfrak{h}}^{\ast},$ and define the null root
$\delta_{1}\in\mathfrak{h}^{\ast}$ by setting
\[
\delta_{1}\left(\dot{\mathfrak{h}}\right)=\delta\left(\text{k}_{1}\right)=0,\quad\delta_{1}\left(\text{d}_{1}\right)=1.
\]
Then we have the root space decomposition
$
\mathfrak{g}=\mathfrak{h}\oplus\sum_{\alpha\in\Delta}\mathfrak{g}_{\alpha},
$
where
\[
\Delta=\left\{ \dot{\alpha}+m\delta_{1},n\delta_{1}\mid\dot{\alpha}\in{\dot{\Delta}},m\in\mathbb{Z},n\in\mathbb{Z}^{\ast}\right\} .
\]
Denote by
$
\Delta^{\times}=\left\{ \dot{\alpha}+m\delta_{1}\mid\dot{\alpha}\in\dot{\Delta},m\in\mathbb{Z}\right\}
$
the set of real roots in $\Delta.$ Recall that for each $\dot{\alpha}\in\dot{\Delta}$ and $\alpha=\dot{\alpha}+m\delta_{1}\in\Delta^{\times}$,
there are $\mathfrak{sl}_{2}$-triples $\left\{ x_{\dot{\alpha}},\dot{\alpha}^{\vee},x_{-\dot{\alpha}}\right\} $
in $\dot{\mathfrak{g}}$, and $\left\{ x_{\alpha},\alpha^{\vee},x_{-\alpha}\right\} $  in $\mathfrak{g}$, where 
\[
\alpha^{\vee}=\dot{\alpha}^{\vee}+\frac{2m}{\left\langle \dot{\alpha},\dot{\alpha}\right\rangle }\text{k}_{1}\, \quad x_{\alpha}=t_{1}^{m}\otimes x_{\dot{\alpha}}\in\mathfrak{g}_{\alpha}.
\]

Let $\dot{\theta}$ be the highest root in $\dot{{\Delta}}$
with respect to the simple root system $\dot{\Pi}$=$\left\{ \dot{\alpha}_{1},\cdots,\dot{\alpha}_{l}\right\} $
of $\dot{\Delta}$. Then $\Pi=\left\{ \alpha_{0},\cdots,\alpha_{l}\right\} $
forms a simple root system of $\Delta$ with $\alpha_{0}=\delta_{1}-\dot{\theta}$
and $\alpha_{i}=\dot{\alpha}_{i}$ for $1\le i\le l$. Denote by
\[
A=\left(a_{ij}\right)_{i,j\in I}=\left(2\frac{\left\langle \alpha_{i},\alpha_{j}\right\rangle }{\left\langle \alpha_{i},\alpha_{i}\right\rangle }\right)_{i,j\in I}
\]
the generalized Cartan matrix of the affine Kac-Moody algebra $\mathfrak{g},$ where $I=\left\{ 0,1,\cdots,l\right\} .$\textcolor{red}{{}
}Let $\mu$ be an automorphism of the generalized Cartan matrix $A$, which
by definition is a permutation of $I$ such that $a_{ij}=a_{\mu\left(i\right)\mu\left(j\right)}$
for $i,j\in I$. It is known (cf. \cite{KW}, \cite{FSS}) that there is a unique automorphism
of $\mathfrak{g}$, still denoted by $\mu$, such that
\begin{equation}
\mu\left(x_{\pm\alpha_{i}}\right)=x_{\pm\alpha_{\mu\left(i\right)}}, \ \ \left\langle \mu\left(x\right),\mu\left(y\right)\right\rangle =\left\langle x,y\right\rangle \label{diagram automorphism}
\end{equation}
for $i\in I$ and $x,y\in\mathfrak{g}$. Moreover, $\mu$ stabilizes
$\mathfrak{h}$ and has the same order as it is viewed as a permutation
of $I$.

\begin{remark} The automorphism $\mu$ of the affine Kac-Moody algebra $\mathfrak{g}$
determined by (\ref{diagram automorphism}) is called a \emph{diagram automorphism
}associated to an automorphism $\mu$ of the generalized Cartan matrix
$A$.

\end{remark}

From now on, we fix a diagram automorphism $\mu$ of $\mathfrak{g}$
with order $T$. Recall that $\mu\left(\mathfrak{h}\right)=\mathfrak{h}$. Then
there is an action of $\mu$ on $\mathfrak{h}^{\ast}$ defined by
\[
\mu\left(\alpha\right)\left(h\right)=\alpha\left(\mu^{-1}\left(h\right)\right)
\]
for $\alpha\in\mathfrak{h}^{\ast},$ $h\in\mathfrak{h}$. Let $Q=\mathbb{Z}\Pi=\mathbb{Z}\alpha_{0}\oplus\cdots\oplus\mathbb{Z}\alpha_{l}$
be the root lattice of $\mathfrak{g}$, and $\dot{Q}=\mathbb{Z}\dot{\Pi}$
be the root lattice of $\dot{\mathfrak{g}}$. Note that $\mu\left(\alpha_{i}\right)=\alpha_{\mu\left(i\right)}$
for $i\in I$ and hence $\mu(Q)=Q$. Furthermore, it is known that
$\mu\left(\delta_{1}\right)=\delta_{1}$ (cf. \cite{FSS}). Since $Q=\dot{Q}\oplus\mathbb{Z}\delta_{1}$,
for each $\dot{\alpha}\in\dot{Q},$ we can write
\[
\mu\left(\dot{\alpha}\right)=\dot{\mu}\left(\dot{\alpha}\right)+\rho_{\mu}\left(\dot{\alpha}\right)\delta_{1},
\]
where $\dot{\mu}\left(\dot{\alpha}\right)\in\dot{Q}$ {and} $ \rho_{\mu}\left(\dot{\alpha}\right)\in\mathbb{Z}$.
One can easily check that
\begin{alignat*}{1}
\dot{\mu}:\dot{Q}\to\dot{Q},\quad\dot{\alpha}\mapsto\dot{\mu}\left(\dot{\alpha}\right)
\end{alignat*}
defines an automorphism of $\dot{Q}$, and the map
\begin{alignat*}{1}
\rho_{\mu}:\dot{Q}\to\mathbb{Z},\quad\dot{\alpha}\mapsto\rho_\mu\left(\dot{\alpha}\right)
\end{alignat*}
is a homomorphism of abelian groups.

Now identity $\dot{\mathfrak{h}}$ with $\dot{\mathfrak{h}}^{\ast}$
via $\left\langle \cdot,\cdot\right\rangle $ so that the coroot
$
\dot{\alpha}^{\vee}=\frac{2\dot{\alpha}}{\left\langle \dot{\alpha},\dot{\alpha}\right\rangle}$ for $\dot{\alpha}\in\dot{\Delta}.
$
Then by $\mathbb{C}$-linearity we obtain two the linear maps $\dot{\mu}:\dot{\mathfrak{h}}\to\dot{\mathfrak{h}}$
and $\rho_{\mu}:\dot{\mathfrak{h}}\to\mathbb{C}$ such that
\begin{align}
\mu(h)=\dot{\mu}(h)+\rho_{\mu}(h)\text{k}_{1}
\end{align}
for $h\in\dot{\mathfrak{h}}$. Furthermore, we can extend $\dot{\mu}$
to an automorphism of the Lie algebra $\dot{\mathfrak{g}}$ by the
following rule (cf. \cite{CJKT}):
\begin{align}
\mu\left(x_{\dot{\alpha}}\right)=t_{1}^{\rho_{\mu}(\dot{\alpha})}\otimes\dot{\mu}\left(x_{\dot{\alpha}}\right)\in\mathfrak{g}_{\dot{\mu}(\dot{\alpha})+\rho_{\mu}(\dot{\alpha})\delta_{1}}{\color{red}}\label{eq:defdotmu}
\end{align}
for $\dot{\alpha}\in\dot{\Delta}$ (recall that $\mu\left(\dot{\alpha}\right)=\dot{\mu}\left(\dot{\alpha}\right)+\rho_{\mu}\left(\dot{\alpha}\right)\delta_{1}$
). Then we have the following results:

\begin{lemma} \label{lem:muaction} For $\dot{\alpha}\in\dot{\Delta}$
and $n\in\mathbb{Z}$, we have
\begin{align}
\mu\left(t_{1}^{n}\otimes x_{\dot{\alpha}}\right) & =t_{1}^{\rho_{\mu}\left(\dot{\alpha}\right)+n}\otimes\dot{\mu}\left(x_{\dot{\alpha}}\right),
\quad \mu\left(\text{k}_{1}\right)  =\text{k}_{1},
\label{eq:charmu1}\\
\mu\left(t_{1}^{n}\otimes\dot{\alpha}^{\vee}\right) & =t_{1}^{n}\otimes\dot{\mu}\left(\dot{\alpha}^{\vee}\right)+\delta_{n,0}\rho_{\mu}\left(\dot{\alpha}^{\vee}\right)\text{k}_{1},\label{eq:charmu2},\\
\mu\left(\text{d}_{1}\right) & =\text{d}_{1}+\boldsymbol{h}-\frac{\left\langle \boldsymbol{h},\boldsymbol{h}\right\rangle }{2}\text{k}_{1},\label{eq:charmu4}
\end{align}
where $\boldsymbol{h}\in\dot{\mathfrak{h}}$ is determined by
\begin{equation}
\dot{\mu}\left(\dot{\gamma}\right)\left(\boldsymbol{h}\right)=-\rho_{\mu}\left(\dot{\gamma}\right)\quad\text{for}\ \dot{\gamma}\in\dot{Q}.\label{eq:defh}
\end{equation}
Furthermore, for any $h\in\dot{\mathfrak{h}}$ we have
\begin{align}
\dot{\mu}^{T}\left(h\right)=h,\quad\sum_{p=0}^{T-1}\rho_{\mu}\left(\dot{\mu}^{p}\left(h\right)\right)=0, \quad\sum_{p=0}^{T-1}
\dot{\mu}^{p}\left(\boldsymbol{h}\right)=0=\sum_{p=1}^{T-1}\left(T-p\right)\left\langle \dot{\mu}^{p}(\boldsymbol{h}),\boldsymbol{h}\right\rangle +\frac{T\left\langle \boldsymbol{h},\boldsymbol{h}\right\rangle }{2}.\label{eq:muN=00003D00003D00003D1}
\end{align}

\end{lemma}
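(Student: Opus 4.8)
The plan is to pin down $\mu$ explicitly on the spanning elements $t_1^n\otimes x_{\dot\alpha}$, $t_1^n\otimes\dot\alpha^{\vee}$, $\text{k}_1$ and $\text{d}_1$ of $\mathfrak g$, and then extract the identities in \eqref{eq:muN=00003D00003D00003D1} by iterating these formulas and invoking $\mu^T=\mathrm{id}$. First, $\mu(\text{k}_1)=\text{k}_1$: since $\text{k}_1$ spans the center of the $\mu$-stable derived subalgebra $[\mathfrak g,\mathfrak g]$, $\mu(\text{k}_1)=\lambda\text{k}_1$ for some scalar; writing $\text{k}_1$ as the canonical central element $\sum_{i\in I}a_i^{\vee}\alpha_i^{\vee}$ and using $\mu(\alpha_i^{\vee})=\alpha_{\mu(i)}^{\vee}$ (which follows from \eqref{diagram automorphism}) together with the $\mu$-invariance $a_{\mu(i)}^{\vee}=a_i^{\vee}$ of the null labels gives $\lambda=1$. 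Because $\mu$ fixes $\text{k}_1$ and $\delta_1$ and maps $\mathfrak g_\beta$ onto $\mathfrak g_{\mu(\beta)}$, and $\mu(\dot\alpha+n\delta_1)=\dot\mu(\dot\alpha)+(n+\rho_\mu(\dot\alpha))\delta_1$, the image $\mu(t_1^n\otimes x_{\dot\alpha})$ lies on the line $\mathbb C\,t_1^{\,n+\rho_\mu(\dot\alpha)}\otimes x_{\dot\mu(\dot\alpha)}$; the content of \eqref{eq:charmu1} is that the scalar there is exactly the one occurring in the case $n=0$, which is by definition $\dot\mu(x_{\dot\alpha})$ (see \eqref{eq:defdotmu}). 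To prove this I would either (a) check that the linear map $\nu$ of $\mathfrak g$ defined by the right-hand sides of \eqref{eq:charmu1}--\eqref{eq:charmu4} is a Lie algebra automorphism---this uses only that $\dot\mu$ is a Lie automorphism of $\dot{\mathfrak g}$ preserving $\langle\cdot,\cdot\rangle$ (immediate from \eqref{diagram automorphism}) and that $\rho_\mu$ is additive---and that $\nu$ agrees with $\mu$ on the Chevalley generators $e_i,f_i$ ($i\in I$) and on $\mathfrak h$, whence $\nu=\mu$; or (b) propagate from $n=0$ by induction, using that $\mathbb C[t_1,t_1^{-1}]\otimes\dot{\mathfrak g}$ is generated by $\dot{\mathfrak g}$ together with $e_0=t_1\otimes x_{-\dot\theta}$ and $f_0=t_1^{-1}\otimes x_{\dot\theta}$, on which $\mu$ is already determined, and again exploiting the additivity of $\rho_\mu$.

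Formula \eqref{eq:charmu2} then follows from \eqref{eq:charmu1}: apply $\mu$ to $t_1^n\otimes\dot\alpha^{\vee}=[t_1^n\otimes x_{\dot\alpha},\,x_{-\dot\alpha}]$, use $\rho_\mu(-\dot\alpha)=-\rho_\mu(\dot\alpha)$, and evaluate the resulting bracket via $[\dot\mu(x_{\dot\alpha}),\dot\mu(x_{-\dot\alpha})]=\dot\mu(\dot\alpha^{\vee})$, $\langle\dot\mu(x_{\dot\alpha}),\dot\mu(x_{-\dot\alpha})\rangle=\langle x_{\dot\alpha},x_{-\dot\alpha}\rangle=\epsilon_{\dot\alpha}$, and $\rho_\mu(\dot\alpha^{\vee})=\epsilon_{\dot\alpha}\rho_\mu(\dot\alpha)$; the central contribution survives only when $n=0$, yielding $\delta_{n,0}\rho_\mu(\dot\alpha^{\vee})\text{k}_1$.

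For \eqref{eq:charmu4}, since $\mu$ stabilizes $\mathfrak h$ we may write $\mu(\text{d}_1)=h_0+a\text{k}_1+b\text{d}_1$ with $h_0\in\dot{\mathfrak h}$ and $a,b\in\mathbb C$. Pairing with $\text{k}_1$ and using $\langle\mu(\text{d}_1),\text{k}_1\rangle=\langle\text{d}_1,\text{k}_1\rangle=1$ together with $\langle\dot{\mathfrak h},\text{k}_1\rangle=\langle\text{k}_1,\text{k}_1\rangle=0$ forces $b=1$. Applying $\mu$ to $[\text{d}_1,x_{\dot\alpha}]=0$ and using \eqref{eq:charmu1} gives $\dot\mu(\dot\alpha)(h_0)+\rho_\mu(\dot\alpha)=0$ for every $\dot\alpha\in\dot\Delta$, i.e.\ $h_0=\boldsymbol h$ by \eqref{eq:defh}. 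Finally $\langle\mu(\text{d}_1),\mu(\text{d}_1)\rangle=\langle\text{d}_1,\text{d}_1\rangle=0$ expands, using $\langle\boldsymbol h,\text{d}_1\rangle=0$, $\langle\text{k}_1,\text{d}_1\rangle=1$ and $\langle\text{k}_1,\text{k}_1\rangle=\langle\text{d}_1,\text{d}_1\rangle=0$, to $\langle\boldsymbol h,\boldsymbol h\rangle+2a=0$, so $a=-\tfrac12\langle\boldsymbol h,\boldsymbol h\rangle$. For the identities in \eqref{eq:muN=00003D00003D00003D1}, iterating \eqref{eq:charmu1}/\eqref{eq:charmu2} (with $\mu(h)=\dot\mu(h)+\rho_\mu(h)\text{k}_1$ for $h\in\dot{\mathfrak h}$) yields $\mu^p(h)=\dot\mu^p(h)+\bigl(\sum_{q=0}^{p-1}\rho_\mu(\dot\mu^q(h))\bigr)\text{k}_1$; taking $p=T$, the relation $\mu^T=\mathrm{id}$ and $\dot{\mathfrak h}\cap\mathbb C\text{k}_1=\{0\}$ give $\dot\mu^T(h)=h$ and $\sum_{q=0}^{T-1}\rho_\mu(\dot\mu^q(h))=0$. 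Similarly, iterating \eqref{eq:charmu4} gives $\mu^p(\text{d}_1)=\text{d}_1+\sum_{q=0}^{p-1}\dot\mu^q(\boldsymbol h)+c_p\text{k}_1$ with $c_1=-\tfrac12\langle\boldsymbol h,\boldsymbol h\rangle$ and $c_{p+1}=c_p-\tfrac12\langle\boldsymbol h,\boldsymbol h\rangle+\sum_{q=0}^{p-1}\rho_\mu(\dot\mu^q(\boldsymbol h))$; taking $p=T$ and using $\mu^T(\text{d}_1)=\text{d}_1$ forces $\sum_{q=0}^{T-1}\dot\mu^q(\boldsymbol h)=0$ and $c_T=0$, and substituting $\rho_\mu(\dot\mu^q(\boldsymbol h))=-\langle\dot\mu^{q+1}(\boldsymbol h),\boldsymbol h\rangle$ (from \eqref{eq:defh} and the $\langle\cdot,\cdot\rangle$-identification of $\dot{\mathfrak h}$ with $\dot{\mathfrak h}^{\ast}$, under which $\dot\mu$ is self-adjoint as it preserves the form) and summing the resulting double series reduces $c_T=0$ to $\sum_{p=1}^{T-1}(T-p)\langle\dot\mu^p(\boldsymbol h),\boldsymbol h\rangle+\tfrac T2\langle\boldsymbol h,\boldsymbol h\rangle=0$.

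The one genuinely delicate point is the scalar-consistency in the first paragraph: that the ``angular part'' of $\mu(t_1^n\otimes x_{\dot\alpha})$ equals $\dot\mu(x_{\dot\alpha})$ for \emph{every} $n$, and not merely a scalar multiple of $x_{\dot\mu(\dot\alpha)}$ that could a priori depend on $n$. I expect this to be the only step requiring real care---handled either by the ``verify the candidate automorphism $\nu$ and compare on generators'' route or by the loop-algebra generation induction---after which everything else is routine bookkeeping with the additivity of $\rho_\mu$ and the invariance of $\langle\cdot,\cdot\rangle$.
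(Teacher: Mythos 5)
Your treatment of $\mu(\text{k}_{1})$, your deduction of \eqref{eq:charmu2} from \eqref{eq:charmu1}, your proof of \eqref{eq:charmu4} (the paper argues the same way, getting $b=1$ by pairing with $\text{k}_{1}$, pinning $\boldsymbol{h}$ via $\langle\mu(\text{d}_{1}),\mu(\dot{\alpha})\rangle=0$ — your bracket with $[\text{d}_{1},x_{\dot{\alpha}}]=0$ is an equivalent variant — and $a$ from $\langle\mu(\text{d}_{1}),\mu(\text{d}_{1})\rangle=0$), and your explicit iteration producing \eqref{eq:muN=00003D00003D00003D1} (which the paper merely asserts follows from \eqref{eq:charmu2}, \eqref{eq:charmu4} and $\mu^{T}=\mathrm{id}$) are all correct. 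But note that the paper does not prove \eqref{eq:charmu1}--\eqref{eq:charmu2}: it quotes them from [CJKT, Proposition 2.2]. The piece you add on your own, the uniformity in $n$ in \eqref{eq:charmu1}, is precisely where your argument has a genuine gap.

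Both of your routes fail at the same step. In route (a), the candidate map $\nu$ agrees with $\mu$, by the very definitions of $\dot{\mu}$ and $\rho_{\mu}$ (including \eqref{eq:defdotmu}) and by your separate proof of \eqref{eq:charmu4}, only on the loop-degree-zero subalgebra $\dot{\mathfrak{g}}\oplus\mathbb{C}\text{k}_{1}\oplus\mathbb{C}\text{d}_{1}$. That subalgebra does not generate $\mathfrak{g}$, and agreement on it does not force $\nu=\mu$: automorphisms of the form $t_{1}^{n}\otimes x\mapsto c^{n}\,t_{1}^{n}\otimes x$ fix it pointwise. So you must also match $\nu$ and $\mu$ on $e_{0}=t_{1}\otimes x_{-\dot{\theta}}$ (or $f_{0}$), and this is not automatic: by \eqref{diagram automorphism} and the paper's choice $x_{\alpha}=t_{1}^{m}\otimes x_{\dot{\alpha}}$ one has $\mu(e_{0})=t_{1}^{1-\rho_{\mu}(\dot{\theta})}\otimes x_{-\dot{\mu}(\dot{\theta})}$, while $\nu(e_{0})=t_{1}^{1-\rho_{\mu}(\dot{\theta})}\otimes\dot{\mu}(x_{-\dot{\theta}})$, so the needed equality is exactly the normalization statement $\dot{\mu}(x_{-\dot{\theta}})=x_{-\dot{\mu}(\dot{\theta}})$ — an instance of what is being proved — and it does not follow from ``$\dot{\mu}$ is a form-preserving automorphism and $\rho_{\mu}$ is additive.'' Route (b) needs $\mu(e_{0}),\mu(f_{0})$ in the same normalized form to seed the induction, hence has the identical gap. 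The point is not cosmetic: for the order-two diagram automorphism of $A_{2}^{(1)}$ fixing the zeroth node one has $\rho_{\mu}=0$ and $\dot{\mu}=\dot{\sigma}$, the diagram automorphism of $\mathfrak{sl}_{3}$, which acts by $-1$ on $\dot{\mathfrak{g}}_{\pm\dot{\theta}}$; since $\mu(e_{0})=e_{0}$, the action on loop degree $n$ is $t_{1}^{n}\otimes x\mapsto(-1)^{n}t_{1}^{n}\otimes\dot{\sigma}(x)$, so $\nu$ and $\mu$ agree on all of $\dot{\mathfrak{g}}$ and $\mathfrak{h}$ yet differ. This shows the scalar-consistency cannot be extracted from the ingredients you list alone; it hinges on the precise compatibility of the chosen root vectors with $\mu$ that is built into the construction in [CJKT], which is exactly why the paper disposes of \eqref{eq:charmu1}--\eqref{eq:charmu2} by citation rather than by a soft ``compare on generators'' argument.
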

\begin{proof}
The equalities \eqref{eq:charmu1} and \eqref{eq:charmu2} were proved
in \cite[Proposition 2.2]{CJKT}. Assume that $\mu\left(\text{d}_{1}\right)=\boldsymbol{h}+a\text{k}_{1}+b\text{d}_{1}$
with $\boldsymbol{h}\in\dot{\mathfrak{h}}$ and $a,b\in\mathbb{C}.$
Since

\[
1=\left\langle \text{d}_{1},\text{k}_{1}\right\rangle =\left\langle \mu\left(\text{d}_{1}\right), \ \ \mu\left(\text{k}_{1}\right)\right\rangle =\left\langle \boldsymbol{h}+a\text{k}_{1}+b\text{d}_{1},\text{k}_{1}\right\rangle =b,
\]
we obtain $b=1$. Then for any $\dot{\alpha}\in\dot{Q}$,
\[
0=\left\langle \mu\left(\text{d}_{1}\right),\ \ \mu\left(\dot{\alpha}\right)\right\rangle =\left\langle \boldsymbol{h}+a\text{k}_{1}+\text{d}_{1},\ \ \dot{\mu}\left(\dot{\alpha}\right)+\rho_{\mu}\left(\dot{\alpha}\right)\text{k}_{1}\right\rangle =\left\langle \boldsymbol{h},\dot{\mu}\left(\dot{\alpha}\right)\right\rangle +\rho_{\mu}\left(\dot{\alpha}\right),
\]
and hence $\boldsymbol{h}$ is determined by the condition (\ref{eq:defh}).
And by using the fact that
$\left\langle \mu\left(\text{d}_{1}\right),\mu\left(\text{d}_{1}\right)\right\rangle =0$,
one obtains $a=-\frac{\left\langle \boldsymbol{h},\boldsymbol{h}\right\rangle }{2}$,
which implies \eqref{eq:charmu4}. Finally, \eqref{eq:muN=00003D00003D00003D1}
follows from the equalities \eqref{eq:charmu2}, \eqref{eq:charmu4}
and the fact that $\mu$ has order $T$.
\end{proof}
Now we are ready to define an automorphism of the full toroidal Lie
algebra $\mathcal{T}\left(\dot{\mathfrak{g}}\right)$ from  the diagram
automorphism $\mu$ of $\mathfrak{g}$.

\begin{proposition}\label{prop:defhatmu} The following assignment, for $\dot{\alpha}\in\dot{\Delta}$,
$m_{0},m_{1}\in\mathbb{Z}$ and $i=0,1,$
\begin{eqnarray*}
t_{0}^{m_{0}}t_{1}^{m_{1}}\otimes x_{\dot{\alpha}} & \mapsto & t_{0}^{m_{0}}t_{1}^{m_{1}+\rho_{\mu}\left(\dot{\alpha}\right)}\otimes\dot{\mu}\left(x_{\dot{\alpha}}\right),\\
t_{0}^{m_{0}}t_{1}^{m_{1}}\otimes\dot{\alpha}^{\vee} & \mapsto & t_{0}^{m_{0}}t_{1}^{m_{1}}\otimes\dot{\mu}\left(\dot{\alpha}^{\vee}\right)+\rho_{\mu}\left(\dot{\alpha}^{\vee}\right)t_{0}^{m_{0}}t_{1}^{m_{1}}\text{k}_{1},\\
t_{0}^{m_{0}}t_{1}^{m_{1}}\text{k}_{i} & \mapsto & t_{0}^{m_{0}}t_{1}^{m_{1}}\text{k}_{i},\quad t_{0}^{m_{0}}t_{1}^{m_{1}}\text{d}_{0}  \mapsto t_{0}^{m_{0}}t_{1}^{m_{1}}\text{d}_{0},\\
t_{0}^{m_{0}}t_{1}^{m_{1}}\text{d}_{1} & \mapsto & t_{0}^{m_{0}}t_{1}^{m_{1}}\text{d}_{1}+t_{0}^{m_{0}}t_{1}^{m_{1}}\otimes\boldsymbol{h}-\frac{\left\langle \boldsymbol{h},\boldsymbol{h}\right\rangle }{2}t_{0}^{m_{0}}t_{1}^{m_{1}}\text{k}_{1},
\end{eqnarray*}
defines an automorphism, denoted by $\hat{\mu}$, of the full toroidal Lie
algebra $\mathcal{T}\left(\dot{\mathfrak{g}}\right)$ with order $T$.
\end{proposition}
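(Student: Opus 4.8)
The plan is to verify, in this order, that $\hat{\mu}$ is a well-defined linear endomorphism of $\mathcal{T}(\dot{\mathfrak{g}})$, that it preserves the Lie bracket, and that $\hat{\mu}^{T}=\mathrm{id}$ with $T$ its exact order --- bijectivity then being free from $\hat{\mu}^{-1}=\hat{\mu}^{T-1}$. The guiding point is that $\hat{\mu}$ leaves the variable $t_{0}$ and the elements $\text{d}_{0},\text{k}_{0},\text{k}_{1}$ fixed and on the $\dot{\mathfrak{g}}$-slot only rescales the $t_{1}$-degree and applies $\dot{\mu}$, exactly mimicking the diagram automorphism $\mu$ of $\mathfrak{g}$ (inserting a $\text{k}_{1}$-correction wherever the image of $\mu$ carries one). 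So virtually every identity to be checked will collapse, after the $t_{0}$-exponents (which are inert and simply add) are pulled out, either to the fact recorded in Lemma~\ref{lem:muaction} that $\mu$ is an automorphism of $\mathfrak{g}$ fixing $\text{k}_{1}$ and preserving $\langle\cdot,\cdot\rangle$, or to the arithmetic of the $\mathcal{K}$-valued toroidal cocycle.

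First I would settle well-definedness. On $\mathcal{R}\otimes\dot{\mathfrak{g}}=\mathbb{C}[t_{0}^{\pm1}]\otimes\bigl(\mathbb{C}[t_{1}^{\pm1}]\otimes\dot{\mathfrak{g}}\bigr)$ the recipe is to multiply the $t_{0}$-power through and apply to the remaining slot the linear map $\dot{\mathfrak{g}}\to\widehat{\mathcal{L}}(\dot{\mathfrak{g}})=(\mathbb{C}[t_{1}^{\pm1}]\otimes\dot{\mathfrak{g}})\oplus\mathbb{C}\text{k}_{1}$ induced by $\mu|_{\dot{\mathfrak{g}}}$ --- it does land in $\widehat{\mathcal{L}}(\dot{\mathfrak{g}})$ by \eqref{eq:defdotmu} and $\mu(h)=\dot{\mu}(h)+\rho_{\mu}(h)\text{k}_{1}$ for $h\in\dot{\mathfrak{h}}$ --- reading a summand $c\,\text{k}_{1}$ as $c\,t_{0}^{m_{0}}t_{1}^{m_{1}}\text{k}_{1}\in\mathcal{K}$; linearity in the $\dot{\mathfrak{g}}$-argument holds since $\dot{\mu}$ and $\rho_{\mu}$ are linear. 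On $\mathcal{K}=\Omega^{1}_{\mathcal{R}}/d(\mathcal{R})$ the recipe is the identity, which respects the relations $m_{0}\,t_{0}^{m_{0}}t_{1}^{m_{1}}\text{k}_{0}+m_{1}\,t_{0}^{m_{0}}t_{1}^{m_{1}}\text{k}_{1}=0$; and on $\text{Der}(\mathcal{R})=\mathcal{R}\text{d}_{0}\oplus\mathcal{R}\text{d}_{1}$ it is given on an actual basis. Hence $\hat{\mu}\in\mathrm{End}(\mathcal{T}(\dot{\mathfrak{g}}))$.

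Next, the bracket: I would check $\hat{\mu}([a,b])=[\hat{\mu}(a),\hat{\mu}(b)]$ on pairs from the spanning set of \eqref{eq:relationint1}--\eqref{eq:relationint4}. For $a,b\in\mathcal{R}\otimes\dot{\mathfrak{g}}$, both sides agree modulo $\mathcal{K}$, since $\hat{\mu}$ induces on $\mathcal{R}\otimes\dot{\mathfrak{g}}=\mathfrak{t}(\dot{\mathfrak{g}})/\mathcal{K}$ the automorphism $\mathrm{id}_{\mathbb{C}[t_{0}^{\pm1}]}\otimes\mu'$, where $\mu'$ is the loop-algebra automorphism of $\mathbb{C}[t_{1}^{\pm1}]\otimes\dot{\mathfrak{g}}$ carried by $\mu$ (only $\dot{\mu}\in\mathrm{Aut}(\dot{\mathfrak{g}})$ and $\rho_{\mu}$ being a homomorphism are needed here); so the difference lies in $\mathcal{K}$, and one computes it is zero using $\langle\dot{\mu}(x),\dot{\mu}(y)\rangle=\langle x,y\rangle$, $\rho_{\mu}$ linear, and the $\mathfrak{sl}_{2}$-normalization $\langle x_{\dot{\alpha}},x_{-\dot{\alpha}}\rangle=2/\langle\dot{\alpha},\dot{\alpha}\rangle$ --- exactly the mechanism of \eqref{eq:charmu1}--\eqref{eq:charmu2}. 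Brackets involving $\mathcal{K}$ are trivial since $\mathcal{K}$ is central and $\hat{\mu}|_{\mathcal{K}}=\mathrm{id}$. For brackets with $\text{Der}(\mathcal{R})$ via \eqref{eq:relationint2}--\eqref{eq:relationint4}, the $\text{d}_{0}$-rows are immediate because $\hat{\mu}$ fixes every $t_{0}^{m_{0}}t_{1}^{m_{1}}\text{d}_{0}$; for the $\text{d}_{1}$-rows one substitutes $\hat{\mu}(t_{0}^{m_{0}}t_{1}^{m_{1}}\text{d}_{1})=t_{0}^{m_{0}}t_{1}^{m_{1}}\text{d}_{1}+t_{0}^{m_{0}}t_{1}^{m_{1}}\otimes\boldsymbol{h}-\tfrac{1}{2}\langle\boldsymbol{h},\boldsymbol{h}\rangle\,t_{0}^{m_{0}}t_{1}^{m_{1}}\text{k}_{1}$ and checks that the extra $\boldsymbol{h}$- and $\text{k}_{1}$-terms, bracketed via \eqref{eq:relationint2}--\eqref{eq:relationint3}, reproduce precisely the $t_{1}$-shift $m_{1}\mapsto m_{1}+\rho_{\mu}(\dot{\alpha})$ and the $\text{k}_{1}$-corrections on the partner images --- the crucial input being \eqref{eq:defh}, so that $[\boldsymbol{h},\dot{\mu}(x_{\dot{\alpha}})]=\dot{\mu}(\dot{\alpha})(\boldsymbol{h})\,\dot{\mu}(x_{\dot{\alpha}})=-\rho_{\mu}(\dot{\alpha})\,\dot{\mu}(x_{\dot{\alpha}})$, exactly as in the proof of Lemma~\ref{lem:muaction}. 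The one genuinely new computation is the $\text{d}_{1}$--$\text{d}_{1}$ bracket \eqref{eq:relationint4}, where one must invoke $(m_{0}+n_{0})\,t_{0}^{m_{0}+n_{0}}t_{1}^{m_{1}+n_{1}}\text{k}_{0}=-(m_{1}+n_{1})\,t_{0}^{m_{0}+n_{0}}t_{1}^{m_{1}+n_{1}}\text{k}_{1}$ in $\mathcal{K}$ to see that the $\text{k}_{0}$- and $\text{k}_{1}$-contributions from $[t_{0}^{m_{0}}t_{1}^{m_{1}}\otimes\boldsymbol{h},\,t_{0}^{n_{0}}t_{1}^{n_{1}}\otimes\boldsymbol{h}]$ and from the $\text{d}_{1}$--$\text{k}_{1}$ cross terms collapse to $\tfrac{1}{2}\langle\boldsymbol{h},\boldsymbol{h}\rangle(m_{1}-n_{1})\,t_{0}^{m_{0}+n_{0}}t_{1}^{m_{1}+n_{1}}\text{k}_{1}$, matching $\hat{\mu}$ of the right-hand side of \eqref{eq:relationint4}.

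Finally, for the order: $\hat{\mu}$ fixes $\text{d}_{0},\text{k}_{0}$; on $t_{0}^{m_{0}}t_{1}^{m_{1}}\otimes x_{\dot{\alpha}}$, $T$-fold iteration yields $t_{0}^{m_{0}}t_{1}^{m_{1}+\sum_{p=0}^{T-1}\rho_{\mu}(\dot{\mu}^{p}(\dot{\alpha}))}\otimes\dot{\mu}^{T}(x_{\dot{\alpha}})$, which is the original by $\dot{\mu}^{T}=\mathrm{id}$ and $\sum_{p=0}^{T-1}\rho_{\mu}(\dot{\mu}^{p}(\dot{\alpha}))=0$; on the $\dot{\alpha}^{\vee}$- and $\text{d}_{1}$-elements one reduces similarly to $\sum_{p=0}^{T-1}\dot{\mu}^{p}(\boldsymbol{h})=0$ and $\sum_{p=1}^{T-1}(T-p)\langle\dot{\mu}^{p}(\boldsymbol{h}),\boldsymbol{h}\rangle+\tfrac{T}{2}\langle\boldsymbol{h},\boldsymbol{h}\rangle=0$ --- all of these being the last displayed identities of Lemma~\ref{lem:muaction}. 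Thus $\hat{\mu}^{T}=\mathrm{id}$; conversely $\hat{\mu}^{k}=\mathrm{id}$ forces $\dot{\mu}^{k}=\mathrm{id}$ together with the vanishing of the corresponding $\rho_{\mu}$- and $\boldsymbol{h}$-sums, hence $\mu^{k}=\mathrm{id}$ on $\mathfrak{g}$ and $T\mid k$, so the order is exactly $T$. The one real obstacle throughout is the consistent treatment of the twist $\text{d}_{1}\mapsto\text{d}_{1}+\boldsymbol{h}-\tfrac{1}{2}\langle\boldsymbol{h},\boldsymbol{h}\rangle\text{k}_{1}$ against \eqref{eq:relationint2}--\eqref{eq:relationint4} and the toroidal cocycle --- the same delicate point already met in Lemma~\ref{lem:muaction} for $\mu$, now additionally carrying $t_{0}$ and the harmless fixed pair $\text{d}_{0},\text{k}_{0}$. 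One could shorten the $\mathfrak{t}(\dot{\mathfrak{g}})$-part by noting it is the universal central extension of $\mathcal{R}\otimes\dot{\mathfrak{g}}$ \cite{MRY}, so $\mathrm{id}\otimes\mu'$ lifts uniquely; the derivation part must still be done by hand.
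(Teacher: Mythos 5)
Your proposal is correct and follows essentially the same route as the paper: check bracket preservation on the spanning relations \eqref{eq:relationint1}--\eqref{eq:relationint4}, with the $\text{d}_{1}$--$\text{d}_{1}$ bracket (handled via the cocycle relation in $\mathcal{K}$ and the defining property \eqref{eq:defh} of $\boldsymbol{h}$) as the one delicate computation --- exactly the case the paper writes out --- and deduce $\hat{\mu}^{T}=\mathrm{id}$ from the closing identities of Lemma \ref{lem:muaction}. The additional material you include (well-definedness, the exact-order lower bound, the optional shortcut via the universal central extension of $\mathcal{R}\otimes\dot{\mathfrak{g}}$) merely fills in steps the paper treats as routine.
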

\begin{proof}
 For $m_{0},m_{1},n_{0},n_{1}\in\mathbb{Z}$,
by applying \eqref{eq:relationint1}-\eqref{eq:relationint4}, we have
\begin{align*}
 & \left[\hat{\mu}\left(t_{0}^{m_{0}}t_{1}^{m_{1}}\text{d}_{1}\right),\hat{\mu}\left(t_{0}^{n_{0}}t_{1}^{n_{1}}\text{d}_{1}\right)\right]\\
=\  & \left[t_{0}^{m_{0}}t_{1}^{m_{1}}\text{d}_{1},t_{0}^{n_{0}}t_{1}^{n_{1}}\text{d}_{1}\right]-\frac{\left\langle \boldsymbol{h},\boldsymbol{h}\right\rangle }{2}\left(\left[t_{0}^{m_{0}}t_{1}^{m_{1}}\text{d}_{1},t_{0}^{n_{0}}t_{1}^{n_{1}}\text{k}_{1}\right]+\left[t_{0}^{m_{0}}t_{1}^{m_{1}}\text{k}_{1},t_{0}^{n_{0}}t_{1}^{n_{1}}\text{d}_{1}\right]\right)\\
 & +\left[t_{0}^{m_{0}}t_{1}^{m_{1}}\text{d}_{1},t_{0}^{n_{0}}t_{1}^{n_{1}}\otimes\boldsymbol{h}\right]+\left[t_{0}^{m_{0}}t_{1}^{m_{1}}\otimes\boldsymbol{h},t_{0}^{n_{0}}t_{1}^{n_{1}}\text{d}_{1}\right]+\left[t_{0}^{m_{0}}t_{1}^{m_{1}}\otimes\boldsymbol{h},t_{0}^{n_{0}}t_{1}^{n_{1}}\otimes\boldsymbol{h}\right]\\
=\  & (n_{1}-m_{1})\left(t_{0}^{m_{0}+n_{0}}t_{1}^{m_{1}+n_{1}}\text{d}_{1}+t_{0}^{m_{0}+n_{0}}t_{1}^{m_{1}+n_{1}}\otimes\boldsymbol{h}-\frac{\left\langle \boldsymbol{h},\boldsymbol{h}\right\rangle }{2}t_{0}^{m_{0}+n_{0}}t_{1}^{m_{1}+n_{1}}\text{k}_{1}\right)\\
 & -\frac{\left\langle \boldsymbol{h},\boldsymbol{h}\right\rangle }{2}\left(\sum_{i=1,2}m_{i}t_{0}^{m_{0}+n_{0}}t_{1}^{m_{1}+n_{1}}\text{k}_{i}-\sum_{i=1,2}n_{i}t_{0}^{m_{0}+n_{0}}t_{1}^{m_{1}+n_{1}}\text{k}_{i}\right)+\left\langle \boldsymbol{h},\boldsymbol{h}\right\rangle \sum_{i=1,2}m_{i}t_{0}^{m_{0}+n_{0}}t_{1}^{m_{1}+n_{1}}\text{k}_{i}\\
=\  & \hat{\mu}\left(\left[t_{0}^{m_{0}}t_{1}^{m_{1}}\text{d}_{1},t_{0}^{n_{0}}t_{1}^{n_{1}}\text{d}_{1}\right]\right).
\end{align*}
One can check $\hat{\mu}$ preserves all other relations by a similar argument as above. And it follows from \eqref{eq:muN=00003D00003D00003D1} that the order of the automorphism $\hat{\mu}$ is equal to $T$.
\end{proof}

\subsection{\label{Lie algebras}Subalgebra of $\widetilde{\mathfrak{g}}$ fixed by the automorphism $\tilde{\mu}$}

From the definition of the automorphism $\hat{\mu}$ of the  full toroidal
Lie algebra $\mathcal{T}\left(\dot{\mathfrak{g}}\right)$ given in the previous subsection, it is easy to see that $\hat{\mu}\left(\widetilde{\mathfrak{g}}\right)=\widetilde{\mathfrak{g}}$. 
 Fix a $\mathbb{Z}$-grading $\widetilde{\mathfrak{g}}=\oplus_{m\in\mathbb{Z}}\widetilde{\mathfrak{g}}_{\left(m\right)}$
 by the adjoint action of $-\text{d}_{0}$,
i.e., $\widetilde{\mathfrak{g}}_{\left(m\right)}$=$\left\{ x\in\widetilde{\mathfrak{g}}\mid\left[\text{d}_{0},x\right]=-mx\right\} $.
Set $\omega=\omega_{T}=e^{2\pi\sqrt{-1}/T}$ and let $\omega^{-\text{d}_{0}}$ be the
automorphism of $\widetilde{\mathfrak{g}}$ defined by $\omega^{-\text{d}_{0}}\left(a\right)=\omega^{m}a$
for $a\in\widetilde{\mathfrak{g}}_{\left(m\right)}$.
Therefore,
\[
\tilde{\mu}=\omega^{-\text{d}_{0}}\circ\hat{\mu}|_{\widetilde{\mathfrak{g}}}\] defines an automorphism for the toroidal EALA $\widetilde{\mathfrak{g}}$.
 Now we investigate the Lie subalgebra $\widetilde{\mathfrak{g}}\left[\mu\right]$ of  $ \widetilde{\mathfrak{g}}$ fixed
by the automorphism $\tilde{\mu}$.


Firstly, for $x\in\mathfrak{g}_{\alpha}$ with $\alpha\in\Delta^{\times}\cup\left\{ 0\right\} $,
$h\in\dot{\mathfrak{h}}$, $m\in\mathbb{Z}$ and $n\in \mathbb{Z}^\ast$, we have
\begin{gather}
\tilde{\mu}\left(t_{0}^{m}x\right)=\omega^{-m}t_{0}^{m}\mu\left(x\right),\quad\tilde{\mu}\left(t_{0}^{m}t_{1}^{n}\otimes h\right)=\omega^{-m}\left(t_{0}^{m}t_{1}^{n}\otimes\dot{\mu}\left(h\right)-\rho_{\mu}\left(h\right)m\text{k}_{m,n}\right),
\quad\tilde{\mu}\left(\text{k}_{0}\right)=\text{k}_{0},\nonumber \\
\tilde{\mu}\left(\text{d}_{0}\right)=\text{d}_{0},\quad\tilde{\mu}\left(\text{k}_{m,n}\right)=\omega^{-m}\text{k}_{m,n},\quad\tilde{\mu}\left(\widetilde{\text{d}}_{m,n}\right)=\omega^{-m}\left(\widetilde{\text{d}}_{m,n}+mt_{0}^{m}t_{1}^{n}\otimes\boldsymbol{h}+\frac{\langle\boldsymbol{h},\boldsymbol{h}\rangle}{2}m^{2}\text{k}_{m,n}\right).\label{eq:deftildemu}
\end{gather}
Moreover, we recall that the diagram automorphism $\mu$ preserves the Cartan subalgebra
$\mathfrak{h}$ and the bilinear form $\left\langle \cdot,\cdot\right\rangle $.
Therefore, by \eqref{eq:defform}, \eqref{eq:defh} and \eqref{eq:deftildemu}, we have the following results:

\begin{lemma}\label{lem:tmuinv} 
$\tilde{\mu}\left(\widetilde{\mathfrak{h}}\right)=\widetilde{\mathfrak{h}}$, and $\left\langle \tilde{\mu}\left(x\right), \tilde{\mu}\left(y\right)\right\rangle =\left\langle x,y\right\rangle $
for $x,y\in\widetilde{\mathfrak{g}}$. \end{lemma}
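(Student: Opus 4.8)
The plan is to prove both assertions by direct computation from the explicit formulas \eqref{eq:deftildemu} for $\tilde{\mu}$ on the spanning set \eqref{eq:basisoftildeg}, treating the two claims separately. For $\tilde{\mu}(\widetilde{\mathfrak{h}})=\widetilde{\mathfrak{h}}$: every element of $\widetilde{\mathfrak{h}}=\dot{\mathfrak{h}}\oplus\mathbb{C}\text{k}_{0}\oplus\mathbb{C}\text{k}_{1}\oplus\mathbb{C}\text{d}_{0}\oplus\mathbb{C}\text{d}_{1}$ lies in $\widetilde{\mathfrak{g}}_{(0)}$, so $\omega^{-\text{d}_{0}}$ restricts to the identity there and $\tilde{\mu}|_{\widetilde{\mathfrak{h}}}=\hat{\mu}|_{\widetilde{\mathfrak{h}}}$. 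Reading Proposition \ref{prop:defhatmu} at $m_{0}=m_{1}=0$ (and using $\mathbb{C}$-linearity of the coroot rule to get $\hat{\mu}(h)=\dot{\mu}(h)+\rho_{\mu}(h)\text{k}_{1}$ for $h\in\dot{\mathfrak{h}}$) shows that $\hat{\mu}$ fixes $\text{k}_{0},\text{k}_{1},\text{d}_{0}$, sends $\dot{\mathfrak{h}}$ into $\dot{\mathfrak{h}}\oplus\mathbb{C}\text{k}_{1}$, and sends $\text{d}_{1}$ to $\text{d}_{1}+\boldsymbol{h}-\tfrac{1}{2}\langle\boldsymbol{h},\boldsymbol{h}\rangle\text{k}_{1}$, all inside $\widetilde{\mathfrak{h}}$; hence $\tilde{\mu}(\widetilde{\mathfrak{h}})\subseteq\widetilde{\mathfrak{h}}$, and since $\widetilde{\mathfrak{h}}$ is finite-dimensional and $\tilde{\mu}$ is injective, equality follows.

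For the invariance of $\langle\cdot,\cdot\rangle$, by bilinearity it suffices to verify $\langle\tilde{\mu}(a),\tilde{\mu}(b)\rangle=\langle a,b\rangle$ for $a,b$ in the spanning set \eqref{eq:basisoftildeg}, using \eqref{eq:deftildemu}. A preliminary remark simplifies the bookkeeping: the form \eqref{eq:defform} is homogeneous of degree $0$ for the grading by $-\text{d}_{0}$ (it pairs $\widetilde{\mathfrak{g}}_{(m)}$ with $\widetilde{\mathfrak{g}}_{(-m)}$, by invariance of the form), so the scalars $\omega^{\pm m}$ produced by the twist $\omega^{-\text{d}_{0}}$ cancel in every pairing; equivalently, it is enough to check that $\hat{\mu}$ preserves $\langle\cdot,\cdot\rangle$. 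The vast majority of the resulting cases vanish identically on both sides, by support and degree considerations together with the mutual orthogonality of the summands $\mathcal{R}\otimes\dot{\mathfrak{g}}$, $\mathcal{K}$ and $\mathcal{S}$ apart from the three pairings listed in \eqref{eq:defform}. The substantive inputs are exactly two: first, the $\mu$-invariance of $\langle\cdot,\cdot\rangle$ on $\mathfrak{g}$, i.e.\ \eqref{diagram automorphism}, which handles the pairings $\langle t_{0}^{m}x,t_{0}^{-m}y\rangle$; and second, the defining relation \eqref{eq:defh} for $\boldsymbol{h}$, used in the form $\langle\dot{\mu}(h),\boldsymbol{h}\rangle=-\rho_{\mu}(h)$ for $h\in\dot{\mathfrak{h}}$, which reconciles the $\mathcal{K}$-valued corrections occurring in $\tilde{\mu}(t_{0}^{m}t_{1}^{n}\otimes h)$ with those in $\tilde{\mu}(\widetilde{\text{d}}_{-m,-n})$ and $\hat{\mu}(\text{d}_{1})$.

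The only pairing that requires a genuine cancellation of several nonzero contributions is $\langle\tilde{\mu}(\widetilde{\text{d}}_{m,n}),\tilde{\mu}(\widetilde{\text{d}}_{-m,-n})\rangle$ (with $n\in\mathbb{Z}^{\ast}$): expanding $\tilde{\mu}(\widetilde{\text{d}}_{m,n})=\omega^{-m}\bigl(\widetilde{\text{d}}_{m,n}+m\,t_{0}^{m}t_{1}^{n}\otimes\boldsymbol{h}+\tfrac{1}{2}\langle\boldsymbol{h},\boldsymbol{h}\rangle m^{2}\text{k}_{m,n}\bigr)$ and pairing with the analogous expansion of $\tilde{\mu}(\widetilde{\text{d}}_{-m,-n})$, the surviving cross-terms $\langle\widetilde{\text{d}}_{m,n},\text{k}_{-m,-n}\rangle=1$ (appearing twice) and $\langle t_{0}^{m}t_{1}^{n}\otimes\boldsymbol{h},t_{0}^{-m}t_{1}^{-n}\otimes\boldsymbol{h}\rangle=\langle\boldsymbol{h},\boldsymbol{h}\rangle$ combine to $\tfrac{1}{2}m^{2}\langle\boldsymbol{h},\boldsymbol{h}\rangle-m^{2}\langle\boldsymbol{h},\boldsymbol{h}\rangle+\tfrac{1}{2}m^{2}\langle\boldsymbol{h},\boldsymbol{h}\rangle=0$, matching $\langle\widetilde{\text{d}}_{m,n},\widetilde{\text{d}}_{-m,-n}\rangle=0$; an analogous but shorter cancellation, now driven by the identity $\langle\dot{\mu}(h),\boldsymbol{h}\rangle=-\rho_{\mu}(h)$, disposes of $\langle\tilde{\mu}(\widetilde{\text{d}}_{m,n}),\tilde{\mu}(t_{0}^{-m}t_{1}^{-n}\otimes h)\rangle$. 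I expect the main obstacle to be purely organizational rather than conceptual: arranging the case list so that the many vanishing pairings are dispatched uniformly, and carefully tracking the $\mathcal{K}$-valued corrections — in particular the identity $t_{0}^{m}t_{1}^{n}\text{k}_{1}=-m\,\text{k}_{m,n}$ in $\mathcal{K}$ needed when passing between the ``$\text{d}_{1}$'' and the ``$\text{k}_{m,n}$'' descriptions of $\hat{\mu}$.
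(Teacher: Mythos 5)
Your proposal is correct and follows essentially the same route as the paper, which simply asserts the lemma as a direct consequence of the explicit formulas \eqref{eq:deftildemu}, the definition \eqref{eq:defform} of the form, the relation \eqref{eq:defh} for $\boldsymbol{h}$, and the $\mu$-invariance of $\langle\cdot,\cdot\rangle$ on $\mathfrak{g}$; your case-by-case verification (including the cancellation in $\langle\tilde{\mu}(\widetilde{\text{d}}_{m,n}),\tilde{\mu}(\widetilde{\text{d}}_{-m,-n})\rangle$ and the use of $\langle\dot{\mu}(h),\boldsymbol{h}\rangle=-\rho_{\mu}(h)$) just makes that computation explicit.
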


Denoted by $\widetilde{\mathfrak{g}}\left[\mu\right]$
and $\widetilde{\mathfrak{h}}\left[\mu\right]$ the Lie subalgebras of
$\widetilde{\mathfrak{g}}$ and $\mathfrak{\widetilde{\mathfrak{h}}}$ respectively
fixed by $\tilde{\mu}$. We consider the root space decomposition of $\widetilde{\mathfrak{g}}\left[\mu\right]$
with respect to its abelian subalgebra $\widetilde{\mathfrak{h}}\left[\mu\right]$.
Denote by $\left(\mathfrak{h}^{\ast}\right)^{\mu}$ the subspace of
$\mathfrak{h}^{\ast}$ that is fixed by $\mu$ and denote by
\begin{gather*}
\pi_{\mu}:\mathfrak{h}^{\ast}\to\left(\mathfrak{h}^{\ast}\right)^{\mu},\quad\alpha\mapsto\check{\alpha}=\frac{1}{T}\sum_{p=0}^{T-1}\mu^{p}\left(\alpha\right)
\end{gather*}
the canonical projection. Since $\check{\alpha}\left(h-\mu\left(h\right)\right)=0$
for $\alpha\in\mathfrak{h}^{\ast}$ and $h\in\mathfrak{h}$, we may
identify $\left(\mathfrak{h}^{\mu}\right)^{\ast}$ with $\left(\mathfrak{h}^{\ast}\right)^{\mu},$
where $\mathfrak{h}^{\mu}$ is the subspace of $\mathfrak{h}$ fixed
by $\mu$. Note that $\widetilde{\mathfrak{h}}\left[\mu\right]=\mathfrak{h}^{\mu}\oplus\mathbb{C}\text{k}_{0}\oplus\mathbb{C}\text{d}_{0}$.
We view $\left(\mathfrak{h}^{\mu}\right)^{\ast}$ as a subspace of
$\widetilde{\mathfrak{h}}\left[\mu\right]^{\ast}$ such that $\alpha\left(\text{k}_{0}\right)=\alpha\left(\text{d}_{0}\right)=0$
for $\alpha\in\left(\mathfrak{h}^{\mu}\right)^{\ast}$, and define
$\delta_{0}\in\widetilde{\mathfrak{h}}\left[\mu\right]^{\ast}$ by
\begin{equation}
\delta_{0}\left(\mathfrak{h}^{\mu}\right)=\delta_{0}\left(\text{k}_{0}\right)=0,\ \ \delta_{0}\left(\text{d}_{0}\right)=1.\label{def of delta_0}
\end{equation}
In particular, for each $\alpha\in\Delta$, $\check{\alpha}\in\left(\mathfrak{h}^{\ast}\right)^{\mu}=\left(\mathfrak{h}^{\mu}\right)^{\ast}$
is an element in $\widetilde{\mathfrak{h}}\left[\mu\right]^{\ast}$.

For $\alpha\in\widetilde{\mathfrak{h}}\left[\mu\right]^{\ast},$ set
\[
\widetilde{\mathfrak{g}}\left[\mu\right]_{\alpha}=\left\{ x\in\widetilde{\mathfrak{g}}\left[\mu\right]\mid\left[h,x\right]=\alpha\left(h\right)x,\ h\in\widetilde{\mathfrak{h}}\left[\mu\right]\right\} .
\]
Then we have the following root space decomposition
\[
\widetilde{\mathfrak{g}}\left[\mu\right]=\widetilde{\mathfrak{g}}\left[\mu\right]_{0}\oplus\sum_{\alpha\in\tilde{\Delta}_{\mu}}\widetilde{\mathfrak{g}}\left[\mu\right]_{\alpha},
\]
where $\tilde{\Delta}_{\mu}=\left\{ \alpha\in\tilde{\mathfrak{h}}\left[\mu\right]^{\ast}\backslash\left\{ 0\right\} \mid\widetilde{\mathfrak{g}}\left[\mu\right]_{\alpha}\not=0\right\} .$
Let
\begin{eqnarray}
\eta_{\mu}:\widetilde{\mathfrak{g}}\to\widetilde{\mathfrak{g}}\left[\mu\right],\quad x\mapsto\sum_{p=0}^{T-1}\tilde{\mu}^{p}\left(x\right)\label{eq:defetamu}
\end{eqnarray}
be a projection from $\widetilde{\mathfrak{g}}$ to $\widetilde{\mathfrak{g}}\left[\mu\right]$.
It can be readily seen that
\begin{align}
\eta_{\mu}\left(t_{0}^{m}u\right)\in\widetilde{\mathfrak{g}}\left[\mu\right]_{\check{\alpha}+m\delta_{0}},\quad\eta_{\mu}\left(\text{k}_{m,n}\right),\eta_{\mu}\left(\tilde{\text{d}}_{m,n}\right)\in\widetilde{\mathfrak{g}}\left[\mu\right]_{m\delta_{0}+n\delta_{1}},\quad\text{k}_{0},\text{d}_{0}\in\widetilde{\mathfrak{g}}\left[\mu\right]_{0}\label{eq:rootvectors}
\end{align}
for $u\in\mathfrak{g}_{\alpha}$ with $\alpha\in\Delta\cup\left\{ 0\right\} $,
$m\in\mathbb{Z}$ and $n\in\mathbb{Z}^{\ast}$. Then we have

\begin{lemma}\label{lem:selfcenh} The root system $\tilde{\Delta}_{\mu}\subset\left(\pi_{\mu}\left(\Delta\right)+\mathbb{Z}\delta_{1}\right)\cup\mathbb{Z}\delta_{1}$
and $\mathfrak{\widetilde{\mathfrak{g}}}\left[\mu\right]_{0}=\mathfrak{\widetilde{h}}\left[\mu\right]$.
\end{lemma}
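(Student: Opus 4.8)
The plan is to bootstrap everything from the explicit surjection $\eta_{\mu}\colon\widetilde{\mathfrak{g}}\to\widetilde{\mathfrak{g}}[\mu]$ of \eqref{eq:defetamu} together with the weight data of \eqref{eq:rootvectors}. Since $\widetilde{\mu}$ has finite order $T$ and the base field is $\mathbb{C}$, the operator $\tfrac{1}{T}\eta_{\mu}$ is the linear projection of $\widetilde{\mathfrak{g}}$ onto its $\widetilde{\mu}$-fixed subspace $\widetilde{\mathfrak{g}}[\mu]$, so $\eta_{\mu}$ is onto; moreover it is injective on each root space $\mathfrak{g}_{\alpha}$ of $\mathfrak{g}$, since for $0\neq u\in\mathfrak{g}_{\alpha}$ the vector $\sum_{p}\widetilde{\mu}^{p}(u)$ is a sum of nonzero vectors lying in pairwise distinct root spaces. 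Feeding the spanning set \eqref{eq:basisoftildeg} through $\eta_{\mu}$, I conclude that $\widetilde{\mathfrak{g}}[\mu]$ is linearly spanned by the vectors $\eta_{\mu}(t_{0}^{m}u)$ ($u\in\mathfrak{g}_{\alpha}$, $\alpha\in\Delta\cup\{0\}$, $m\in\mathbb{Z}$), $\eta_{\mu}(\mathrm{k}_{m,n})$, $\eta_{\mu}(\widetilde{\mathrm{d}}_{m,n})$ ($m\in\mathbb{Z}$, $n\in\mathbb{Z}^{\ast}$), $\mathrm{k}_{0}$ and $\mathrm{d}_{0}$, and by \eqref{eq:rootvectors} each of these is an $\widetilde{\mathfrak{h}}[\mu]$-weight vector with weight $\check{\alpha}+m\delta_{0}$, $m\delta_{0}+n\delta_{1}$, or $0$. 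The inclusion claimed for $\tilde{\Delta}_{\mu}$ is then immediate from these weight formulas, recalling $\check{\alpha}=\pi_{\mu}(\alpha)$ for $\alpha\in\Delta$.

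Next I would identify the zero weight space. Among the listed spanning vectors, $\eta_{\mu}(\mathrm{k}_{m,n})$ and $\eta_{\mu}(\widetilde{\mathrm{d}}_{m,n})$ have weight $m\delta_{0}+n\delta_{1}$ with $n\neq 0$, hence nonzero; $\mathrm{k}_{0},\mathrm{d}_{0}$ already lie in $\widetilde{\mathfrak{h}}[\mu]$; and $\eta_{\mu}(t_{0}^{m}u)$ with $u\in\mathfrak{g}_{\alpha}$ has weight $\check{\alpha}+m\delta_{0}$, which is $0$ exactly when $m=0$ and $\pi_{\mu}(\alpha)=0$. So the equality $\widetilde{\mathfrak{g}}[\mu]_{0}=\widetilde{\mathfrak{h}}[\mu]$ reduces to the claim that $\pi_{\mu}$ has trivial kernel on $\Delta$, and this is the heart of the matter. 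I would settle it as follows: the diagram automorphism $\mu$ permutes the affine simple roots $\{\alpha_{0},\dots,\alpha_{l}\}$, so it preserves the set $\Delta^{+}$ of positive roots; since these simple roots are linearly independent in $\mathfrak{h}^{*}$, the monoid $\sum_{i\in I}\mathbb{N}\alpha_{i}$ is a pointed cone containing $\Delta^{+}$, so the orbit sum $\sum_{p=0}^{T-1}\mu^{p}(\alpha)$ is a nonzero element of that cone for every $\alpha\in\Delta^{+}$; applying $\alpha\mapsto-\alpha$ handles $\Delta^{-}$, whence $\pi_{\mu}(\alpha)\neq 0$ for all $\alpha\in\Delta$.

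Granting this, the only weight-$0$ spanning vectors of the first type are the $\eta_{\mu}(u)$ with $u\in\mathfrak{g}_{0}=\mathfrak{h}$, and $\eta_{\mu}(\mathfrak{h})=\mathfrak{h}^{\mu}$; hence $\widetilde{\mathfrak{g}}[\mu]_{0}\subseteq\mathfrak{h}^{\mu}\oplus\mathbb{C}\mathrm{k}_{0}\oplus\mathbb{C}\mathrm{d}_{0}=\widetilde{\mathfrak{h}}[\mu]$, while the reverse inclusion is clear because $\widetilde{\mathfrak{h}}[\mu]$ is abelian and $\widetilde{\mu}$-fixed. I do not expect any deep obstacle here: verifying \eqref{eq:rootvectors} itself is a mechanical computation from \eqref{eq:deftildemu} which I would take as given, and the genuinely load-bearing ingredients are the surjectivity and root-space injectivity of $\eta_{\mu}$, the weight formulas of \eqref{eq:rootvectors}, and the elementary pointed-cone argument forcing $\pi_{\mu}$ to separate every root from the origin.
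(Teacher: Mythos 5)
Your argument is essentially the paper's proof: span $\widetilde{\mathfrak{g}}[\mu]$ by the images under $\eta_{\mu}$ of the spanning set \eqref{eq:basisoftildeg}, read off the weights from \eqref{eq:rootvectors} to get the inclusion for $\tilde{\Delta}_{\mu}$, and reduce $\widetilde{\mathfrak{g}}[\mu]_{0}=\widetilde{\mathfrak{h}}[\mu]$ to the fact that $\check{\alpha}=\pi_{\mu}(\alpha)\neq 0$ for every $\alpha\in\Delta$; your pointed-cone argument is just a spelled-out version of the paper's observation that $\check{\alpha}=\frac{1}{T}\sum_{i}\sum_{p}a_{i}\alpha_{\mu^{p}(i)}$ cannot vanish because all $a_{i}$ have the same sign and the simple roots are linearly independent.

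One caveat: your side claim that $\eta_{\mu}$ is injective on each root space $\mathfrak{g}_{\alpha}$, justified by saying the $\widetilde{\mu}^{p}(u)$ lie in pairwise distinct root spaces, is false in general. If $\mu$ swaps two simple roots $\alpha_{1},\alpha_{2}$ with $a_{12}=-1$ (case (b) of Lemma \ref{lem:defsi}), then $\alpha=\alpha_{1}+\alpha_{2}$ is $\mu$-fixed and $\mu(x_{\alpha})=-x_{\alpha}$ (since $x_{\alpha}\sim[x_{\alpha_{1}},x_{\alpha_{2}}]$), so for $T=2$ one gets $\eta_{\mu}(x_{\alpha})=0$; the iterates stay in the same root space. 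Fortunately this claim, despite being listed by you as load-bearing, is never actually used: the lemma only asserts an inclusion for $\tilde{\Delta}_{\mu}$ and an upper bound on the zero weight space, and possibly vanishing spanning vectors cause no harm. So the proof stands once that sentence is deleted.
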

\begin{proof}
The first assertion follows from \eqref{eq:basisoftildeg} and \eqref{eq:rootvectors}.
For the second one, note that if $\alpha=\sum_{i\in I}a_{i}\alpha_{i}$
is a root in $\Delta$, then $\check{\alpha}=\frac{1}{T}\sum_{i\in I}\sum_{p=0}^{T-1}a_{i}\alpha_{\mu^{p}(i)}$
is clearly nonzero. This together with \eqref{eq:rootvectors} implies
that
\[
\widetilde{\mathfrak{g}}\left[\mu\right]_{0}=\eta_{\mu}\left(\mathfrak{h}\right)\oplus\mathbb{C}\text{k}_{0}\oplus\mathbb{C}\text{d}_{0}=\widetilde{\mathfrak{h}}\left[\mu\right].
\]
\end{proof}
Denote by $\mathfrak{t}\left(\dot{\mathfrak{g}},\mu\right)$ the subalgebra
of $\mathfrak{t}\left(\dot{\mathfrak{g}}\right)$ fixed by $\tilde{\mu}$
(noting that $\tilde{\mu}\left(\mathfrak{t}\left(\dot{\mathfrak{g}}\right)\right)=\mathfrak{t}\left(\dot{\mathfrak{g}}\right)$).
We have the following result from \cite{CJKT}. \begin{proposition}\label{prop:centralclosed}
The Lie algebra $\mathfrak{t}\left(\dot{\mathfrak{g}},\mu\right)$
is centrally closed. \end{proposition}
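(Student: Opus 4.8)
The plan is to use the homological characterization of central closedness: a perfect Lie algebra $L$ over $\mathbb{C}$ is centrally closed precisely when $H_2(L,\mathbb{C})=0$. Accordingly I would split the argument into proving (a) that $\mathfrak{t}(\dot{\mathfrak{g}},\mu)$ is perfect, and (b) that its second homology vanishes.

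\textbf{Perfectness.} Using the projection $\eta_{\mu}\colon\widetilde{\mathfrak{g}}\to\widetilde{\mathfrak{g}}[\mu]$ of \eqref{eq:defetamu} together with the explicit formulas \eqref{eq:deftildemu}, one sees that $\mathfrak{t}(\dot{\mathfrak{g}},\mu)$ is linearly spanned by the elements $\eta_{\mu}(t_0^{m_0}t_1^{m_1}\otimes x)$ with $x\in\dot{\mathfrak{g}}$, together with $\mathrm{k}_0$ and the $\eta_{\mu}(\mathrm{k}_{m_0,m_1})$. Since $\dot{\mathfrak{g}}=[\dot{\mathfrak{g}},\dot{\mathfrak{g}}]$, each $\eta_{\mu}(t_0^{m_0}t_1^{m_1}\otimes x)$ is a sum of brackets of elements of $\mathfrak{t}(\dot{\mathfrak{g}},\mu)$ by \eqref{eq:relationint1} and the $\tilde\mu$-equivariance of the bracket; and the central generators are recovered by bracketing $\eta_{\mu}(t_0^{m_0}t_1^{m_1}\otimes x_{\dot\alpha})$ against $\eta_{\mu}(t_0^{-m_0}t_1^{-m_1}\otimes x_{-\dot\alpha})$, using $\langle x_{\dot\alpha},x_{-\dot\alpha}\rangle\ne 0$ and subtracting off the $\dot{\mathfrak{h}}$-part (which already lies in the derived algebra). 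This is routine and parallel to the proof that $\mathcal{R}\otimes\dot{\mathfrak{g}}$ and $\mathfrak{t}(\dot{\mathfrak{g}})$ are perfect.

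\textbf{Vanishing of $H_2$.} The strategy is to identify $\mathfrak{t}(\dot{\mathfrak{g}},\mu)$ with a central extension of a twisted double loop algebra over $\dot{\mathfrak{g}}$ and to compare the canonical center with the universal one. Because $\tilde\mu=\omega^{-\mathrm{d}_0}\circ\hat\mu$ is built from the diagram automorphism $\mu$ of the affine Kac--Moody algebra $\mathfrak{g}$, formulas \eqref{eq:charmu1}--\eqref{eq:charmu4} and \eqref{eq:deftildemu} show that on the centerless quotient $\overline{\mathfrak{t}}(\dot{\mathfrak{g}},\mu):=\mathfrak{t}(\dot{\mathfrak{g}},\mu)/(\mathcal{K}\cap\mathfrak{t}(\dot{\mathfrak{g}},\mu))$ the induced structure is exactly a twisted multiloop algebra $M(\dot{\mathfrak{g}},\dot{\mu},\rho_{\mu})$. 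By the Berman--Moody/Kassel description of universal central extensions of (twisted) multiloop algebras over a simple Lie algebra---in the twisted form used in the classification of \cite{ABP}---the universal central extension of $M(\dot{\mathfrak{g}},\dot{\mu},\rho_{\mu})$ is $M(\dot{\mathfrak{g}},\dot{\mu},\rho_{\mu})\oplus\mathcal{K}'$ for an explicit space $\mathcal{K}'$ of twisted K\"ahler differentials. One then checks, using the cocycle read off from \eqref{eq:relationint1} and \eqref{eq:deftildemu}, that the canonical central extension $\mathfrak{t}(\dot{\mathfrak{g}},\mu)$ already realizes all of $\mathcal{K}'$: its center $\mathcal{K}\cap\mathfrak{t}(\dot{\mathfrak{g}},\mu)=\eta_{\mu}(\mathcal{K})$ maps isomorphically onto $\mathcal{K}'$, matched family by family along the basis \eqref{eq:basisK} ($\mathrm{k}_0$, the $t_0^{m_0}\mathrm{k}_1$, and the $\mathrm{k}_{m_0,m_1}$). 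Hence the canonical central extension is the universal one, i.e. $\mathfrak{t}(\dot{\mathfrak{g}},\mu)$ is centrally closed. Alternatively, one can run this entirely cohomologically, following Moody--Rao--Yokonuma's computation of $H_2$ for $\mathfrak{t}(\dot{\mathfrak{g}})$ in \cite{MRY}: given a $\mathbb{C}$-valued $2$-cocycle on $\mathfrak{t}(\dot{\mathfrak{g}},\mu)$, average over the action of the torus $\widetilde{\mathfrak{h}}[\mu]$ to reduce to a cocycle supported on pairs of opposite root spaces, and kill it using the $\mathfrak{sl}_2$-relations inside real root strings; the finite-order twist only restricts which monomials in $t_0,t_1$ occur and does not affect the shape of the argument.

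\textbf{Main obstacle.} The delicate point is the matching in step (b): one must verify that neither the twist by $\rho_{\mu}$ nor the correction $\mathrm{d}_1\mapsto\mathrm{d}_1+\boldsymbol{h}-\tfrac{\langle\boldsymbol{h},\boldsymbol{h}\rangle}{2}\mathrm{k}_1$ in $\hat\mu$ introduces an unexpected class in $H_2(M(\dot{\mathfrak{g}},\dot{\mu},\rho_{\mu}))$, and conversely that $\eta_{\mu}$ does not collapse $\mathcal{K}$ too far---i.e. that $\eta_{\mu}(\mathcal{K})\to H_2\bigl(\overline{\mathfrak{t}}(\dot{\mathfrak{g}},\mu)\bigr)$ is bijective. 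Equivalently, in the cohomological route, the work is in showing the reduced center-valued cocycles are all coboundaries, where the relations \eqref{eq:muN=00003D00003D00003D1} for $\boldsymbol{h}$ are exactly what is needed to dispose of the potentially anomalous terms.
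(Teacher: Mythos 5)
First, note that the paper you are comparing against does not actually prove this proposition: it is quoted verbatim from \cite{CJKT} (``We have the following result from \cite{CJKT}''), so there is no in-paper argument to measure your proposal against; it can only be judged on its own merits.

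On those merits, your overall strategy (show $\mathfrak{t}(\dot{\mathfrak{g}},\mu)$ is perfect and that its canonical central extension of the centerless algebra is already universal, equivalently $H_2=0$) is the natural one and is in the spirit of the MRY-type computations that \cite{CJKT} builds on. The perfectness step is indeed routine. But the second step, as written, has a genuine gap: the assertion that the universal central extension of the twisted algebra $M(\dot{\mathfrak{g}},\dot{\mu},\rho_{\mu})$ is given by an ``explicit space $\mathcal{K}'$ of twisted K\"ahler differentials'' which is then matched with $\eta_{\mu}(\mathcal{K})$ is precisely the content of the proposition, and it is not supplied by the results you invoke. Kassel's theorem (and the Berman--Moody computations) describe the universal central extension of the \emph{untwisted} algebra $\dot{\mathfrak{g}}\otimes A$; for a fixed-point (equivariant/twisted multiloop) algebra there is no general principle that the fixed points of the universal central extension form the universal central extension of the fixed points, and in nullity $\ge 2$ this is exactly where the work lies. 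Your ``one then checks'' and, in the cohomological variant, the claim that the finite-order twist ``does not affect the shape of the argument'' skip over the twisted two-cocycle analysis (controlling the $\rho_{\mu}$-shifted root strings and the isotropic pieces $t_1^{n}\otimes\dot{\mathfrak{h}}$, where the identities \eqref{eq:muN=00003D00003D00003D1} for $\boldsymbol{h}$ enter), which is the heart of the matter. To close the gap you would either have to carry out that cocycle computation in the twisted setting, or replace the Kassel citation by a result that genuinely covers equivariant fixed-point algebras (e.g.\ the universal central extensions of equivariant map algebras / twisted multiloop algebras), and then verify its hypotheses and match its answer with $\eta_{\mu}(\mathcal{K})$ basis family by basis family. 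As it stands, the proposal is a plausible plan rather than a proof.
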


In the rest of this subsection, we recall a characterization for the subset  of the root system $ \tilde{\Delta}_{\mu}$ given in \cite{CJKT}:
\[
\tilde{\Delta}_{\mu}^{\times}=\left\{ \check{\alpha}+m\delta_{0}\in\tilde{\Delta}_{\mu}\mid\alpha\in\Delta,m\in\mathbb{Z}\ \text{and}\ \left\langle \check{\alpha},\check{\alpha}\right\rangle \not=0\right\}.
\]
 For every $i\in I=\left\{ 0,1,\cdots,l\right\} $,
denote by $\mathcal{O}\left(i\right)$ the orbit containing $i$ under
the action of the group $\left\langle \mu\right\rangle $. We say
$\mu$ is \emph{transitive} if $\mathcal{O}\left(i\right)=I$ for
each $i\in I$. Observe that a diagram automorphism on $\mathfrak{g}$
is transitive if and only if $\mathfrak{g}$ is of type $A_{l}^{\left(1\right)}$,
 and it has order $l+1$. Note that in this case we have $\pi_{\mu}\left(\alpha_{i}\right)=\delta_{1}$
for any $i\in I$, and hence $\tilde{\Delta}_{\mu}^{\times}=\emptyset.$

From now on we assume that $\mu$ is nontransitive, and in this case, it is known that the folded matrix $\check{A}=\left(2\frac{\left\langle \check{\alpha}_{i},\check{\alpha}_{j}\right\rangle }{\left\langle \check{\alpha}_{i},\check{\alpha}_{i}\right\rangle }\right)_{i,j\in\check{I}}$
of $A$ associated to $\mu$ is also an affine generalized Cartan
matrix (cf. \cite{FSS,ABP}), where $\check{I}=\left\{ i\in I\mid \mu^{k}\left(i\right)\ge i\ \text{for\ }k\in\mathbb{Z}\right\}$ is a set
of representative elements in $I$.
Denote by $\check{\Delta}$ and $\check{W}$ respectively the root system and the
Weyl group associated to the folded matrix $\check{A}$.
Then $\left\{ \check{\alpha}_{i}\right\} _{i\in\check{I}}$ is a simple
root base of $\check{\Delta}$. Furthermore, we have the following result from
\cite[Lemma 12.15]{ABP}.

\begin{lemma}\label{lem:defsi} For each $i\in I$, one and only one of
the following statement holds:

(a) The elements $\alpha_{p},$ for $p\in\mathcal{O}\left(i\right)$, are pairwise
orthogonal;

(b) $\mathcal{O}\left(i\right)=\left\{ i,\mu\left(i\right)\right\} $,
and $a_{i\mu\left(i\right)}=-1=a_{\mu\left(i\right)i}.$

\end{lemma}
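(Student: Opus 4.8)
The two alternatives are manifestly incompatible, so it suffices to prove the implication: \emph{if $p\neq q$ lie in a common $\langle\mu\rangle$-orbit and $a_{pq}\neq 0$, then $\mathcal{O}(i)=\{p,q\}$ and $a_{pq}=a_{qp}=-1$.} Granting this, for a given $i\in I$ either $\mathcal{O}(i)$ contains an adjacent pair, in which case (b) holds, or it does not, in which case (a) holds. The first ingredient is that $\mu$ preserves root lengths: since the affine generalized Cartan matrix $A$ is indecomposable, its symmetrizing diagonal $(d_i)_{i\in I}$ with $d_i a_{ij}=d_j a_{ji}$ is unique up to a positive scalar, and because $a_{ij}=a_{\mu(i)\mu(j)}$ the tuple $(d_{\mu(i)})_{i\in I}$ is again symmetrizing; comparing the sums $\sum_i d_{\mu(i)}=\sum_i d_i$ forces $d_{\mu(i)}=d_i$ for all $i$. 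Hence $d_p=d_q$ whenever $p,q$ lie in one orbit, so $a_{pq}=a_{qp}$. If moreover $a_{pq}\neq 0$ and $\{p,q\}\neq I$, then the $2\times 2$ principal submatrix of $A$ on $\{p,q\}$ is a proper principal submatrix of an affine GCM, hence of finite type and positive definite, so $a_{pq}a_{qp}=a_{pq}^{2}\le 3$; together with $a_{pq}\le -1$ this gives $a_{pq}=a_{qp}=-1$. The excluded case $\{p,q\}=I$ would force $A$ to be of type $A_1^{(1)}$ with $\mu$ transitive.

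It remains to show that an adjacent pair inside an orbit has that orbit as its only members, and for this I would work with the underlying graph $D$ of the Dynkin diagram of $A$, which is either a cycle (type $A_n^{(1)}$) or a tree. If $D$ is a tree, a finite-order automorphism fixes the center of $D$, which is a vertex $z$ or an edge. If $\mu$ fixes a vertex $z$, then no orbit contains an adjacent pair: for adjacent $p,q$ in one orbit one has $d(z,p)=d(z,q)$, and writing $w$ for the last common vertex of the two geodesics from $z$ we get $d(p,q)=d(w,p)+d(w,q)=2d(w,p)$, impossible since $d(p,q)=1$ and $p\neq q$. If instead $\mu$ fixes an edge $\{e,e'\}$ but no vertex, deleting this edge splits $D$ into subtrees $D_e\sqcup D_{e'}$ with $\mu$ restricting to an isomorphism from $D_e$ onto $D_{e'}$; then $\mu^{2}$ fixes the vertices $e,e'$ and preserves each side, so by the previous case no two members of a $\mu$-orbit lying on the same side are adjacent, whence an adjacent pair in a $\mu$-orbit must be $\{e,e'\}$ itself and $\mathcal{O}(e)=\{e,e'\}$. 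Finally, if $D$ is a cycle, a finite-order automorphism is a rotation or a reflection: a non-transitive rotation by $k$ admits no adjacency $i\sim i+jk$ within an orbit (such an adjacency would give $\gcd(k,n+1)=1$), while a reflection is an involution, so all its orbits have size at most $2$ and an adjacent orbit is automatically a pair. Combining these observations with the length computation of the first paragraph yields the stated dichotomy; this is precisely \cite[Lemma 12.15]{ABP}.

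The only genuinely delicate point is the case analysis of tree automorphisms, and this becomes routine once one invokes the center of a tree together with the reduction to $\mu^{2}$ on each side in the edge-fixed case; everything else is immediate from the uniqueness of the symmetrizing vector and the finite-type property of proper submatrices of an affine GCM. Alternatively, one may simply cite \cite{ABP} for this purely combinatorial statement about affine Dynkin diagrams.
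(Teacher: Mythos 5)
Your argument is correct, but it is worth noting that the paper itself gives no proof of this lemma: it is quoted verbatim from \cite[Lemma 12.15]{ABP}, with the standing hypothesis (stated just before the lemma) that $\mu$ is nontransitive. So your proposal is a genuinely different route in the sense that it is a self-contained proof where the paper only cites. The two ingredients you use are sound and fit together: the uniqueness (up to positive scalar) of the symmetrizer of the indecomposable affine matrix $A$ gives $d_{\mu(i)}=d_i$, hence $a_{pq}=a_{qp}$ for $p,q$ in one orbit, and positivity of the $2\times 2$ proper principal minor then forces $a_{pq}=a_{qp}=-1$ whenever $a_{pq}\neq 0$; the tree-center/dihedral analysis of the underlying graph (vertex-fixed case via the parity of $d(p,q)=2d(w,p)$, edge-swapped case via reduction to $\mu^{2}$, and for $A_n^{(1)}$ the rotation/reflection dichotomy) correctly shows that an adjacent pair inside an orbit can only be a swapped edge or a reflection $2$-orbit, which pins the orbit down to $\{i,\mu(i)\}$. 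You also invoke nontransitivity exactly where it is needed (to exclude the $A_1^{(1)}$ swap, where $a_{01}=a_{10}=-2$, and transitive rotations of the cycle, where the orbit is all of $I$), which matches the paper's standing assumption; without it the dichotomy is false, so keeping that hypothesis explicit is essential. What your approach buys is an elementary, purely combinatorial verification independent of \cite{ABP}; what the paper's citation buys is brevity, since \cite[Lemma 12.15]{ABP} is proved there in the same combinatorial spirit. Either is acceptable; if you keep your proof, the only polish I would suggest is to state explicitly at the outset that $\mu$ is assumed nontransitive and that non-adjacency in the Dynkin diagram is equivalent to orthogonality of the corresponding simple roots, since (a) is phrased in terms of the bilinear form.
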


Recall that $T$ is the order of the diagram automorphism $\mu$.
For every $i\in I$, we set
\begin{align}
T_{i}=T/\left|\mathcal{O}\left(i\right)\right|, \ \text{and}\ s_{i}=\begin{cases}
1, & \text{if}\ \text{(a)\ holds\ in Lemma \ref{lem:defsi};}\\
2, & \text{if\ (b)\ holds\ in Lemma \ref{lem:defsi}. }
\end{cases}\label{eq:defsi}
\end{align}
The following result was proved in \cite[Proposition 5.1]{CJKT}.

\begin{proposition}\label{prop:nonisoroots} If $\mu$ is nontransitive,
then
\begin{alignat*}{1}
\tilde{\Delta}_{\mu}^{\times}=\left\{ \check{w}\check{\alpha}_{i}+T_{i}m\delta_{0}\mid\check{w}\in\check{W},i\in\check{I},m\in\mathbb{Z}\right\} \cup\left\{ 2\check{w}\check{\alpha}_{i}+\left(\frac{T}{2}+mT\right)\delta_{0}\mid\check{w}\in\check{W},i\in\check{I}\ \text{with\ }s_{i}=2,m\in\mathbb{Z}\right\} .
\end{alignat*}
\end{proposition}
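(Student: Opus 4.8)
The proof will combine the root-space description of Lemma~\ref{lem:selfcenh}, the folding theory behind Lemma~\ref{lem:defsi} (as in \cite{FSS,ABP}), and a short computation with the automorphism $\tilde{\mu}$ recorded in \eqref{eq:deftildemu}. First I would set things up as follows. By Lemma~\ref{lem:selfcenh}, any element of $\tilde{\Delta}_\mu^\times$ has the form $\check{\alpha}+m\delta_0$ with $\alpha\in\Delta$, $m\in\mathbb{Z}$ and $\langle\check{\alpha},\check{\alpha}\rangle\neq0$; since $\mu(\delta_1)=\delta_1$ and the imaginary roots $n\delta_1$ of $\mathfrak{g}$ project to isotropic vectors, the condition $\langle\check{\alpha},\check{\alpha}\rangle\neq0$ forces $\alpha\in\Delta^{\times}$. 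By \eqref{eq:rootvectors}, the root space $\widetilde{\mathfrak{g}}[\mu]_{\check{\alpha}+m\delta_0}$ is spanned by the vectors $\eta_\mu(t_0^m x)$ with $x\in\mathfrak{g}_{\alpha'}$, $\alpha'\in\Delta^{\times}$ and $\pi_\mu(\alpha')=\check{\alpha}$. Thus the task reduces to: (a) describing the anisotropic projections $\{\,\pi_\mu(\alpha)\mid\alpha\in\Delta^{\times},\ \langle\pi_\mu(\alpha),\pi_\mu(\alpha)\rangle\neq0\,\}$; and (b) deciding, for each such projection, for which $m\in\mathbb{Z}$ one has $\eta_\mu(t_0^m x)\neq0$.

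For (a), I would invoke the folding data. Using Lemma~\ref{lem:defsi} and the description of the folded affine root system $\check{\Delta}$ in \cite{FSS,ABP}, the set of anisotropic projections is $\check{\Delta}^{\mathrm{re}}\cup\{\,2\check{w}\check{\alpha}_i\mid\check{w}\in\check{W},\ i\in\check{I},\ s_i=2\,\}$, where $\check{\Delta}^{\mathrm{re}}=\check{W}\{\check{\alpha}_i\mid i\in\check{I}\}$ is the set of real roots of $\check{\Delta}$; moreover $\check{\alpha}_i=\pi_\mu(\alpha_i)$, and when $s_i=2$ one has $2\check{\alpha}_i=\pi_\mu(\alpha_i+\alpha_{\mu(i)})$ with $\alpha_i+\alpha_{\mu(i)}$ the highest root of the $A_2$-orbit subsystem. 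Next I would use the Weyl group to reduce to simple roots: the coroot of the root $\check{\alpha}_i$ of $\widetilde{\mathfrak{g}}[\mu]$ lies in $\mathfrak{h}^{\mu}$, on which $\delta_0$ vanishes, so the reflection $\check{s}_i$ fixes $\delta_0$ and preserves $\tilde{\Delta}_\mu$; hence the $\check{s}_i$ generate a copy of $\check{W}$ acting on $\tilde{\Delta}_\mu$ with $\check{w}(\check{\beta}+m\delta_0)=\check{w}\check{\beta}+m\delta_0$. Consequently $\check{w}\check{\alpha}_i+m\delta_0\in\tilde{\Delta}_\mu^\times\iff\check{\alpha}_i+m\delta_0\in\tilde{\Delta}_\mu^\times$, and likewise with $2\check{\alpha}_i$, so it suffices to treat $\alpha\in\{\alpha_i\mid i\in\check{I}\}\cup\{\alpha_i+\alpha_{\mu(i)}\mid i\in\check{I},\ s_i=2\}$.

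For (b), I would compute $\eta_\mu$ directly. By \eqref{eq:deftildemu} one has $\tilde{\mu}(t_0^m x)=\omega^{-m}t_0^m\mu(x)$ for $x\in\mathfrak{g}_\alpha$ with $\alpha\in\Delta^{\times}$, so $\eta_\mu(t_0^m x)=\sum_{p=0}^{T-1}\omega^{-pm}t_0^m\mu^p(x)$. Taking $x=x_{\alpha_i}$ and using that $\mu$ permutes Chevalley generators with $\mu^{|\mathcal{O}(i)|}(x_{\alpha_i})=x_{\alpha_i}$, one factors
\[
\eta_\mu(t_0^m x_{\alpha_i})=\Bigl(\sum_{q=0}^{T_i-1}\omega^{-q|\mathcal{O}(i)|m}\Bigr)\sum_{r=0}^{|\mathcal{O}(i)|-1}\omega^{-rm}t_0^m x_{\alpha_{\mu^r(i)}},
\]
where the second factor is never zero (distinct simple roots $\alpha_{\mu^r(i)}$) and the first, a geometric sum in the primitive $T_i$-th root of unity $\omega^{|\mathcal{O}(i)|}$, vanishes exactly when $T_i\nmid m$; hence $\check{\alpha}_i+m\delta_0\in\tilde{\Delta}_\mu^\times\iff T_i\mid m$. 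When $s_i=2$, take $x=[x_{\alpha_i},x_{\alpha_{\mu(i)}}]\neq0$; since $|\mathcal{O}(i)|=2$ one gets $\mu(x)=-x$ and $\eta_\mu(t_0^m x)=\bigl(\sum_{p=0}^{T-1}(-\omega^{-m})^p\bigr)t_0^m x$, and because $|\mathcal{O}(i)|=2$ divides $T$ (so $T$ is even) this vanishes iff $-\omega^{-m}\neq1$, i.e.\ iff $m\not\equiv T/2\pmod T$; hence $2\check{\alpha}_i+m\delta_0\in\tilde{\Delta}_\mu^\times\iff m\in\tfrac{T}{2}+T\mathbb{Z}$. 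Combining (a), the Weyl-group reduction, and these congruences yields exactly the two families in the statement; and $\langle2\check{w}\check{\alpha}_i,2\check{w}\check{\alpha}_i\rangle=4\langle\check{\alpha}_i,\check{\alpha}_i\rangle\neq0$ confirms that the second family indeed lies in $\tilde{\Delta}_\mu^\times$.

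The main obstacle is step (a) together with the Weyl-group reduction: pinning down precisely which real roots of $\mathfrak{g}$ have anisotropic projection, identifying those projections with $\check{\Delta}^{\mathrm{re}}$ and its doubling along the $s_i=2$ directions, and verifying that the copy of $\check{W}$ inside the Weyl group of $\widetilde{\mathfrak{g}}[\mu]$ acts on $\tilde{\Delta}_\mu$ fixing the $\delta_0$-direction. This is exactly where the folding results of \cite{FSS,ABP} and the structural analysis of $\widetilde{\mathfrak{g}}[\mu]$ in \cite{CJKT} are used. By contrast, the reduction in the first paragraph is immediate from Lemma~\ref{lem:selfcenh} and \eqref{eq:rootvectors}, and the identities in step (b) are elementary finite geometric sums.
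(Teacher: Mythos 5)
The paper does not actually prove this proposition: it is quoted verbatim from \cite[Proposition 5.1]{CJKT}, so there is no internal argument to compare yours against. Judged on its own, your outline is essentially the right argument, and it is consistent with the machinery this paper uses elsewhere (in the proof of Proposition \ref{prop:charinttgmumod} the authors exploit exactly your rank-one picture, namely that the elements $\eta_\mu(t_0^m x_{\pm\alpha_i})$ and $\mathrm{d}_0$ generate a copy of the affine algebra of type $A_{s_i}^{(s_i)}$, whose root system is precisely the set $\{\pm\check\alpha_i+T_im\delta_0\}\cup\{\pm2\check\alpha_i+(\tfrac{T}{2}+mT)\delta_0\}$). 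Your eigenvalue computations in step (b) are correct: they use only \eqref{eq:deftildemu} together with the defining property $\mu(x_{\pm\alpha_i})=x_{\pm\alpha_{\mu(i)}}$, and the geometric-sum criteria $T_i\mid m$, respectively $m\equiv T/2 \pmod T$, come out right (I checked them against the $A_2^{(1)}$-transposition example, where they reproduce the two families of the statement). The Weyl-group reduction is also sound, provided you make explicit that the vectors $\eta_\mu(x_{\pm\alpha_i})$ give $\mathfrak{sl}_2$-triples acting ad-locally nilpotently, so that $\exp(\mathrm{ad})$-automorphisms implement the simple reflections of $\check{W}$ and fix $\delta_0$.

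Two points need tightening. First, step (a) -- the identification of the anisotropic projections with $\check\Delta^{\mathrm{re}}\cup\{2\check w\check\alpha_i\mid s_i=2\}$ -- is the real content of the proposition and you only cite it; a self-contained proof would have to reproduce the relevant analysis from \cite{ABP} (or argue as in \cite{CJKT}), so strictly speaking your write-up is a proof modulo that imported folding statement, much as the paper's own treatment is a proof modulo \cite{CJKT}. Second, to get the ``only if'' direction of your congruences you need the additional (easy but necessary) observation that the \emph{only} real roots of $\mathfrak{g}$ projecting onto $\check\alpha_i$ are the simple roots $\alpha_j$, $j\in\mathcal{O}(i)$, and the only ones projecting onto $2\check\alpha_i$ (for $s_i=2$) are $\pm(\alpha_i+\alpha_{\mu(i)})$: this follows from positivity of root coefficients, since $\pi_\mu(\sum_jc_j\alpha_j)=\check\alpha_i$ forces all orbit sums of the $c_j$ outside $\mathcal{O}(i)$ to vanish. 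Without this, the root space $\widetilde{\mathfrak{g}}[\mu]_{\check\alpha_i+m\delta_0}$ could a priori receive contributions from other $\mathfrak{g}_{\alpha'}$ with different $\tilde\mu$-eigenvalues, and your vanishing computation for $\eta_\mu(t_0^mx_{\alpha_i})$ alone would not exclude the root.
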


\begin{remark}\label{rem:ears} If $\mu$ is nontransitive, then
$\tilde{\Delta}_{\mu}^{\times}$ is a nullity 2 reduced extended affine
root system introduced by Saito (cf. \cite{Sa}). Conversely, any nullity
2 reduced extended affine root system is of such a form (cf. \cite{ABP}).
Furthermore, we will see in the next section that the triple $\left(\widetilde{\mathfrak{g}}\left[\mu\right],\widetilde{\mathfrak{h}}\left[\mu\right],\langle\cdot,\cdot\rangle\right)$
is a nullity $2$ EALA, and
which we call a \emph{nullity $2$ twisted toroidal extended affine
Lie algebra}.

\end{remark}

\section{Nullity 2 EALA of maximal type}

In this section, we first recall the definition of extended affine Lie algebra (cf. \cite{AABGP,N}), and the classification of nullity $2$ extended
affine Lie algebras of maximal type based on Allison-Berman-Pianzola's
work \cite{ABP}.

Let
$\mathcal{E}$ be a Lie algebra equipped with a nontrivial finite-dimensional
self-centralizing ad-diagonalizable subalgebra $\mathcal{H}$ and
a nondegenerate invariant symmetric bilinear form $\left(\cdot |\cdot \right).$
Let $\mathcal{E}=\mathcal{H}\oplus\sum_{\alpha\in\Phi}\mathcal{E}_{\alpha}$
be the root space decomposition  with respect to
$\mathcal{H},$ where $\Phi$ is the corresponding root system. The
form $\left(\cdot |\cdot \right)$ is also nondegenerate when it is restricted
to $\mathcal{H}=\mathcal{E}_{0}$. Hence it induces a nondegenerate
symmetric bilinear form on $\mathcal{H}^{\ast}$. Set
\[
\Phi^{\times}=\left\{ \alpha\in\Phi\mid\left(\alpha|\alpha\right)\not=0\right\}, \quad\Phi^{0}=\left\{ \alpha\in\Phi\mid\left(\alpha|\alpha\right)=0\right\} .
\]
Let $\mathcal{E}_{c}$ be the subalgebra of $\mathcal{E}$ generated
by the root spaces $\mathcal{E}_{\alpha},\alpha\in\Phi^{\times}$,
which is called the \emph{core} of $\mathcal{E}$.

\begin{definition}\label{def:eala} $\mathcal{E}:=\left(\mathcal{E},\mathcal{H},(\cdot |\cdot)\right)$ is called an \emph{extended affine Lie
algebra} (EALA for short) if

(1) $\text{ad}\left(x\right)$ is locally nilpotent for $x\in\mathcal{E}_{\alpha},\alpha\in\Phi^{\times};$

(2) $\Phi^{\times}$ cannot be decomposed as a union of two orthogonal
nonempty subsets;

(3) The centralizer of $\mathcal{E}_{c}$ in $\mathcal{E}$ is contained
in $\mathcal{E}_{c}$;

(4) $\Phi$ is a discrete subset of $\mathcal{H}^{\ast}$ with respect
to its Euclidean topology.

\end{definition}

The axiom (4) implies that the subgroup $\left\langle \Phi^{0}\right\rangle $
of $\mathcal{H}^{\ast}$ generated by $\Phi^{0}$ is a free abelian
group of finite rank, and this rank is called the \emph{nullity} of
$\mathcal{E}$. Indeed, the nullity $0$ EALAs are exactly the finite dimensional
simple Lie algebras, while nullity $1$ EALAs are exactly the affine Kac-Moody
algebras (cf. \cite{ABGP}). For the purpose of classifying EALAs, the following
notion was introduced in \cite{BGK}.

\begin{definition} An EALA is of \emph{maximal type }if its core
is centrally closed. \end{definition}

Note that the nullity $0$ and nullity $1$ EALAs are all of maximal
type. In general, an EALA may not be of maximal type. However, the
maximal EALAs appear to be the most interesting ones as from them
one can know the structure of other EALAs (see \cite[Remark 3.73]{BGK}
and \cite{N} for details). In what follows, we give two classes of
nullity $2$ EALAs of maximal type.

\begin{proposition} \label{prop:EALA-1} Let $\mu$ be a nontransitive
diagram automorphism of an untwisted affine Kac-Moody algebra $\mathfrak{g}$.
Then the triple $\left(\widetilde{\mathfrak{g}}\left[\mu\right],\widetilde{\mathfrak{h}}\left[\mu\right],\left\langle \cdot,\cdot\right\rangle \right)$, defined in the previous section,
is a nullity 2 EALA of maximal type, and $\mathfrak{t}\left(\dot{\mathfrak{g}},\mu\right)$
is the core of $\widetilde{\mathfrak{g}}\left[\mu\right]$. \end{proposition}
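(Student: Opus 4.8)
The plan is to verify the four EALA axioms for $\bigl(\widetilde{\mathfrak{g}}[\mu],\widetilde{\mathfrak{h}}[\mu],\langle\cdot,\cdot\rangle\bigr)$ one at a time, using the explicit description of the root system $\tilde\Delta_\mu$ and the root vectors obtained in Section~4. First, by Lemma~\ref{lem:tmuinv} the form $\langle\cdot,\cdot\rangle$ restricts to a $\tilde\mu$-invariant symmetric bilinear form on $\widetilde{\mathfrak{g}}$, hence to a symmetric bilinear form on $\widetilde{\mathfrak{g}}[\mu]$; its nondegeneracy follows because $\widetilde{\mathfrak{g}}=\widetilde{\mathfrak{g}}[\mu]\oplus\bigl(\bigoplus_{j\ne 0}\widetilde{\mathfrak{g}}^{(j)}\bigr)$ as a $\tilde\mu$-eigenspace decomposition, the form is nondegenerate on $\widetilde{\mathfrak{g}}$, and the pairing of $\widetilde{\mathfrak{g}}[\mu]$ with the sum of the nontrivial eigenspaces is zero (standard averaging argument using $\eta_\mu$). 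Invariance is inherited from $\widetilde{\mathfrak{g}}$. That $\widetilde{\mathfrak{h}}[\mu]$ is finite-dimensional, ad-diagonalizable and self-centralizing is exactly Lemma~\ref{lem:selfcenh} (namely $\widetilde{\mathfrak{g}}[\mu]_0=\widetilde{\mathfrak{h}}[\mu]$), and it is nontrivial since it contains $\text{k}_0,\text{d}_0$.

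Next I would check axioms (1)--(4) of Definition~\ref{def:eala}. For (1): a nonisotropic root is, by Proposition~\ref{prop:nonisoroots}, of the form $\check w\check\alpha_i+T_im\delta_0$ or $2\check w\check\alpha_i+(\tfrac T2+mT)\delta_0$; in either case the root space $\widetilde{\mathfrak{g}}[\mu]_\alpha$ is spanned by elements $\eta_\mu(t_0^m x)$ with $x$ a real root vector of the affine algebra $\mathfrak{g}$ (possibly a sum of such over a $\mu$-orbit), and since ad of a real root vector of $\mathfrak{g}$ is locally nilpotent on $\mathfrak{g}$, hence on $\mathcal R\otimes\dot{\mathfrak{g}}\oplus\mathcal K\oplus\mathcal S$ by the explicit bracket formulas \eqref{eq:relationinS3}--\eqref{eq:relationinS4}, local nilpotence of $\text{ad}(x)$ on $\widetilde{\mathfrak{g}}$ descends to $\widetilde{\mathfrak{g}}[\mu]$. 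For (4): from Lemma~\ref{lem:selfcenh} the whole root system sits inside $\pi_\mu(\Delta)+\mathbb Z\delta_1\cup\mathbb Z\delta_1$, which is a lattice translate of finitely many points in the finite-rank free abelian group $\langle\pi_\mu(\dot\Delta)\rangle\oplus\mathbb Z\delta_0\oplus\mathbb Z\delta_1$, hence discrete; in particular $\langle\Phi^0\rangle$ is free of rank $2$ (generated by $\delta_0,\delta_1$), giving nullity $2$. For (2): $\tilde\Delta_\mu^\times$ contains $\check W\check\alpha_i$ for all $i\in\check I$ and $\check W$-translates by $\delta_0$; since $\check A$ is an affine (indecomposable) GCM, its finite root system $\{\check\alpha_i\}$ is irreducible, so $\tilde\Delta_\mu^\times$ cannot split into two orthogonal pieces — any such splitting would induce one on the irreducible finite root system $\check\Delta^{\mathrm{fin}}$. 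For (3): the core $\mathcal E_c$ is generated by the $\widetilde{\mathfrak{g}}[\mu]_\alpha$ with $\alpha\in\tilde\Delta_\mu^\times$; I claim $\mathcal E_c=\mathfrak t(\dot{\mathfrak g},\mu)$ (addressed in the last step), and then the centralizer of $\mathfrak t(\dot{\mathfrak g},\mu)$ in $\widetilde{\mathfrak{g}}[\mu]$ must be computed directly — an element $\sum c_i\text{d}_0+(\text{stuff})$ centralizing all of $t_0^mt_1^n\otimes\dot{\mathfrak g}$-type elements forces it to lie in the core plus possibly $\mathbb C\text{k}_0$, and $\text{k}_0\in\mathfrak t(\dot{\mathfrak g},\mu)$ since $\text{k}_0=\text{k}_{0,0}$ is central and fixed, so the centralizer is contained in $\mathcal E_c$.

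For the identification of the core, the inclusion $\mathfrak t(\dot{\mathfrak g},\mu)\supseteq \mathcal E_c$ is clear since all real root vectors $\eta_\mu(t_0^m x_\alpha)$ lie in $\mathfrak t(\dot{\mathfrak g},\mu)=(\mathcal R\otimes\dot{\mathfrak g}\oplus\mathcal K)^{\tilde\mu}$ and this is a subalgebra. For the reverse inclusion, I would argue that the subalgebra generated by the real root spaces already contains all of $(\mathcal R\otimes\dot{\mathfrak g})^{\tilde\mu}$ — using that brackets of real root vectors of the affine algebra $\mathfrak g$ generate its derived algebra, twisted by $t_0$-powers these generate the $t_0$-graded pieces, and averaging by $\eta_\mu$ lands in the fixed subalgebra; the center $\mathcal K^{\tilde\mu}$ is then recovered as the image of brackets $[\eta_\mu(t_0^mx_\alpha),\eta_\mu(t_0^{-m}x_{-\alpha})]$ exactly as in the standard toroidal case (this is where Proposition~\ref{prop:centralclosed}, centrally closedness of $\mathfrak t(\dot{\mathfrak g},\mu)$, is invoked to conclude the core is \emph{all} of $\mathfrak t(\dot{\mathfrak g},\mu)$ and that $\widetilde{\mathfrak{g}}[\mu]$ is of maximal type). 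The main obstacle I anticipate is precisely this core computation together with axiom (3): showing that the real root vectors generate the \emph{entire} fixed toroidal subalgebra $\mathfrak t(\dot{\mathfrak g},\mu)$ (including its two-dimensional-in-each-degree center $\mathcal K$) requires a careful bracket bookkeeping with the twist $\rho_\mu$, and verifying the centralizer condition requires knowing there are no extra ``ghost'' elements of $\mathcal S$ or $\mathbb C\text{d}_0$ commuting with the core — both are somewhat delicate but should follow from the explicit formulas \eqref{eq:deftildemu} and the known nullity-$2$ toroidal EALA structure in \cite{BGK,B2}.
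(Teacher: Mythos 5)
Your proposal is correct and follows essentially the same route as the paper's proof: nondegeneracy and self-centralization come from Lemmas \ref{lem:tmuinv} and \ref{lem:selfcenh}, axiom (1) and the identification of the core come from Proposition \ref{prop:nonisoroots} together with the fact that $\mathfrak{t}\left(\dot{\mathfrak{g}},\mu\right)$ is generated by the elements $\eta_{\mu}\left(t_{0}^{m}x_{\pm\alpha_{i}}\right)$, and maximality comes from Proposition \ref{prop:centralclosed}. The only cosmetic differences are that you argue axiom (2) via connectedness of the folded affine matrix $\check{A}$ instead of citing the irreducibility of Saito's extended affine root system as in Remark \ref{rem:ears}, and that you spell out axioms (3)--(4) which the paper calls obvious; note in this connection that central closedness is needed only for maximality, not (as your last step suggests) to conclude that the core equals $\mathfrak{t}\left(\dot{\mathfrak{g}},\mu\right)$, which already follows from the generation statement and Proposition \ref{prop:nonisoroots}.
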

\begin{proof}
Recall from Lemma \ref{lem:selfcenh} that the ad-diagonalizable subalgebra
$\widetilde{\mathfrak{h}}\left[\mu\right]$ of $\widetilde{\mathfrak{g}}\left[\mu\right]$
is self-centralized, and it follows from Lemma \ref{lem:tmuinv}
that the invariant form $\left\langle \cdot,\cdot\right\rangle $
restricted to $\widetilde{\mathfrak{g}}\left[\mu\right]$ is still
nondegenerate. One can easily check that $\mathfrak{t}\left(\dot{\mathfrak{g}},\mu\right)$
is generated by the elements $\eta_{\mu}\left(t_{0}^{m}x_{\pm\alpha_{i}}\right)$
for $m\in\mathbb{Z}$ and $i\in I$. This together with Proposition
\ref{prop:nonisoroots} implies that the core of $\widetilde{\mathfrak{g}}\left[\mu\right]$
is $\mathfrak{t}\left(\dot{\mathfrak{g}},\mu\right)$. Now we check
the axioms (1)-(4) in Definition \ref{def:eala}. The axiom (1) follows
from Proposition \ref{prop:nonisoroots}, while the axiom (2) is implied
by Remark \ref{rem:ears} as the root system defined by Saito is irreducible
(see \cite[Definition 1]{Sa}). The axioms (3) and (4) are obvious.
Finally, Proposition \ref{prop:centralclosed} implies the maximality
of $\widetilde{\mathfrak{g}}\left[\mu\right]$.
\end{proof}

For the second class of nullity $2$ EALAs of maximal type. We let $q\in \mathbb{C}^{\ast}$ be generic, i.e, $q$ is not a root of unity, and $\mathbb{C}_{q}:=\mathbb{C}_{q}\left[t_{0}^{\pm1},t_{1}^{\pm1}\right]$
 the quantum torus associated to $q$ such that $t_{0}t_{1}=qt_{1}t_{0}$.
For any positive integer $N\ge2$, denoted by $\mathfrak{gl}_{N}\left(\mathbb{C}_{q}\right)$
the general linear Lie algebra over $\mathbb{C}_{q}$, and $\mathfrak{sl}_{N}\left(\mathbb{C}_{q}\right)$
the derived subalgebra of $\mathfrak{gl}_{N}\left(\mathbb{C}_{q}\right)$.
For $1\le i,j\le N$, $a\in\mathbb{C}_{q}$, we write $E_{i,j}a$
the matrix whose only possible nonzero entry is the $\left(i,j\right)$-entry
which is $a$. We consider the central extension
of the Lie algebra $\mathfrak{gl}_{N}(\mathbb{C}_{q})$:
\begin{align*}
\mathfrak{\widehat{gl}}_{N}\left(\mathbb{C}_{q}\right):=\mathfrak{gl}_{N}\left(\mathbb{C}_{q}\right)\oplus\mathbb{C}\text{k}_{0}\oplus\mathbb{C}\text{k}_{1},
\end{align*}
where $\text{k}_{0},\text{k}_{1}$ are central
elements, and
\begin{equation}
\begin{split}\label{commutator1}\left[E_{i,j}t_{0}^{m_{0}}t_{1}^{m_{1}},E_{k,l}t_{0}^{n_{0}}t_{1}^{n_{1}}\right]=\  & \delta_{j,k}q^{m_{1}n_{0}}E_{i,l}t_{0}^{m_{0}+n_{0}}t_{1}^{m_{1}+n_{1}}-\delta_{i,l}q^{n_{1}m_{0}}E_{k,j}t_{0}^{m_{0}+n_{0}}t_{1}^{m_{1}+n_{1}}\\
 & +\delta_{j,k}\delta_{i,l}\delta_{m_{0}+n_{0},0}\delta_{m_{1}+n_{1},0}q^{m_{1}n_{0}}\left(m_{0}\text{k}_{0}+m_{1}\text{k}_{1}\right),
\end{split}
\end{equation}
for $1\le i,j,k,l\le N$ and $m_{0},m_{1},n_{0},n_{1}\in\mathbb{Z}$. Moreover, we
define two derivations $\text{d}_{0},\text{d}_{1}$ acting on $\mathfrak{\widehat{gl}}_{N}\left(\mathbb{C}_{q}\right)$
by
\begin{align*}
[\text{d}_{r}, E_{i,j}t_{0}^{m_{0}}t_{1}^{m_{1}}]=m_{r}E_{i,j}t_{0}^{m_{0}}t_{1}^{m_{1}},\quad [\text{d}_{r}, \text{k}_{s}]=0=[\text{d}_{r}, \text{d}_{s}],
\end{align*}
for $r,s\in \{0,1\}$ and for $1\le i,j\le N,m_{0},m_{1}\in\mathbb{Z}$. Therefore,
we obtain a Lie algebra
\begin{eqnarray*}
\mathfrak{\widetilde{gl}}_{N}\left(\mathbb{C}_{q}\right):=\mathfrak{\widehat{gl}}_{N}\left(\mathbb{C}_{q}\right)\oplus\mathbb{C}\text{d}_{0}\oplus\mathbb{C}\text{d}_{1}.
\end{eqnarray*}

Set
\begin{eqnarray*}
\widehat{\mathfrak{sl}}_{N}\left(\mathbb{C}_{q}\right)=\left[\mathfrak{\widehat{gl}}_{N}\left(\mathbb{C}_{q}\right),\mathfrak{\widehat{gl}}_{N}\left(\mathbb{C}_{q}\right)\right]=\mathfrak{sl}_{N}\left(\mathbb{C}_{q}\right)\oplus\mathbb{C}\text{k}_{0}\oplus\mathbb{C}\text{k}_{1},
\end{eqnarray*}
the derived subalgebra of $\mathfrak{\widehat{gl}}_{N}\left(\mathbb{C}_{q}\right)$.
It is known that $\widehat{\mathfrak{sl}}_{N}\left(\mathbb{C}_{q}\right)$
is central closed and $\mathfrak{\widehat{gl}}_{N}\left(\mathbb{C}_{q}\right)=\widehat{\mathfrak{sl}}_{N}\left(\mathbb{C}_{q}\right)\oplus\mathbb{C}I_{N}$
(cf. \cite{BGK}), where $I_{N}$ is the identity matrix. Furthermore, we
define
\begin{eqnarray*}
\widetilde{\mathfrak{sl}}_{N}\left(\mathbb{C}_{q}\right):=\widehat{\mathfrak{sl}}_{N}\left(\mathbb{C}_{q}\right)\oplus\mathbb{C}\text{d}_{0}
\oplus\mathbb{C}\text{d}_{1},\quad\widetilde{\mathcal{H}}:=\sum_{i=1}^{N-1}\mathbb{C}\left(E_{i,i}-E_{i+1,i+1}\right)\oplus\mathbb{C}\text{k}_{0}
\oplus\mathbb{C}\text{k}_{1}\oplus\mathbb{C}\text{d}_{0}\oplus\mathbb{C}\text{d}_{1},
\end{eqnarray*}
and a bilinear form $\left\langle \cdot,\cdot\right\rangle $ of
 $\widetilde{\mathfrak{sl}}_{N}\left(\mathbb{C}_{q}\right)$ such that
\begin{align*}
\left\langle E_{i,j}t_{0}^{m_{0}}t_{1}^{m_{1}},E_{j,i}t_{0}^{-m_{0}}t_{1}^{-m_{1}}\right\rangle =1=\left\langle \text{d}_{r},\text{k}_{r}\right\rangle,
\end{align*}
for $1\le i,j\le N,m_{0},m_{1}\in\mathbb{Z}$, $r\in\{0,1\}$, and trivial for others. The following
result is from \cite{BGK}:

\begin{proposition} \label{prop:EALA-2} Let $N\ge2$ be a positive
integer and $q\in\mathbb{C}^{\ast}$ be generic. Then the triple $\left(\widetilde{\mathfrak{sl}}_{N}\left(\mathbb{C}_{q}\right),\widetilde{\mathcal{H}},\left\langle \cdot,\cdot\right\rangle \right)$
is a nullity 2 EALA of maximal type, and $\widehat{\mathfrak{sl}}_{N}\left(\mathbb{C}_{q}\right)$
is the core of $\mathfrak{\widetilde{sl}}_{N}(\mathbb{C}_{q})$. \end{proposition}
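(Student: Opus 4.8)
The plan is to verify the four axioms of Definition~\ref{def:eala} directly, which is the route taken in \cite{BGK}. First I would record the root space decomposition of $\widetilde{\mathfrak{sl}}_{N}(\mathbb{C}_{q})$ with respect to $\widetilde{\mathcal{H}}$: an element killed by $\mathrm{ad}(\text{d}_{0})$ and $\mathrm{ad}(\text{d}_{1})$ is bihomogeneous of $(\text{d}_{0},\text{d}_{1})$-degree $(0,0)$, hence lies in $\mathfrak{sl}_{N}\oplus\mathbb{C}\text{k}_{0}\oplus\mathbb{C}\text{k}_{1}\oplus\mathbb{C}\text{d}_{0}\oplus\mathbb{C}\text{d}_{1}$, and commuting further with $\sum_{i}\mathbb{C}(E_{i,i}-E_{i+1,i+1})$ forces it into $\widetilde{\mathcal{H}}$; so $\widetilde{\mathcal{H}}$ is a self-centralizing ad-diagonalizable subalgebra, and $\langle\cdot,\cdot\rangle$ (which pairs $\text{d}_{r}$ with $\text{k}_{r}$ and restricts to the standard form on the traceless diagonal matrices) is nondegenerate on $\widetilde{\mathcal{H}}$, hence induces a nondegenerate form on $\widetilde{\mathcal{H}}^{\ast}$. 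Writing $\dot{\Delta}$ for the type $A_{N-1}$ root system carried by the constant matrices and $\delta_{0},\delta_{1}\in\widetilde{\mathcal{H}}^{\ast}$ for the functionals dual to $\text{d}_{0},\text{d}_{1}$, one gets $\Phi=\bigl(\dot{\Delta}\cup\{0\}+\mathbb{Z}\delta_{0}+\mathbb{Z}\delta_{1}\bigr)\setminus\{0\}$, and the induced form satisfies $(\dot{\alpha}+m_{0}\delta_{0}+m_{1}\delta_{1}\mid\dot{\beta}+n_{0}\delta_{0}+n_{1}\delta_{1})=(\dot{\alpha}\mid\dot{\beta})$. Consequently $\Phi^{\times}=\{\dot{\alpha}+m_{0}\delta_{0}+m_{1}\delta_{1}\mid\dot{\alpha}\in\dot{\Delta},\,m_{0},m_{1}\in\mathbb{Z}\}$ and $\Phi^{0}=(\mathbb{Z}\delta_{0}+\mathbb{Z}\delta_{1})\setminus\{0\}$, so $\langle\Phi^{0}\rangle\cong\mathbb{Z}^{2}$ and the nullity is $2$.

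Next I would dispose of the three easy axioms. For~(1), a root vector attached to $\dot{\alpha}+m_{0}\delta_{0}+m_{1}\delta_{1}\in\Phi^{\times}$ is a scalar multiple of $E_{i,j}t_{0}^{m_{0}}t_{1}^{m_{1}}$ with $i\ne j$; since $E_{i,j}E_{i,j}=0$ this element is nilpotent in the matrix algebra over $\mathbb{C}_{q}$, it annihilates $\text{k}_{0},\text{k}_{1}$, and it sends $\text{d}_{0},\text{d}_{1}$ back into its own span, so its adjoint is nilpotent on all of $\widetilde{\mathfrak{sl}}_{N}(\mathbb{C}_{q})$. Axiom~(4) is immediate, as $\Phi$ is contained in the lattice $\dot{Q}+\mathbb{Z}\delta_{0}+\mathbb{Z}\delta_{1}$ and hence discrete. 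For~(2), the induced form on $\Phi^{\times}$ only sees the $A_{N-1}$-component, so a decomposition of $\Phi^{\times}$ into two orthogonal nonempty parts would yield one for $\dot{\Delta}$, contradicting the irreducibility of $A_{N-1}$. Along the way I would also pin down the core: the subalgebra generated by the real root spaces contains every $E_{i,j}t_{0}^{m_{0}}t_{1}^{m_{1}}$ with $i\ne j$, hence by \eqref{commutator1} all of $\mathfrak{sl}_{N}(\mathbb{C}_{q})$ together with $\text{k}_{0},\text{k}_{1}$, but no nonzero multiple of $\text{d}_{0}$ or $\text{d}_{1}$, so $\mathcal{E}_{c}=\widehat{\mathfrak{sl}}_{N}(\mathbb{C}_{q})$.

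The main obstacle is axiom~(3), the tameness condition that the centralizer of $\mathcal{E}_{c}$ in $\widetilde{\mathfrak{sl}}_{N}(\mathbb{C}_{q})$ is contained in $\mathcal{E}_{c}$. Writing an element of this centralizer as $z=c+a\text{d}_{0}+b\text{d}_{1}$ with $c\in\mathcal{E}_{c}=\widehat{\mathfrak{sl}}_{N}(\mathbb{C}_{q})$, I would test $[z,E_{i,j}t_{0}^{m}]=0$ for all $m\in\mathbb{Z}$, $i\ne j$, so that $[c,E_{i,j}t_{0}^{m}]=-am\,E_{i,j}t_{0}^{m}$; extracting the $(\text{d}_{0},\text{d}_{1})$-bidegree $(m,0)$ component forces $[c^{(0,0)},E_{i,j}t_{0}^{m}]=-am\,E_{i,j}t_{0}^{m}$, where $c^{(0,0)}\in\mathfrak{sl}_{N}\oplus\mathbb{C}\text{k}_{0}\oplus\mathbb{C}\text{k}_{1}$ is the bidegree $(0,0)$ part of $c$. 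The left side is a fixed multiple of $E_{i,j}t_{0}^{m}$ independent of $m$, which can equal $-am\,E_{i,j}t_{0}^{m}$ for all $m$ only if $a=0$; symmetrically $b=0$, so $z=c\in\mathcal{E}_{c}$ (in fact this centralizer equals the center $\mathbb{C}\text{k}_{0}\oplus\mathbb{C}\text{k}_{1}$ of $\mathcal{E}_{c}$). This completes the verification that $(\widetilde{\mathfrak{sl}}_{N}(\mathbb{C}_{q}),\widetilde{\mathcal{H}},\langle\cdot,\cdot\rangle)$ is a nullity~$2$ EALA with core $\widehat{\mathfrak{sl}}_{N}(\mathbb{C}_{q})$. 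Finally, since $\widehat{\mathfrak{sl}}_{N}(\mathbb{C}_{q})$ is centrally closed (cf.~\cite{BGK}), this EALA is of maximal type, as asserted.
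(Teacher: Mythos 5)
Your verification is correct, but it follows a genuinely different route from the paper: the paper offers no argument for this proposition at all, simply importing it from \cite{BGK}, whereas you verify the axioms of Definition \ref{def:eala} directly, in close analogy with the paper's own proof of Proposition \ref{prop:EALA-1} for $\widetilde{\mathfrak{g}}\left[\mu\right]$. Your treatment of the delicate points is sound: the bidegree argument for axiom (3) works because any central contribution to a bracket sits in bidegree $(0,0)$, so for $m\ne 0$ the $(m,0)$-component of $[c,E_{i,j}t_{0}^{m}]$ is exactly $[\bar{c},E_{i,j}]t_{0}^{m}$ with $\bar{c}$ the constant-matrix part of $c^{(0,0)}$ (the $q$-twists are trivial since $c^{(0,0)}$ has zero exponents), and comparing two nonzero values of $m$ kills $a$ and, symmetrically with $t_{1}^{m}$, kills $b$. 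Two small steps are compressed and worth spelling out if this were written up: for axiom (2), one should note that roots $\dot{\alpha}+m_{0}\delta_{0}+m_{1}\delta_{1}$ and $\dot{\alpha}+n_{0}\delta_{0}+n_{1}\delta_{1}$ over the same $\dot{\alpha}$ pair to $(\dot{\alpha}\mid\dot{\alpha})\ne 0$, so each fiber lies wholly in one part and the decomposition genuinely descends to $\dot{\Delta}$; and for the core, the central elements are obtained explicitly, e.g. $[E_{1,2}t_{0},E_{2,1}t_{0}^{-1}]-[E_{1,2},E_{2,1}]=\text{k}_{0}$ and likewise for $\text{k}_{1}$, while the generated subalgebra also contains the full diagonal part of $\mathfrak{sl}_{N}\left(\mathbb{C}_{q}\right)$ via brackets $[E_{i,j}a,E_{j,i}b]$. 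The maximality step still rests on the central closedness of $\widehat{\mathfrak{sl}}_{N}\left(\mathbb{C}_{q}\right)$ from \cite{BGK}, which is also the fact the paper itself quotes; so what your approach buys is a self-contained proof of the EALA axioms and the identification of the core, at the cost of that one imported input.
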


For any given EALA $\mathcal{E}$, we call $\mathcal{E}_{cc}:=\mathcal{E}_{c}/{\text{Z}}\left(\mathcal{E}_{c}\right)$
\emph{the centerless core} of $\mathcal{E}$, where ${\text{Z}}\left(\mathcal{E}_{c}\right) $ is the center of the core $ \mathcal{E}_{c}$. One notes that the centerless
core of the affine Kac-Moody algebra $\mathfrak{g}$ is isomorphic to the loop algebra $\overline{\mathfrak{g}}=\mathbb{C}\left[t_{1},t_{1}^{-1}\right]\otimes\dot{\mathfrak{g}}.$
Denote by $\bar{\mu}$ the automorphism of $\overline{\mathfrak{g}}$
induced by $\mu$. Then the centerless core of $\widetilde{\mathfrak{g}}\left[\mu\right]$
is isomorphic to the $\bar{\mu}$-twisted loop algebra
\[
\mathcal{L}\left(\overline{\mathfrak{g}},\bar{\mu}\right)=\sum_{n\in\mathbb{Z}}t_{0}^{n}\otimes\overline{\mathfrak{g}}_{\left(m\right)}\subset\mathbb{C}\left[t_{0},t_{0}^{-1}\right]\otimes\overline{\mathfrak{g}}=\mathcal{R}\otimes\dot{\mathfrak{g}},
\]
where $\overline{\mathfrak{g}}_{\left(m\right)}=\left\{ x\in\overline{\mathfrak{g}}\mid\bar{\mu}\left(x\right)=\omega^{m}x\right\} $.

It is clear that the centerless core of $\widetilde{\mathfrak{sl}}_{N}\left(\mathbb{C}_{q}\right)$ with $q$ generic
is isomorphic to $\mathfrak{sl}_{N}\left(\mathbb{C}_{q}\right)$. The following classification of centerless cores of nullity 2 EALAs
was given in \cite{ABP}:

\begin{proposition} \label{prop:centerlesscore} The centerless core
of a nullity 2 EALA is either isomorphic to $\mathfrak{sl}_{N}\left(\mathbb{C}_{q}\right)$
for some positive integer $N\ge2$ and generic $q\in\mathbb{C}^{\ast}$,
or to $\mathcal{L}\left(\overline{\mathfrak{g}},\bar{\mu}\right)$
for some nontransitive diagram automorphism $\mu$ of an untwisted
affine Kac-Moody algebra $\mathfrak{g}$. \end{proposition}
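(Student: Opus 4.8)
The plan is to reduce this classification to the classification of nullity $2$ \emph{Lie tori} and then to repackage the resulting list in the form stated here. First I would recall, from the general structure theory linking extended affine Lie algebras with Lie tori (see \cite{N} and the references therein), that for any nullity $2$ EALA $\mathcal{E}$ the centerless core $\mathcal{E}_{cc}$ is a centerless Lie torus whose grading group is the rank $2$ subgroup $\langle\Phi^0\rangle$ of $\mathcal{H}^{\ast}$ generated by the isotropic roots; conversely, every centerless Lie torus of nullity $2$ arises in this way. Thus it suffices to classify centerless nullity $2$ Lie tori up to isomorphism.

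Next I would invoke Allison-Berman-Pianzola's classification in \cite{ABP}: every centerless nullity $2$ Lie torus is isomorphic either to the quantum torus type $A$ algebra $\mathfrak{sl}_{N}(\mathbb{C}_{q})$ for some integer $N\ge 2$ and some $q\in\mathbb{C}^{\ast}$, or to an iterated loop algebra built from a finite dimensional simple Lie algebra and a pair of commuting automorphisms of finite order. I would then observe that the Lie torus axioms force $q$ to be generic in the first alternative: when $q$ is a root of unity the quantum torus $\mathbb{C}_{q}$ is Azumaya over its center, a rank $2$ Laurent polynomial ring, and one checks that $\mathfrak{sl}_{N}(\mathbb{C}_{q})$ is then isomorphic to an algebra of loop type over that center, so that it already belongs to the second family. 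Hence the two families in the statement are exhaustive, with $q$ generic in the first.

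It remains to cast the iterated loop alternative in the form $\mathcal{L}(\overline{\mathfrak{g}},\bar{\mu})$ used in this paper. Up to conjugation, which does not alter the isomorphism class of a loop algebra, the inner automorphism may be taken to be a diagram automorphism of $\dot{\mathfrak{g}}$, so the inner loop algebra is $\overline{\mathfrak{g}}=\mathbb{C}[t_{1}^{\pm 1}]\otimes\dot{\mathfrak{g}}$, the centerless core of an untwisted affine Kac-Moody algebra $\mathfrak{g}$. Passing to the loop presentation of $\mathfrak{g}$ converts the remaining automorphism into a diagram automorphism $\mu$ of $\mathfrak{g}$ (possibly moving the affine node) and identifies the Lie torus with $\mathcal{L}(\overline{\mathfrak{g}},\bar{\mu})$; this is exactly the construction carried out in Section~4 and in \cite{CJKT}. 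Finally, if the resulting $\mu$ is transitive, then $\mathfrak{g}$ is of type $A_{l}^{(1)}$, $\mu$ rotates the affine diagram, and a direct computation shows that $\mathcal{L}(\overline{\mathfrak{g}},\bar{\mu})$ is isomorphic to a member of the list obtained with $\mu$ the identity; so one may always arrange $\mu$ nontransitive.

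The hard part will be this last step: matching Allison-Berman-Pianzola's normalized multiloop presentation with the ``diagram automorphism of the \emph{affine} algebra'' packaging used here, tracking both gradings at once, and verifying that the transitive case genuinely collapses into the two listed families, so that restricting to nontransitive $\mu$ loses nothing. The first two steps are essentially appeals to \cite{N} and \cite{ABP}, combined with the constructions already in place in Sections~4 and 5.
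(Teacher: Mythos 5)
The paper itself gives no proof of this proposition: it is quoted directly from Allison--Berman--Pianzola \cite{ABP} (see also \cite{GP}), so the only fair comparison is between your sketch and the argument in \cite{ABP}. Your first two steps are a reasonable summary of that route: centerless cores of nullity $2$ EALAs are centerless nullity $2$ Lie tori (\cite{N}), and these are either non-fgc, which forces $\mathfrak{sl}_{N}\left(\mathbb{C}_{q}\right)$ with $q$ not a root of unity, or fgc, hence multiloop algebras, with the root-of-unity quantum-torus algebras landing in the multiloop family. Had you stopped at ``this is Theorem~X of \cite{ABP}'', you would have matched the paper's treatment exactly.

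The reconstruction of the last step, however, contains genuine errors precisely where you flag ``the hard part''. First, ``the inner automorphism may be taken to be a diagram automorphism of $\dot{\mathfrak{g}}$, so the inner loop algebra is $\overline{\mathfrak{g}}$'' is a non sequitur: a nontrivial diagram automorphism of $\dot{\mathfrak{g}}$ produces a \emph{twisted} loop algebra, not $\overline{\mathfrak{g}}$; the whole content of the theorem in \cite{ABP} is that the twist can be pushed entirely into a diagram automorphism $\mu$ of the \emph{untwisted} affine algebra after reorganizing the two gradings, and this is asserted rather than proved. Second, your disposal of the transitive case is wrong. If $\mu$ is transitive ($\mathfrak{g}$ of type $A_{l}^{\left(1\right)}$, $\mu$ a rotation of order $l+1$), then, as the paper itself observes before Proposition \ref{prop:nonisoroots}, $\tilde{\Delta}_{\mu}^{\times}=\emptyset$: the algebra $\mathcal{L}\left(\overline{\mathfrak{g}},\bar{\mu}\right)$ is essentially the derived algebra of a quantum torus at a root of unity, has no nonisotropic roots, and is \emph{not} the centerless core of any nullity $2$ EALA at all -- in particular it is not ``isomorphic to a member of the list obtained with $\mu$ the identity''. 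The transitive case is excluded because it never occurs as a centerless core, not because it is redundant within the list; meanwhile the root-of-unity quantum-torus cores that your second step moved into the multiloop family reappear as \emph{nontransitive} diagram automorphisms (rotations of an $A^{\left(1\right)}$-type diagram by a non-coprime amount). So the justification you give for ``restricting to nontransitive $\mu$ loses nothing'' is not only unproved but incompatible with the actual mechanism, and the proposal as written does not constitute a proof independent of \cite{ABP}.
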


Two EALAs are said to be \emph{ equivalent} if their cores are isomorphic.
Then we have the following result.

\begin{theorem}\label{thm:2EALAs} Up to equivalence, a nullity $2$
extended affine Lie algebra of maximal type either has the form $\widetilde{\mathfrak{sl}}_{N}\left(\mathbb{C}_{q}\right)$
for some positive integer $N\ge2$ and generic $q\in\mathbb{C}^{\ast}$,
or has the form $\widetilde{\mathfrak{g}}\left[\mu\right]$ for some
nontransitive diagram automorphism $\mu$ of an untwisted affine Kac-Moody
algebra $\mathfrak{g}$. \end{theorem}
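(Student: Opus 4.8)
The plan is to assemble the structural results already in place: every EALA of maximal type has a centrally closed core, such a core is the universal central extension of the centerless core, and Proposition \ref{prop:centerlesscore} determines the centerless core of a nullity $2$ EALA up to two families. The substance of the argument then reduces to matching each family to the model EALA ($\widetilde{\mathfrak{g}}[\mu]$ or $\widetilde{\mathfrak{sl}}_{N}(\mathbb{C}_{q})$) that realizes it.

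First I would fix a nullity $2$ EALA $\mathcal{E}$ of maximal type, with core $\mathcal{E}_{c}$ and centerless core $\mathcal{E}_{cc}=\mathcal{E}_{c}/\mathrm{Z}(\mathcal{E}_{c})$. Since the core of any EALA is perfect and, by the maximal-type hypothesis, centrally closed, the canonical projection $\mathcal{E}_{c}\twoheadrightarrow\mathcal{E}_{cc}$ is a universal central extension; in particular $\mathcal{E}_{c}$ is determined up to isomorphism by $\mathcal{E}_{cc}$. By Proposition \ref{prop:centerlesscore}, $\mathcal{E}_{cc}$ is isomorphic either to $\mathfrak{sl}_{N}(\mathbb{C}_{q})$ for some integer $N\ge 2$ and generic $q\in\mathbb{C}^{\ast}$, or to the twisted loop algebra $\mathcal{L}(\overline{\mathfrak{g}},\bar{\mu})$ for some nontransitive diagram automorphism $\mu$ of an untwisted affine Kac-Moody algebra $\mathfrak{g}$.

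In the first case I would invoke Proposition \ref{prop:EALA-2}: $\widetilde{\mathfrak{sl}}_{N}(\mathbb{C}_{q})$ is a nullity $2$ EALA of maximal type whose core is $\widehat{\mathfrak{sl}}_{N}(\mathbb{C}_{q})$, and $\widehat{\mathfrak{sl}}_{N}(\mathbb{C}_{q})$ is centrally closed with centerless core $\mathfrak{sl}_{N}(\mathbb{C}_{q})$; hence it too is the universal central extension of $\mathfrak{sl}_{N}(\mathbb{C}_{q})$. Uniqueness of universal central extensions then gives $\mathcal{E}_{c}\cong\widehat{\mathfrak{sl}}_{N}(\mathbb{C}_{q})$, i.e.\ $\mathcal{E}$ is equivalent to $\widetilde{\mathfrak{sl}}_{N}(\mathbb{C}_{q})$. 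In the second case, by Proposition \ref{prop:EALA-1} the core of $\widetilde{\mathfrak{g}}[\mu]$ is $\mathfrak{t}(\dot{\mathfrak{g}},\mu)$, which is centrally closed by Proposition \ref{prop:centralclosed} and has centerless core $\mathcal{L}(\overline{\mathfrak{g}},\bar{\mu})$ (as recorded just before Proposition \ref{prop:centerlesscore}); thus $\mathfrak{t}(\dot{\mathfrak{g}},\mu)$ is the universal central extension of $\mathcal{L}(\overline{\mathfrak{g}},\bar{\mu})\cong\mathcal{E}_{cc}$, and again uniqueness yields $\mathcal{E}_{c}\cong\mathfrak{t}(\dot{\mathfrak{g}},\mu)$, so $\mathcal{E}$ is equivalent to $\widetilde{\mathfrak{g}}[\mu]$.

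The only genuinely non-formal ingredient is the assertion that a perfect centrally closed central extension of a Lie algebra $L$ is the universal central extension of $L$ — this is what allows "centrally closed core" together with knowledge of the centerless core to pin down the core. I would either cite this as a standard fact from the theory of central extensions (see \cite{BGK,N}) or record the short argument via the universal property. I do not expect a real obstacle at this stage: the substantive content resides entirely in Propositions \ref{prop:EALA-1}, \ref{prop:EALA-2}, \ref{prop:centralclosed}, \ref{prop:centerlesscore} and in \cite{ABP}, and the theorem itself is their formal consequence.
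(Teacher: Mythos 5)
Your proposal is correct and follows essentially the same route as the paper: reduce via Proposition \ref{prop:centerlesscore} to the two possible centerless cores, then use the maximal-type hypothesis to pin down the core, and conclude by Propositions \ref{prop:EALA-1} and \ref{prop:EALA-2}. The only difference is that you spell out the step the paper compresses into ``the maximality of $\mathcal{E}$ forces $\mathcal{E}_{c}\cong\widehat{\mathfrak{sl}}_{N}(\mathbb{C}_{q})$ or $\mathcal{E}_{c}\cong\mathfrak{t}(\dot{\mathfrak{g}},\mu)$,'' namely the standard fact that a perfect, centrally closed central extension of the centerless core is its universal central extension and hence unique up to isomorphism.
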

\begin{proof}
Let $\mathcal{E}$ be a nullity $2$ EALA of maximal type. Then by
Proposition \ref{prop:centerlesscore}, either $\mathcal{E}_{cc}\cong\mathfrak{sl}_{N}\left(\mathbb{C}_{q}\right)$,
or $\mathcal{E}_{cc}\cong\mathcal{L}\left(\overline{\mathfrak{g}},\bar{\mu}\right)$.
Then the maximality of $\mathcal{E}$ forces that either $\mathcal{E}_{c}\cong\widehat{\mathfrak{sl}}_{N}\left(\mathbb{C}_{q}\right)$
or $\mathcal{E}_{c}\cong\mathfrak{t}\left(\dot{\mathfrak{g}},\mu\right)$.
Thus  it
follows from Propositions \ref{prop:EALA-1} and \ref{prop:EALA-2} that $\mathcal{E}$ is either equivalent to $\widetilde{\mathfrak{sl}}_{N}\left(\mathbb{C}_{q}\right)$
or to $\widetilde{\mathfrak{g}}\left[\mu\right]$.
\end{proof}

 \begin{remark}\label{rem:defaffinecocyle}
Following \cite{BGK,N}, a \emph{$\mathcal{K}$-valued affine cocycle
$\tau$ on $\mathcal{S}$} is an abelian $2$-cocycle $\tau:\mathcal{S}\times\mathcal{S}\rightarrow\mathcal{K}$
such that $\tau\left(\mathcal{S},\text{d}_{i}\right)=0$ for $i=0,1$,
and $\left\langle \tau\left(s_{1},s_{2}\right),s_{3}\right\rangle =\left\langle s_{1},\tau\left(s_{2},s_{3}\right)\right\rangle $
for $s_{1},s_{2},s_{3}\in\mathcal{S}$. For any affine cocycle $\tau$,
one can define a new Lie multiplication $[\cdot,\cdot]_{\tau}$ on $\widetilde{\mathfrak{g}}$
defined by
\begin{align*}
\left[x_{1}+s_{1},x_{2}+s_{2}\right]{}_{\tau}=\left[x_{1}+s_{1},x_{2}+s_{2}\right]+\tau\left(s_{1},s_{2}\right)
\end{align*}
for $x_{1},x_{2}\in\mathfrak{t}\left(\dot{\mathfrak{g}}\right)$ and
$s_{1},s_{2}\in\mathcal{S}$. We denote the resulting Lie algebra by
$\widetilde{\mathfrak{g}}_{\tau}$. It is easy to see that the action
\eqref{eq:deftildemu} also defines an automorphism $\tilde{\mu}$
of $\widetilde{\mathfrak{g}}_{\tau}$. Denoted by $\widetilde{\mathfrak{g}}\left[\mu\right]{}_{\tau}$
the subalgebra of $\widetilde{\mathfrak{g}}_{\tau}$ fixed by $\tilde{\mu}$.
Similar to the proof of Proposition \ref{prop:EALA-1}, one can check
that $\left(\widetilde{\mathfrak{g}}\left[\mu\right]_{\tau},\widetilde{\mathfrak{h}}\left[\mu\right],\left\langle \cdot,\cdot\right\rangle \right)$
is a nullity 2 EALA of maximal type if $\mu$ is nontransitive.
By the explicit construction of EALAs (of maximal type) given by Neher
in \cite{N}, one can prove that if an EALA is equivalent to $\widetilde{\mathfrak{g}}\left[\mu\right]$,
then it is isomorphic to $\widetilde{\mathfrak{g}}\left[\mu\right]_{\tau}$
for some affine cocycle $\tau$. And, if  an EALA is equivalent to $\widetilde{\mathfrak{sl}}_{N}\left(\mathbb{C}_{q}\right)$,
then it must be isomorphic to  $\widetilde{\mathfrak{sl}}_{N}\left(\mathbb{C}_{q}\right)$. \end{remark}

\begin{remark}\label{rem:deftaua} It was shown in \cite{MRY} that
there exist nontrivial affine cocycles. For example, for any complex
number $a$, the bilinear map $\tau_{a}:\mathcal{S}\times\mathcal{S}\rightarrow\mathcal{K}$
defined by
\[
\tau_{a}\left(\mathcal{S},\text{d}_{r}\right)=0,\quad\tau_{a}\left(\text{d}_{m_{0},m_{1}},\text{d}_{n_{0},n_{1}}\right)=a\left(m_{0}n_{1}-n_{0}m_{1}\right){}^{3}\text{k}_{m_{0}+n_{0},m_{1}+n_{1}}
\]
for $m_{0},m_{1},n_{0},n_{1}\in\mathbb{Z}$ and $r=0,1$, is an affine
cocycle. Then we have the Lie algebras $\widetilde{\mathfrak{g}}\left[\mu\right]_{\tau_{a}}$
with $\widetilde{\mathfrak{g}}\left[\mu\right]_{\tau_{0}}=\widetilde{\mathfrak{g}}\left[\mu\right]$.
It is conjectured that any $\mathcal{K}$-valued affine cocycle on
$\mathcal{S}$ has the form $\tau_{a}$ for some $a\in\mathbb{C}$.
This will imply that $\widetilde{\mathfrak{g}}\left[\mu\right]_{\tau_{a}}$
and $\widetilde{\mathfrak{sl}}_{N}\left(\mathbb{C}_{q}\right)$ exhaust
all nullity $2$ EALAs of maximal type up to isomorphism. \end{remark}

\section{Associating $\tilde{\mathfrak{g}}\left[\mu\right]$ with vertex algebras}

Let $\mu$ be a fixed diagram automorphism of the untwisted affine
Kac-Moody algebra $\mathfrak{g}$. In this section, we associate the twisted toroidal
EALA $\widetilde{\mathfrak{g}}\left[\mu\right]$ with vertex algebras
through equivariant $\phi$-coordinated quasi modules.

\subsection{Vertex algebras $V_{\widehat{\mathfrak{g}}}\left(\ell,0\right)$
and $L_{\widehat{\mathfrak{g}}}\left(\ell,0\right)$}

We recall the variant of the skew derivation algebra
$\mathcal{S}$ introduced in \cite{CLiT}:
\[
\widehat{\mathcal{S}}=\left\{ f_{0}\frac{\partial}{\partial t_{0}}+f_{1}\text{d}_{1}\mid f_{0},f_{1}\in\mathcal{R},\frac{\partial}{\partial t_{0}}\left(f_{0}\right)+\text{d}_{1}\left(f_{1}\right)=0\right\} \subset\mathrm{Der}(\mathcal{R}).
\]
For $m,n\in\mathbb{Z}$, set
\[
\widehat{\text{d}}_{n,m}=\left(n+1\right)t_{0}^{n}t_{1}^{m}\text{d}_{1}-mt_{0}^{n}t_{1}^{m}\text{d}_{0},
\]
It is easy to see that the set
\[
\mathbb{B}_{\widehat{\mathcal{S}}}=\left\{ t_{0}^{-1}\text{d}_{0},t_{0}^{-1}\text{d}_{1}\right\} \cup\left\{ \widehat{\text{d}}_{n,m}\mid\left(n,m\right)\in\mathbb{Z}\times\mathbb{Z}\setminus\left\{ \left(-1,0\right)\right\} \right\}
\]
\[
=\left\{ t_{0}^{-1}\text{d}_{0}\right\} \cup\left\{ t_{0}^{n}\text{d}_{1},\ \widehat{\text{d}}_{n,m}\mid n\in\mathbb{Z},m\in\mathbb{Z}^{\ast}\right\}
\]
forms a $\mathbb{C}$-basis of $\widehat{\mathcal{S}}$, and subject to the following relations
\begin{align}
 & \left[t_{0}^{-1}\text{d}_{0},\widehat{\text{d}}{}_{n,m}\right]=\left(n+1\right)\widehat{\text{d}}_{n-1,m},\quad\left[t_{0}^{-1}\text{d}_{1},\widehat{\text{d}}_{n,m}\right]=m\widehat{\text{d}}_{n-1,m},\label{eq:relationinhats1}\\
 & \left[\widehat{\text{d}}_{n,m},\widehat{\text{d}}_{n_{1},m_{1}}\right]=\left(\left(n+1\right)m_{1}-m\left(n_{1}+1\right)\right)\widehat{\text{d}}_{n+n_{1},m+m_{1}},\label{eq:relationinhats2}
\end{align}
for $n,m,n_{1},m_{1}\in\mathbb{Z}$. In view of this, we have the
following subalgebra of the full toroidal Lie algebra $\mathcal{T}(\dot{\mathfrak{g}})$:
\begin{align*}
\widehat{\mathfrak{g}}=\mathfrak{t}\left(\dot{\mathfrak{g}}\right)+\left[\widehat{\mathcal{S}},\widehat{\mathcal{S}}\right]+\mathbb{C}t_{0}^{-1}\text{d}_{1}.
\end{align*}

Note that $\widehat{\mathfrak{g}}$ is linearly spanned by the set
\begin{align}
\left\{ t_{0}^{n}u,\ \text{k}_{n,m},\ \text{k}_{0},\ \widehat{\text{d}}_{n,m}\mid u\in\mathfrak{g},\ n\in\mathbb{Z},\ m\in\mathbb{Z}^{\ast}\right\} .\label{eq:basisofhatg}
\end{align}
From \eqref{eq:relationint2} and \eqref{eq:relationint3}, we have
\begin{align}
 & \left[\widehat{\text{d}}_{i,m},t_{0}^{j}t_{1}^{n}\otimes x\right]=\left(\left(i+1\right)n-mj\right)t_{0}^{i+j}t_{1}^{m+n}\otimes x,\label{eq:relationinhatg1}\\
 & \left[\widehat{\text{d}}_{i,m},\text{k}_{j,n}\right]=\left(\left(i+1\right)\left(m+n\right)-m\left(i+j\right)\right)\text{k}_{i+j,m+n}+\delta_{m+n,0}\delta_{i+j,0}\left(\left(i+1\right)\text{k}_{0}+m\text{k}_{1}\right),\label{eq:relationinhatg2}
\end{align}
for $i,j,m,n\in\mathbb{Z}$ and $x\in\dot{\mathfrak{g}}$. Set
\begin{align}
\widehat{\mathfrak{g}}_{+} & =\text{Span}\left\{ t_{0}^{n}u,\text{k}_{n+1,m},\widehat{\text{d}}_{n-1,m}\mid u\in\mathfrak{g},\ n\in\mathbb{N},\ m\in\mathbb{Z}\right\} ,\label{eq:decofhatg}\\
\widehat{\mathfrak{g}}_{-} & =\text{Span}\left\{ t_{0}^{-n-1}u,\text{k}_{-n,m},\widehat{\text{d}}_{-n-2,m}\mid u\in\mathfrak{g},\ n\in\mathbb{N},\ m\in\mathbb{Z}\right\}.
\end{align}
Then both $\widehat{\mathfrak{g}}_{+}$ and $\widehat{\mathfrak{g}}_{-}$ are subalgebras of $\widehat{\mathfrak{g}}$. Furthermore, we have the
decomposition:
\begin{align}
\widehat{\mathfrak{g}}=\widehat{\mathfrak{g}}_{+}\oplus\mathbb{C}\text{k}_{0}\oplus\widehat{\mathfrak{g}}_{-}.
\end{align}

Let $\ell$ be a complex number. View $\mathbb{C}$ as a $\left(\widehat{\mathfrak{g}}_{+}+\mathbb{C}\text{k}_{0}\right)$-module
with $\widehat{\mathfrak{g}}_{+}$ acting trivially and with $\text{k}_{0}$
acting as scalar $\ell$. We form the induced $\widehat{\mathfrak{g}}$-module
\begin{align}
V_{\widehat{\mathfrak{g}}}\left(\ell,0\right)=\mathcal{U}\left(\widehat{\mathfrak{g}}\right)\otimes_{\mathcal{U}\left(\widehat{\mathfrak{g}}_{+}+\mathbb{C}\text{k}_{0}\right)}\mathbb{C}.\label{universal-va}
\end{align}
Let $\mathcal{A}$ be a vector space with a basis $\left\{ K_{n},D_{n}\mid n\in\mathbb{Z}^{\ast}\right\} $,
and set
\[
\mathcal{A}_{\mathfrak{g}}=\mathfrak{g}\oplus\mathcal{A}.
\]
Form the generating functions $a\left(z\right)$, $a\in\mathcal{A}_{\mathfrak{g}}$
in $\widehat{\mathfrak{g}}\left[\left[z,z^{-1}\right]\right]$ as
follows:
\begin{gather*}
u\left(z\right)=\sum_{n\in\mathbb{Z}}\left(t_{0}^{n}u\right)z^{-n-1},\quad D_{m}\left(z\right)=\sum_{n\in\mathbb{Z}}\widehat{\text{d}}_{n,m}z^{-n-2},\quad K_{m}\left(z\right)=\sum_{n\in\mathbb{Z}}\text{k}_{n,m}z^{-n},
\end{gather*}
for $u\in\mathfrak{g}$ and $m\in\text{\ensuremath{\mathbb{Z}^{\ast}.}}$
Set $\mathbf{1}=1\otimes1\in V_{\widehat{\mathfrak{g}}}\left(\ell,0\right)$.
Identify $\mathcal{A}_{\mathfrak{g}}$ as a subspace of $V_{\widehat{\mathfrak{g}}}\left(\ell,0\right)$
through the linear map
\begin{align}
u\mapsto\left(t_{0}^{-1}u\right)\mathbf{1},\quad K_{n}\mapsto\text{k}_{0,n}\mathbf{1},\quad D_{n}\mapsto\hat{\text{d}}_{-2,n}\mathbf{1},\label{embaginv}
\end{align}
for $u\in\mathfrak{g},\,n\in\mathbb{Z}^{\ast}$. It is proved in \cite{CLiT}
that there is a unique vertex algebra structure on $V_{\widehat{\mathfrak{g}}}\left(\ell,0\right)$
with $Y\left(a,z\right)=a\left(z\right)$ for $a\in\mathcal{A}_{\mathfrak{g}}$,
and $\mathbf{1}$ the vacuum vector.

\begin{remark}\label{rem:t0-1d0}Note that $\mathfrak{t}\left(\dot{\mathfrak{g}}\right)\oplus\widehat{\mathcal{S}}=\widehat{\mathfrak{g}}\oplus\mathbb{C}t_{0}^{-1}\text{d}_{0}$
with
\[
\left[-t_{0}^{-1}\text{d}_{0},a(z)\right]=\frac{d}{dz}a\left(z\right),\quad a\in\mathcal{A}_{\mathfrak{g}}.
\]
This implies that $V_{\widehat{\mathfrak{g}}}\left(\ell\right)$ is
a $\mathfrak{t}\left(\dot{\mathfrak{g}}\right)\oplus\widehat{\mathcal{S}}$-module
with $-t_{0}^{-1}\text{d}_{0}$ acts as the canonical derivation $\mathcal{D}$.
In particular, from \cite{LLi} it follows that an ideal of vertex
algebra $V_{\widehat{\mathfrak{g}}}\left(\ell,0\right)$ is the same
as a $\mathfrak{t}\left(\dot{\mathfrak{g}}\right)\oplus\widehat{\mathcal{S}}$-submodule.
\end{remark}

When $\ell$ is a nonnegative integer, denoted by $J_{\widehat{\mathfrak{g}}}\left(\ell,0\right)$
the $\widehat{\mathfrak{g}}$-submodule of $V_{\widehat{\mathfrak{g}}}\left(\ell,0\right)$
generated by the vectors
\begin{align}
\left(t_{0}^{-1}x_{\pm\alpha_{i}}\right){}^{\epsilon_{i}\ell+1}\mathbf{1},\quad i\in I,\label{defjbl}
\end{align}
where $\epsilon_{i}=\frac{2}{\langle\alpha_{i},\alpha_{i}\rangle}$.
It is straightforward to check that $J_{\widehat{\mathfrak{g}}}\left(\ell,0\right)$
is $t_{0}^{-1}\text{d}_{0}$-stable (cf.  \cite[Lemma 3.13]{CLiT}), and hence by Remark \ref{rem:t0-1d0}
it is an ideal of the vertex algebra $V_{\widehat{\mathfrak{g}}}\left(\ell,0\right)$.
Let
\begin{align}
L_{\widehat{\mathfrak{g}}}\left(\ell,0\right)=V_{\widehat{\mathfrak{g}}}\left(\ell,0\right)/J_{\widehat{\mathfrak{g}}}\left(\ell,0\right)
\end{align}
be a quotient vertex algebra of $V_{\widehat{\mathfrak{g}}}\left(\ell,0\right)$.

\subsection{Conformal algebra $\mathcal{C}_{\mathfrak{g}}$ }

In order to associate the twisted toroidal EALA $\widetilde{\mathfrak{g}}[\mu]$ with the vertex algebras $V_{\widehat{\mathfrak{g}}}\left(\ell,0\right)$
and $L_{\widehat{\mathfrak{g}}}\left(\ell,0\right)$,
we define a $G_{\mu}$-conformal algebra $\mathcal{C}_{\mathfrak{g}}$
such that $\widehat{\mathcal{C}}_{\mathfrak{g}}\cong\widehat{\mathfrak{g}}$,
and $\widetilde{\mathcal{C}}_{\mathfrak{g}}\left[G_{\mu}\right]\cong\widetilde{\mathfrak{g}}\left[\mu\right]$.
As a vector space, we set
\[
\mathcal{C}_{\mathfrak{g}}=\left(\mathbb{C}\left[\partial\right]\otimes\mathcal{A}_{\mathfrak{g}}\right)\oplus\mathbb{C}\text{k}_{0},
\]
and define $\partial$ to be a linear transformation on $\mathcal{C}_{\mathfrak{g}}$ such that
\[
\partial\left(\partial^{m}\otimes x\right)=\partial^{m+1}\otimes x,\quad\partial\left(\text{k}_{0}\right)=0
\]
for $m\in\mathbb{N},$ $x\in\mathcal{A}_{\mathfrak{g}}.$ Let
\begin{eqnarray*}
Y^{-}:\mathcal{C}_{\mathfrak{g}}\to\text{Hom}\left(\mathcal{C}_{\mathfrak{g}},z^{-1}\mathcal{C}_{\mathfrak{g}}\left[z^{-1}\right]\right),\quad a\mapsto\sum_{i\in\mathbb{N}}a_{i}z^{-i-1}
\end{eqnarray*}
be the unique linear map such that the  property (\ref{derivation property of Y-})
holds, and the nontrivial $i$-products on $\mathcal{A}_{\mathfrak{g}}\oplus {\mathbb C}\text{k}_{0}$
are as follows:
\begin{gather*}
\left(t_{1}^{m}\otimes u\right)_{0}\left(t_{1}^{n}\otimes v\right)=t_{1}^{m+n}\otimes\left[u,v\right]+\left\langle u,v\right\rangle m\left(\partial\otimes K_{m+n}\right)+\delta_{m+n,0}\left\langle u,v\right\rangle m\text{k}_{1},\\
\left(t_{1}^{m}\otimes u\right)_{1}\left(t_{1}^{n}\otimes v\right)=\left(m+n\right)\left\langle u,v\right\rangle K_{m+n}+\delta_{m+n,0}\left\langle u,v\right\rangle \text{k}_{0},\\
\left(D_{r}\right)_{0}\left(t_{1}^{m}\otimes u\right)=m\left(\partial\otimes\left(t_{1}^{r+m}\otimes u\right)\right),\quad\left(t_{1}^{m}\otimes u\right)_{0}\left(D_{r}\right)=r\left(\partial\otimes\left(t_{1}^{r+m}\otimes u\right)\right),\\
\left(D_{r}\right)_{1}\left(t_{1}^{m}\otimes u\right)=\left(t_{1}^{m}\otimes u\right)_{1}\left(D_{r}\right)=\left(r+m\right)t_{1}^{r+m}\otimes u,\\
\left(D_{r}\right)_{0}\left(K_{s}\right)=r\left(\partial\otimes K_{r+s}\right)+\delta_{r+s,0}r\text{k}_{1},\\
\left(K_{s}\right)_{0}\left(D_{r}\right)=s\left(\partial\otimes K_{r+s}\right)+\delta_{r+s,0}\left(-r\text{k}_{1}+\partial\otimes\text{k}_{0}\right),\\
\left(D_{r}\right)_{1}\left(K_{s}\right)=\left(K_{s}\right)_{1}\left(D_{r}\right)=\left(r+s\right)K_{r+s}+\delta_{r+s,0}\text{k}_{0},\\
\left(\text{d}_{1}\right)_{0}\left(t_{1}^{m}\otimes u\right)=-\left(t_{1}^{m}\otimes u\right)_{0}\left(\text{d}_{1}\right)=mt_{1}^{m}\otimes u,\quad\left(\text{d}_{1}\right)_{1}\left(\text{k}_{1}\right)=\left(\text{k}_{1}\right)_{1}\left(\text{d}_{1}\right)=\text{k}_{0},\\
\left(\text{d}_{1}\right)_{0}\left(K_{r}\right)=-\left(K_{r}\right)_{0}\left(\text{d}_{1}\right)=rK_{r},\quad\left(\text{d}_{1}\right)_{0}\left(D_{r}\right)=-\left(D_{r}\right)_{0}\left(\text{d}_{1}\right)=D_{r},\\
\left(D_{r}\right)_{0}\left(D_{s}\right)=r\partial\otimes D_{r+s}+\delta_{r+s,0}\left(-r\partial^{2}\otimes\text{d}_{1}\right),\quad\left(D_{r}\right)_{1}\left(D_{s}\right)=\left(r+s\right)D_{r+s},
\end{gather*}
where $u,v\in\dot{\mathfrak{g}},$ $m,n\in\mathbb{Z},$ $r,s\in\mathbb{Z}^{\ast}$,
and we have used the convention $K_{0}=0=D_{0}$.

\begin{proposition}\label{prop:conalgcg} The vector space $\mathcal{C}_{\mathfrak{g}}$,
together with the linear maps $\partial$ and $Y^{-},$ as defined
above, is a conformal algebra, and the linear map $\hat{i}_{\mathfrak{g}}:\widehat{\mathcal{C}}_{\mathfrak{g}}\to\widehat{\mathfrak{g}}$
defined by
\begin{eqnarray*}
u\left(m\right)\mapsto t_{0}^{m}u,\ K_{n}\left(m\right)\mapsto\text{k}_{m+1,n},\ D_{n}\left(m\right)\mapsto\widehat{\text{d}}_{m-1,n},\ \text{k}_{0}\left(m\right)\mapsto\delta_{m,-1}\text{k}_{0},
\end{eqnarray*}
for $u\in\mathfrak{g},$ $m\in\mathbb{Z}$ and $n\in\mathbb{Z}^{\ast}$,
is an isomorphism of Lie algebras. Furthermore, the linear map $\tilde{i}_{\mathfrak{g}}:\widetilde{\mathcal{C}}_{\mathfrak{g}}\to\widetilde{\mathfrak{g}}$
defined by
\begin{eqnarray}
u\left[m\right]\mapsto t_{0}^{m}u,\ K_{n}\left[m\right]\mapsto\text{k}_{m,n},\ D_{n}\left[m\right]\mapsto\widetilde{\text{d}}_{m,n},\ \text{k}_{0}\left[m\right]\mapsto\delta_{m,0}\text{k}_{0},\ \mathbf{d}\mapsto-\text{d}_{0}
\end{eqnarray}
for $u\in\mathfrak{g},$ $m\in\mathbb{Z}$ and $n\in\mathbb{Z}^{\ast}$,
is also an isomorphism of Lie algebras. \end{proposition}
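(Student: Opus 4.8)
The plan is a transport-of-structure argument resting on the characterization in Lemma~\ref{lem:hatc}. Since $\widehat{\mathfrak g}$ and $\widetilde{\mathfrak g}$ are already Lie algebras, being subalgebras of the full toroidal Lie algebra $\mathcal T(\dot{\mathfrak g})$, the real content is to show that $\hat i_{\mathfrak g}$ is a linear bijection carrying the Lie bracket of $\widehat{\mathfrak g}$ to the bracket \eqref{eq:relationinhatc}, and that $\tilde i_{\mathfrak g}$ is a linear bijection carrying the Lie bracket of $\widetilde{\mathfrak g}$ to the bracket \eqref{eq:relationintildec}. For $\hat i_{\mathfrak g}$ this will, via Lemma~\ref{lem:hatc}, \emph{simultaneously} establish that $\mathcal C_{\mathfrak g}$ is a conformal algebra and that $\hat i_{\mathfrak g}$ is a Lie algebra isomorphism, so the first assertion of the proposition drops out a posteriori. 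One preliminary point: the map $Y^{-}$ is well defined and satisfies $[\partial,Y^{-}(u,z)]=Y^{-}(\partial u,z)=\tfrac{d}{dz}Y^{-}(u,z)$; indeed $\mathbb C[\partial]\otimes\mathcal A_{\mathfrak g}$ is free over $\mathbb C[\partial]$ and $\partial\text{k}_0=0$ with $Y^{-}(\text{k}_0,z)=0$, so extending the displayed $i$-products from the generating space $\mathcal A_{\mathfrak g}\oplus\mathbb C\text{k}_0$ by $(\partial a)_j v=-j\,a_{j-1}v$ produces a consistent linear map with the stated property, which is immediate from the shape of each listed product.

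Next I would settle the linear bijectivity. In $\widehat{\mathcal C}_{\mathfrak g}$ one has $(\partial a)(m)=-m\,a(m-1)$, and since $\partial\text{k}_0=0$ the element $\text{k}_0(m)$ vanishes for $m\neq-1$; hence $\widehat{\mathcal C}_{\mathfrak g}$ has basis $\{u(m),K_n(m),D_n(m)\mid u\text{ in a basis of }\mathfrak g,\ n\in\mathbb Z^{\ast},\ m\in\mathbb Z\}\cup\{\text{k}_0(-1)\}$. Matching this against the basis \eqref{eq:basisofhatg} of $\widehat{\mathfrak g}$, the assignments $u(m)\mapsto t_0^m u$, $K_n(m)\mapsto\text{k}_{m+1,n}$, $D_n(m)\mapsto\widehat{\text{d}}_{m-1,n}$, $\text{k}_0(-1)\mapsto\text{k}_0$ are mutually inverse reindexings, so $\hat i_{\mathfrak g}$ is a linear isomorphism. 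The same computation in $\overline{\mathcal C}_{\mathfrak g}$, where instead $(\partial a)[m]=-m\,a[m]$ and $\text{k}_0[m]=0$ for $m\neq0$, shows $\tilde i_{\mathfrak g}$ is a linear isomorphism onto $\widetilde{\mathfrak g}$ with basis \eqref{eq:basisoftildeg}.

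The heart of the proof is the bracket verification. By bilinearity and the $\partial$-compatibility just recorded, it suffices to check, for $a,b$ ranging over the generating set $\mathcal A_{\mathfrak g}\cup\{\text{k}_0\}$ and all $m,n\in\mathbb Z$, the identity
\[
\hat i_{\mathfrak g}\Bigl(\sum_{i\ge0}\binom{m}{i}(a_i b)(m+n-i)\Bigr)=\bigl[\hat i_{\mathfrak g}(a(m)),\hat i_{\mathfrak g}(b(n))\bigr]_{\widehat{\mathfrak g}},
\]
together with its analogue $\tilde i_{\mathfrak g}\bigl(\sum_{i\ge0}\tfrac{m^i}{i!}(a_i b)[m+n]\bigr)=[\tilde i_{\mathfrak g}(a[m]),\tilde i_{\mathfrak g}(b[n])]_{\widetilde{\mathfrak g}}$ and the relation $[\mathbf d,a[m]]=-m\,a[m]\mapsto[-\text{d}_0,\tilde i_{\mathfrak g}(a[m])]$, the last being immediate from the $\mathbb Z$-grading of $\widetilde{\mathfrak g}$ by $-\text{d}_0$. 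This is a finite case analysis over the pair-types $(a,b)$ occurring in the list of $i$-products: every listed $i$-product vanishes for $i\ge2$, so the sum collapses to its $i=0$ and $i=1$ terms, and one matches the outcome against \eqref{eq:affrela}, \eqref{eq:relationinhatg1}, \eqref{eq:relationinhatg2}, \eqref{eq:relationinhats1}, \eqref{eq:relationinhats2} for $\widehat{\mathfrak g}$, respectively \eqref{eq:affrela}, \eqref{eq:relationinS2}, \eqref{eq:relationinS3}, \eqref{eq:relationinS4} for $\widetilde{\mathfrak g}$. The delicate point, and the main bookkeeping obstacle, is that the index shifts built into $\hat i_{\mathfrak g}$ (namely $K_n(m)\mapsto\text{k}_{m+1,n}$ and $D_n(m)\mapsto\widehat{\text{d}}_{m-1,n}$) must turn the weight $\binom{m}{1}=m$ of the $i=1$ contribution into the coefficients of type $(i+1)n-mj$ that appear in the relations of $\widehat{\mathfrak g}$; this is most safely verified by evaluating both sides on each ordered pair of basis elements separately, paying particular attention to the $D$--$D$, $D$--$K$ and $K$--$D$ brackets where the central elements $\text{k}_0,\text{k}_1$ and the extra term $\partial^{2}\otimes\text{d}_1$ enter, and comparing coefficients. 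Once these identities are in hand, Lemma~\ref{lem:hatc} gives that $\mathcal C_{\mathfrak g}$ is a conformal algebra and that $\widehat{\mathcal C}_{\mathfrak g}$ carries the bracket \eqref{eq:relationinhatc}, whence $\hat i_{\mathfrak g}$ is a Lie algebra isomorphism; and $\mathcal C_{\mathfrak g}$ being conformal, $\widetilde{\mathcal C}_{\mathfrak g}$ is defined and the second family of identities shows $\tilde i_{\mathfrak g}$ is a Lie algebra isomorphism, finishing the proof.
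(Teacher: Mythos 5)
Your proposal is correct and takes essentially the same route as the paper: show $\hat i_{\mathfrak g}$ and $\tilde i_{\mathfrak g}$ are linear bijections (using that $\text{k}_0(m)$, resp.\ $\text{k}_0[m]$, vanishes unless $m=-1$, resp.\ $m=0$, and matching against \eqref{eq:basisofhatg} and \eqref{eq:basisoftildeg}), check that the brackets \eqref{eq:relationinhatc} and \eqref{eq:relationintildec} correspond under these maps to the defining relations of $\widehat{\mathfrak g}$ and $\widetilde{\mathfrak g}$, and invoke Lemma~\ref{lem:hatc} so that conformality of $\mathcal C_{\mathfrak g}$ and the isomorphism statement come out together, exactly as in the paper. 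The only cosmetic point is that the $\dot{\mathfrak g}$-part of the bracket check should be matched against \eqref{eq:relationint1} rather than \eqref{eq:affrela}, and the bookkeeping you flag is handled in the paper via the identities $\widehat{\text{d}}_{n,0}=(n+1)t_0^{n}\text{d}_1$ and $\text{k}_{m,0}=-\tfrac{1}{m}t_0^{m}\text{k}_1$ together with the convention $K_0=0=D_0$.
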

\begin{proof}
By definition, $\widehat{\mathcal{C}}_{\mathfrak{g}}$ is linearly
spanned by the elements $a\left(m\right),\text{k}_{0}\left(m\right)$
for $a\in\mathcal{A}_{\mathfrak{g}}$, $m\in\Z$. Note that $\text{k}_{0}\left(m\right)=0$
if $m\ne-1$. Using these with \eqref{eq:basisofhatg}, it is easy
to see that the map $\hat{i}_{\mathfrak{g}}$ is an isomorphism of
vector spaces. Then $\widehat{\mathcal{C}}_{\mathfrak{g}}$ admits
a Lie algebra structure transferring from $\widehat{\mathfrak{g}}$ via the linear isomorphism $\hat{i}_{\mathfrak{g}}$.
By using the relations \eqref{eq:relationint1}, \eqref{eq:relationinhats1},
\eqref{eq:relationinhats2}, \eqref{eq:relationinhatg1}, \eqref{eq:relationinhatg2}
and the facts that $\widehat{\text{d}}_{n,0}=\left(n+1\right)t_{0}^{n}\text{d}_{1}$
and $\text{k}_{m,0}=-\frac{1}{m}t_{0}^{m}\text{k}_{1}$ for $n\in\mathbb{Z}$
and $m\in\mathbb{Z}^{\ast}$, one can check that the Lie brackets
on $\widehat{\mathcal{C}}_{\mathfrak{g}}$ coincide with that in \eqref{eq:relationinhatc}
with the $i$-products defined above. Thus, by Lemma \ref{lem:hatc},
$\left(\mathcal{C}_{\mathfrak{g}},\partial,Y_{-}\right)$ is a conformal algebra
with $\hat{i}_{\mathfrak{g}}$ a Lie algebra isomorphism.

For the second assertion of the proposition, we note that $\widetilde{\mathcal{C}}_{\mathfrak{g}}$
is linearly spanned by the elements $a\left[m\right],\text{k}_{0}\left[m\right]$,
$\mathbf{d}$ for $a\in\mathcal{A}_{\mathfrak{g}}$ and $m\in\Z$.
Also note that $\text{k}_{0}\left[n\right]=0$ if $n\ne0$. These
together with \eqref{eq:basisoftildeg} give that $\tilde{i}_{\mathfrak{g}}$
is a linear isomorphism. Furthermore, by comparing the Lie relation
\eqref{eq:relationintildec} in $\widetilde{\mathcal{C}}_{\mathfrak{g}}$
and the Lie relations \eqref{eq:relationint1},
\eqref{eq:relationinS2}, \eqref{eq:relationinS3}, and \eqref{eq:relationinS4}
in $\widetilde{\mathfrak{g}}$, it is straightforward to check that
$\tilde{i}_{\mathfrak{g}}$ is a Lie algebra homomorphism, as required.
\end{proof}
Now we define an automorphism group $G_{\mu}$ on $\mathcal{C}_{\mathfrak{g}}$
so that $\widetilde{\mathcal{C}}_{\mathfrak{g}}\left[G_{\mu}\right]\cong\widetilde{\mathfrak{g}}\left[\mu\right]$.
We first define a linear transformation $R_{\mu}$ on $\mathcal{C}_{\mathfrak{g}}$
by
\begin{eqnarray*}
 &  & R_{\mu}\left(\partial^{m}\otimes x\right)=\partial^{m}\otimes\mu\left(x\right),\ R_{\mu}\left(\text{k}_{0}\right)=\text{k}_{0},\ R_{\mu}\left(\partial^{m}\otimes K_{n}\right)=\partial^{m}\otimes K_{n},\\
 &  & R_{\mu}\left(\partial^{m}\otimes\left(t_{1}^{n}\otimes h\right)\right)=\partial^{m}\otimes\left(t_{1}^{n}\otimes\dot{\mu}\left(h\right)\right)+\rho_{\mu}\left(h\right)\partial^{m+1}\otimes K_{n},\\
 &  & R_{\mu}\left(\partial^{m}\otimes D_{n}\right)=\partial^{m}\otimes D_{n}-\partial^{m+1}\otimes\left(t_{1}^{n}\otimes\boldsymbol{h}\right)+\frac{\left\langle \boldsymbol{h},\boldsymbol{h}\right\rangle }{2}\partial^{m+2}\otimes K_{n},
\end{eqnarray*}
for $x\in\mathfrak{g}_{\alpha}$ with $\alpha\in\Delta^{\times}\cup\left\{ 0\right\} $,
$h\in\dot{\mathfrak{h}},$ $m\in\mathbb{Z}$ and $n\in\mathbb{Z}^{\ast}$.

\begin{lemma}The linear transformation $R_{\mu}$, as defined above,
is an automorphism of $\mathcal{C}_{\mathfrak{g}}$ with order $T$.
\end{lemma}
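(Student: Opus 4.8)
The plan is to reduce the statement to Lemma~\ref{lem:conformalaut} by transporting everything through the Lie algebra isomorphism $\hat{i}_{\mathfrak g}\colon\widehat{\mathcal C}_{\mathfrak g}\xrightarrow{\sim}\widehat{\mathfrak g}$ of Proposition~\ref{prop:conalgcg} and comparing with the automorphism $\hat\mu$ of $\mathcal T(\dot{\mathfrak g})$ from Proposition~\ref{prop:defhatmu}. First I would check directly from the defining formulas that $R_\mu\circ\partial=\partial\circ R_\mu$ on $\mathcal C_{\mathfrak g}$; this is immediate, since $\partial$ merely raises the power of $\partial$ in each summand while every correction term produced by $R_\mu$ (those involving $\rho_\mu$, $\boldsymbol h$ and $\langle\boldsymbol h,\boldsymbol h\rangle$) differs from its input by exactly one extra $\partial$. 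By the first assertion of Lemma~\ref{lem:conformalaut} this gives a well-defined linear map $\widehat{R_\mu}\colon\widehat{\mathcal C}_{\mathfrak g}\to\widehat{\mathcal C}_{\mathfrak g}$, $u(m)\mapsto R_\mu(u)(m)$; iterating the definition, $\widehat{R_\mu}^{\,k}=\widehat{R_\mu^{\,k}}$ for all $k\geq1$.

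The core of the argument is the identity
\[
\hat{i}_{\mathfrak g}\circ\widehat{R_\mu}=\bigl(\hat\mu|_{\widehat{\mathfrak g}}\bigr)\circ\hat{i}_{\mathfrak g}.
\]
To make sense of the right-hand side I would first note $\hat\mu(\widehat{\mathfrak g})\subset\widehat{\mathfrak g}$: apply the formulas of Proposition~\ref{prop:defhatmu} to the spanning set \eqref{eq:basisofhatg} of $\widehat{\mathfrak g}$, using that $\hat\mu$ fixes $\mathcal K$ pointwise, that $\boldsymbol h\in\dot{\mathfrak g}\subset\mathfrak g$, and the congruence $t_0^{a}t_1^{b}\text{k}_1\equiv -a\,\text{k}_{a,b}$ modulo $d(\mathcal R)$ valid for $b\neq0$; since $\hat\mu^{T}=\mathrm{id}$, iterating the inclusion gives $\widehat{\mathfrak g}=\hat\mu^{T}(\widehat{\mathfrak g})\subset\hat\mu(\widehat{\mathfrak g})$, so $\hat\mu(\widehat{\mathfrak g})=\widehat{\mathfrak g}$ and $\hat\mu|_{\widehat{\mathfrak g}}$ is a Lie algebra automorphism. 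The displayed identity is then verified generator by generator on $x_\alpha(p)$, on $h(p)$ with $h\in\dot{\mathfrak h}$, on $(t_1^{n}\otimes h)(p)$, on $\text{d}_1(p)$, on $K_n(p)$, $D_n(p)$, $\text{k}_1(p)$ and $\text{k}_0(p)$: in each case one rewrites $(\partial^{j}\otimes\,\cdot\,)(p)$ via $(\partial a)(p)=-p\,a(p-1)$, applies $\hat{i}_{\mathfrak g}$ with its built-in shifts $K_n(p)\mapsto\text{k}_{p+1,n}$, $D_n(p)\mapsto\widehat{\text{d}}_{p-1,n}$, and compares with the matching clause of Proposition~\ref{prop:defhatmu} and Lemma~\ref{lem:muaction}; the polynomial coefficients agree, e.g.\ $(\partial^{2}\otimes K_n)(p)=p(p-1)K_n(p-2)$ is matched by the factor $m(m-1)$ coming from $t_0^{m-1}t_1^{n}\text{k}_1\equiv-(m-1)\text{k}_{m-1,n}$.

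Granting this, $\widehat{R_\mu}=\hat{i}_{\mathfrak g}^{-1}\circ(\hat\mu|_{\widehat{\mathfrak g}})\circ\hat{i}_{\mathfrak g}$ is a Lie algebra automorphism of $\widehat{\mathcal C}_{\mathfrak g}$. From $\hat\mu^{T}=\mathrm{id}$ we get $\widehat{R_\mu^{\,T}}=\widehat{R_\mu}^{\,T}=\mathrm{id}$, hence $R_\mu^{\,T}(u)(-1)=u(-1)$ for every $u\in\mathcal C_{\mathfrak g}$, and the injectivity of $u\mapsto u(-1)$ (namely \eqref{eq:cimbedhatc}) forces $R_\mu^{\,T}=\mathrm{id}$; in particular $R_\mu$ is a linear automorphism of $\mathcal C_{\mathfrak g}$. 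Since also $R_\mu\circ\partial=\partial\circ R_\mu$, the second assertion of Lemma~\ref{lem:conformalaut} shows that $R_\mu$ is an automorphism of the conformal algebra $\mathcal C_{\mathfrak g}$. For the order: if $R_\mu^{\,k}=\mathrm{id}$ then $\hat\mu^{\,k}|_{\widehat{\mathfrak g}}=\mathrm{id}$, and since $\widehat{\mathfrak g}$ contains the affine subalgebra $\mathfrak g$ (sitting at $t_0$-degree $0$), on which $\hat\mu$ restricts to $\mu$ itself (by comparing the $t_0$-degree-$0$ part of Proposition~\ref{prop:defhatmu} with Lemma~\ref{lem:muaction}), an automorphism of order $T$, we get $T\mid k$; combined with $R_\mu^{\,T}=\mathrm{id}$ this yields that $R_\mu$ has order exactly $T$. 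I expect the generator-by-generator matching of the second paragraph to be the main obstacle: it is conceptually routine but fiddly, since the index shifts hard-wired into $\hat{i}_{\mathfrak g}$ and the reductions modulo $d(\mathcal R)$ inside $\mathcal K$ must be tracked with care for the polynomial coefficients on the two sides to agree.
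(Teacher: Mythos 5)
Your proposal is correct and follows essentially the same route as the paper: check that $R_{\mu}$ commutes with $\partial$, pass to the induced map $\widehat{R}_{\mu}$ on $\widehat{\mathcal{C}}_{\mathfrak{g}}$, identify it through $\hat{i}_{\mathfrak{g}}$ with the restriction of the order-$T$ automorphism $\hat{\mu}$ of Proposition \ref{prop:defhatmu} to $\widehat{\mathfrak{g}}$, and then invoke Lemma \ref{lem:conformalaut}. You merely spell out a few steps the paper leaves implicit (that $\hat{\mu}$ preserves $\widehat{\mathfrak{g}}$, the generator-by-generator matching, and that the order is exactly $T$ via the restriction of $\hat{\mu}$ to $\mathfrak{g}\subset\widehat{\mathfrak{g}}$, where it equals $\mu$), and your sample coefficient checks are consistent with \eqref{eq:defhatmu}.
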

\begin{proof}
Note that $R_{\mu}\circ\partial=\partial\circ R_{\mu}$ on $\mathcal{C}_{\mathfrak{g}}$.
Then we have a linear map $\widehat{R}_{\mu}:\widehat{\mathcal{C}}_{\mathfrak{g}}\rightarrow\widehat{\mathcal{C}}_{\mathfrak{g}}$
determined by \eqref{eq:defhatvarphi}. Via the isomorphism $\hat{i}_{\mathfrak{g}}:\widehat{\mathcal{C}}_{\mathfrak{g}}\rightarrow\widehat{\mathfrak{g}}$
given in Proposition \ref{prop:conalgcg}, $\widehat{R}_{\mu}$ induces
a linear map on $\widehat{\mathfrak{g}}$ determined by
\begin{equation}
\begin{split}\label{eq:defhatmu}\widehat{R}_{\mu}\left(t_{0}^{m}x\right)=t_{0}^{m}\mu\left(x\right),\quad\widehat{R}_{\mu}\left(t_{0}^{m}t_{1}^{m}\otimes h\right)=t_{0}^{m}t_{1}^{m}\otimes\dot{\mu}\left(h\right)-\rho_{\mu}\left(h\right)m\text{k}_{m,n},\\
\widehat{R}_{\mu}\left(\text{k}_{m,n}\right)=\text{k}_{m,n},\quad\widehat{R}_{\mu}\left(\text{d}_{m,n}\right)
=\text{d}_{m,n}+\left(m+1\right)t_{0}^{m}t_{1}^{n}\otimes\boldsymbol{h}+\frac{\left\langle \boldsymbol{h},\boldsymbol{h}\right\rangle }{2}\left(m+1\right)m\text{k}_{m,n},
\end{split}
\end{equation}
for $x\in\mathfrak{g}_{\alpha}$ with $\alpha\in\Delta^{\times}\cup\left\{ 0\right\} ,$
$h\in\dot{\mathfrak{h}},$ $m\in\mathbb{Z}$ and $n\in\mathbb{Z}^{\ast}.$
Moreover, from Proposition \ref{prop:defhatmu} we have a
Lie automorphism $\hat{\mu}$ of $\mathcal{T}\left(\dot{\mathfrak{g}}\right)$,
which preserves $\widehat{\mathfrak{g}}$. And one can check
that $\hat{\mu}=\widehat{R}_{\mu}$ on $\widehat{\mathfrak{g}}$.
Thus $\widehat{R}_{\mu}$ is an automorphism of the Lie algebra $\widehat{\mathcal{C}}_{\mathfrak{g}}$
with order $T$. The assertion of the lemma then follows from Lemma \ref{lem:conformalaut}.
\end{proof}
Set $G_{\mu}=\langle R_{\mu}\rangle$, an automorphism group of $\mathcal{C}_{\mathfrak{g}}$,
and let $\chi_{\omega}$ be the linear character of $G_{\mu}$ defined
by $\chi_{\omega}\left(R_{\mu}\right)=\omega^{-1}$. Associated to
the $G_{\mu}$-conformal algebra $\mathcal{C}_{\mathfrak{g}}$ and
the character $\chi_{\omega}$, there is a Lie algebras $\widetilde{\mathcal{C}}_{\mathfrak{g}}\left[G_{\mu}\right]$
by \eqref{eq:deftcg}.\textcolor{red}{{} }Recalling the surjective map
$\eta_{\mu}:\widetilde{\mathfrak{g}}\rightarrow\widetilde{\mathfrak{g}}\left[\mu\right]$
defined in \eqref{eq:defetamu}, we have:

\begin{proposition}\label{prop:tgisotcg} The following assignment
\begin{eqnarray}
\overline{u\left[m\right]}\mapsto\eta_{\mu}\left(t_{0}^{m}u\right),\ \overline{K_{n}\left[m\right]}\mapsto\eta_{\mu}\left(\text{k}_{m,n}\right),\ \overline{D_{n}\left[m\right]}\mapsto\eta_{\mu}\left(\widetilde{\text{d}}_{m,n}\right),\ \overline{\text{k}_{0}\left[m\right]}\mapsto\delta_{m,0}T\text{k}_{0},\ \mathbf{d}\mapsto-\text{d}_{0}
\end{eqnarray}
for $u\in\mathfrak{g},$ $m\in\mathbb{Z}$ and $n\in\mathbb{Z}^{\ast}$,
determines an isomorphism from the Lie algebra $\widetilde{\mathcal{C}}_{\mathfrak{g}}\left[G_{\mu}\right]$
to the Lie algebra $\widetilde{\mathfrak{g}}$$\left[\mu\right]$. \end{proposition}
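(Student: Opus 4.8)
The plan is to identify $\widetilde{\mathcal{C}}_{\mathfrak{g}}[G_\mu]$ with the fixed subalgebra of $\widetilde{\mathcal{C}}_{\mathfrak{g}}$ under the automorphism induced by $R_\mu$, transport this through the isomorphism $\tilde i_{\mathfrak{g}}$ of Proposition~\ref{prop:conalgcg}, and match the result with $\widetilde{\mathfrak{g}}[\mu]=(\widetilde{\mathfrak{g}})^{\tilde\mu}$. First I would note that $\chi_\omega$ is injective, since $G_\mu=\langle R_\mu\rangle$ has order $T$ and $\chi_\omega(R_\mu)=\omega^{-1}$ is a primitive $T$-th root of unity, and that $\mathcal{C}_{\mathfrak{g}}$ is trivially a $G_\mu$-conformal algebra because $G_\mu$ is finite. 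Since $R_\mu$ is an automorphism of the conformal algebra $\mathcal{C}_{\mathfrak{g}}$ of order $T$ (shown above), it induces, via \eqref{eq:deftildeg}, an automorphism $\widetilde{R}_\mu$ of the Lie algebra $\widetilde{\mathcal{C}}_{\mathfrak{g}}$ with $\widetilde{R}_\mu(v[m])=\omega^{-m}(R_\mu v)[m]$ for $v\in\mathcal{C}_{\mathfrak{g}}$, $m\in\mathbb{Z}$, and $\widetilde{R}_\mu(\mathbf{d})=\mathbf{d}$. Remark~\ref{rem:isoofcovandfix} then yields a Lie algebra isomorphism $\theta:\widetilde{\mathcal{C}}_{\mathfrak{g}}[G_\mu]\xrightarrow{\ \sim\ }(\widetilde{\mathcal{C}}_{\mathfrak{g}})^{\widetilde{R}_\mu}$ determined by $\overline{v[n]}\mapsto\sum_{p=0}^{T-1}(\widetilde{R}_\mu)^p(v[n])$ and $\mathbf{d}\mapsto\mathbf{d}$.

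The heart of the argument is the claim that $\tilde i_{\mathfrak{g}}:\widetilde{\mathcal{C}}_{\mathfrak{g}}\xrightarrow{\sim}\widetilde{\mathfrak{g}}$ intertwines $\widetilde{R}_\mu$ and $\tilde\mu$, i.e. $\tilde i_{\mathfrak{g}}\circ\widetilde{R}_\mu=\tilde\mu\circ\tilde i_{\mathfrak{g}}$. I would verify this on the spanning set $\{u[m],K_n[m],D_n[m],\text{k}_0[m],\mathbf{d}\}$ ($u\in\mathfrak{g}$, $m\in\mathbb{Z}$, $n\in\mathbb{Z}^{\ast}$). The only extra identity needed is $(\partial^{j}v)[m]=(-m)^{j}v[m]$ in $\overline{\mathcal{C}}_{\mathfrak{g}}$, immediate from the quotient defining $\overline{\mathcal{C}}_{\mathfrak{g}}$. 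For $u$ a real or zero root vector of $\mathfrak{g}$ one has $R_\mu u=\mu(u)$ while $\tilde\mu(t_0^m u)=\omega^{-m}t_0^m\mu(u)$, so both sides agree at once (using $\tilde i_{\mathfrak{g}}(w[m])=t_0^m w$ for $w\in\mathfrak{g}$). For $u=t_1^n\otimes h$ with $h\in\dot{\mathfrak{h}}$, $n\neq0$, the definition of $R_\mu$ together with the identity above gives $\widetilde{R}_\mu(u[m])=\omega^{-m}\big((t_1^n\otimes\dot\mu(h))[m]-m\rho_\mu(h)K_n[m]\big)$, whose image under $\tilde i_{\mathfrak{g}}$ is $\omega^{-m}(t_0^m t_1^n\otimes\dot\mu(h)-m\rho_\mu(h)\text{k}_{m,n})=\tilde\mu(t_0^m t_1^n\otimes h)$ by \eqref{eq:deftildemu}; the cases $u=K_n$ and $u=D_n$ are entirely analogous, the $\partial^{2}$-term in $R_\mu(D_n)$ producing the $m^{2}$-term in $\tilde\mu(\widetilde{\text{d}}_{m,n})$, and $\text{k}_0[m]\mapsto\delta_{m,0}\text{k}_0$, $\mathbf{d}\mapsto-\text{d}_0$ match trivially. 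I expect this term-by-term bookkeeping — keeping the $\partial$-shifts in $R_\mu$ and the $\omega^{-m}$-twists aligned — to be the main, though routine, obstacle.

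Granting the claim, $\tilde i_{\mathfrak{g}}$ restricts to a Lie algebra isomorphism $(\widetilde{\mathcal{C}}_{\mathfrak{g}})^{\widetilde{R}_\mu}\xrightarrow{\sim}(\widetilde{\mathfrak{g}})^{\tilde\mu}=\widetilde{\mathfrak{g}}[\mu]$. Composing with $\theta$ and using $\eta_\mu=\sum_{p=0}^{T-1}\tilde\mu^p$ from \eqref{eq:defetamu}, the composite sends, for instance, $\overline{u[m]}\mapsto\tilde i_{\mathfrak{g}}\big(\sum_{p}(\widetilde{R}_\mu)^p(u[m])\big)=\sum_{p}\tilde\mu^p(t_0^m u)=\eta_\mu(t_0^m u)$, and likewise $\overline{K_n[m]}\mapsto\eta_\mu(\text{k}_{m,n})$, $\overline{D_n[m]}\mapsto\eta_\mu(\widetilde{\text{d}}_{m,n})$, $\mathbf{d}\mapsto-\text{d}_0$; for $\text{k}_0$ one has $\text{k}_0[m]=0$ in $\overline{\mathcal{C}}_{\mathfrak{g}}$ when $m\neq0$ while $\sum_{p}(\widetilde{R}_\mu)^p(\text{k}_0[0])=T\,\text{k}_0[0]$, giving $\overline{\text{k}_0[m]}\mapsto\delta_{m,0}T\text{k}_0$. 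This is precisely the asserted assignment, so it defines a Lie algebra isomorphism $\widetilde{\mathcal{C}}_{\mathfrak{g}}[G_\mu]\xrightarrow{\sim}\widetilde{\mathfrak{g}}[\mu]$, completing the proof.
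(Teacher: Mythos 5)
Your proposal is correct and follows essentially the same route as the paper: pass from $R_{\mu}$ to the automorphism $\widetilde{R}_{\mu}$ of $\widetilde{\mathcal{C}}_{\mathfrak{g}}$, check via $\tilde{i}_{\mathfrak{g}}$ that it corresponds to $\tilde{\mu}$ on $\widetilde{\mathfrak{g}}$ (the paper leaves this as ``straightforward to check,'' which you carry out explicitly using $(\partial v)[m]=-m\,v[m]$), and then invoke Remark \ref{rem:isoofcovandfix} to identify $\widetilde{\mathcal{C}}_{\mathfrak{g}}[G_{\mu}]$ with the $\widetilde{R}_{\mu}$-fixed subalgebra and hence with $\widetilde{\mathfrak{g}}[\mu]$. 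The additional checks you include (injectivity of $\chi_{\omega}$, the $G_{\mu}$-conformal condition, and the explicit tracking of the stated assignment including the $\delta_{m,0}T\mathrm{k}_{0}$ term) are consistent with the paper and fill in exactly the details it omits.
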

\begin{proof}
Corresponding to the automorphism $R_{\mu}$ of ${\mathcal{C}}_{\mathfrak{g}}$, there is an automorphism $\widetilde{R}_{\mu}$
of $\widetilde{\mathcal{C}}_{\mathfrak{g}}$ (see \eqref{eq:deftildeg}).
Via the isomorphism $\tilde{i}_{\mathfrak{g}}:\widetilde{\mathcal{C}}_{\mathfrak{g}}\rightarrow\widetilde{\mathfrak{g}}$
given in Proposition \ref{prop:conalgcg}, $\widetilde{R}_{\mu}$
induces an automorphism of $\widetilde{\mathfrak{g}}$. It is straightforward
to check that this automorphism of $\widetilde{\mathfrak{g}}$ coincides
with $\tilde{\mu}$ (see \eqref{eq:deftildemu}). Since $G_{\mu}$
is a cyclic group of order $T$, it follows from Remark \ref{rem:isoofcovandfix}
that $\widetilde{\mathcal{C}}_{\mathfrak{g}}\left[G_{\mu}\right]$
is isomorphic to the subalgebra of $\widetilde{\mathcal{C}}_{\mathfrak{g}}$
fixed by $\widetilde{R}_{\mu}$. This implies that $\widetilde{\mathcal{C}}_{\mathfrak{g}}\left[G_{\mu}\right]$
is isomorphic to $\widetilde{\mathfrak{g}}[\mu]$ with the isomorphism
given in the proposition.
\end{proof}

\subsection{The correspondence theorem for $\widetilde{\mathfrak{g}}\left[\mu\right]$}

In this subsection we generalize the correspondence theorem (Theorem
\ref{thm:main1}) for affine Kac-Moody algebras to the nullity $2$
twisted toroidal extended affine Lie algebras $\widetilde{\mathfrak{g}}\left[\mu\right]$.

Form the following generating functions $a^{\mu}\left[z\right]$,
$a\in\mathcal{A}_{\mathfrak{g}}$ in $\widetilde{\mathfrak{g}}\left[\mu\right]\left[\left[z,z^{-1}\right]\right]$:
\begin{gather*}
u^{\mu}\left[z\right]=\sum_{n\in\mathbb{Z}}\eta_{\mu}\left(t_{0}^{n}u\right)z^{-n},\quad D_{m}^{\mu}\left[z\right]=\sum_{n\in\mathbb{Z}}\eta_{\mu}\left(\widetilde{\text{d}}_{n,m}\right)z^{-n},\quad K_{m}^{\mu}\left[z\right]=\sum_{n\in\mathbb{Z}}\eta_{\mu}\left(\text{k}_{n,m}\right)z^{-n},
\end{gather*}
for $u\in\mathfrak{g}$ and $m\in\text{\ensuremath{\mathbb{Z}^{\ast}.}}$
Note that all the components of these generating functions together
with $\text{k}_{0}$, $\text{d}_{0}$ span the algebra $\widetilde{\mathfrak{g}}\left[\mu\right]$.
As in the affine Kac-Moody algebra case, we formulate the following definition.

\begin{definition}\label{def:resintgmumod} We say that a $\widetilde{\mathfrak{g}}\left[\mu\right]$-module
$W$ is\emph{ restricted} if for any $a\in\mathcal{A}_{\mathfrak{g}}$,
$a^{\mu}\left[z\right]\in\mathrm{Hom}\left(W,W\left(\left(z\right)\right)\right)$. And $W$
 is said \emph{ of level $\ell\in\mathbb{C}$} if the central element
$\text{k}_{0}$ acts as the scalar $\ell/T$. Furthermore, if $\mu$
is nontransitive, we say that $W$ is \emph{integrable} if for any
$\alpha\in\tilde{\Delta}_{\mu}^{\times}$, $\widetilde{\mathfrak{g}}\left[\mu\right]{}_{\alpha}$
acts locally nilpotent on $W$. \end{definition}

For each $i\in I=\{0,1,\cdots, l\}$, one recalls the positive integer $T_{i},s_{i}$ defined
in \eqref{eq:defsi}, and set
\[
p_{i}\left(z\right)=\frac{1-z^{s_iT_{i}}}{1-z^{T_{i}}}.
\]
Then we have the following analogue of Proposition \ref{prop:charintaffmod}.

\begin{proposition}\label{prop:charinttgmumod} Assume that $\mu$
is nontransitive. Then for any $i\in I$,
\begin{align}
p_{i}\left(z_{1}/z_{2}\right)\left[x_{\pm\alpha_{i}}^{\mu}\left[z_{1}\right],x_{\pm\alpha_{i}}^{\mu}\left[z_{2}\right]\right]=0.\label{eq:pizxx=00003D00003D00003D0}
\end{align}
Furthermore, if $W$ is a restricted $\widetilde{\mathfrak{g}}\left[\mu\right]$-module
of level $\ell$, then $W$ is integrable if and only if $\ell$
is a nonnegative integer and for any $i\in I$,
\begin{align}
\left(\prod_{1\le i<j\le\epsilon_{i}\ell+1}p_{i}\left(z_{i}/z_{j}\right)\right)x_{\pm\alpha_{i}}^{\mu}\left[z_{1}\right]x_{\pm\alpha_{i}}^{\mu}\left[z_{2}\right]\cdots x_{\pm\alpha_{i}}^{\mu}\left[z_{\epsilon_{i}\ell+1}\right]|_{z_{1}=z_{2}\cdots=z_{\epsilon_{i}\ell+1}}=0\quad\text{on}\ W.\label{eq:charinttgmumod}
\end{align}
\end{proposition}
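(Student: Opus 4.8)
The plan is to run the argument of Proposition~\ref{prop:charintaffmod} with the affine Kac--Moody algebra replaced by the twisted toroidal EALA $\widetilde{\mathfrak{g}}[\mu]$ and the affine vertex algebra replaced by $V_{\widehat{\mathfrak{g}}}(\ell,0)$. The substitute for Theorem~\ref{thm:main1} is the correspondence between restricted $\widetilde{\mathfrak{g}}[\mu]$-modules of level $\ell$ and $(G_\mu,\chi_\omega)$-equivariant $\phi$-coordinated quasi $V_{\widehat{\mathfrak{g}}}(\ell,0)$-modules $(W,Y_W^\phi,d)$, which one reads off from Proposition~\ref{prop:tgisotcg} together with the $G$-conformal algebra correspondence of Section~2.2 applied to the $G_\mu$-conformal algebra $\mathcal{C}_{\mathfrak{g}}$ (here $\chi_\omega$ is injective, and level $\ell$ matches the normalization $\text{k}_0\mapsto\ell/T$). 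Under this correspondence $d=-\text{d}_0$ and $Y_W^\phi(a,z)=a^\mu[z]$ for $a\in\mathcal{A}_{\mathfrak{g}}$; in particular $Y_W^\phi(x_{\pm\alpha_i},z)=x_{\pm\alpha_i}^\mu[z]$, and, since $(x_{\pm\alpha_i})_{-1}^{k}\mathbf{1}=(t_0^{-1}x_{\pm\alpha_i})^{k}\mathbf{1}$ in $V_{\widehat{\mathfrak{g}}}(\ell,0)$, the vectors in \eqref{defjbl} are recovered from iterated products of $x_{\pm\alpha_i}^\mu[z]$.

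First I would establish \eqref{eq:pizxx=00003D00003D00003D0} by a direct calculation inside $\widetilde{\mathfrak{g}}[\mu][[z_1^{\pm1},z_2^{\pm1}]]$. Writing $\eta_\mu=\sum_{p=0}^{T-1}\tilde{\mu}^{p}$ and using \eqref{eq:deftildemu}, one has $\tilde{\mu}^{p}(t_0^{n}x_{\pm\alpha_i})=\omega^{-np}\,t_0^{n}\mu^{p}(x_{\pm\alpha_i})$, so the commutator reduces to the brackets $[\mu^{p}(x_{\pm\alpha_i}),\mu^{q}(x_{\pm\alpha_i})]$ taken in $\mathfrak{g}$ (all $\mathcal{K}$- and $\text{k}_0$-contributions vanishing because $\alpha_{\mu^{p}(i)}+\alpha_{\mu^{q}(i)}\neq0$). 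Lemma~\ref{lem:defsi} leaves two cases. If the $\mu$-orbit of $\alpha_i$ is pairwise orthogonal ($s_i=1$), all these brackets vanish, $p_i(z)=1$, and \eqref{eq:pizxx=00003D00003D00003D0} is immediate. If $\mathcal{O}(i)=\{i,\mu(i)\}$ with $a_{i\mu(i)}=-1$ ($s_i=2$, $T_i=T/2$), then $[\mu^{p}(x_{\pm\alpha_i}),\mu^{q}(x_{\pm\alpha_i})]$ is a nonzero root vector (for $\pm(\alpha_i+\alpha_{\mu(i)})$) exactly when $p+q$ is odd; summing the corresponding powers of $\omega$ shows that $[\eta_\mu(t_0^{m}x_{\pm\alpha_i}),\eta_\mu(t_0^{n}x_{\pm\alpha_i})]$ is nonzero only if $T_i\mid m$, $T_i\mid n$ and $(m+n)/T_i$ is odd, in which case it is proportional to $(-1)^{n/T_i}-(-1)^{m/T_i}$. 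Multiplying by $p_i(z_1/z_2)=1+(z_1/z_2)^{T_i}$ then pairs the coefficient of $z_1^{-m}z_2^{-n}$ with that of $z_1^{-(m-T_i)}z_2^{-(n+T_i)}$, and these cancel, giving \eqref{eq:pizxx=00003D00003D00003D0}. Since the roots of $p_i$ are $T$-th roots of unity different from $1$, we also get $p_i\in\mathbb{C}_{\chi_\omega(G_\mu)\setminus\{1\}}[z]$.

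Next I would turn to the integrability statement. Because $[x_{\pm\alpha_i},x_{\pm\alpha_i}]=0$ and $\langle x_{\pm\alpha_i},x_{\pm\alpha_i}\rangle=0$ in $\mathfrak{g}$, the $n$-products of $x_{\pm\alpha_i}$ with itself in $\mathcal{C}_{\mathfrak{g}}$ vanish for $n\ge0$, hence $(x_{\pm\alpha_i})_{n}(x_{\pm\alpha_i})=0$ for $n\ge0$ in $V_{\widehat{\mathfrak{g}}}(\ell,0)$. Assuming $\ell\in\mathbb{N}$ (the case $\ell=0$ being immediate, as then \eqref{eq:charinttgmumod} says exactly $x_{\pm\alpha_i}^\mu[z]=0$), I apply Proposition~\ref{prop:modnil} with $u=x_{\pm\alpha_i}$, its positive integer taken to be $\epsilon_i\ell$, and $q(z)=p_i(z)$ (admissible by \eqref{eq:pizxx=00003D00003D00003D0} and the previous paragraph): this shows that \eqref{eq:charinttgmumod} holds on $W$ if and only if $(t_0^{-1}x_{\pm\alpha_i})^{\epsilon_i\ell+1}\mathbf{1}\in\ker Y_W^\phi$ for all $i\in I$ --- the forward implication via the faithful module over $V_{\widehat{\mathfrak{g}}}(\ell,0)/\ker Y_W^\phi$. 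As $J_{\widehat{\mathfrak{g}}}(\ell,0)$ is generated as an ideal by exactly the vectors in \eqref{defjbl} and $\ker Y_W^\phi$ is an ideal of $V_{\widehat{\mathfrak{g}}}(\ell,0)$, this is in turn equivalent to $J_{\widehat{\mathfrak{g}}}(\ell,0)\subseteq\ker Y_W^\phi$, i.e. to $(W,Y_W^\phi,d)$ being a $(G_\mu,\chi_\omega)$-equivariant $\phi$-coordinated quasi $L_{\widehat{\mathfrak{g}}}(\ell,0)$-module.

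It then remains to match ``$W$ is integrable'' with ``$\ell\in\mathbb{N}$ and $(W,Y_W^\phi,d)$ factors through $L_{\widehat{\mathfrak{g}}}(\ell,0)$''. If $W$ is integrable, the root vectors $\eta_\mu(t_0^{T_ik}x_{\pm\alpha_i})$ (and, for $s_i=2$, the ones attached to $2\check\alpha_i+(\frac{T}{2}+mT)\delta_0$) act locally nilpotently by Definition~\ref{def:resintgmumod}, since their roots lie in $\tilde\Delta_\mu^\times$ by Proposition~\ref{prop:nonisoroots}; together with the coroots they generate affine Kac--Moody subalgebras over which $W$ restricts to an integrable restricted module, and affine Kac--Moody theory then forces $\ell\in\mathbb{N}$ and, through the generation of $J_{\widehat{\mathfrak{g}}}(\ell,0)$, that the vectors \eqref{defjbl} lie in $\ker Y_W^\phi$. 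Conversely, if $(W,Y_W^\phi,d)$ factors through $L_{\widehat{\mathfrak{g}}}(\ell,0)$ then \eqref{eq:charinttgmumod} holds, and combining it with \eqref{eq:pizxx=00003D00003D00003D0} and Lemma~\ref{lem:premodnil} gives local nilpotence of the simple real root vectors of $\widetilde{\mathfrak{g}}[\mu]$; the standard integrability argument then propagates this to all of $\tilde\Delta_\mu^\times$ and produces the $\check W$-action, using the rationality of $L_{\widehat{\mathcal L}(\dot{\mathfrak{g}})}(\ell,0)$ on each affine sub-datum. I expect this last equivalence to be the main obstacle: it requires pinning down ``integrable $\widetilde{\mathfrak{g}}[\mu]$-module'' in terms of the vertex-algebra quotient $L_{\widehat{\mathfrak{g}}}(\ell,0)$, which means carefully locating the affine Kac--Moody subalgebras sitting inside $\widetilde{\mathfrak{g}}[\mu]$ (one cluster per $i\in I$, with an extra long root when $s_i=2$) and tracking their levels through the center $\mathcal{K}$ and the normalization $\text{k}_0\mapsto\ell/T$.
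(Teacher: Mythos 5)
Your verification of \eqref{eq:pizxx=00003D00003D00003D0} is correct and is a genuinely different route from the paper's: you compute the commutator directly from $\eta_{\mu}=\sum_{p}\tilde{\mu}^{p}$ and the case split of Lemma \ref{lem:defsi} (the $s_{i}=1$ case being trivial since $p_{i}=1$, the $s_{i}=2$ case by the parity cancellation against $1+(z_{1}/z_{2})^{T_{i}}$), whereas the paper obtains both \eqref{eq:pizxx=00003D00003D00003D0} and \eqref{eq:charinttgmumod} from a single reduction: for each $i\in I$ it exhibits an explicit isomorphism from the subalgebra $\widetilde{\mathfrak{g}}\left[\mu\right]_{i}$ generated by the $\eta_{\mu}\left(t_{0}^{m}x_{\pm\alpha_{i}}\right)$ and $\text{d}_{0}$ onto the affine Kac--Moody algebra $\widetilde{\mathcal{L}}\left(\mathfrak{sl}_{s_{i}+1},\theta_{s_{i}}\right)$ of type $A_{s_{i}}^{(s_{i})}$, and then quotes \eqref{eq:charintaffmod1} and Proposition \ref{prop:charintaffmod}. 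Your use of Proposition \ref{prop:modnil} with $q=p_{i}$ is also sound and correctly shows, for $\ell$ a positive integer, that \eqref{eq:charinttgmumod} is equivalent to $\left(t_{0}^{-1}x_{\pm\alpha_{i}}\right)^{\epsilon_{i}\ell+1}\mathbf{1}\in\ker Y_{W}^{\phi}$ for all $i$, i.e. to $J_{\widehat{\mathfrak{g}}}\left(\ell,0\right)\subseteq\ker Y_{W}^{\phi}$.

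The genuine gap is the remaining equivalence ``$W$ integrable $\Leftrightarrow$ $\ell\in\mathbb{N}$ and $J_{\widehat{\mathfrak{g}}}\left(\ell,0\right)\subseteq\ker Y_{W}^{\phi}$,'' which you explicitly defer (``the main obstacle'') and only gesture at. In the forward direction, ``affine Kac--Moody theory then forces $\ell\in\mathbb{N}$ and \ldots that the vectors \eqref{defjbl} lie in $\ker Y_{W}^{\phi}$'' is not an argument until the relevant affine subalgebras are pinned down; in the converse direction, the claim that \eqref{eq:charinttgmumod} together with \eqref{eq:pizxx=00003D00003D00003D0} and Lemma \ref{lem:premodnil} ``gives local nilpotence of the simple real root vectors'' does not follow from those ingredients: passing from the vanishing of the $(\epsilon_{i}\ell+1)$-fold product of currents at equal variables to local nilpotence of the coefficients $\eta_{\mu}\left(t_{0}^{m}x_{\pm\alpha_{i}}\right)$ is exactly the content of Proposition \ref{prop:charintaffmod} (which itself rests, via Theorem \ref{thm:main1}, on Li's identification of twisted $L_{\widehat{\mathcal{L}}(\dot{\mathfrak{g}})}(\ell,0)$-modules with integrable restricted modules), and to invoke it you must construct the isomorphism $\widetilde{\mathfrak{g}}\left[\mu\right]_{i}\cong\widetilde{\mathcal{L}}\left(\mathfrak{sl}_{s_{i}+1},\theta_{s_{i}}\right)$, check that restrictedness and level transfer under $\eta_{\mu}\left(t_{0}^{T_{i}m}x_{\pm\alpha_{i}}\right)\mapsto t^{m}\otimes\left(x_{\pm\beta}\right)_{(m)}$, $\text{d}_{0}\mapsto T_{i}\text{d}$ (with the bookkeeping through $\text{k}_{0}\mapsto\ell/T$ and $\epsilon_{i}$, and the matching of $p_{i}$ with the affine $p_{\dot{\beta}}$ after $z\mapsto z^{T_{i}}$), and observe that integrability of $W$ over $\widetilde{\mathfrak{g}}\left[\mu\right]$ is equivalent to integrability over all $\widetilde{\mathfrak{g}}\left[\mu\right]_{i}$ because the $t_{0}^{m}x_{\pm\alpha_{i}}$ generate the core $\mathfrak{t}\left(\dot{\mathfrak{g}},\mu\right)$. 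This is precisely what the paper's proof does, and it does so without any detour through $V_{\widehat{\mathfrak{g}}}\left(\ell,0\right)$ and $\ker Y_{W}^{\phi}$; in the paper that vertex-algebra argument only appears afterwards, in Theorem \ref{thm:main2}, with Proposition \ref{prop:charinttgmumod} as an input, so your plan also proves the proposition in a more roundabout way than needed even if the missing reduction were supplied.
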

\begin{proof}
For each $i\in I$, denoted by $\widetilde{\mathfrak{g}}\left[\mu\right]{}_{i}$
the subalgebra of $\widetilde{\mathfrak{g}}\left[\mu\right]$ generated
by the elements $t_{0}^{m}x_{\pm\alpha_{i}}$, $\text{d}_{0}$ for
$m\in\mathbb{Z}$. We first show that $\widetilde{\mathfrak{g}}\left[\mu\right]{}_{i}$
is isomorphic to the affine Kac-Moody algebra of type $A_{s_{i}}^{(s_{i})}$.
For $k=1,2$, we denote by $\theta_{k}$ the order $k$ diagram automorphism
of the simple Lie algebra $\mathfrak{sl}_{k+1}$.
Then for each $i\in I$, we have an affine Kac-Moody algebra $\widetilde{\mathcal{L}}\left(\mathfrak{sl}_{s_{i}+1},\theta_{s_{i}}\right)$
of type $A_{s_{i}}^{(s_{i})}$ (see Section 3.2). Recall that $\eta_{\mu}\left(t_{0}^{m}x_{\pm\alpha_{i}}\right)=\sum_{p=0}^{T-1}t_{0}^{m}x_{\pm\alpha_{\mu(i)}}$
for $i\in I$ and $m\in\mathbb{Z}$. In particular, we have $\eta_{\mu}\left(t_{0}^{m}x_{\pm\alpha_{i}}\right)\ne0$
if and only if $m\in T_{i}\mathbb{Z}$. By Lemma \ref{lem:defsi},
it is straightforward to see that the assignment ($m\in \mathbb{Z}$ and $\beta$ a fixed simple root of $\mathfrak{sl}_{s_{i}+1}$)
\begin{align*}
\eta_{\mu}\left(t_{0}^{T_{i}m}x_{\pm\alpha_{i}}\right)\mapsto t^{m}\otimes\left(x_{\pm\beta}\right){}_{(m)},\quad\frac{T}{s_{i}}\text{k}_{0}\mapsto\text{k},\quad\text{d}_{0}\mapsto T_{i}\text{d}
\end{align*}
determines an isomorphism from the Lie algebra $\widetilde{\mathfrak{g}}\left[\mu\right]{}_{i}$
to the Lie algebra $\widetilde{\mathcal{L}}\left(\mathfrak{sl}_{s_{i}+1},\theta_{s_{i}}\right)$.
This together with \eqref{eq:charintaffmod1} implies the first assertion
of the proposition.

For the second assertion, let $W$ be a restricted $\widetilde{\mathfrak{g}}\left[\mu\right]$-module
of level $\ell$. For each $i\in I$, via the isomorphism $\widetilde{\mathfrak{g}}\left[\mu\right]{}_{i}\cong\widetilde{\mathcal{L}}\left(\mathfrak{sl}_{s_{i}+1},\theta_{s_{i}}\right)$,
$W$ becomes a restricted $\widetilde{\mathcal{L}}\left(\mathfrak{sl}_{s_{i}+1},\theta_{i}\right)$-module
of level $\ell$. Note that the $\widetilde{\mathfrak{g}}\left[\mu\right]$-module
$W$ is integrable if and only if the elements $t_{0}^{m}x_{\pm\alpha_{i}}$,
$i\in I,m\in\mathbb{Z}$ act locally nilpotent (as they generate the
core $\mathfrak{t}\left(\dot{\mathfrak{g}},\mu\right)$). Thus, $W$
is integrable if and only if $W$ is integrable as an $\widetilde{\mathcal{L}}\left(\mathfrak{sl}_{s_{i}+1},\theta_{i}\right)$-module
for all $i\in I$. Thus the assertion follows from Proposition \ref{prop:charintaffmod}.
\end{proof}

Note that $\text{k}_0$ is a central element in $\mathcal{C}_{\mathfrak{g}}$.
Thus, for any $\ell\in \mathbb{C}$, the $\widehat{\mathcal{C}}_{\mathfrak{g}}$-submodule $\langle\text{k}_{0}-\ell\rangle$ of
$V_{\mathcal{C}_{\mathfrak{g}}}$ generated by $\text{k}_{0}-\ell$ is an ideal of $V_{\mathcal{C}_{\mathfrak{g}}}$ (as a vertex algebra).
Recall the isomorphism $\hat{i}_{\mathfrak{g}}:\widehat{\mathcal{C}}_{\mathfrak{g}}\to\widehat{\mathfrak{g}}$
given in Proposition \ref{prop:conalgcg}. One can readily check that
$\hat{i}_{\mathfrak{g}}\left(\widehat{\mathcal{C}}_{\mathfrak{g}}^{+}\right)=\widehat{\mathfrak{g}}_{+}$
and $\hat{i}_{\mathfrak{g}}\left(\widehat{\mathcal{C}}_{\mathfrak{g}}^{-}\right)=\widehat{\mathfrak{g}}{}_{-}\oplus \mathbb{C}\text{k}_0$
(see \eqref{eq:polardecofC} and \eqref{eq:decofhatg}). This implies
that $V_{\widehat{\mathfrak{g}}}\left(\ell,0\right)$
is isomorphic to the quotient vertex algebra $V_{\mathcal{C}_{\mathfrak{g}}}/\langle\text{k}_{0}-\ell\rangle$.
 Recall also that the automorphism group $G_{\mu}=\langle R_{\mu}\rangle$
of $\mathcal{C}_{\mathfrak{g}}$ can be uniquely lifted to an automorphism
group of its universal enveloping vertex algebra $V_{\mathcal{C}_{\mathfrak{g}}}$.
As $R_{\mu}\left(\text{k}_{0}\right)=\text{k}_{0}$, $G_{\mu}$ is
naturally an automorphism group of $V_{\widehat{\mathfrak{g}}}\left(\ell,0\right)$.
Furthermore, we have

\begin{lemma}For each nonnegative integer $\ell$, $J_{\widehat{\mathfrak{g}}}\left(\ell,0\right)$
is a $R_{\mu}$-stable ideal of $V_{\widehat{\mathfrak{g}}}\left(\ell,0\right)$.
\end{lemma}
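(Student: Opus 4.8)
The plan is to combine two facts already available in the excerpt: that $R_{\mu}$ has been lifted to an automorphism of the vertex algebra $V_{\widehat{\mathfrak{g}}}\left(\ell,0\right)$ (this is legitimate since $R_{\mu}\left(\text{k}_{0}\right)=\text{k}_{0}$), and that $J_{\widehat{\mathfrak{g}}}\left(\ell,0\right)$ is by construction the $\widehat{\mathfrak{g}}$-submodule of $V_{\widehat{\mathfrak{g}}}\left(\ell,0\right)$ generated by the vectors $\left(t_{0}^{-1}x_{\pm\alpha_{i}}\right)^{\epsilon_{i}\ell+1}\mathbf{1}$, $i\in I$, which by Remark \ref{rem:t0-1d0} is an ideal of this vertex algebra. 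Since $R_{\mu}$ is an automorphism of $V_{\widehat{\mathfrak{g}}}\left(\ell,0\right)$ of finite order $T$, it suffices to prove the inclusion $R_{\mu}\left(J_{\widehat{\mathfrak{g}}}\left(\ell,0\right)\right)\subseteq J_{\widehat{\mathfrak{g}}}\left(\ell,0\right)$, and for this I will show that $R_{\mu}$ merely permutes the displayed generating vectors.

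First I would record the compatibility between the lift $R_{\mu}$ and the $\widehat{\mathfrak{g}}$-action on $V_{\widehat{\mathfrak{g}}}\left(\ell,0\right)$: under the identification $\hat{i}_{\mathfrak{g}}:\widehat{\mathcal{C}}_{\mathfrak{g}}\to\widehat{\mathfrak{g}}$ of Proposition \ref{prop:conalgcg} the automorphism $\widehat{R}_{\mu}$ of $\widehat{\mathcal{C}}_{\mathfrak{g}}$ corresponds to $\hat{\mu}$ on $\widehat{\mathfrak{g}}$ (this was established when $R_{\mu}$ was defined, via \eqref{eq:defhatmu} and the identity $\hat{\mu}=\widehat{R}_{\mu}$ on $\widehat{\mathfrak{g}}$), and the lift is compatible with the action in the sense that $R_{\mu}\left(a\cdot v\right)=\hat{\mu}\left(a\right)\cdot R_{\mu}\left(v\right)$ for $a\in\widehat{\mathfrak{g}}$ and $v\in V_{\widehat{\mathfrak{g}}}\left(\ell,0\right)$, together with $R_{\mu}\left(\mathbf{1}\right)=\mathbf{1}$. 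From \eqref{eq:defhatmu} one has $\hat{\mu}\left(t_{0}^{-1}x\right)=t_{0}^{-1}\mu\left(x\right)$ for every $x\in\mathfrak{g}_{\alpha}$ with $\alpha\in\Delta^{\times}\cup\left\{0\right\}$; here one should note that the $\rho_{\mu}$ and $\boldsymbol{h}$ correction terms in the definition of $R_{\mu}$ occur only on the pieces $\mathbb{C}\left[\partial\right]\otimes\left(t_{1}^{n}\otimes\dot{\mathfrak{h}}\right)$ and on the $D_{n}$, so they are irrelevant for the real root vectors $x_{\pm\alpha_{i}}$. Hence, applying $R_{\mu}$ to the product of $\widehat{\mathfrak{g}}$-modes acting on the vacuum,
\[
R_{\mu}\left(\left(t_{0}^{-1}x_{\pm\alpha_{i}}\right)^{\epsilon_{i}\ell+1}\mathbf{1}\right)=\left(t_{0}^{-1}\mu\left(x_{\pm\alpha_{i}}\right)\right)^{\epsilon_{i}\ell+1}\mathbf{1}=\left(t_{0}^{-1}x_{\pm\alpha_{\mu\left(i\right)}}\right)^{\epsilon_{i}\ell+1}\mathbf{1},
\]
where the last step uses $\mu\left(x_{\pm\alpha_{i}}\right)=x_{\pm\alpha_{\mu\left(i\right)}}$ from \eqref{diagram automorphism}.

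It remains to match the exponents. Since $\mu$ is an automorphism of the Cartan matrix $A$ that preserves $\left\langle\cdot,\cdot\right\rangle$ and sends $\alpha_{i}$ to $\alpha_{\mu\left(i\right)}$, we have $\left\langle\alpha_{i},\alpha_{i}\right\rangle=\left\langle\alpha_{\mu\left(i\right)},\alpha_{\mu\left(i\right)}\right\rangle$, hence $\epsilon_{i}=\epsilon_{\mu\left(i\right)}$ and $\epsilon_{i}\ell+1=\epsilon_{\mu\left(i\right)}\ell+1$. Therefore $R_{\mu}$ sends the generating vector indexed by $i$ to the one indexed by $\mu\left(i\right)$, and since $\mu$ is a permutation of $I$ it permutes the whole generating set. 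Finally, using $R_{\mu}\left(a\cdot v\right)=\hat{\mu}\left(a\right)\cdot R_{\mu}\left(v\right)$ and the fact that $\hat{\mu}$ is an automorphism of $\widehat{\mathfrak{g}}$ (so that $\hat{\mu}$ induces a bijection of $\mathcal{U}\left(\widehat{\mathfrak{g}}\right)$), we obtain
\[
R_{\mu}\left(J_{\widehat{\mathfrak{g}}}\left(\ell,0\right)\right)=\sum_{i\in I}\mathcal{U}\left(\widehat{\mathfrak{g}}\right)\cdot\left(t_{0}^{-1}x_{\pm\alpha_{\mu\left(i\right)}}\right)^{\epsilon_{\mu\left(i\right)}\ell+1}\mathbf{1}=J_{\widehat{\mathfrak{g}}}\left(\ell,0\right),
\]
which proves the lemma. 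The argument is largely bookkeeping; the only subtle point, and the one I would state carefully, is verifying that the lift $R_{\mu}$ acts on the degree-one generating subspace exactly through the diagram automorphism $\mu$ of $\mathfrak{g}$, so that the defining correction terms of $R_{\mu}$ do not intervene and the multiplicities $\epsilon_{i}\ell+1$ are preserved.
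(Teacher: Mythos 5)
Your proposal is correct and follows essentially the same route as the paper: the paper's proof is exactly the one-line computation $R_{\mu}\bigl(\left(t_{0}^{-1}x_{\pm\alpha_{i}}\right)^{\epsilon_{i}\ell+1}\mathbf{1}\bigr)=\left(t_{0}^{-1}x_{\pm\alpha_{\mu(i)}}\right)^{\epsilon_{\mu(i)}\ell+1}\mathbf{1}\in J_{\widehat{\mathfrak{g}}}\left(\ell,0\right)$ together with $\epsilon_{\mu(i)}=\epsilon_{i}$, i.e.\ that $R_{\mu}$ permutes the generators. You merely make explicit the background facts the paper leaves implicit (the compatibility $R_{\mu}\left(a\cdot v\right)=\hat{\mu}\left(a\right)\cdot R_{\mu}\left(v\right)$ and that the correction terms of $R_{\mu}$ do not touch the real root vectors), which is fine.
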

\begin{proof}
The assertion follows from the fact that
\[
R_{\mu}\left(\left(t_{0}^{-1}x_{\pm\alpha_{i}}\right){}^{\epsilon_{i}\ell+1}\mathbf{1}\right)=\left(t_{0}^{-1}\mu\left(x_{\pm\alpha_{i}}\right)\right){}^{\epsilon_{i}\ell+1}\mathbf{1}=\left(t_{0}^{-1}x_{\pm\alpha_{\mu(i)}}\right){}^{\epsilon_{\mu(i)}\ell+1}\mathbf{1}\in J(\ell),
\]
where $\epsilon_{\mu(i)}=\epsilon_{i}$ for $i\in I$.
\end{proof}
In view of the above lemma, $G_{\mu}$ is also an automorphism group
of the vertex algebra $L_{\widehat{\mathfrak{g}}}\left(\ell,0\right)$.
Now we state one of the main results of the paper.

\begin{theorem}\label{thm:main2} Let $\ell$ be a complex number.
For any restricted $\widetilde{\mathfrak{g}}\left[\mu\right]$-module
$W$ of level $\ell$, there is a $\left(G_{\mu},\chi_{\omega}\right)$-equivariant
$\phi$-coordinated quasi $V_{\widehat{\mathfrak{g}}}\left(\ell,0\right)$-module
structure $(Y_{W}^{\phi},d)$ on $W$, which is uniquely determined
by
\begin{align*}
d=-\text{d}_{0}, \ \quad Y_{W}^{\phi}\left(a,z\right)=a^{\mu}\left[z\right],
\end{align*}
for $ a\in\mathcal{A}_{\mathfrak{g}}.$
On the other hand, for any $\left(G_{\mu},\chi_{\omega}\right)$-equivariant
$\phi$-coordinated quasi $V_{\widehat{\mathfrak{g}}}\left(\ell,0\right)$-module
$\left(W,Y_{W}^{\phi},d\right)$, $W$ is a restricted $\widetilde{\mathfrak{g}}\left[\mu\right]$-module
of level $\ell$ with action given by
\begin{align*}
\text{d}_{0}=-d, \ \quad a^{\mu}\left[z\right]=Y_{W}^{\phi}\left(a,z\right),
\end{align*}
for $ a\in\mathcal{A}_{\mathfrak{g}}$.
Furthermore, if $\ell$ is a nonnegative integer and $\mu$ is nontransitive,
then the integrable restricted $\widetilde{\mathfrak{g}}\left[\mu\right]$-modules
of level $\ell$ are exactly the $\left(G_{\mu},\chi_{\omega}\right)$-equivariant
$\phi$-coordinated quasi $L_{\widehat{\mathfrak{g}}}\left(\ell,0\right)$-modules
$(W,Y_{W}^{\phi},d)$. \end{theorem}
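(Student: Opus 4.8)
The plan is to reduce Theorem \ref{thm:main2} to the combination of the conformal-algebra correspondence from Section 2 and the integrability criterion in Proposition \ref{prop:charinttgmumod}. First I would invoke Proposition \ref{prop:conalgcg} and the lemma identifying $V_{\widehat{\mathfrak{g}}}(\ell,0)$ with $V_{\mathcal{C}_{\mathfrak{g}}}/\langle\text{k}_0-\ell\rangle$: together with Proposition \ref{prop:tgisotcg}, which says $\widetilde{\mathcal{C}}_{\mathfrak{g}}[G_\mu]\cong\widetilde{\mathfrak{g}}[\mu]$ (with $\overline{\text{k}_0[0]}\mapsto T\text{k}_0$, $\mathbf{d}\mapsto -\text{d}_0$), the first two paragraphs of the theorem follow directly from Proposition 2.\ref{prop:modnil}'s predecessor, i.e.\ from the Proposition stating that $(G,\chi)$-equivariant $\phi$-coordinated quasi $V_{\mathcal{C}}$-modules are exactly restricted $\widetilde{\mathcal{C}}[G]$-modules. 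Since the injective character $\chi_\omega$ is required there and $\chi_\omega(R_\mu)=\omega^{-1}=e^{-2\pi\sqrt{-1}/T}$ is injective on $G_\mu=\langle R_\mu\rangle$, this applies; the scalar bookkeeping $\text{k}_0\mapsto T\text{k}_0$ is exactly what turns "$\text{k}_0$ acts by $\ell/T$" in Definition \ref{def:resintgmumod} into "$\text{k}_0[0]$ acts by $\ell$" as needed for $V_{\mathcal{C}_{\mathfrak{g}}}/\langle\text{k}_0-\ell\rangle$-modules, and Lemma \ref{lem:actofd} handles the derivation $d=-\text{d}_0$.

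Next I would treat the integrability statement. Fix a nonnegative integer $\ell$ and $\mu$ nontransitive. By the first part, a restricted $\widetilde{\mathfrak{g}}[\mu]$-module $W$ of level $\ell$ is the same as a $(G_\mu,\chi_\omega)$-equivariant $\phi$-coordinated quasi $V_{\widehat{\mathfrak{g}}}(\ell,0)$-module $(W,Y_W^\phi,d)$, with $Y_W^\phi(x_{\pm\alpha_i},z)=x_{\pm\alpha_i}^\mu[z]$. Since $L_{\widehat{\mathfrak{g}}}(\ell,0)=V_{\widehat{\mathfrak{g}}}(\ell,0)/J_{\widehat{\mathfrak{g}}}(\ell,0)$ and $J_{\widehat{\mathfrak{g}}}(\ell,0)$ is the ideal generated by the vectors $(t_0^{-1}x_{\pm\alpha_i})^{\epsilon_i\ell+1}\mathbf{1}=\big((x_{\pm\alpha_i})_{-1}\big)^{\epsilon_i\ell+1}\mathbf{1}$ for $i\in I$, the $(G_\mu,\chi_\omega)$-equivariant $\phi$-coordinated quasi $L_{\widehat{\mathfrak{g}}}(\ell,0)$-modules are precisely those $(W,Y_W^\phi,d)$ on which $\big((x_{\pm\alpha_i})_{-1}\big)^{\epsilon_i\ell+1}\mathbf{1}$ acts as $0$ for all $i$. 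Now I apply Proposition \ref{prop:modnil} with $u=x_{\pm\alpha_i}$ (noting $(x_{\pm\alpha_i})_n x_{\pm\alpha_i}=0$ for $n\ge 0$ in $V_{\widehat{\mathfrak{g}}}(\ell,0)$, since inside the affine subalgebra this is the standard fact) and $\ell$ replaced by $\epsilon_i\ell$: the key input is \eqref{eq:pizxx=00003D00003D00003D0} of Proposition \ref{prop:charinttgmumod}, which gives the commutativity polynomial $q(z)=p_i(z)$ annihilating the bracket of two copies of $x_{\pm\alpha_i}^\mu[z]$, so \eqref{eq:modnil3}--\eqref{eq:modnil4} translate the vanishing of $\big((x_{\pm\alpha_i})_{-1}\big)^{\epsilon_i\ell+1}\mathbf{1}$ into exactly \eqref{eq:charinttgmumod}. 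For the converse direction I use the faithfulness clause of Proposition \ref{prop:modnil}: passing to $V_{\widehat{\mathfrak{g}}}(\ell,0)/\ker Y_W^\phi$, the identity \eqref{eq:charinttgmumod} forces $\big((x_{\pm\alpha_i})_{-1}\big)^{\epsilon_i\ell+1}\mathbf{1}\in\ker Y_W^\phi$, hence $J_{\widehat{\mathfrak{g}}}(\ell,0)\subset\ker Y_W^\phi$ and $W$ descends to an $L_{\widehat{\mathfrak{g}}}(\ell,0)$-module. Finally Proposition \ref{prop:charinttgmumod} identifies the condition "$\ell\in\mathbb{N}$ and \eqref{eq:charinttgmumod} holds" with integrability of the restricted $\widetilde{\mathfrak{g}}[\mu]$-module $W$, closing the loop.

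The main obstacle is the careful verification that Proposition \ref{prop:modnil} applies with the right polynomial: in that proposition the commutativity polynomial $q(z)$ is produced abstractly from $u_n v = 0$ for $n\ge 0$ via Proposition \ref{prop:Borcherdscomm}, and its roots lie in $\chi(G)\setminus\{1\}$, whereas here we want to use the \emph{explicit} polynomial $p_i(z)=\frac{1-z^{s_iT_i}}{1-z^{T_i}}$ coming from \eqref{eq:pizxx=00003D00003D00003D0}. One has to check that $p_i$ has the required shape --- its roots are roots of unity distinct from $1$, hence in $\chi_\omega(G_\mu)\setminus\{1\}=\langle\omega\rangle\setminus\{1\}$ (using that $T_i\mid T$ and $s_iT_i\mid T$ when $s_i=2$, which is where the structure of Lemma \ref{lem:defsi} and \eqref{eq:defsi} enters) --- and that it is indeed \emph{some} valid choice of $q$ in the "furthermore" part of Proposition \ref{prop:modnil}, so that \eqref{eq:modnil3}--\eqref{eq:modnil5} hold with $q=p_i$. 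I expect this to be routine once \eqref{eq:pizxx=00003D00003D00003D0} is in hand, together with a short remark that $\epsilon_i=\epsilon_{\dot\alpha_i}$ matches the $\mathfrak{sl}_2$-normalization so that $\epsilon_i\ell+1$ is the correct exponent. Everything else is an assembly of the cited results.
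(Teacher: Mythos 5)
Your proposal is correct and takes essentially the same route as the paper's own proof: the first two assertions via the conformal-algebra machinery (the Section 2.2 proposition identifying $\left(G,\chi\right)$-equivariant $\phi$-coordinated quasi $V_{\mathcal{C}}$-modules with restricted $\widetilde{\mathcal{C}}\left[G\right]$-modules, Proposition \ref{prop:tgisotcg}, and $V_{\widehat{\mathfrak{g}}}\left(\ell,0\right)\cong V_{\mathcal{C}_{\mathfrak{g}}}/\langle\text{k}_{0}-\ell\rangle$, with Lemma \ref{lem:actofd} handling $d=-\text{d}_{0}$), and the integrability statement by combining Proposition \ref{prop:modnil}, the commutation identity \eqref{eq:pizxx=00003D00003D00003D0} and Proposition \ref{prop:charinttgmumod} through the kernel argument $J_{\widehat{\mathfrak{g}}}\left(\ell,0\right)\subset\ker Y_{W}^{\phi}$ in one direction and the vanishing of the generators of $J_{\widehat{\mathfrak{g}}}\left(\ell,0\right)$ in the other. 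Your explicit check that $p_{i}\left(z\right)$ lies in $\mathbb{C}_{\chi_{\omega}\left(G_{\mu}\right)\setminus\{1\}}\left[z\right]$ (using $s_{i}T_{i}\mid T$) is precisely the routine verification the paper leaves implicit, so the argument matches the paper's.
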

\begin{proof}
By Proposition \ref{prop:modnil} and Proposition \ref{prop:tgisotcg},
the restricted $\widetilde{\mathfrak{g}}\left[\mu\right]$-modules
are exactly the $\left(G_{\mu},\chi_{\omega}\right)$-equivariant
$\phi$-coordinated quasi $V_{\mathcal{C}_{\mathfrak{g}}}$-modules.
Thus the fact that $V_{\widehat{\mathfrak{g}}}\left(\ell,0\right)\cong V_{\mathcal{C}_{\mathfrak{g}}}/\left\langle \text{k}_{0}-\ell\right\rangle $
implies the first assertion of the theorem.

For the second part of the theorem, we assume that $\ell$ is a nonnegative integer, $\mu$ is nontransitive and $W$ is an integrable
restricted $\widetilde{\mathfrak{g}}\left[\mu\right]$-module of level
$\ell$. Note that for each
$i$, we have $\left[x_{\pm\alpha_{i}}\left(z_{1}\right),x_{\pm\alpha_{i}}\left(z_{2}\right)\right]=0$
on $\widehat{\mathfrak{g}}$. This implies that for any $n\in\mathbb{N}$
and $i\in I$,
\begin{align}
\left(x_{\pm\alpha_{i}}{}\right)_{n}\left(x_{\pm\alpha_{i}}\right)=0\quad\text{on}\quad V_{\widehat{\mathfrak{g}}}\left(\ell,0\right).\label{eq:xalinxali=00003D00003D00003D0}
\end{align}

Viewing $W$ as a faithful $\left(G_{\mu},\chi_{\omega}\right)$-equivariant
$\phi$-coordinated quasi $V_{\widehat{\mathfrak{g}}}\left(\ell,0\right)/\ker Y_{W}^{\phi}$-module,
by Proposition \ref{prop:charinttgmumod}, we have
\begin{align}
\left(\prod_{1\le i<j\le\epsilon_{i}\ell+1}p_{i}(z_{i}/z_{j})\right)Y_{W}^{\phi}\left(x_{\pm\alpha_{i}},z_{1}\right)Y_{W}^{\phi}\left(x_{\pm\alpha_{i}},z_{2}\right)\cdots Y_{W}^{\phi}\left(x_{\pm\alpha_{i}},z_{\epsilon_{i}\ell+1}\right)|_{z_{1}=z_{2}\cdots=z_{\epsilon_{i}\ell+1}}=0\quad\text{on}\ W\label{eq:main1eq1}
\end{align}
for all $i\in I$. This together with \eqref{eq:xalinxali=00003D00003D00003D0}
and Proposition \ref{prop:modnil} proves that $\left(\left(x_{\pm\alpha_{i}}\right){}_{-1}\right){}^{\epsilon_{i}\ell+1}\mathbf{1}\in\ker Y_{W}^{\phi}$
for $i\in I$. Thus, we have $J_{\widehat{\mathfrak{g}}}\left(\ell,0\right)\subset\ker Y_{W}^{\phi}$
and $W$ becomes a $\left(G_{\mu},\chi_{\omega}\right)$-equivariant
$\phi$-coordinated quasi $L_{\widehat{\mathfrak{g}}}\left(\ell,0\right)$-module.

Conversely, let $\left(W,Y_{W}^{\phi},d\right)$ be a $\left(G_{\mu},\chi_{\omega}\right)$-equivariant
$\phi$-coordinated quasi $L_{\widehat{\mathfrak{g}}}\left(\ell,0\right)$-module.
Then it is also a $\left(G_{\mu},\chi_{\omega}\right)$-equivariant $\phi$-coordinated
quasi $V_{\widehat{\mathfrak{g}}}\left(\ell,0\right)$-module, and such
that for any $i\in I$, $\left(\left(x_{\pm\alpha_{i}}\right){}_{-1}\right){}^{\epsilon_{i}\ell+1}\mathbf{1}$
acts trivially on $W$. Recall from the first part of the theorem that $W$ is then a $\widetilde{\mathfrak{g}}[\mu]$-module
with $a^{\mu}\left[z\right]=Y_{W}^{\phi}\left(a,z\right)$ for $a\in\mathcal{A}_{\mathfrak{g}}$.
Combining this with \eqref{eq:pizxx=00003D00003D00003D0}, we obtain
\begin{align*}
\left(\prod_{1\le i<j\le\epsilon_{i}\ell+1}p_{i}\left(z_{i}/z_{j}\right)\right)Y_{W}^{\phi}\left(x_{\pm\alpha_{i}},z_{1}\right)Y_{W}^{\phi}\left(x_{\pm\alpha_{i}},z_{2}\right)\cdots Y_{W}^{\phi}\left(x_{\pm\alpha_{i}},z_{\epsilon_{i}\ell+1}\right)\in\mathrm{Hom}\left(W,W\left(\left(z_{1},\dots,z_{\epsilon_{i}\ell+1}\right)\right)\right).
\end{align*}
Then again by \eqref{eq:xalinxali=00003D00003D00003D0} and Proposition
\ref{prop:modnil}, we see that \eqref{eq:main1eq1} holds. This implies, viewing $W$ as a restricted $\widetilde{\mathfrak{g}}[\mu]$-module
of level $\ell$, that \eqref{eq:charinttgmumod} holds. Thus, by Proposition
\ref{prop:charinttgmumod}, $W$ is an integrable $\widetilde{\mathfrak{g}}[\mu]$-module.
This completes the proof of the theorem.
\end{proof}
\begin{remark} For any complex number $a$, two vertex algebras $V_{\widehat{\mathfrak{t}}(\mathfrak{g},a)^{o}}\left(\ell\right)$
and $L_{\widehat{\mathfrak{t}}(\mathfrak{g},a)^{o}}(\ell)$ were constructed in \cite{CLiT}, and such that
$V_{\widehat{\mathfrak{t}}\left(\mathfrak{g},0\right){}^{o}}\left(\ell\right)=V_{\widehat{\mathfrak{g}}}\left(\ell,0\right)$
and $L_{\widehat{\mathfrak{t}}(\mathfrak{g},0)^{o}}\left(\ell\right)=L_{\widehat{\mathfrak{g}}}\left(\ell,0\right)$. By a similarly argument as above, one can prove that  the Lie algebra $\widetilde{\mathfrak{g}}\left[\mu\right]{}_{\tau_{a}}$ can be associated
with the vertex algebras $V_{\widehat{\mathfrak{t}}(\mathfrak{g},a)^{o}}\left(\ell\right)$
and $L_{\widehat{\mathfrak{t}}(\mathfrak{g},a)^{o}}\left(\ell\right)$
via their equivariant $\phi$-coordinated quasi modules, where $\widetilde{\mathfrak{g}}\left[\mu\right]{}_{\tau_{a}}$ and the affine cocycle $\tau_{a}$ are defined in Remark \ref{rem:deftaua}.
\end{remark}

\section{Associating $\widetilde{\mathfrak{sl}}_{N}\left(\mathbb{C}_{q}\right)$
with vertex algebras}

Let $N\ge2$ be a positive integer and $q\in {\mathbb{C}}^{\ast}$ a generic complex number.
In this section we prove an analog of Theorem \ref{thm:main2} for
the extended affine Lie algebra $\widetilde{\mathfrak{sl}}_{N}\left(\mathbb{C}_{q}\right)$.

First, we define the following generating functions in $\widetilde{\mathfrak{sl}}_{N}\left(\mathbb{C}_{q}\right)[[z,z^{-1}]]$:
\begin{align*}
\left(E_{i,j}t_{1}^{m}\right)\left[z\right]&=\sum_{n\in\Z}\left(E_{i,j}t_{0}^{n}t_{1}^{m}\right)z^{-n},\quad H_{k}\left[z\right]=\sum_{n\in\mathbb{Z}}\left(E_{k,k}-E_{k+1,k+1}\right)t_{0}^{n}z^{-n},\\
H_{N}\left[z\right]&=\sum_{n\in\mathbb{Z}}\left(E_{N,N}t_{0}^{n}-q^{-n}E_{1,1}t_{0}^{n}-\delta_{n,0}{\text{k}}_{1}\right)z^{-n},
\end{align*}
where $1\le i,j\le N,\ m\in\mathbb{Z}$ with $\left(i-j,m\right)\ne\left(0,0\right)$
and $1\le k\le N-1$.
Note that all the coefficients of these generating
functions, together with $\text{k}_{0}$, $\text{d}_{0}$ and $\text{d}_{1}$,
form a basis of $\widetilde{\mathfrak{sl}}_{N}\left(\mathbb{C}_{q}\right)$.

\begin{definition}\label{def:resintslcqmod} We say that
an $\widetilde{\mathfrak{sl}}_{N}\left(\mathbb{C}_{q}\right)$-module
$W$ is \emph{restricted} if $\left(E_{i,j}t_{1}^{m}\right)\left[z\right]$, $ H_{k}\left[z\right]\in\mathrm{Hom}\left(W,W\left(\left(z\right)\right)\right)$ for $1\le i,j,k\le N,\ m\in\mathbb{Z}$
with $\left(i-j,m\right)\ne(0,0)$.
$W$ is said to be \emph{of level }$\ell\in\mathbb{C}$ if the central element
$\text{k}_{0}$ acts as the scalar $\ell$. Furthermore, we say that
$W$ is \emph{integrable} if
$E_{i,j}t_{0}^{n}t_{1}^{m}$ acts locally nilpotent on $W$ for $1\le i\ne j\le N,\ m,n\in\mathbb{Z}$. \end{definition}

The following result is from \cite[Proposition 3.13]{CLiTW}.

\begin{proposition}\label{prop:charintslmod} Let $W$ be a restricted
$\widetilde{\mathfrak{sl}}_{N}\left(\mathbb{C}_{q}\right)$-module
of level $\ell$. Then $W$ is integrable if and only if $\ell$ is
a nonnegative integer and
\[
\left(E_{i,j}t_{1}^{m}\right)\left[z\right]^{\ell+1}=0\ \ \text{on}\ W
\]
for $1\le i\ne j\le N$ and $m\in\Z$. \end{proposition}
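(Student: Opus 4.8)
The plan is to reduce Proposition~\ref{prop:charintslmod} to the analogous integrability criterion for affine Kac--Moody algebras, namely Proposition~\ref{prop:charintaffmod} applied with $\dot{\mathfrak g}=\mathfrak{sl}_2$ and $\dot\nu=\mathrm{id}$. The mechanism is that for each $1\le i\ne j\le N$ and each $m\in\mathbb Z$, the Lie algebra $\widetilde{\mathfrak{sl}}_N(\mathbb C_q)$ contains a copy of the untwisted affine Kac--Moody algebra $\widetilde{\mathcal L}(\mathfrak{sl}_2,\mathrm{id})$ whose positive real-root current is exactly $\left(E_{i,j}t_1^m\right)[z]$.

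First I would fix such $i,j,m$ and introduce $\hat x_n=E_{i,j}t_0^n t_1^m$ and $\hat y_n=q^{-mn}E_{j,i}t_0^n t_1^{-m}$ for $n\in\mathbb Z$, the rescaling factor $q^{-mn}$ being chosen so as to absorb the $q$-powers appearing in \eqref{commutator1}. A direct computation with \eqref{commutator1} then yields $[\hat x_p,\hat x_r]=0=[\hat y_p,\hat y_r]$, $[\hat x_p,\hat y_r]=\bar h_{p+r}+p\,\delta_{p+r,0}\,\text{k}_0$, $[\bar h_k,\hat x_r]=2\hat x_{k+r}$, $[\bar h_k,\hat y_r]=-2\hat y_{k+r}$ and $[\bar h_k,\bar h_{k'}]=2k\,\delta_{k+k',0}\,\text{k}_0$, where $\bar h_k=(E_{i,i}-q^{-mk}E_{j,j})t_0^k$ for $k\ne0$ and $\bar h_0=E_{i,i}-E_{j,j}+m\,\text{k}_1$. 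Since moreover $\text{d}_0$ acts on all these elements as the degree operator in $t_0$, the subspace $\mathfrak a_{i,j,m}$ spanned by the $\hat x_n$, $\hat y_n$, $\bar h_n$, $\text{k}_0$ and $\text{d}_0$ is a Lie subalgebra of $\widetilde{\mathfrak{sl}}_N(\mathbb C_q)$ isomorphic to $\widetilde{\mathcal L}(\mathfrak{sl}_2,\mathrm{id})$, via $t^n\otimes x_{\dot\alpha}\mapsto\hat x_n$, $t^n\otimes x_{-\dot\alpha}\mapsto\hat y_n$, $\text{k}\mapsto\text{k}_0$, $\text{d}\mapsto\text{d}_0$; here $q$ generic guarantees that no structure constant degenerates, and the central element $\text{k}_1$ enters only through the image $\bar h_0$ of the coroot $\dot\alpha^\vee$, so it creates no obstruction.

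Next I would transport a module along this isomorphism. If $W$ is a restricted $\widetilde{\mathfrak{sl}}_N(\mathbb C_q)$-module of level $\ell$, then $W$, viewed via $\widetilde{\mathcal L}(\mathfrak{sl}_2,\mathrm{id})\cong\mathfrak a_{i,j,m}$, is again a restricted module of level $\ell$: the central element $\text{k}_0$ still acts by $\ell$, the current $\hat x[z]=\sum_n\hat x_n z^{-n}=\left(E_{i,j}t_1^m\right)[z]$ and its companion $\hat y[z]$ lie in $\mathrm{Hom}(W,W((z)))$ by Definition~\ref{def:resintslcqmod}, and hence so does the Cartan current $\sum_n\bar h_n z^{-n}$, whose components with $n\ne0$ are $[\hat x_n,\hat y_0]$. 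Under the identification the positive real-root current $x_{\dot\alpha}[z]$ of $\widetilde{\mathcal L}(\mathfrak{sl}_2,\mathrm{id})$ is precisely $\left(E_{i,j}t_1^m\right)[z]$, and since for $\mathfrak{sl}_2$ one has $\epsilon_{\dot\alpha}=1$ and $p_{\dot\alpha}(z)=1$, Proposition~\ref{prop:charintaffmod} tells us that $W$ is integrable as an $\mathfrak a_{i,j,m}$-module if and only if $\ell\in\mathbb N$ and $\left(E_{i,j}t_1^m\right)[z]^{\ell+1}=0$ on $W$.

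Finally I would assemble the global statement. The real root vectors of $\widetilde{\mathfrak{sl}}_N(\mathbb C_q)$ are exactly the elements $E_{a,b}t_0^n t_1^p$ with $a\ne b$, and each of these is a nonzero scalar multiple of a real root vector of $\mathfrak a_{a,b,p}$; hence $W$ is integrable over $\widetilde{\mathfrak{sl}}_N(\mathbb C_q)$ if and only if it is integrable over every $\mathfrak a_{i,j,m}$. Together with the previous paragraph (applied for all admissible $i,j,m$) this gives exactly the claimed equivalence, $\ell\in\mathbb N$ being forced by any single $\mathfrak a_{i,j,m}$ and the current identities by all of them. I expect the only genuine difficulty to be the first step: carrying out the bookkeeping that identifies $\mathfrak a_{i,j,m}$ with $\widetilde{\mathcal L}(\mathfrak{sl}_2,\mathrm{id})$, i.e.\ pinning down the rescaling that removes all $q$-factors from \eqref{commutator1} and checking that the spurious term $m\,\text{k}_1$ is absorbed into the coroot; once that structural identification is made, everything else is a straightforward invocation of Proposition~\ref{prop:charintaffmod}.
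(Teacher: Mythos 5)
Your proof is correct, and the comparison here is necessarily one-sided: the paper gives no argument for Proposition \ref{prop:charintslmod} at all, importing it verbatim from \cite[Proposition 3.13]{CLiTW}. What you supply is an internal proof by rank-one reduction, and it is essentially the same mechanism the paper itself uses one section earlier for the twisted toroidal case (Proposition \ref{prop:charinttgmumod}), where integrability is tested on embedded rank-one affine subalgebras and then Proposition \ref{prop:charintaffmod} is invoked. Your bookkeeping checks out against \eqref{commutator1}: with $\hat x_n=E_{i,j}t_0^nt_1^m$, $\hat y_n=q^{-mn}E_{j,i}t_0^nt_1^{-m}$, $\bar h_k=(E_{i,i}-q^{-mk}E_{j,j})t_0^k$ for $k\ne 0$ and $\bar h_0=E_{i,i}-E_{j,j}+m\,\text{k}_1$, one gets exactly the untwisted affine $\mathfrak{sl}_2$ relations with central element $\text{k}_0$ and degree derivation $\text{d}_0$; the central $\text{k}_1$ is indeed absorbed into the zero-mode coroot, and no genericity of $q$ is even needed for this step. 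Restrictedness and level $\ell$ pass to each copy $\mathfrak{a}_{i,j,m}$ (the Cartan current is truncated since $\bar h_n=[\hat x_n,\hat y_0]$), integrability in the sense of Definition \ref{def:resintslcqmod} is precisely integrability over all the $\mathfrak{a}_{i,j,m}$, and since $\epsilon_{\dot\alpha}=1$ and $p_{\dot\alpha}(z)=1$ for $\mathfrak{sl}_2$ with $\dot\nu=\mathrm{id}$, Proposition \ref{prop:charintaffmod} yields exactly the conditions $\left(E_{i,j}t_1^m\right)[z]^{\ell+1}=0$; the negative-root condition for the triple $(i,j,m)$ is the positive-root condition for $(j,i,-m)$ after the harmless substitution $z\mapsto q^{m}z$, so the family of conditions closes up as you claim. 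Compared with the citation route, your argument buys self-containedness within this paper (it rests only on results proved here, notably Proposition \ref{prop:charintaffmod}) at the cost of redoing the $\mathfrak{sl}_2$-identification that \cite{CLiTW} has already packaged; it also makes transparent why the exponent is $\ell+1$ rather than $\epsilon\ell+1$ and why no nontrivial polynomial $p(z)$ appears, namely because every embedded rank-one subalgebra is untwisted of type $A_1^{(1)}$.
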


Let $\mathfrak{gl}_{\infty}$ be the algebra of all doubly infinite
complex matrices\textcolor{red}{{} }with only finitely many nonzero
entries. For $m,n\in\mathbb{Z}$, let $E_{m,n}$ denote the unit matrix
whose only nonzero entry is the $(m,n)$-entry which is equal to $1$. Equip
$\mathfrak{gl}_{\infty}$ with a nondegenerate, invariant and symmetric bilinear form $\langle\cdot,\cdot\rangle$
defined by
\begin{align*}
\left\langle E_{i,j},E_{k,l}\right\rangle =\delta_{j,k}\,\delta_{i,l},
\end{align*}
for $ i,j,k,l\in\mathbb{Z}.$
 Let $\mathfrak{sl}_{\infty}=\left[\mathfrak{gl}_{\infty},\mathfrak{gl}_{\infty}\right]$ be
the derived subalgebra of $\mathfrak{gl}_{\infty}$. Then $\left\langle \cdot,\cdot\right\rangle $
is also nondegenerate on $\mathfrak{sl}_{\infty}$. And
associated to the pair $\left(\mathfrak{sl}_{\infty},\langle\cdot,\cdot\rangle\right)$,
we have the corresponding affine Lie algebra $\widehat{\mathcal{L}}\left(\mathfrak{sl}_{\infty}\right)$,
the universal affine vertex algebra $V_{\widehat{\mathcal{L}}\left(\mathfrak{sl}_{\infty}\right)}\left(\ell,0\right)$,
and the simple affine vertex algebra $L_{\widehat{\mathcal{L}}\left(\mathfrak{sl}_{\infty}\right)}\left(\ell,0\right)
=V_{\widehat{\mathcal{L}}\left(\mathfrak{sl}_{\infty}\right)}\left(\ell,0\right)/J_{\widehat{\mathcal{L}}\left(\mathfrak{sl}_{\infty}\right)}
\left(\ell,0\right)$ (see the subsection 3.2).
The following result is given in \cite[Lemma 3.11]{CLiTW}.

\begin{lemma}\label{lem:genofjslinfty} If $\ell$ is a nonnegative
integer, then $J_{\widehat{\mathcal{L}}\left(\mathfrak{sl}_{\infty}\right)}\left(\ell,0\right)$,
as  $\widehat{\mathcal{L}}\left(\mathfrak{sl}_{\infty}\right)$-module,
is generated by the set of vectors
\begin{align}
\{\left(t^{-1}\otimes E_{mN+i,nN+j}\right){}^{\ell+1}{\mathbf{1}}\mid \ \text{for}\ 1\le i\ne j\le N,\ m,n\in\Z\}.\label{genofj}
\end{align}
\end{lemma}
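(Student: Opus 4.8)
\emph{Set-up and strategy.} Write $V=V_{\widehat{\mathcal{L}}(\mathfrak{sl}_{\infty})}(\ell,0)$ and $J=J_{\widehat{\mathcal{L}}(\mathfrak{sl}_{\infty})}(\ell,0)$, and for $p\ne q$ in $\Z$ put $v_{p,q}=(t^{-1}\otimes E_{p,q})^{\ell+1}\mathbf{1}\in V$. Since $\{mN+i\mid m\in\Z,\ 1\le i\le N\}=\Z$ and the condition $i\ne j$ amounts to $p\not\equiv q\pmod N$, the displayed generating set is exactly $\{v_{p,q}\mid p\not\equiv q\pmod N\}$; let $M\subseteq V$ be the $\widehat{\mathcal{L}}(\mathfrak{sl}_{\infty})$-submodule it generates, and let $M'\subseteq V$ be the submodule generated by $\{v_{p,q}\mid p\ne q\}$. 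Clearly $M\subseteq M'$. The plan is to (i) show $M'\subseteq M$, so that $M=M'$, and then (ii) invoke $M'=J$.

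\emph{Step (i): the reduction $v_{p,q}\in M$ for all $p\ne q$.} The case $p\not\equiv q\pmod N$ is by definition, so assume $p\equiv q\pmod N$. Because $N\ge2$ we may pick $r\in\Z$ with $r\not\equiv p\pmod N$; then automatically $r\ne p$ and $r\ne q$, the pairs $(p,r)$ and $(r,q)$ both have non-congruent entries, and in particular $v_{p,r}\in M$. Put $x=t^{-1}\otimes E_{p,r}$, $y=t^{0}\otimes E_{r,q}$ and $z=t^{-1}\otimes E_{p,q}$ in $\widehat{\mathcal{L}}(\mathfrak{sl}_{\infty})$. From the matrix identities $[E_{p,r},E_{r,q}]=E_{p,q}$ and $[E_{p,r},E_{p,q}]=0=[E_{p,q},E_{r,q}]$ (valid since $r\ne p$, $p\ne q$, $q\ne r$), together with the affine commutation relation, one obtains $[x,y]=z$ and $[x,z]=[z,y]=0$; also $y\cdot\mathbf{1}=0$ because $\C[t]\otimes\mathfrak{sl}_{\infty}$ annihilates the vacuum. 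A one-line induction then gives $y^{\ell+1}x^{\ell+1}\mathbf{1}=(-1)^{\ell+1}(\ell+1)!\,z^{\ell+1}\mathbf{1}$, whence $v_{p,q}=z^{\ell+1}\mathbf{1}=\tfrac{(-1)^{\ell+1}}{(\ell+1)!}\,y^{\ell+1}\,v_{p,r}\in M$. Thus $M'\subseteq M$ and $M=M'$.

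\emph{Step (ii): $M'=J$.} This is the infinite-rank counterpart of the Frenkel--Zhu description of the maximal graded ideal of a level-$\ell$ vacuum affine vertex algebra, and for $\widehat{\mathcal{L}}(\mathfrak{sl}_{\infty})$ it is proved in \cite{CLiTW}. For completeness: $M'\subseteq J$ holds because, for $\ell\in\N$, $L:=V/J$ is an integrable $\widehat{\mathcal{L}}(\mathfrak{sl}_{\infty})$-module, and for each $p\ne q$ the $\mathfrak{sl}_{2}$-triple $e=t^{-1}\otimes E_{p,q}$, $f=t^{1}\otimes E_{q,p}$, $h=[e,f]=t^{0}\otimes(E_{p,p}-E_{q,q})-\text{k}$ satisfies $f\cdot\mathbf{1}=0$ and $h\cdot\mathbf{1}=-\ell\mathbf{1}$, so $e^{\ell+1}\bar{\mathbf{1}}=0$ in the integrable module $L$, i.e.\ $v_{p,q}\in J$; conversely $J\subseteq M'$ can be obtained by exhausting $\mathfrak{sl}_{\infty}$ by the finite-rank subalgebras supported on $\{-n,\dots,n\}$, using that $J_{\widehat{\mathcal{L}}(\mathfrak{sl}_{2n+1})}(\ell,0)$ is generated by $(t^{-1}\otimes E_{-n,n})^{\ell+1}\mathbf{1}$ and that a nonzero integrable quotient of a vacuum module is simple, then passing to the direct limit. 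Combining Steps (i) and (ii) yields $M=M'=J$.

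\emph{Main obstacle.} The reduction in Step (i) is entirely elementary and its only hypothesis is $N\ge2$. The substantive point is Step (ii): since $\widehat{\mathcal{L}}(\mathfrak{sl}_{\infty})$ has no highest root and its positive roots do not lie in the $\Z_{\ge0}$-span of its simple roots, neither the integrability of $L_{\widehat{\mathcal{L}}(\mathfrak{sl}_{\infty})}(\ell,0)$ nor the Kac--Moody-style presentation of the maximal submodule of a vacuum module is available verbatim, so one must either run the finite-rank approximation above carefully or simply quote the result from \cite{CLiTW}.
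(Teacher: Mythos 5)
The first thing to note is that the paper does not prove this lemma at all: it is quoted verbatim from \cite{CLiTW} (Lemma 3.11 there), so there is no internal argument to compare your proposal with. Your Step (i) is correct and complete. The identification of the displayed generating set with $\{v_{p,q}\mid p\not\equiv q\ (\mathrm{mod}\ N)\}$ is right, and with $x=t^{-1}\otimes E_{p,r}$, $y=t^{0}\otimes E_{r,q}$, $z=t^{-1}\otimes E_{p,q}$ (where $r\not\equiv p\ (\mathrm{mod}\ N)$, so $r\ne p,q$) one indeed has $[x,y]=z$, $[x,z]=[z,y]=0$, $y\mathbf{1}=0$, and the induction $y^{k}x^{\ell+1}\mathbf{1}=(-1)^{k}\tfrac{(\ell+1)!}{(\ell+1-k)!}\,x^{\ell+1-k}z^{k}\mathbf{1}$ yields $v_{p,q}=\tfrac{(-1)^{\ell+1}}{(\ell+1)!}\,y^{\ell+1}v_{p,r}\in M$. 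So you have correctly and economically reduced the lemma to the statement that $J_{\widehat{\mathcal{L}}(\mathfrak{sl}_{\infty})}(\ell,0)$ is generated by all $v_{p,q}$ with $p\ne q$.

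The weak point is Step (ii), and you should be aware that the reference you invoke there is essentially the statement being proved: the lemma in this paper \emph{is} \cite{CLiTW}, Lemma 3.11, so as written your argument is either a (harmless but then redundant) elaboration of the same citation the paper makes, or, if meant to be independent, it is incomplete exactly where all the content lies. Concretely, two points in your sketch need real work and are not available verbatim, as you partly concede. First, $M'\subseteq J$ requires that each $v_{p,q}$ generate a \emph{proper} graded submodule of $V_{\widehat{\mathcal{L}}(\mathfrak{sl}_{\infty})}(\ell,0)$; your route through integrability of $L_{\widehat{\mathcal{L}}(\mathfrak{sl}_{\infty})}(\ell,0)$ presupposes a fact that has no off-the-shelf Kac--Moody proof here (no highest root, no standard integrability criterion for the simple vacuum quotient of a loop algebra of $\mathfrak{sl}_{\infty}$), and the alternative singular-vector argument also has to be redone since $E_{p,q}$ is not a highest root vector for any Borel of $\mathfrak{sl}_{\infty}$ and $v_{p,q}$ is not annihilated by all raising operators of the big algebra. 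Second, $J\subseteq M'$ ``by passing to the direct limit'' silently uses a compatibility statement such as $J_{\widehat{\mathcal{L}}(\mathfrak{sl}_{2n+1})}(\ell,0)=J_{\widehat{\mathcal{L}}(\mathfrak{sl}_{\infty})}(\ell,0)\cap V_{\widehat{\mathcal{L}}(\mathfrak{sl}_{2n+1})}(\ell,0)$, or equivalently that the quotient of $V_{\widehat{\mathcal{L}}(\mathfrak{sl}_{\infty})}(\ell,0)$ by the submodule generated by the finite-rank singular vectors is already simple; only the inclusion $J_{\infty}\cap V_{n}\subseteq J_{n}$ is automatic, and the other direction is precisely the kind of statement the cited reference has to supply. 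So: judged against the paper, your proposal does strictly more than the paper (which only cites), and its Step (i) is sound; judged as a self-contained proof, Step (ii) contains genuine gaps that the one-line appeals to integrability and to the direct limit do not close.
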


Let $\sigma_{N}$ be the automorphism of the algebra $\mathfrak{gl}_{\infty}$
defined by
\begin{align}
\sigma_{N}\left(E_{m,n}\right)=E_{m+N,n+N},
\end{align}
for $ m,n\in\mathbb{Z}$.
Restrict $\sigma_{N}$ to the subalgebra $\mathfrak{sl}_{\infty}$, we see that $\sigma_{N}$
is also an automorphism of $\mathfrak{sl}_{\infty}$ that preserves the
bilinear form $\left\langle \cdot,\cdot\right\rangle $. Denoted by
$G_{N}=\langle\sigma_{N}\rangle$ the automorphism group of $\mathfrak{sl}_{\infty}$
generated by $\sigma_{N}$. As pointed out in Section 3.2, $G_{N}$ can be
extended uniquely to an automorphism group of the vertex algebras $V_{\widehat{\mathcal{L}}\left(\mathfrak{sl}_{\infty}\right)}\left(\ell,0\right)$
and $L_{\widehat{\mathcal{L}}\left(\mathfrak{sl}_{\infty}\right)}\left(\ell,0\right)$.
Let $\chi_{q}:G_{N}\rightarrow\mathbb{C}^{\times}$ be the linear
character defined by $\chi_{q}\left(\sigma_{N}^{n}\right)=q^{n}$
for $n\in\mathbb{Z}$.

Define a $\Z$-grading $\mathfrak{gl}_{\infty}=\oplus_{n\in\mathbb{Z}}\mathfrak{gl}_{\infty(n)}$ on $\mathfrak{gl}_{\infty}$
 by assigning
\begin{align}
\deg E_{mN+i,nN+j}=n-m, \label{eq:defpingrading}
\end{align}
for $ m,n\in\mathbb{Z},\ 1\le i,j\le N$.
Note that $\mathfrak{sl}_{\infty}$ is a graded subalgebra of $\mathfrak{gl}_{\infty}$.
Denoted by $\mathcal{P}$ the derivation of $\mathfrak{sl}_{\infty}$
defined by
$\mathcal{P}(a)=na$ if $a\in\mathfrak{sl}_{\infty}$ with $\deg a=n$.
Note that for $a,b\in\mathfrak{sl}_{\infty}$, one has $\langle\mathcal{P}a,b\rangle+\langle a,\mathcal{P}b\rangle=0$.
This allows us to lift the  derivation $\mathcal{P}$
of $\mathfrak{sl}_{\infty}$ to be a derivation of the affine Lie algebra $\widehat{\mathcal{L}}\left(\mathfrak{sl}_{\infty}\right)$
with
\begin{align}
\mathcal{P}\left(\text{k}\right)=0, \ \quad\mathcal{P}\left(t^{n}\otimes a\right)=t^{n}\otimes\mathcal{P}(a),
\end{align}
for $ n\in\mathbb{Z},a\in\mathfrak{sl}_{\infty}$.
As $\mathcal{P}\left(t^{-1}\mathbb{C}\left[t^{-1}\right]\otimes\mathfrak{sl}_{\infty}\right)\subset t^{-1}\mathbb{C}\left[t^{-1}\right]\otimes\mathfrak{sl}_{\infty}$,
$\mathcal{P}$ is also a derivation of the associative algebra  $\mathcal{U}\left(t^{-1}\mathbb{C}\left[t^{-1}\right]\otimes\mathfrak{sl}_{\infty}\right)$.
Via the isomorphism $\mathcal{U}\left(t^{-1}\mathbb{C}\left[t^{-1}\right]\otimes\mathfrak{sl}_{\infty}\right)\cong V_{\widehat{\mathcal{L}}\left(\mathfrak{sl}_{\infty}\right)}\left(\ell,0\right)$,
$\mathcal{P}$ becomes a derivation of $V_{\widehat{\mathcal{L}}\left(\mathfrak{sl}_{\infty}\right)}\left(\ell,0\right)$
(as a vertex algebra). Furthermore, if $\ell$ is a nonnegative integer, then by
Lemma \ref{lem:genofjslinfty}, we see that $\mathcal{P}$ also preserves
the submodule $J_{\widehat{\mathcal{L}}\left(\mathfrak{sl}_{\infty}\right)}\left(\ell,0\right)$.
Therefore, it descends to a derivation of $L_{\widehat{\mathcal{L}}\left(\mathfrak{sl}_{\infty}\right)}\left(\ell,0\right)$.


\begin{definition} A \textit{$\left(G_{N},\chi_{q}\right)$-equivariant
$\phi$-coordinated quasi $V_{\widehat{\mathcal{L}}\left(\mathfrak{sl}_{\infty}\right)}\left(\ell,0\right)$-module}
$\left(W,Y_{W}^{\phi},d,p\right)$ is a $\left(G_{N},\chi_{q}\right)$-equivariant
$\phi$-coordinated quasi module $\left(W,Y_{W}^{\phi},d\right)$
equipped with a linear transformation $p$ on $W$ such that
$$\left[p,Y_{W}^{\phi}(v,z)\right]
=Y_{W}^{\phi}\left(\mathcal{P}v,z\right),
$$
for $v\in V_{\widehat{\mathcal{L}}\left(\mathfrak{sl}_{\infty}\right)}\left(\ell,0\right)$.
Similarly, when $\ell$ is a nonnegative integer, we can define the
notion of $\left(G_{N},\chi_{q}\right)$-equivariant $\phi$-coordinated
quasi $L_{\widehat{\mathcal{L}}\left(\mathfrak{sl}_{\infty}\right)}\left(\ell,0\right)$-module
$\left(W,Y_{W}^{\phi},d,p\right)$. \end{definition}


\begin{theorem}\label{thm:main3} Let $\ell$ be a complex number. If $W$ is a restricted $\widetilde{\mathfrak{sl}}_{N}\left(\mathbb{C}_{q}\right)$-module
of level $\ell$, then there is a $\left(G_{N},\chi_{q}\right)$-equivariant
$\phi$-coordinated quasi $V_{\widehat{\mathcal{L}}\left(\mathfrak{sl}_{\infty}\right)}\left(\ell,0\right)$-module
structure $\left(Y_{W}^{\phi},d,p\right)$ on $W$ uniquely
determined by
\begin{align*}
p=-\text{d}_{1},\ d=-\text{d}_{0},\ Y_{W}^{\phi}\left(E_{mN+i,j},z\right)=\left(E_{i,j}t_{1}^{m}\right)\left[z\right], \ \ Y_{W}^{\phi}\left(E_{k,k}-E_{k+1,k+1},z\right)=H_{k}\left[z\right]
\end{align*}
for $1\le i,j,k\le N$, $m\in\Z$ with $\left(i-j,m\right)\ne(0,0)$.
On the other hand, if $\left(W,Y_{W}^{\phi},d,p\right)$ is a $\left(G_{N},\chi_{q}\right)$-equivariant
$\phi$-coordinated quasi $V_{\widehat{\mathcal{L}}\left(\mathfrak{sl}_{\infty}\right)}\left(\ell,0\right)$-module, then $W$ is a restricted $\widetilde{\mathfrak{sl}}_{N}\left(\mathbb{C}_{q}\right)$-module
of level $\ell$ with action given by
\begin{align*}
\text{d}_{1}=-p,\ \text{d}_{0}=-d,\ \left(E_{i,j}t_{1}^{m}\right)\left[z\right]=Y_{W}^{\phi}\left(E_{mN+i,j},z\right), \ \ H_{k}\left[z\right]=Y_{W}^{\phi}\left(E_{k,k}-E_{k+1,k+1},z\right)
\end{align*}
for $1\le i,j,k\le N$, $m\in\Z$ with $\left(i-j,m\right)\ne(0,0)$.

Furthermore, if $\ell$ is a nonnegative integer, then the integrable
restricted $\widetilde{\mathfrak{sl}}_{N}\left(\mathbb{C}_{q}\right)$-modules
of level $\ell$ are exactly the $\left(G_{N},\chi_{q}\right)$-equivariant
$\phi$-coordinated quasi $L_{\widehat{\mathcal{L}}\left(\mathfrak{sl}_{\infty}\right)}\left(\ell,0\right)$-modules
$\left(W,Y_{W}^{\phi},d,p\right)$. \end{theorem}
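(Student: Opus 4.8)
The plan is to run the same strategy as in the proof of Theorem \ref{thm:main2}, but with the conformal-algebra input replaced by the affine vertex algebra machinery of Subsection 3.2 applied to the pair $(\mathfrak{sl}_\infty,\langle\cdot,\cdot\rangle)$ together with the automorphism group $G_N=\langle\sigma_N\rangle$ and the character $\chi_q$. First I would check that this data meets the hypothesis \eqref{conconvar}: for $a,b\in\mathfrak{sl}_\infty$ one has $[\sigma_N^k a,b]=0$ and $\langle\sigma_N^k a,b\rangle=0$ for all but finitely many $k\in\mathbb{Z}$, which is clear since matrices in $\mathfrak{gl}_\infty$ have finitely many nonzero entries and $\sigma_N^k$ pushes them off to infinity. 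Thus Proposition \ref{prop:affva1} identifies the category of $(G_N,\chi_q)$-equivariant $\phi$-coordinated quasi $V_{\widehat{\mathcal{L}}(\mathfrak{sl}_\infty)}(\ell,0)$-modules $(W,Y_W^\phi)$ with the category of restricted $\widehat{\mathcal{L}}(\mathfrak{sl}_\infty,G_N)$-modules of level $\ell$, via $Y_W^\phi(a,z)=\sum_{n\in\mathbb{Z}}(\overline{t^n\otimes a})z^{-n}$.

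The key structural step is to exhibit a Lie algebra isomorphism $\widehat{\mathcal{L}}(\mathfrak{sl}_\infty,G_N)\xrightarrow{\sim}\widehat{\mathfrak{sl}}_N(\mathbb{C}_q)$, the ``covering'' analogue of the ``folding'' isomorphism \eqref{eq:twistedaffiso1}. Here the defining relation of $\widehat{\mathcal{L}}(\mathfrak{sl}_\infty,G_N)$ forces $\overline{t^r\otimes\sigma_N(a)}=q^{-r}\,\overline{t^r\otimes a}$ for $a\in\mathfrak{sl}_\infty$; since $\sigma_N$ shifts matrix indices by $N$, after reducing the row index modulo $N$ every element is a scalar multiple of one attached to a matrix unit with row index in $\{1,\dots,N\}$, and sending it to the corresponding element of $\widehat{\mathfrak{sl}}_N(\mathbb{C}_q)$ (a unit matrix times a monomial $t_0^r t_1^s$ with a matching power of $q$) is — using the bracket formula in the definition of $\widehat{\mathcal{L}}(\mathfrak{sl}_\infty,G_N)$ and the genericity of $q$ — a well-defined isomorphism of Lie algebras; the image of the traceless element $E_{N,N}-E_{N+1,N+1}$ of $\mathfrak{sl}_\infty$ picks up the extra central term $\text{k}_1$, which accounts for the second central direction of $\widehat{\mathfrak{sl}}_N(\mathbb{C}_q)$ and for the shape of $H_N[z]$. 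One then checks that this isomorphism intertwines the generating functions as claimed, sending $\sum_n(\overline{t^n\otimes E_{mN+i,j}})z^{-n}$ to $(E_{i,j}t_1^m)[z]$ and $\sum_n(\overline{t^n\otimes(E_{k,k}-E_{k+1,k+1})})z^{-n}$ to $H_k[z]$. Finally the two derivations are matched: by Lemma \ref{lem:actofd} the canonical-derivation datum $d$ corresponds to $-\text{d}_0$ (the $t_0$-degree), while the datum $p$ with $[p,Y_W^\phi(v,z)]=Y_W^\phi(\mathcal{P}v,z)$ corresponds to $-\text{d}_1$ (the $t_1$-degree), since under the isomorphism $\mathcal{P}$ — the grading derivation of $\mathfrak{sl}_\infty$ from \eqref{eq:defpingrading}, lifted to $V_{\widehat{\mathcal{L}}(\mathfrak{sl}_\infty)}(\ell,0)$ — is precisely the $t_1$-degree derivation. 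Combining these identifications yields the first two assertions, with uniqueness clear because the listed generating functions together with $\text{k}_0,\text{d}_0,\text{d}_1$ span $\widetilde{\mathfrak{sl}}_N(\mathbb{C}_q)$; the two directions then consist, respectively, of building the $\phi$-quasi module structure (checking $d=-\text{d}_0$ and $p=-\text{d}_1$ on generators and extending by the derivation property) and of reading off the $\widetilde{\mathfrak{sl}}_N(\mathbb{C}_q)$-action.

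For the integrability assertion, suppose $\ell$ is a nonnegative integer and let $W$ be a restricted $\widetilde{\mathfrak{sl}}_N(\mathbb{C}_q)$-module of level $\ell$, viewed via the above as a $(G_N,\chi_q)$-equivariant $\phi$-coordinated quasi $V_{\widehat{\mathcal{L}}(\mathfrak{sl}_\infty)}(\ell,0)$-module and, after replacing $V_{\widehat{\mathcal{L}}(\mathfrak{sl}_\infty)}(\ell,0)$ by $V_{\widehat{\mathcal{L}}(\mathfrak{sl}_\infty)}(\ell,0)/\ker Y_W^\phi$, as a faithful one. For a matrix unit $a=E_{mN+i,nN+j}$ with $1\le i\ne j\le N$ one has $[a,a]=0$ and $\langle a,a\rangle=0$ (as $mN+i\ne nN+j$ whenever $i\ne j$), hence $a_k a=0$ in $V_{\widehat{\mathcal{L}}(\mathfrak{sl}_\infty)}(\ell,0)$ for $k\ge0$; moreover, by $G_N$-equivariance $Y_W^\phi(a,z)=(E_{i,j}t_1^{m-n})[q^n z]$ and the generating functions $(E_{i,j}t_1^{c})[z]$ with $i\ne j$ mutually commute, so the iterated vertex operators can be restricted to the diagonal without a nontrivial locality polynomial. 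Consequently Proposition \ref{prop:modnil} shows that $(a_{-1})^{\ell+1}\mathbf{1}$ acts trivially on $W$ if and only if $(E_{i,j}t_1^{c})[z]^{\ell+1}=0$ on $W$ for the corresponding $c=m-n$ (the rescaling $z\mapsto q^{\pm n}z$ being immaterial). Letting $a$ run over all such matrix units and invoking Lemma \ref{lem:genofjslinfty}, this is equivalent to $J_{\widehat{\mathcal{L}}(\mathfrak{sl}_\infty)}(\ell,0)\subseteq\ker Y_W^\phi$, i.e. to $W$ being a $(G_N,\chi_q)$-equivariant $\phi$-coordinated quasi $L_{\widehat{\mathcal{L}}(\mathfrak{sl}_\infty)}(\ell,0)$-module — the operator $p$ descending to the quotient because $J_{\widehat{\mathcal{L}}(\mathfrak{sl}_\infty)}(\ell,0)$ is $\mathcal{P}$-stable, as noted before the theorem. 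By Proposition \ref{prop:charintslmod} the latter condition is exactly integrability of $W$, and the degenerate case $\ell=0$ is handled directly (both sides then assert that all $E_{i,j}t_0^n t_1^m$ with $i\ne j$ act as $0$).

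I expect the main obstacle to be the construction of the isomorphism $\widehat{\mathcal{L}}(\mathfrak{sl}_\infty,G_N)\cong\widehat{\mathfrak{sl}}_N(\mathbb{C}_q)$ compatible with both derivations: this is a Morita-type duality between $\mathfrak{gl}_N$ over the quantum torus $\mathbb{C}_q$ and a $\mathbb{Z}$-graded version of $\mathfrak{gl}_\infty$, and the genericity of $q$ is needed both for the relation $t_0t_1=qt_1t_0$ to be faithfully reproduced and for the two-dimensional center to be matched up correctly.
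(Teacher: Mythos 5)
Your overall route coincides with the paper's: for the second (integrability) assertion your argument — $G_N$-equivariance giving $Y_W^{\phi}(E_{mN+i,nN+j},z)=(E_{i,j}t_1^{m-n})(q^nz)$, self-commutativity of these fields, Proposition \ref{prop:modnil}, Proposition \ref{prop:charintslmod} and Lemma \ref{lem:genofjslinfty} to pass back and forth between $J_{\widehat{\mathcal{L}}(\mathfrak{sl}_\infty)}(\ell,0)\subset\ker Y_W^{\phi}$ and integrability — is exactly the paper's proof, and your treatment of the two derivations ($d=-\text{d}_0$ via Lemma \ref{lem:actofd}, $p=-\text{d}_1$ via the $\mathcal{P}$-grading \eqref{eq:defpingrading} and $\mathcal{P}$-stability of the maximal submodule) also matches.

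The gap is in the first assertion. The paper obtains the correspondence between restricted $\widehat{\mathfrak{sl}}_N(\mathbb{C}_q)$-modules and $(G_N,\chi_q)$-equivariant (quasi, hence via Proposition \ref{prop:affva1} $\phi$-coordinated quasi) $V_{\widehat{\mathcal{L}}(\mathfrak{sl}_\infty)}(\ell,0)$-modules with the \emph{specific} field identifications of the theorem by citing \cite[Proposition 3.8]{CLiTW}; you propose instead to construct directly a ``covering'' isomorphism $\widehat{\mathcal{L}}(\mathfrak{sl}_\infty,G_N)\cong\widehat{\mathfrak{sl}}_N(\mathbb{C}_q)$, but this is precisely where the real work lies and your proposal only asserts it (``one then checks \dots''). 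It is not routine: with $\chi_q(\sigma_N)=q$, the bracket in $\widehat{\mathcal{L}}(\mathfrak{sl}_\infty,G_N)$ gives $[\,\overline{t^{r}\otimes E_{mN+i,j}},\overline{t^{s}\otimes E_{m'N+k,l}}\,]=\delta_{j,k}q^{m'r}\,\overline{t^{r+s}\otimes E_{(m+m')N+i,l}}-\delta_{i,l}q^{ms}\,\overline{t^{r+s}\otimes E_{(m+m')N+k,j}}+\cdots$, whereas \eqref{commutator1} has the factors $q^{ms}$ and $q^{m'r}$ attached to the \emph{opposite} terms; so the naive assignment $\overline{t^{r}\otimes E_{mN+i,j}}\mapsto E_{i,j}t_0^{r}t_1^{m}$ (and likewise any single scalar rescaling depending only on $(m,r)$) does not satisfy the bracket relations, and one must pin down the correct normalization and verify that it reproduces exactly the stated fields, including the form of $H_N[z]$, where the second central direction $\text{k}_1$ (absent from the one-dimensional canonical center $\overline{\text{k}}$ of $\widehat{\mathcal{L}}(\mathfrak{sl}_\infty,G_N)$) must emerge from the degree-zero Cartan coefficients. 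Your sketch acknowledges this as ``the main obstacle'' but does not resolve it; to complete the proof you either have to carry out this verification in detail (equivalently, check the commutator formula of Proposition \ref{prop:Borcherdscomm} against the $\widetilde{\mathfrak{sl}}_N(\mathbb{C}_q)$ relations for the stated fields) or, as the paper does, invoke \cite[Proposition 3.8]{CLiTW}.
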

\begin{proof}
Recall that $\widetilde{\mathfrak{sl}}_{N}\left(\mathbb{C}_{q}\right)=\widehat{\mathfrak{sl}}_{N}\left(\mathbb{C}_{q}\right)\oplus\mathbb{C}\text{d}_{0}\oplus\mathbb{C}\text{d}_{1}$.
It was proved in \cite[Proposition 3.8]{CLiTW} that the restricted $\widehat{\mathfrak{sl}}_{N}\left(\mathbb{C}_{q}\right)$-modules
$W$ of level $\ell$ are exactly the $\left(G_{N},\chi_{q}\right)$-equivariant
quasi $V_{\widehat{\mathcal{L}}\left(\mathfrak{sl}_{\infty}\right)}\left(\ell,0\right)$-modules
$\left(W,Y_{W}\right)$ with
\begin{align*}
z^{-1}\left(E_{i,j}t_{1}^{m}\right)\left[z\right]=Y_{W}\left(E_{mN+i,j},z\right), \ \ z^{-1}H_{k}\left[z\right]=Y_{W}\left(E_{k,k}-E_{k+1,k+1},z\right)
\end{align*}
for $1\le i,j,k\le N$, $m\in\Z$ with $\left(i-j,m\right)\ne\left(0,0\right)$.
It then follows from Proposition \ref{prop:affva1} that the restricted
$\widehat{\mathfrak{sl}}_{N}\left(\mathbb{C}_{q}\right)$-modules
$W$ of level $\ell$ are exactly the $\left(G_{N},\chi_{q}\right)$-equivariant
$\phi$-coordinated quasi $V_{\widehat{\mathcal{L}}\left(\mathfrak{sl}_{\infty}\right)}\left(\ell,0\right)$-modules
$\left(W,Y_{W}^{\phi}\right)$ with
\begin{align*}
\left(E_{i,j}t_{1}^{m}\right)\left[z\right]=Y_{W}^{\phi}\left(E_{mN+i,j},z\right), \ \ H_{k}\left[z\right]=Y_{W}^{\phi}\left(E_{k,k}-E_{k+1,k+1},z\right).
\end{align*}
Furthermore, by \eqref{eq:defpingrading} and Lemma \ref{lem:actofd}
we have
\begin{align*}
\left[-\text{d}_{1},\left(E_{i,j}t_{1}^{m}\right)\left[z\right]\right] & =-m\left(E_{i,j}t_{1}^{m}\right)\left[z\right]=-mY_{W}^{\phi}\left(E_{mN+i,j},z\right)=Y_{W}^{\phi}\left(\mathcal{P}\left(E_{mN+i,j}\right),z\right),\\
\left[-\text{d}_{1},H_{k}\left[z\right]\right] & =0=Y_{W}^{\phi}\left(\mathcal{P}\left(E_{k,k}-E_{k+1,k+1}\right),z\right),\\
\left[-\text{d}_{0},\left(E_{i,j}t_{1}^{m}\right)\left[z\right]\right] & =z\frac{d}{dz}\left(E_{i,j}t_{1}^{m}\right)\left[z\right]=z\frac{d}{dz}Y_{W}^{\phi}\left(E_{mN+i,j},z\right)=Y_{W}^{\phi}\left(\mathcal{D}\left(E_{mN+i,j}\right),z\right),\\
\left[-\text{d}_{0},H_{k}\left[z\right]\right] & =z\frac{d}{dz}H_{k}\left[z\right]=z\frac{d}{dz}Y_{W}^{\phi}\left(E_{k,k}-E_{k+1,k+1},z\right)=Y_{W}^{\phi}\left(\mathcal{D}\left(E_{k,k}-E_{k+1,k+1}\right),z\right).
\end{align*}
Therefore, we have finished the  proof for the first part of the theorem.

 To prove the second part of the theorem, we suppose that $\ell$ is a nonnegative integer and $W$ an integrable restricted $\widetilde{\mathfrak{sl}}_{N}\left(\mathbb{C}_{q}\right)$-module
of level $\ell$. Then for $1\le i\ne j\le N$ and $m,n\in\mathbb{Z}$,
 we have $\left(E_{mN+i,nN+j}\right){}_{r}\left(E_{mN+i,nN+j}\right)=0$
for $r\ge0$ in $V_{\widehat{\mathcal{L}}\left(\mathfrak{sl}_{\infty}\right)}\left(\ell,0\right)$. And
 as a $\left(G_{N},\chi_{q}\right)$-equivariant
$\phi$-coordinated quasi $V_{\widehat{\mathcal{L}}\left(\mathfrak{sl}_{\infty}\right)}\left(\ell,0\right)$-module
we have
\begin{align*}
Y_{W}^{\phi}\left(E_{mN+i,nN+j},z\right)=Y_{W}^{\phi}\left(\sigma_{N}^{n}\left(E_{(m-n)N+i,j}\right),z\right)=Y_{W}^{\phi}\left(E_{(m-n)N+i,j},\chi_{q}(\sigma_{N})^{n}z\right)=\left(E_{i,j}t_{1}^{m-n}\right)\left(q^{n}z\right).
\end{align*}
This implies that
\[
\left[Y_{W}^{\phi}\left(E_{mN+i,nN+j},z_{1}\right),Y_{W}^{\phi}\left(E_{mN+i,nN+j},z_{2}\right)\right]=\left[\left(E_{i,j}t_{1}^{m-n}\right)\left(q^{n}z_{1}\right),\left(E_{i,j}t_{1}^{m-n}\right)\left(q^{n}z_{2}\right)\right]=0.
\]
Thus, by Propositions \ref{prop:modnil} and \ref{prop:charintslmod},
we have $\left(\left(E_{mN+i,nN+j}\right){}_{-1}\right){}^{\ell+1}{\mathbf{1}}=0$
in $V_{\widehat{\mathcal{L}}\left(\mathfrak{sl}_{\infty}\right)}\left(\ell,0\right)/\ker Y_{W}^{\phi}$.
This together with Lemma \ref{lem:genofjslinfty} implies that $J_{\widehat{\mathcal{L}}\left(\mathfrak{sl}_{\infty}\right)}\left(\ell,0\right)\subset\ker Y_{W}^{\phi}$,
and hence $\left(W,Y_{W}^{\phi},d,p\right)$ is a $\left(G_{N},\chi_{q}\right)$-equivariant
$\phi$-coordinated quasi $L_{\widehat{\mathcal{L}}\left(\mathfrak{sl}_{\infty}\right)}\left(\ell,0\right)$-module.
Conversely, let $\left(W,Y_{W}^{\phi},d,p\right)$ be a $\left(G_{N},\chi_{q}\right)$-equivariant
$\phi$-coordinated quasi $L_{\widehat{\mathcal{L}}\left(\mathfrak{sl}_{\infty}\right)}\left(\ell,0\right)$-module.
Then it is a restricted $\widetilde{\mathfrak{sl}}_{N}\left(\mathbb{C}_{q}\right)$-module
of level $\ell$. Again by Lemma \ref{lem:genofjslinfty}, Proposition
\ref{prop:modnil} and Proposition \ref{prop:charintslmod}, one deduces that
$W$ is integrable as required.
\end{proof}

\section*{Acknowledgement}
This work was supported by the National Natural Science Foundation of China 11971396, 11971397,  and  the Fundamental Research Funds for the Central Universities 20720190069, 20720200067. The authors would like to thank Institute for Advanced Study in Mathematics, Zhejiang. Part of this work was carried out while the first and the third author were visiting the institute.

\vskip10pt

\textbf{F. Chen}{:
School of Mathematical Sciences, Xiamen University, Xiamen, 361005,
China; }\texttt{chenf@xmu.edu.cn}

\textbf{S. Tan}{:
School of Mathematical Sciences, Xiamen University, Xiamen, 361005,
China;} \texttt{tans@xmu.edu.cn}

\textbf{N. Yu}{:
School of Mathematical Sciences, Xiamen University, Xiamen, 361005,
China; } \texttt{ninayu@xmu.edu.cn}


\begin{thebibliography}{AABGP}
\bibitem[AABGP]{AABGP} B. Allison, S. Azam, S. Berman, Y. Gao, A.
Pianzola, Extended affine Lie algebras and their root systems, \emph{Memoir
Amer. Math. Soc.} \textbf{126} (1997), x+122 pp.

\bibitem[ABFP]{ABFP} B. Allison, S. Berman, J. Faulkner, A. Pianzola,
Multiloop realization of extended affine Lie algebras and Lie tori,
\emph{Trans. Amer. Math. Soc.}\textbf{ 361} (2009), 4807--4842.

\bibitem[ABGP]{ABGP} B. Allison, S. Berman, Y. Gao, A. Pianzola,
A characterization of affine Kac-Moody Lie algebras, \emph{Commun.
Math. Phys. }\textbf{185} (1997), 671--688.

\bibitem[ABP]{ABP} B. Allison, S. Berman, A. Pianzola,\emph{ }Multiloop
algebras, iterated loop algebras and extended affine Lie algebras
of nullity 2, \emph{J. Eur. Math. Soc.} \textbf{16} (2014), 327--385.


\bibitem[B1]{B1} Y. Billig, A category of modules for the full toroidal
Lie algebra,\emph{ Int. Math. Res. Not.} (2006), Art. ID 68395, 46
pp.

\bibitem[B2]{B2} Y. Billig, Representations of toroidal extended
affine Lie algebra, \emph{J. Algebra} \textbf{308} (2007), 252--269.

\bibitem[BGK]{BGK} S. Berman, Y. Gao, Y. Krylyuk, Quantum tori and
the structure of elliptic quasi-simple Lie algebras, \emph{J. Funct.
Anal.} \textbf{135} (1996), 339--389.


\bibitem[CJKT]{CJKT} F. Chen, N. Jing, F. Kong, S. Tan, Twisted
toroidal Lie algebras and Moody-Rao-Yokonuma presentation,\emph{ Sci.
China Math.} \textbf{64} (2021) , 1181--1200.


\bibitem[CLiT]{CLiT} F. Chen, H.-S. Li, S. Tan, Toroidal extended affine
Lie algebras and vertex algebras, arXiv:2102.10968.

\bibitem[CLT1]{CLT1} F. Chen, Z. Li, S. Tan, Integrable representations
for toroidal extended affine Lie algebras, \emph{J. Algebra } \textbf{519}
(2019), 228--252.

\bibitem[CLT2]{CLT2} F. Chen, Z. Li, S. Tan, Classification of
integrable representations for toroidal extended affine Lie algebras,
\emph{J. Algebra } \textbf{574} (2021), 1--37.

\bibitem[CLiTW]{CLiTW} F. Chen, H.-S. Li, S. Tan, Q. Wang, Extended
affine Lie algebras, vertex algebras, and reductive groups, arXiv:
2004.02821.


\bibitem[CLTW]{CLTW} F. Chen, X. Liao, S. Tan, Q.
Wang, $\left(G,\chi_{\phi}\right)$-equivariant $\phi$-coordinated
quasi modules for vertex algebras, arXiv:2103.10038v1.

\bibitem[CNPY]{CNPY} V. Chernousov, E. Neher, A. Pianzola, U.~Yahorau,
On conjugacy of cartan subalgebras in extended affine lie algebras,
\emph{Adv. Math. }\textbf{290} (2016), 260--292.

\bibitem[DLM]{DLM} C. Dong, H.-S. Li, G. Mason, Vertex Lie algebras,
vertex Poisson algebras and vertex algebras, in: Recent Developments
in Infinite-Dimensional Lie Algebras and Conformal Field Theory, Charlottesville,
VA, 2000, in: \emph{Contemp. Math.}, vol. 297, Amer. Math. Soc., Providence,
RI, 2002, pp. 69--96.

\bibitem[ESB]{ESB} S. Eswara Rao, S. Sharma, P. Batra, Integrable modules for twisted toroidal extended affine Lie algebras, \emph{
J. Algebra},  \textbf{556} (2020), 1057--1072.

\bibitem[FHL]{FHL} I. B. Frenkel, Y. Huang, J. Lepowsky, On axiomatic
approaches to vertex operator algebras and modules, Memoirs
Amer. Math. Soc. \textbf{104}, 1993.

\bibitem[FLM]{FLM} I. B. Frenkel, J. Lepowsky, A. Meurman, Vertex
operator algebras and the monster, Pure and Applied Math. Vol.
134, Academic Press, Boston, 1988.

\bibitem[FSS]{FSS} J. Fuchs, B. Schellekens, C. Schweigert, From
Dynkin diagram symmetries to fixed point structures, \emph{Commun. Math.
Phys.} \textbf{180} (1996), 39--97.

\bibitem[FZ]{FZ} I. B. Frenkel, Y.-C. Zhu, Vertex operator algebras
associated to representations of affine and Virasoro algebras, \emph{Duke
Math. J.} \textbf{66} (1992), 123--168.

\bibitem[G1]{G1} Y. Gao, Representations of extended affine Lie algebras
coordinatized by certain quantum tori, \emph{Compositio Math.} \textbf{123}
(2000), 1--25.

\bibitem[G2]{G2} Y. Gao, Vertex operators arising from the homogeneous
realization for $\widehat{gl_{N}}$, \emph{Commun. Math. Phys. } \textbf{211}
(2000), 745--777.

\bibitem[G3]{G3} Y. Gao, Fermionic and bosonic representations of
the extended affine Lie algebra $\widetilde{gl_{N}(\mathbb{C}_{q})}$,
\emph{Canada Math. Bull.} \textbf{45} (2002), 623--633.

\bibitem[GZ]{GZ} Y. Gao, Z. Zeng, Hermitian representations of the
extended affine Lie algebra $\widetilde{gl_{2}(\mathbb{C}_{q})}$,
\emph{Adv. Math. }\textbf{207} (2006), 244--265.

\bibitem[G-KK]{G-KK} M. Golenishcheva-Kutuzova, V. Kac, $\Gamma$-conformal
algebras, \emph{J. Math. Phys.} \textbf{39} (1998), 2290--2305.

\bibitem[GP]{GP} P. Gille, A. Pianzola, Torsors, reductive group
schemes and extended affine Lie algebras, \emph{ Memoirs Amer. Math. Soc.}
\textbf{226}, vi+112, 2013.

\bibitem[H-KT]{H-KT} R. Hoegh-Krohn, B. Torresani, Classification
and construction of quasisimple Lie algebras, \emph{J. Funct. Anal.}
\textbf{89} (1990), 106--136.

\bibitem[JKLiT]{JKLiT}\textcolor{red}{{} }N. Jing, F. Kong, H.-S.
Li, S. Tan, $\left(G,\chi_{\phi}\right)$-equivariant $\phi$-coordinated
quasi modules for nonlocal vertex algebras,\emph{ J. Algebra } \textbf{570
} (2021), 24--74.

\bibitem[K1]{K1} V. Kac, Infinite Dimensional Lie Algebras, Third
edition, Cambridge University Press, 1990.

\bibitem[K2]{K2} V. Kac, Vertex Algebras for Beginners, 2nd edition,
University Lecture Series, 10, Amer. Math. Soc., 1998.

\bibitem[KW]{KW} V. Kac, S. P. Wang, On automorphisms of Kac-Moody
algebras and groups, \emph{Adv. Math.} \textbf{92} (1992), 129--195.

\bibitem[Li1]{Li1} H.-S. Li, Local systems of vertex operators, vertex
superalgebras and modules, \emph{J. Pure Appl. Algebra} \textbf{109}
(1996), 143--195.

\bibitem[Li2]{Li2} H.-S. Li, Local systems of twisted vertex operators,
vertex superalgebras and twisted modules,\emph{ Contemporary Math.}
\textbf{193}, Amer. Math. Soc., Providence, 1996, 203--236.

\bibitem[Li3]{Li3} H.-S. Li, A new construction of vertex algebras
and quasi modules for vertex algebras, \emph{Adv. Math.} \textbf{202}
(2006), 232--286.

\bibitem[Li4]{Li4}H.-S. Li, On centain generalizations of twisted
affine Lie algebras and quasimodules for $\Gamma$-vertex algebras,
\emph{J. Pure Appl. Algebra} \textbf{209} (2007), 853--871.

\bibitem[Li5]{Li5} H.-S. Li, Twisted modules and quasi-modules for
vertex operator algebras, \emph{Contemporary Math.} \textbf{422},
Amer. Math. Soc., Providence, 2007, 389--400.

\bibitem[Li6]{Li6} H.-S. Li, $\phi$-coordinated quasi-modules for
quantum vertex algebras, \emph{Commun. Math. Phys.} \textbf{308} (2011),
703--741.

\bibitem[Li7]{Li7} H.-S. Li, Vertex $F$-algebras and their $\phi$-coordinated
modules, \emph{J. Pure Appl. Algebra} \textbf{215} (2011), 1645--1662.

\bibitem[Li8]{Li8}H.-S. Li, $G$-equivariant $\phi$-coordinated
quasi modules for quantum vertex algebras, \emph{J. Math. Phys.} \textbf{54}
(2013), 1--26.

\bibitem[LLi]{LLi} J. Lepowsky, H. Li, Introduction to vertex operator
algebras and their representations, Progr. Math., vol. 227, Birkh$\ddot{a}$user,
Boston, 2004.

\bibitem[LP]{LP} Y. Lin and L. Peng, Elliptic Lie algebras and tubular
algebras, \emph{Adv. Math.} \textbf{196} (2005), 487--530.


\bibitem[MRY]{MRY} R.~Moody, S.~E.~Rao, T.~Yokonuma, Toroidal
Lie algebras and vertex representations, \emph{Geom. Dedicata} \textbf{35}
(1990), 283--307.

\bibitem[N]{N} E. Neher, Extended affine Lie algebras,\emph{ C. R.
Math. Acad. Sci. Soc. R. Can.} \textbf{26} (2004), 90--96.

\bibitem[P]{P} M. Primc, Vertex algebras generated by Lie algebras,
\emph{J. Pure Appl. Algebra} \textbf{135} (1999), 253--293.

\bibitem[Sa]{Sa} K. Saito, Extended affine root systems. I. Coxeter transformations, \emph{Publ. Res. Inst. Math. Sci.} \textbf{21 }(1985),
no. 1, 75--179.

\bibitem[Sl]{Sl} P. Slodowy, Singularit\"{a}ten, Kac-Moody-Liealgebren, assoziierte Gruppen und Vebrallgemeinerungen,
Habilitationsschrift publication of the Max-Planck-Institut f\"{u}r
Mathematik, Bonn (1984).

\bibitem[Z]{Z} Y.-C. Zhu, Modular invariance of characters of vertex
operator algebras, \emph{J. Amer. Math. Soc.}\textbf{ 9} (1996), 237--302.
\end{thebibliography}
\end{document}